\setlist[enumerate,1]{label={(\alph*)}}
\DeclareFontFamily{U}{rcjhbltx}{}
\DeclareFontShape{U}{rcjhbltx}{m}{n}{<->rcjhbltx}{}
\DeclareSymbolFont{hebrewletters}{U}{rcjhbltx}{m}{n}
\DeclareMathSymbol{\mem}{\mathord}{hebrewletters}{109}
\DeclareMathSymbol{\memh}{\mathord}{hebrewletters}{110}
\DeclareMathSymbol{\nund}{\mathord}{hebrewletters}{141}
\newcommand{\nundh}{\hat{\nund}}
\newtheorem{thm}{Theorem}
\newtheorem{prop}{Proposition}[section]
\newtheorem{lm}[prop]{Lemma}
\newtheorem{cor}[prop]{Corollary}
\newtheorem{cl}[prop]{Proposition}
\theoremstyle{definition}
\newtheorem{dfn}[prop]{Definition}
\theoremstyle{remark}
\newtheorem{rem}[prop]{Remark}
\DeclareMathOperator{\Ker}{Ker}
\DeclareMathOperator{\Coker}{Coker}
\DeclareMathOperator{\Id}{Id}
\DeclareMathOperator{\im}{Im}
\DeclareMathOperator{\GW}{GW}
\DeclareMathOperator{\ogw}{OGW}
\DeclareMathOperator{\ogwb}{\overline{\ogw}}
\DeclareMathOperator{\rdim}{rel\,dim}
\newcommand{\fix}{\textit{Fix}}
\newcommand{\lrarr}{\longrightarrow}
\newcommand{\Lrarr}{\Longrightarrow}
\newcommand{\R}{\mathbb{R}}
\newcommand{\C}{\mathbb{C}}
\newcommand{\Z}{\mathbb{Z}}
\newcommand{\M}{\mathcal{M}}
\newcommand{\J}{\mathcal{J}}
\renewcommand{\P}{\mathbb{C}P}
\renewcommand{\L}{\Lambda}
\newcommand{\mI}{\mathcal{I}}
\newcommand{\g}{\Gamma}
\newcommand{\Hh}{\widehat{H}}
\newcommand{\qkl}{\mathfrak{q}_{\,k\!,\:l}}
\newcommand{\q}{\mathfrak{q}}
\newcommand{\m}{\mathfrak{m}}
\renewcommand{\d}{\partial}
\newcommand{\I}{[0,1]}
\newcommand{\at}{\tilde{\alpha}}
\newcommand{\xit}{\tilde{\xi}}
\newcommand{\etat}{\tilde{\eta}}
\newcommand{\Mt}{\widetilde{\M}}
\newcommand{\bt}{\tilde b}
\newcommand{\gt}{\tilde\gamma}
\newcommand{\qt}{\tilde\q}
\newcommand{\ct}{\tilde{c}}
\newcommand{\evbt}{\widetilde{evb}}
\newcommand{\evit}{\widetilde{evi}}
\newcommand{\evt}{\widetilde{ev}}
\renewcommand{\ll}{\langle\!\langle}
\renewcommand{\gg}{\rangle\!\rangle}
\renewcommand{\Im}{\im}
\newcommand{\Oh}{\widehat{\Omega}}
\newcommand{\Ob}{\overline{\Omega}}
\newcommand{\RP}{\mathbb{R}P}
\newcommand{\oFS}{\omega_{FS}}
\newcommand{\sly}{\Pi}
\newcommand{\pr}{\varpi}
\renewcommand{\i}{\mathfrak{i}}
\newcommand{\Qh}{Q}
\newcommand{\qbg}{\q^{\gamma,b}}
\newcommand{\qg}{\q^{\gamma}}
\newcommand{\qtbg}{\qt^{\gt,\bt}}
\newcommand{\xb}{\bar{x}}
\renewcommand{\a}{\alpha}
\newcommand{\mf}{f}
\newcommand{\dmf}{\d_s\mf}
\newcommand{\Ah}{\widehat{A}}
\newcommand{\cone}{C(\i)}
\newcommand{\coneq}{\widehat C(\i)}
\newcommand{\Qhh}{\widehat{\Qh}}
\renewcommand{\k}{\Bbbk}
\newcommand{\Lc}{\L_c}
\newcommand{\mJ}{\mathcal{K}}
\newcommand{\dcone}{d_C}
\newcommand{\memhh}{\hat{\memh}}
\newcommand{\s}{\mathfrak{s}}
\renewcommand{\k}{\Bbbk}
\DeclareRobustCommand{\cev}[1]{%
  \mathpalette\do@cev{#1}%
}
\newcommand{\do@cev}[2]{%
  \fix@cev{#1}{+}%
  \reflectbox{$\m@th#1\vec{\reflectbox{$\fix@cev{#1}{-}\m@th#1#2\fix@cev{#1}{+}$}}$}%
  \fix@cev{#1}{-}%
}
\newcommand{\fix@cev}[2]{%
  \ifx#1\displaystyle
    \mkern#23mu
  \else
    \ifx#1\textstyle
      \mkern#23mu
    \else
      \ifx#1\scriptstyle
        \mkern#22mu
      \else
        \mkern#22mu
      \fi
    \fi
  \fi
}
\xpretocmd{\@adminfootnotes}{\let\@makefntext\BHFN@OldMakefntext}{}{}
\renewcommand\@makefntext[1]{%
  \@ifundefined{@makefnmark}
    {}
    {%
     \renewcommand\@makefnmark{%
       \mbox{%
         \textsuperscript{%
           \normalfont
           \hyperref[\BackrefFootnoteTag]{\@thefnmark}%
         }%
       }\,%
     }%
     \BHFN@OldMakefntext{#1}%
  }%
}
\newcommand{\rI}{\cev{I}}
\newcommand{\ga}{\Delta}
\DeclareMathOperator{\spa}{Span}
\begin{document}

\title{Relative quantum cohomology}

\author[J. Solomon]{Jake P. Solomon}
\address{Institute of Mathematics\\ Hebrew University, Givat Ram\\Jerusalem, 91904, Israel } \email{jake@math.huji.ac.il}
\author[S. Tukachinsky]{Sara B. Tukachinsky}
\address{School of Mathematical Sciences\\ Tel Aviv University\\Tel Aviv, 6997801, Israel }\email{sarabt1@gmail.com}

\keywords{open WDVV, relative quantum cohomology, $A_\infty$ algebra, bounding chain, open Gromov-Witten invariant, Lagrangian submanifold, Gromov-Witten axiom, $J$-holomorphic, stable map, superpotential}
\subjclass[2020]{53D45, 53D37 (Primary) 14N35, 14N10, 53D12 (Secondary)}
\date{August 2022}

\begin{abstract}
We establish a system of PDE, called open WDVV, that constrains the bulk-deformed superpotential and associated open Gromov-Witten invariants of a Lagrangian submanifold $L \subset X$ with a bounding chain. Simultaneously, we define the quantum cohomology algebra of $X$ relative to $L$ and prove its associativity. We also define the relative quantum connection and prove it is flat. A wall-crossing formula is derived that allows the interchange of point-like boundary constraints and certain interior constraints in open Gromov-Witten invariants. Another result is a vanishing theorem for open Gromov-Witten invariants of homologically non-trivial Lagrangians with more than one point-like boundary constraint. In this case, the open Gromov-Witten invariants with one point-like boundary constraint are shown to recover certain closed invariants. From open WDVV and the wall-crossing formula, a system of recursive relations is derived that entirely determines the open Gromov-Witten invariants of $(X,L) = (\C P^n, \R P^n)$ with $n$ odd, defined in previous work of the authors. Thus, we obtain explicit formulas for enumerative invariants defined using the Fukaya-Oh-Ohta-Ono theory of bounding chains.
\end{abstract}

\maketitle

\tableofcontents

\section{Introduction}

\subsection{Overview}
Let $(X,\omega)$ be a symplectic manifold with $\dim_\R X = 2n$, and let $L \subset X$ be a Lagrangian submanifold. We assume $L$ admits a relative spin structure and fix one, so the standard theory of orientations for moduli spaces of $J$-holomorphic disks~\cite{FOOO,Seidel} applies.

A bounding chain for $L$ is a solution of the Maurer-Cartan equation in the Fukaya $A_\infty$ algebra of $L.$ Bounding chains provide a systematic method to compensate for the disk bubbling phenomenon that generally spoils the invariance of counts of $J$-holomorphic curves with boundary. There is a natural equivalence relation on bounding chains known as gauge equivalence. The superpotential of $L$ is a function on the space of bounding chains modulo gauge equivalence. The superpotential counts $J$-holomorphic disks in $X$ with boundary in $L$ constrained to pass through the given bounding chain. Cycles on $X$ give rise to deformations of the Fukaya $A_\infty$ algebra, bounding chains, and superpotential of $L$, known as bulk deformations. The deformed superpotential counts $J$-holomorphic disks in $X$ with boundary on $L$ with the interior constrained to pass through the cycles in $X$ and the boundary constrained to pass through the deformed bounding chain. If one can invariantly parameterize the space of bounding chains modulo gauge equivalence, the superpotential becomes a generating function for open Gromov-Witten invariants~\cite{Fukaya2,ST2}. Thus, the superpotential is an analog in open Gromov-Witten theory of the genus zero closed Gromov-Witten potential in closed Gromov-Witten theory. Indeed, the closed Gromov-Witten potential is a generating function for genus zero closed Gromov-Witten invariants, which count $J$-holomorphic spheres in $X.$ To invariantly count $J$-holomorphic disks with boundary contractible in $L$, it is natural to define an enhanced superpotential that includes contributions from $J$-holomorphic spheres as well as disks~\cite{Fukaya2,Joyce,PandharipandeSolomonWalcher}. The sphere contributions compensate for the phenomenon of the boundary of the disk collapsing to a point.

We prove a system of quadratic PDE, called the open WDVV equations, satisfied by the bulk-deformed enhanced superpotential of an arbitrary $L$ equipped with a bounding chain for bulk deformations in a Frobenius subalgebra of big quantum cohomology $U \subset QH(X).$ See Theorem~\ref{thm:OWDVV}. The coefficients of open WDVV are given by the partial derivatives of the closed Gromov-Witten potential. Viewing the superpotential as a generating series of open Gromov-Witten invariants, open WDVV gives rise to a system of quadratic equations relating Gromov-Witten invariants of different degrees, both closed and open. Thus, open WDVV is an analog in open Gromov-Witten theory of the WDVV equations in closed Gromov-Witten theory~\cite{KontsevichManin,RuanTian0,Witten2}.

Simultaneously, for a Lagrangian submanifold equipped with a bounding chain for bulk deformations in a Frobenius subalgebra $U \subset QH(X),$ we define the relative quantum cohomology algebra $QH_U(X,L)$ and prove its associativity. See Theorem~\ref{thm:mem}. Denoting by $Q$ the relevant Novikov ring, we have a long exact sequence of $Q$-modules,
\[
\xymatrix{
QH_U(X,L) \ar[rr] && U \ar[dl]^{\int_L} \\
 & Q[n] \ar[ul]^{[1]}
}
\]
where the top arrow is an algebra homomorphism.
The algebra structure on $QH(X,L)$ is given by counting both $J$-holomorphic spheres in $X$ and $J$-holomorphic disks in $X$ with boundary in $L$. When $L$ is a real cohomology sphere, one may consider $U = QH(X)$ and find a bounding chain for all associated bulk-deformations as shown in Theorem~\ref{thm:esphere}. A typical situation in which it is useful to consider $U \subset QH(X)$ a proper subalgebra is when an anti-symplectic involution fixing $L$ is used to construct the bounding chain as in Theorem~\ref{thm:cases}.

To naturally integrate both disk and sphere contributions in the enhanced superpotential, the open WDVV system, and the relative quantum product, we introduce a cone complex $\cone.$ We define a relative potential $\Psi \in \cone$ and a tensor potential $\memh : \cone \to \cone,$ which combine both the enhanced superpotential and the closed Gromov-Witten potential. The open WDVV equations and associativity of relative quantum cohomology follow from a commutation relation for partial derivatives of the tensor potential, which holds up to chain homotopy. See Theorem~\ref{lm:assoc}. We interpret the commutation relation for partial derivatives of $\memh$ as the flatness of the relative quantum connection in Corollary~\ref{cy:flat}.

The chain homotopy underlying the open WDVV equations and associativity is constructed from operators on the Fukaya $A_\infty$ algebra of $L$ associated to moduli spaces of $J$-holomorphic disks with three marked points constrained to a geodesic. These moduli spaces come in three different families with either one, two, or three, of the marked points on the geodesic being in the interior of the disk, while the rest are on the boundary. In addition, the construction of the chain homotopy uses a family of $J$-holomorphic sphere moduli spaces with the cross ratio of four marked points constrained to be real. Bubbling in Gromov converging sequences of $J$-holomorphic disks with three marked points constrained to a geodesic gives rise to $A_\infty$ type relations for the geodesic operators. The Maurer-Cartan equation satisfied by the bounding chain cancels the terms of the geodesic $A_\infty$ relations corresponding to many types of bubbling. The remaining types of bubbling give rise to the open WDVV equations. Figure~\ref{pic:geodbubbling} shows the two types of bubbling of $J$-holomorphic disks with three interior marked points on a geodesic that contribute to the open WDVV equation. In the bubbling depicted on the left, the components of the Gromov limit are a disk without any geodesic constraints and a sphere. This type of bubbling gives rise to products of the enhanced superpotential and the closed Gromov-Witten potential in the open WDVV equations~\eqref{eq:cor1}-\eqref{eq:cor2}. In the bubbling depicted on the right, one of the two disk components of the limit open stable map retains the geodesic constraint. Namely, two of the interior marked points and the node must lie on the geodesic. A priori, it is not clear how to interpret the disk component with the geodesic constraint in terms of the enhanced superpotential, which counts disks without a geodesic constraint. Remarkably, the Maurer-Cartan equation and the interaction of the geodesic constraint with the unit of the Fukaya $A_\infty$ algebra of $L$ nonetheless allow the open WDVV equations to capture this type of bubbling with quadratic expressions in the enhanced superpotential.

\begin{figure}[ht]
\centering
\includegraphics[width=12cm]{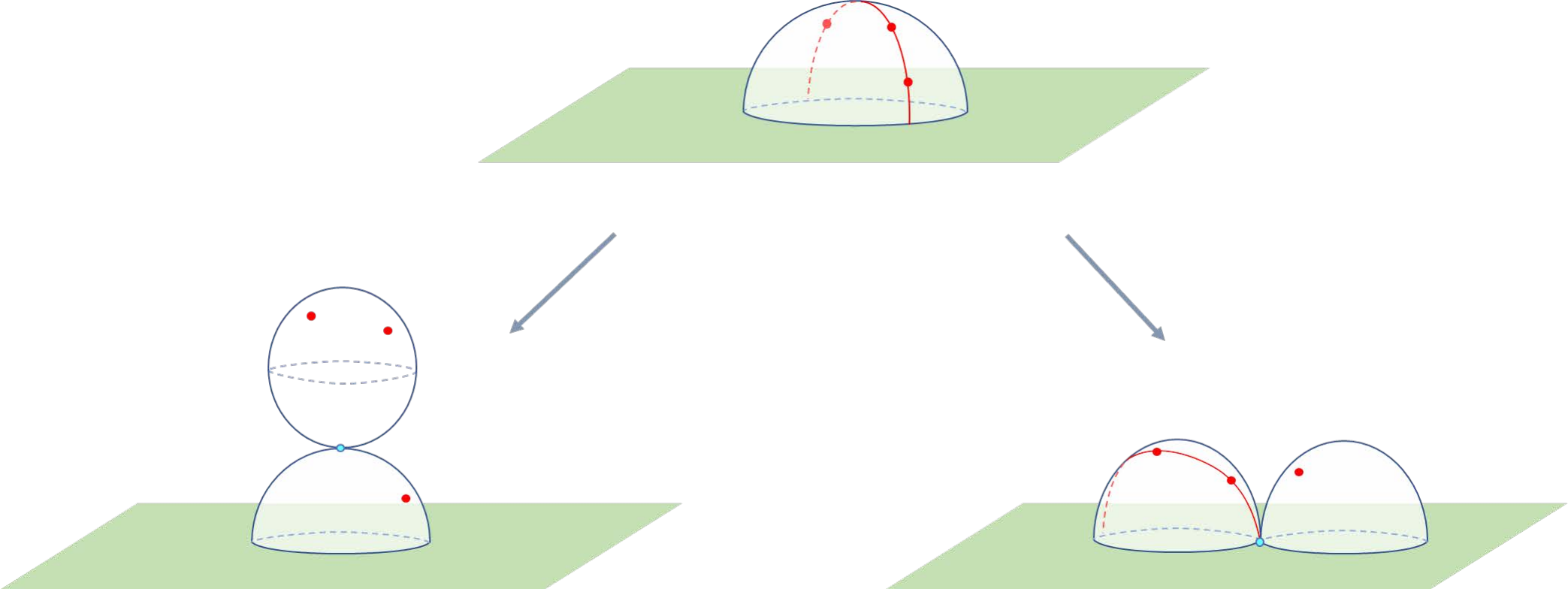}
\caption{The bubbling of $J$-holomorphic disks with three interior points constrained to a geodesic.}\label{pic:geodbubbling}
\end{figure}

Again using the tensor potential $\memh$, we obtain a wall-crossing formula for open Gromov-Witten invariants that allows one to exchange a boundary constraint for a certain type of interior constraint. See Theorem~\ref{prop:wallcross}. Furthermore, from the proof that $\memh$ is a chain map, we derive a vanishing theorem for open Gromov-Witten invariants with more than one boundary constraint in the case that $[L] \neq 0 \in H_n(X).$ In this case, we show the open Gromov-Witten invariants with one boundary constraint recover certain closed invariants. See Theorem~\ref{prop:formforc}.

We apply the open WDVV equations and the wall-crossing formula to calculate the superpotential open Gromov-Witten invariants of~\cite{ST2} for $(X,L) = (\C P^n,\R P^n)$ with $n$ odd. See Corollary~\ref{cor:computable} and sample values in Section~\ref{ssec:sample}. When $n = 3,$ it is shown in~\cite{ST2} that the superpotential invariants recover Welschinger's invariants~\cite{Welschinger2}. Thus, our calculations recover those of~\cite{BrugalleGeorgieva,BrugalleMikhalkin1}. For arbitrary odd $n,$ interior constraints restricted to odd powers of $\omega$, and no boundary constraints, it is shown in~\cite{ST2} that the superpotential invariants recover the invariants of Georgieva~\cite{Georgieva}. Thus, our calculations recover those of~\cite{GZ0,GZ}. When at least one interior constraint is an odd power of $\omega,$ the invariants of Georgieva vanish. However, our calculations show that the superpotential invariants do not vanish. Also, the superpotential invariants for arbitrary odd $n$ allow for boundary constraints that behave like real point constraints for Welschinger's invariants in dimension~$3$. Point-like boundary constraints are not allowed in other constructions for $n > 3.$ Our calculations show the superpotential invariants with point-like boundary constraints are non-trivial for~$n > 3.$

The open WDVV equations of the present work are an extension of the equations introduced in~\cite{SolomonPreprint} in the real setting for $n = 2.$ See also~\cite{HorevSolomon} and~\cite{Chen2d}. We establish the open WDVV equations without regard to real structure and in any dimension. A preliminary version of our results appeared in~\cite{TThesis}. A discussion of the formal properties of open WDVV in arbitrary dimension appeared in~\cite{Alcolado}. The real WDVV equations of~\cite{GZ0} can be obtained from open WDVV by setting certain parameters to zero. Recently, the preprint~\cite{ChenZinger} has appeared, which obtains some of our results in the real setting when $n = 3$ by different methods.

\subsection{Background}
\subsubsection{Notation}

Consider a symplectic manifold $(X,\omega)$ of dimension $2n$ and a connected, Lagrangian submanifold $L$ with relative spin structure $\s.$ Let $J$ be an $\omega$-tame almost complex structure on $X.$ Denote by $\mu:H_2(X,L) \to \Z$ the Maslov index. Denote by $A^*(L)$ the ring of differential forms on $L$ with coefficients in $\R$.
Let $\sly$ be a quotient of $H_2(X,L;\Z)$ by a possibly trivial subgroup $S_L$ contained in the kernel of the homomorphism $\omega \oplus \mu : H_2(X,L;\Z) \to \R \oplus \Z.$ Thus the homomorphisms $\omega,\mu,$ descend to $\sly.$ Denote by $\beta_0$ the zero element of $\sly.$ Let
\begin{equation}\label{eq:varpi}
\pr: H_2(X;\Z) \to \sly
\end{equation}
denote the composition of the natural map $H_2(X;\Z) \to H_2(X,L;\Z)$ with the projection $H_2(X,L;\Z) \to \sly.$

\subsubsection{Coefficient rings}
Define Novikov coefficient rings
\begin{gather*}
\L=\left\{\sum_{i=0}^\infty a_iT^{\beta_i}\bigg|a_i\in\R,\beta_i\in \sly,\omega(\beta_i)\ge 0,\; \lim_{i\to \infty}\omega(\beta_i)=\infty\right\},\\
\Lc:= \left\{\sum_{j=0}^\infty a_jT^{\varpi(\beta_j)}\,| \,a_j\in \R, \beta_j\in H_2(X;\Z), \omega(\beta_j)\ge 0,\lim_{j\to \infty}\omega(\beta_j)=\infty\right\}\leqslant \L.
\end{gather*}
Gradings on $\L,\Lc$ are defined by declaring $T^\beta$ to be of degree $\mu(\beta).$ Define ideals $\L^+ \triangleleft \L$ and $\Lc^+ \triangleleft \Lc$ by
\[
\L^+=\left\{\sum_{i=0}^\infty a_iT^{\beta_i} \in \L \bigg|\;\omega(\beta_i)> 0 \quad\forall i\right\}, \qquad \Lc^+ = \Lc\cap\L^+ .
\]

For a graded real vector space $V$, let $\R[[V]]$ denote the ring of formal functions on the completion of $V$ at the origin and let $m_V \subset \R[[V]]$ denote the unique maximal ideal. More explicitly, let $\{v_i\}_{i \in I_V}$ be a homogeneous basis of $V,$ let $\{v_i^*\}_{i \in I_V}$ be the dual basis of $V^*$, and let $t_i$ be a formal variable of degree $-|v_i|.$ We will often identify $\R[[t_i]]_{i \in I_V} \overset{\sim}\to \R[[V]]$ by the natural isomorphism taking $t_i$ to $v_i^*.$ Under this isomorphism, the ideal $\langle t_i\rangle_{i \in I_V}$ is identified with the ideal $m_V.$ Since each tangent space of $V$ is canonically isomorphic to $V,$ the $\R[[V]]$ module of formal vector fields on $V$ is canonically isomorphic to $V \otimes \R[[V]]$. Each formal vector field $v \in V \otimes\R[[V]]$ gives rise to a derivation $\partial_v : \R[[V]] \to \R[[V]].$  In coordinates, if $v = \sum_i f_i v_i$ with $f_i \in \R[[V]]$, then $\partial_v = \sum_i f_i \d_i.$ For $l \in \Z,$ let $V[l]$ denote the shift of $V$ by $l.$ That is, $V[l]$ is the graded vector space with $V[l]^i = V^{i+l}.$ Let $S$ be another graded real vector space. Write
\begin{gather*}
R_V:=\L \otimes \R[[V[2]\oplus S[1]]],\\
\Qh_V:=\Lc \otimes \R[[V[2]]] \leqslant R_V.
\end{gather*}
The vector space $V$ will be used to parameterize deformations associated with marked points in the interior of a Riemann surface while $S$ will be used to parameterize deformations associated with marked points on the boundary of a Riemann surface. So, the grading of $V$ is shifted by the real dimension of a Riemann surface and the grading of $S$ is shifted by the dimension of the boundary.
Define ideals $\mJ_V \triangleleft R_V$
and $\mI_V \triangleleft Q_V$ by
\[
\mJ_V = \L^+R_V + m_VR_V + m_S R_V,\qquad \mI_V = \Lc^+Q_V + m_V Q_V.
\]
We may drop the subscript $V$ from the notations $Q_V, R_V, \mI_V,\mJ_V,$ in statements that hold for all choices of $V.$
Denote by $\Gamma_V \in V \otimes\Qh_V$ the vector field corresponding to the parity operator $\mathcal P:V \to V$ given by $\mathcal P(v) = (-1)^{\deg v}v.$ That is,
\[
\Gamma_V = \sum_{i \in I_V} (-1)^{|v_i|}v_i \otimes v_i^* = \sum_{i \in I_V} t_i v_i.
\]
Note that $|\Gamma_V| = 2.$

Below, we assume that each index set $I_V$ for a basis of a vector space $V$ is endowed with an order, and we implicitly use this order in every graded commutative product over $i \in I_V.$ We denote by $\rI_V$ the same set with the order reversed.
We reserve the formal variables $\{s_i\}_{i \in I_S}$ for coordinate functions on the vector space $S$. We abbreviate $s^k = \prod_{i \in I_S} s_i^{k_i}$ and $k! = \prod_{i \in I_S}k_i!.$

\subsubsection{Moduli spaces}
Let $\M_{k+1,l}(\beta)$ be the moduli space of genus zero $J$-holomorphic open stable maps $u:(\Sigma,\d \Sigma) \to (X,L)$ of degree $[u_*([\Sigma,\d \Sigma])] = \beta \in \sly$ with one boundary component, $k+1$ boundary marked points, and $l$ interior marked points. The boundary points are labeled according to their cyclic order. The space $\M_{k+1,l}(\beta)$ carries evaluation maps associated to boundary marked points $evb_j^\beta:\M_{k+1,l}(\beta)\to L$, $j=0,\ldots,k$, and evaluation maps associated to interior marked points $evi_j^\beta:\M_{k+1,l}(\beta)\to X$, $j=1,\ldots,l$.

Let $\M_{l+1}(\beta)$ be the moduli space of genus zero $J$-holomorphic stable maps $u: \Sigma \to X$ of degree $u_*([\Sigma]) = \beta\in H_2(X;\Z)$ with $l+1$ marked points. The space $\M_{l+1}(\beta)$ carries evaluations maps $ev_j^\beta:\M_{l+1}(\beta)\to X$, $j = 0,\ldots,l.$

We assume that all $J$-holomorphic genus zero open stable maps with one boundary component are regular, the moduli spaces $\M_{k+1,l}(\beta;J)$ are smooth orbifolds with corners, and the evaluation maps $evb_0^\beta$ are proper submersions. Furthermore, we assume that all the moduli spaces $\M_{l+1}(\beta)$ are smooth orbifolds and $ev_0$ is a submersion.
Examples include $(\P^n,\RP^n)$ with the standard symplectic and complex structures or, more generally, flag varieties, Grassmannians, and products thereof. See Example 1.5 and Remark 1.6 in~\cite{ST1}.
Throughout the paper we fix a connected component $\mathcal{J}$ of the space of $\omega$-tame almost complex structures satisfying our assumptions.  All almost complex structures are taken from $\J.$ The results and arguments of the paper extend to general target manifolds with arbitrary $\omega$-tame almost complex structures if we use the virtual fundamental class techniques of~\cite{Fukaya,Fukaya2,FOOOtoricI,FOOOtoricII,FOOO1}. Alternatively, it should be possible to use the polyfold theory of~\cite{HoferWysockiZehnder,HoferWysockiZehnder1,HoferWysockiZehnder2,HoferWysockiZehnder3,LiWehrheim}.
See Section~\ref{sssec:reg} for a detailed discussion on regularity assumptions.

\subsubsection{Operations}
We encode the geometry of the moduli spaces of open stable maps in operations
\[
\qkl:A^*(L;\L)^{\otimes k}\otimes A^*(X)^{\otimes l}\lrarr A^*(L;\L)
\]
defined by
\begin{multline*}
\qkl(\alpha_1,\ldots,\alpha_k;\eta_1,\ldots,\eta_l):=\\
=\delta_{k,1}\cdot\delta_{l,0}\cdot d\alpha_1+(-1)^{\sum_{j=1}^kj(|\alpha_j|+1)+1}
\sum_{\beta\in\sly} T^{\beta}{evb_0^\beta}_* (\bigwedge_{j=1}^k (evb_j^\beta)^*\alpha_j\wedge \bigwedge_{j=1}^l(evi_j^\beta)^*\eta_j).
\end{multline*}
The push-forward $(evb_0^\beta)_*$ is defined by integration over the fiber; it is well-defined because $evb_0^\beta$ is a proper submersion.
Intuitively, the $\eta_i$ should be thought of as interior constraints, while $\alpha_j$ are boundary constraints. Then the output is a cochain on $L$ that is ``Poincar\'e dual'' to the image of the boundaries of disks that satisfy the given constraints.

We define similar operations using moduli spaces of stable maps,
\[
\q_{\emptyset,l}:A^*(X;\Lc)^{\otimes l}\lrarr A^*(X;\Lc),
\]
as follows. Recall that the relative spin structure $\s$ on $L$ determines a class $w_{\s} \in H^2(X;\Z/2\Z)$ such that $w_2(TL) = i^* w_{\s}$. By abuse of notation we think of $w_{\s}$ as acting on $H_2(X;\Z)$. Set
\begin{equation}\label{eq:qemptyset}
\q_{\emptyset,l}(\eta_1,\ldots,\eta_l):=
\sum_{\beta\in H_2(X;\Z)}
(-1)^{w_\s(\beta)}
T^{\varpi(\beta)}{ev_0^\beta}_* (\bigwedge_{j=1}^l(ev_j^\beta)^*\eta_j).
\end{equation}
The sign $(-1)^{w_\s(\beta)}$ is designed to balance out the sign of gluing spheres as explained in~\cite[Lemma 2.12]{ST1}.
Below, we use the same notation for the linear extensions of these operations to spaces of differential forms with larger coefficient rings.

\subsubsection{Bounding chains}
Consider the subcomplex of differential forms on $X$ consisting of those with trivial integral on $L$,
\[
\Ah^*(X,L):=\left\{\eta\in A^*(X)\;\bigg|\,\int_L\eta=0\right\}.
\]
For an $\R$-algebra $\Upsilon$, write
\[
 \Ah(X,L;\Upsilon) = \Ah(X,L)\otimes\Upsilon, \qquad \Hh^*(X,L;\Upsilon):=H^*(\Ah(X,L;\Upsilon),d).
\]

\begin{dfn}\label{dfn:bdpair}
A pair $(\gamma,b)\in \mI_V A^*(X;\Qh_V)\oplus \mJ_V A^*(L;R_V)$ is called a \textbf{bounding pair} if
$d\gamma=0$, $|\gamma|=2,$
$|b|=1,$ and
there exists $c\in \mJ_V$
such that $|c|=2$ and the Maurer-Cartan equation holds,
\begin{equation}\label{eq:bdchdfn}
\sum_{k,l\ge 0} \frac{1}{l!}\qkl(b^{\otimes k};\gamma^{\otimes l})
= c\cdot 1.
\end{equation}
In this case, we call $b$ a \textbf{bounding chain.}
Let $W \subset \Hh^*(X,L;\R)$ be a graded vector subspace. A \textbf{bounding pair over $W$} is a bounding pair $(\gamma,b) \in \mI_W\Ah^*(X,L;\Qh_W )\oplus \mJ_{W}A^*(L;R_W)$ with $[\gamma] = \Gamma_W.$
We say a bounding chain $b$ is \textbf{separated} if
\[
\int_L b\in \Lambda\otimes \R[[S[1]]] \subset R_W.
\]
A bounding chain is \textbf{point-like} if the vector space $S$ is one-dimensional and $\int_L b = s,$ where $s$ is the single coordinate on $S.$
\end{dfn}
The definition of a bounding chain is due to Fukaya-Oh-Ohta-Ono~\cite{FOOO}. The notion of a point-like bounding chain is due to~\cite{ST2}. In Remark~\ref{rem:plbc} we explain why generically, all open Gromov-Witten invariants can be obtained from point-like bounding chains.

\subsubsection{Gromov-Witten potential}\label{sssec:gwp}
Define a bilinear form on $A^*(X)$ by
\[
\langle\eta,\zeta\rangle_X:=(-1)^{|\zeta|} \int_X \eta\wedge\zeta.
\]
The pairing $\langle\cdot,\cdot\rangle_X$ descends to the Poincar\'e pairing on cohomology, for which we use the same notation. Let $U \subset H^*(X;\R)$ be a linear subspace, and let $\gamma_U \in \mI_U A^*(X;\Qh_U)$ satisfy $d\gamma_U = 0$ and $[\gamma_U] = \Gamma_U.$

Consider the formal function
\begin{equation}\label{eq:Phi}
\Phi_U
=
\sum_{l\ge 0} \frac{1}{(l+1)!}\langle\q_{\emptyset,l}(\gamma_U^{\otimes l}),\gamma_U\rangle_X \in \Qh_U.
\end{equation}
Writing $\gamma_U = \sum_{i \in I_U} t_i\gamma_i,$ we have
\[
\Phi_U = \Phi_U(\{t_i\}_{i \in I_U})
=
\sum_{\substack{\beta\in H_2(X;\Z)\\ r_i\ge 0}}
(-1)^{w_\s(\beta)}\frac{T^{\varpi(\beta)}\prod_{i \in \rI_U}t_i^{r_i}}{\prod_{i \in I_U}r_i!} \GW_{\beta}(\otimes_{i \in I_U}[\gamma_i]^{\otimes r_i}),
\]
where $\GW_\beta(\eta_1,\ldots,\eta_N)$ denotes the closed Gromov-Witten invariant.
The sign $(-1)^{w_\s(\beta)}$ compensates for the sign in equation~\eqref{eq:qemptyset}.
The gradient of $\Phi_U$ with respect to  $\langle\cdot,\cdot\rangle_X$ is given by
\begin{equation}\label{eq:gradphi}
\nabla\Phi_U(\gamma) :=
\sum_{l\ge 0}\frac{1}{l!}[\q_{\emptyset,l}(\gamma_U^{\otimes l})]
\in H^*(X;\Qh_U).
\end{equation}
It is well-known that $\q_{\emptyset,l}(\gamma_U^{\otimes l})$ is closed and that $\Phi_U$ only depends on the cohomology class of~$\gamma_U$.

\subsubsection{Quantum cohomology}\label{sssec:qprod}
The big quantum product
\[
\star_U : H^*(X;Q_U) \otimes H^*(X;Q_U) \to H^*(X;Q_U)
\]
is given by
\[
[\zeta] \star_U [\eta] \to \sum_{l\ge 0}\frac{1}{l!}[\q_{\emptyset,l+2}(\zeta,\eta,\gamma_U^{\otimes l})]
\]
It is well known to be commutative and associative. See Remark~\ref{rem:clst}.
Moreover, the Poincar\'e pairing makes $(H^*(X;Q_U),\star_U)$ a Frobenius algebra,
\[
\langle\eta \star_U \xi,\zeta\rangle_X = \langle\eta,\xi \star_U \zeta\rangle_X.
\]
We denote this Frobenius algebra by $QH_U(X)$.

\subsection{Results}
\subsubsection{Relative potential}\label{sssec:psi}
The usual superpotential of a Lagrangian submanifold $L\subset X$ does not give invariant counts of $J$-holomorphic disks in $X$ with boundary contractible in $L$ and no boundary constraints. The lack of invariance stems from the possibility of the boundary of such disks collapsing to a point in a $1$ parameter family. In order to formulate the open WDVV equations, we define an enhanced superpotential
that gives invariant counts of all types of $J$-holomorphic disks in $X$ with boundary in $L.$ Invariance is achieved by including certain contributions from $J$-holomorphic spheres that cancel the boundary collapse phenomenon. We begin by defining a relative potential that counts both $J$-holomorphic disks and $J$-holomorphic spheres. The natural home for the relative potential is the following cone complex. Let $W$ be a graded real vector space and consider the map of complexes of $Q_W$ modules
\begin{gather*}
\i:A^*(X;\Qh_W)\lrarr R_W[-n],\\
\eta\mapsto (-1)^{n+|\eta|}\int_Li^*\eta,
\end{gather*}
where $R_W[-n]$ is equipped with the trivial differential. The \textbf{cone} $\cone$ is the complex with underlying graded $Q_W$ module $A^*(X;\Qh_W) \oplus R_W[-n-1]$
and differential
\[
\dcone(\eta,\xi):=(d\eta, \i(\eta)-d\xi)=(d\eta,\i(\eta)).
\]

Let $(\gamma,b) \in \mI_W\Ah^*(X,L;\Qh_W )\oplus \mJ_{W}A^*(L;R_W)$ be a bounding pair and define $\psi(\gamma,b) \in \cone$ by
\begin{equation}\label{eq:psidfn}
\psi(\gamma,b):=
\Big(\sum_{l\ge 0}\frac{1}{l!}\q_{\emptyset,l}(\gamma^{\otimes l}), (-1)^{n+1}\big(\sum_{k,l\ge 0}\frac{1}{l!(k+1)}\langle\qkl(b^{\otimes k};\gamma^{\otimes l}),b\rangle
+\sum_{l\ge 0}\frac{1}{l!}\q_{-1,l}(\gamma^{\otimes l})\big)\Big).
\end{equation}
In Section~\ref{ssec:psi}, we show that $\dcone\psi(\gamma,b) = 0$. Thus, we define
\textbf{the relative potential}
$\Psi(\gamma,b)$ to be the cohomology class of $\psi(\gamma,b).$
Definition~\ref{dfn:gequiv} gives a notion of gauge-equivalence between a bounding pair $(\gamma,b)$ with respect to $J$ and a bounding pair $(\gamma',b')$ with respect to $J'$.
We prove the following.
\begin{thm}\label{thm:psiinv}
If $(\gamma,b)$ is gauge equivalent to $(\gamma',b')$, then $\Psi(\gamma,b)=\Psi(\gamma',b')$.
\end{thm}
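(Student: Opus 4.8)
The plan is to reduce the invariance of $\Psi$ to the invariance properties of its two components and, more fundamentally, to the pseudo-isotopy machinery that presumably underlies the definition of gauge equivalence in Definition~\ref{dfn:gequiv}. Concretely, a gauge equivalence between $(\gamma,b)$ relative to $J$ and $(\gamma',b')$ relative to $J'$ should be given by a path of almost complex structures $\{J_t\}_{t\in\I}$ together with a bounding pair $(\tilde\gamma,\tilde b)$ for the associated family of $A_\infty$ structures on $X\times\I$, restricting to the given data at the endpoints. The key point is that the operations $\q_{k,l}$, $\q_{-1,l}$, and $\q_{\emptyset,l}$ all fit into such a pseudo-isotopy, and the relative potential $\psi$ is assembled from exactly these operations together with the pairings $\langle\cdot,\cdot\rangle$ and $\langle\cdot,\cdot\rangle_X$, which are themselves compatible with the cone differential $d_C$.

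First I would recall the structure of the pseudo-isotopy: there are operations $\qt_{k,l}$ on $A^*(\Lt;R)$ (where $\Lt = L\times\I$) satisfying $A_\infty$ relations, and correspondingly $\qt_{\emptyset,l}$, together with the property that pulling back by the two endpoint inclusions recovers $\q_{k,l}$ for $J$ and $J'$. Given the gauge equivalence data $(\tilde\gamma,\tilde b)$, one forms $\tilde\psi(\tilde\gamma,\tilde b)\in\coneq$ in the evident family version of the cone complex, and the computation that $\dcone\psi = 0$ (carried out in Section~\ref{ssec:psi}) applies verbatim in the family to give $d_{\coneq}\tilde\psi = 0$. Then I would show that the difference $\psi(\gamma,b) - \psi(\gamma',b')$, obtained by restricting $\tilde\psi$ to the two endpoints, equals $d_C$ of the class obtained by integrating $\tilde\psi$ along the $\I$ direction — this is the standard Stokes-type argument: $\int_{\d\I} = \int_\I d$, and since $d_{\coneq}\tilde\psi = 0$, the $\I$-derivative part of $d_{\coneq}$ acting on $\tilde\psi$ equals minus the $X$-differential part, so pushing forward along $\I$ produces a primitive for the endpoint difference. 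Hence $[\psi(\gamma,b)] = [\psi(\gamma',b')]$ in $H^*(\cone)$, which is the assertion.

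One subtlety to handle carefully is that the definition of gauge equivalence may allow the pair to vary not only through a homotopy of $A_\infty$ structures but also through a change of the bounding chain within a fixed structure (the ``gauge'' part proper), so I would need to check that both moves preserve $\Psi$. For the change-of-bounding-chain move, the relevant input is that $\psi(\gamma,b)$ depends on $b$ only through the gauge-equivalence class; this in turn follows from the same family argument applied to a constant path of almost complex structures together with the homotopy realizing the gauge equivalence of bounding chains, using the Maurer-Cartan equation to cancel the terms that would otherwise obstruct closedness. The two cases can likely be unified by taking the pseudo-isotopy over $\I$ to encode both the path $J_t$ and the interpolating bounding pair simultaneously, as is presumably done in Definition~\ref{dfn:gequiv}.

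The main obstacle I anticipate is bookkeeping rather than conceptual: one must verify that the family operations $\qt_{k,l}$, $\qt_{-1,l}$, $\qt_{\emptyset,l}$ interact correctly with the family cone differential — in particular that the boundary-collapse term $\q_{-1,l}$ and its family analog transform with the right signs so that the combination appearing in \eqref{eq:psidfn} remains closed, and that the pairing terms $\frac{1}{l!(k+1)}\langle\qkl(b^{\otimes k};\gamma^{\otimes l}),b\rangle$ produce the expected divergence when differentiated along $\I$. Getting the signs consistent across the $(-1)^{n+|\eta|}$ in the map $\i$, the $(-1)^{|\zeta|}$ in $\langle\cdot,\cdot\rangle_X$, and the cyclic-symmetry signs of the $\qt_{k,l}$ is where the real care is needed; everything else is a direct transcription of the closedness computation of Section~\ref{ssec:psi} into the pseudo-isotopy setting.
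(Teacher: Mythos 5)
Your approach is essentially the paper's. You correctly identify that gauge equivalence per Definition~\ref{dfn:gequiv} supplies a pseudo-isotopy $(\gt,\bt)$, that the family operations $\qt_{\emptyset,0}$ and $\qt_{-1,0}^{\bt,\gt}$ are the right objects, that closedness of the family relative potential follows from the structure equation Proposition~\ref{cl:qt_-1} combined with the Maurer-Cartan equation (the $\ll\cdot,\cdot\gg$ term drops because $\qt^{\bt,\gt}_{0,0} = \ct\cdot 1$ and $(p_I)_*$ kills a degree-$0$ form), and that pushing forward along $I$ via Stokes produces a primitive for the endpoint difference. The paper carries this out component by component rather than introducing an explicit family cone complex, and finds that the primitive is simply $\bigl(-(p_X)_*\qt^{\gt}_{\emptyset,0},\,0\bigr)$ — the second component is zero, since $pt_*$ of $\qt_{-1,0}^{\bt,\gt}$ does not contribute to the primitive and the second component of the endpoint difference is handled entirely by $\i\bigl((p_X)_*\qt^{\gt}_{\emptyset,0}\bigr)$ via Lemma~\ref{lm:fibprodsgn}; your phrasing of ``integrating $\tilde\psi$ along $I$'' obscures this slightly but is reconcilable with careful degree bookkeeping, which you anticipate. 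Also, no separate ``change-of-bounding-chain move'' needs to be treated: Definition~\ref{dfn:gequiv} already packages the path of almost complex structures and the interpolating bounding pair into a single pseudo-isotopy, as you note at the end.
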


\subsubsection{The relative potential and the closed potential}\label{sssec:intropsi}
Let
\begin{equation}\label{eq:rho}
\rho : \Hh^*(X,L;\R) \to H^*(X;\R)
\end{equation}
denote the natural map. Let
\[
U \subset H^*(X;\R), \qquad W = \rho^{-1}(U) \subset \Hh^*(X,L;\R),
\]
and suppose $(\gamma_W,b)$ is a bounding pair over $W.$ Denote by $\rho^* : Q_U \to Q_W$ the induced map of rings and let $\pi : H^*(\cone) \to H^*(X,\Qh_{W})$ be the natural map. We show in Lemma~\ref{lm:phipsi} that
\[
\pi(\Psi(\gamma_W,b)) = \rho^*(\nabla\Phi_U).
\]
That is, the relative potential $\Psi(\gamma_W,b)$ lifts the gradient of the closed Gromov-Witten potential $\nabla\Phi_U$ to the cohomology of the cone complex $H^*(\cone).$

\subsubsection{Enhanced superpotential}\label{sssec:esp}
From the long exact sequence of the cone,
\[
\xymatrix{
H^*(\cone) \ar[rr]^\pi && H^*(X;\Qh_W) \ar[dl]_{[1]}^{\i} \\
 & R_W[-n-1] \ar[ul]^x,
}
\]
we obtain an exact sequence,
\begin{equation}\label{eq:xbar}
0 \lrarr \Coker \i \overset{\bar x}{\lrarr} H^*(\cone) \overset{\pi}{\lrarr} H^*(X,\Qh_{W}).
\end{equation}
If $[L] = 0 \in H_n(X;\R),$ then $\Coker \i \simeq R_W[-n-1].$  Otherwise, $\Coker \i \simeq (R_W/Q_W)[-n-1].$ We choose
\[
P :  H^*(\cone) \lrarr \Coker \i,
\]
a left inverse to the map $\bar x$ from the exact sequence~\eqref{eq:xbar} satisfying natural conditions detailed in Section~\ref{ssec:omegabar}. The choice of $P$ is equivalent to the choice of a left inverse
\begin{equation}\label{eq:iPR}
P_\R : \Hh^*(X,L;\R) \to \Coker \i_\R
\end{equation}
to the map $\bar y : \Coker \i_\R \to \Hh^*(X,L;\R)$ induced by the map $y$ from the long exact sequence
\begin{equation}\label{eq:yrho}
\xymatrix{
\Hh^*(X,L;\R) \ar[rr]^\rho && H^*(X;\R) \ar[dl]^{\i_\R} \\
 & \R[-n] \ar[ul]_{[1]}^y.
}
\end{equation}
If $[L] \neq 0,$ then $\Coker \i_\R = 0,$ so $P$ is unique. A geometric interpretation of $P$ is given in Remark~\ref{rem:Pgeom}.

Define the \textbf{enhanced superpotential} $\Ob(\gamma,b) \in \Coker\i$ by
\begin{equation}\label{eq:obdef}
\Ob(\gamma,b) := P\Psi(\gamma,b).
\end{equation}
Define the \textbf{superpotential} $\Omega(\gamma,b) \in R_W$ as follows. Let $D : R_W \to Q_W$ denote the unique $Q_W$ module homomorphism such that $D|_{Q_W} = \Id,$ $D(s^k) = 0$ for $k \in \Z_{\ge 1}$ and $D(T^\beta) = 0$ for $\beta \notin \im \varpi.$ Let $q : R_W \to R_W/Q_W$ denote the quotient map, and let $\bar q : \Coker \i \to R_W/Q_W$ be the induced map. Then $\Omega \in R_W$ is the unique element such that $q(\Omega) = \bar q(\Ob)$ and $D(\Omega) = 0.$ The following is immediate from Theorem~\ref{thm:psiinv}.
\begin{cor}
The enhanced superpotential $\Ob$ and the superpotential $\Omega$ are invariant under gauge equivalence of bounding pairs.
\end{cor}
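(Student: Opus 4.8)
The plan is to derive both invariance statements as formal consequences of Theorem~\ref{thm:psiinv}, exploiting the fact that every map used to pass from the relative potential $\Psi(\gamma,b)$ to $\Ob(\gamma,b)$ and then to $\Omega(\gamma,b)$ is fixed once and for all, independently of the bounding pair.

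First I would recall that $\Ob(\gamma,b) := P\Psi(\gamma,b)$, where $P : H^*(\cone) \to \Coker\i$ is a left inverse to $\bar x$ chosen in advance, subject to the conditions of Section~\ref{ssec:omegabar}, and in particular not depending on $(\gamma,b)$. Hence, if $(\gamma,b)$ is gauge equivalent to $(\gamma',b')$, Theorem~\ref{thm:psiinv} gives $\Psi(\gamma,b) = \Psi(\gamma',b')$, and applying the fixed map $P$ to both sides yields $\Ob(\gamma,b) = \Ob(\gamma',b')$.

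For the superpotential I would then invoke the defining property of $\Omega(\gamma,b)$: it is the unique element of $R_W$ with $q(\Omega(\gamma,b)) = \bar q(\Ob(\gamma,b))$ and $D(\Omega(\gamma,b)) = 0$, the maps $q$, $\bar q$, $D$ again being fixed independently of the bounding pair. By the previous paragraph $\Ob(\gamma,b) = \Ob(\gamma',b')$, so $\Omega(\gamma,b)$ and $\Omega(\gamma',b')$ satisfy identical characterizing conditions, and uniqueness forces $\Omega(\gamma,b) = \Omega(\gamma',b')$. There is essentially no obstacle here: the content of the corollary is entirely carried by Theorem~\ref{thm:psiinv}, and the remaining step is merely the observation that $P$, $q$, $\bar q$, $D$ are bounding-pair-independent. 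The one point that must already be in hand is the well-definedness of $\Omega$, i.e. that $q$ restricts to an isomorphism from $\Ker D$ onto $R_W/Q_W$; this is immediate from the explicit description of $D$ as the $Q_W$-linear retraction killing $s^k$ for $k \geq 1$ and $T^\beta$ for $\beta \notin \im\varpi$, together with $\Ker q = Q_W$.
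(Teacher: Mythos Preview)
Your proposal is correct and takes essentially the same approach as the paper: the paper simply states that the corollary is immediate from Theorem~\ref{thm:psiinv}, and your argument spells out precisely why, namely that the maps $P$, $q$, $\bar q$, $D$ used to pass from $\Psi$ to $\Ob$ and then to $\Omega$ are fixed independently of the bounding pair.
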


Assume now that $(\gamma,b) = (\gamma_W,b)$ is a bounding pair over $W \subset \Hh^*(X,L)$. We define the associated open Gromov-Witten invariants
\[
\ogwb_{\beta,k} : W^{\otimes l} \to \R, \qquad l,k \in \Z_{\ge 0}, \quad \beta \in \sly,
\]
to be the coefficients of the series expansion of $\Ob(\gamma_W,b).$ More explicitly, write $\gamma_W = \sum_{i \in I_W} t_i \gamma_i.$ Then, the invariants $\ogwb$ are defined by the equation in $\Coker \i,$
\begin{equation}\label{eq:Ob}
\Ob(\gamma_W,b) = \Ob(s,\{t_i\}_{i \in I_W})=\sum_{\substack{\beta\in\sly\\k\ge 0\\r_i\ge 0}}\frac{T^\beta s^k \prod_{i \in \rI_W}t_i^{r_i}}{k! \prod_{i \in I_W}r_i!} \ogwb_{\beta,k}(\otimes_{i \in I_W}[\gamma_i]^{\otimes r_i}).
\end{equation}
Thus, if $[L] \neq 0 \in H_n(X;\R),$ the invariants $\ogwb_{\beta,k}(\gamma_{i_1},\ldots,\gamma_{i_l})$
are undefined when $k = 0$ and $\beta\in \im \varpi.$ Indeed, $\Ob(\gamma_W,b) \in \Coker \i \simeq (R_W/Q_W)[-n-1],$ and $T^\beta \prod_{i \in \rI_W}t_i^{r_i} \in Q_W$ when $\beta \in \im\varpi.$

Define open Gromov-Witten invariants $\ogw_{\beta,k} : W^{\otimes l} \to \R$ for $l,k \in \Z_{\geq 0}, \beta \in \sly,$ by
\begin{equation}\label{eq:ogw}
\ogw_{\beta,k}(\eta_1,\ldots,\eta_l) =
\begin{cases}
0, & k = 0 \text{ and } \beta \in \im\varpi, \\
\ogwb_{\beta,k}(\eta_1,\ldots,\eta_l), & \text{otherwise.}
\end{cases}
\end{equation}
So, equation~\eqref{eq:Ob} holds in $R_W$ with $\Ob$ replaced by $\Omega$ and $\ogwb$ replaced by $\ogw.$ Lemma~\ref{lm:OP} shows that $\Omega$ and therefore the invariants $\ogw$ are independent of the choice of $P.$ Thus, the choice of $P$ only influences the invariants $\ogwb_{\beta,k}$ for $k = 0$ and $\beta \in \Im \varpi.$
\begin{rem}
Lemma~\ref{lm:spc} shows the superpotential $\Omega$ coincides with superpotential defined in \cite{ST2}.
For $(\gamma,b)$ a bounding pair of the type considered in~\cite{ST2}, the open Gromov-Witten invariants $\ogw$ defined here coincide with those defined there.
\end{rem}

\subsubsection{Vanishing theorem}
In order to formulate the open WDVV equations, we need the following property of the element $c \in R_W$ associated to a bounding pair $(\gamma,b) \in \mI_W\Ah^*(X,L;\Qh_W )\oplus \mJ_{W}A^*(L;R_W)$ by the Maurer-Cartan equation~\eqref{eq:bdchdfn}.
Let $PD([L])\in H^n(X;\R)$ denote the Poincar\'e dual to the fundamental class $[L] \in H_n(X;\R).$
\begin{thm}\label{prop:formforc}
Suppose $[L]\ne 0$ and let $\eta\in H^*(X;\R)$ such that $\int_L\eta=1$. Then
\[
c= \sum_{\substack{\beta \in H_2(X,\Z) \\ l\ge 0}}(-1)^{n+1+w_\s(\beta)}\frac{T^{\varpi(\beta)}}{l!} \GW_\beta(\eta, PD([L]), [\gamma]^{\otimes l}).
\]
In particular, $c\in \Qh_W$.
\end{thm}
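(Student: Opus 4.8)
The plan is to extract $c$ from the Maurer--Cartan equation~\eqref{eq:bdchdfn} by pairing it with a point class on $L$, and then to match the resulting disk integrals with the closed invariants. Since $[L]\neq 0$, the map $H^n(X;\R)\to\R$, $[\eta]\mapsto\int_L i^*\eta$, is surjective, so a closed form $\eta$ with $\int_L i^*\eta=1$ exists, and $i^*\eta$ represents $PD_L(\mathrm{pt})\in H^n(L;\R)$; moreover, the right-hand side of the asserted formula depends on $\eta$ only through the condition $\int_L\eta=1$, since altering $\eta$ by a class that restricts trivially to $L$ does not change $\GW_{\beta'}(\eta,PD([L]),[\gamma]^{\otimes l})$. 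Wedging~\eqref{eq:bdchdfn} with $i^*\eta$, integrating over $L$, and using $\int_L(c\cdot 1)\wedge i^*\eta=c$ gives
\[
c=\sum_{k,l\geq 0}\frac{1}{l!}\int_L\qkl(b^{\otimes k};\gamma^{\otimes l})\wedge i^*\eta.
\]
The term carrying the $\delta_{k,1}\delta_{l,0}\,d\alpha_1$ contribution equals $\int_L db\wedge i^*\eta=\int_L d(b\wedge i^*\eta)=0$ by Stokes, since $\eta$ and $L$ are closed. For the remaining terms, unwinding the definition of $\qkl$ and applying the projection formula to the proper submersion $evb_0^\beta$ turns each term into
\[
\pm\sum_{\beta\in\sly}T^\beta\int_{\M_{k+1,l}(\beta)}\Big(\bigwedge_{j=1}^k(evb_j^\beta)^*b\Big)\wedge\Big(\bigwedge_{j=1}^l(evi_j^\beta)^*\gamma\Big)\wedge(evb_0^\beta)^*i^*\eta,
\]
with an explicit sign depending on $k,l$ and the degrees.

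Thus $c$ is a signed sum, over $\beta\in\sly$ and over the numbers of boundary and interior inputs, of integrals over moduli of $J$-holomorphic disks one of whose boundary marked points is pinned to a point of $L$. The heart of the proof is to show this collapses to $\sum_{\beta'\in H_2(X;\Z),\,l\geq 0}(-1)^{n+1+w_\s(\beta')}\frac{T^{\varpi(\beta')}}{l!}\GW_{\beta'}(\eta,PD([L]),[\gamma]^{\otimes l})$. The mechanism I expect: pinning a boundary marked point to a single point of $L$ is a codimension-$n$ condition, and together with the Maurer--Cartan equation for $b$ and the closedness of $\gamma$ it cancels every contribution involving a nontrivial disk bubble or an extra $b$-constrained boundary marked point; what survives is controlled by $J$-holomorphic spheres constrained to meet $L$, i.e.\ to pass through the cycle $PD([L])$, and the passage from the disk side to $\GW_{\beta'}(\eta,PD([L]),[\gamma]^{\otimes l})$ proceeds through the same comparison of gluing orientations that produces the sign $(-1)^{n+1+w_\s(\beta')}$. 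I would establish this either by a direct analysis of Gromov limits in the relevant one-parameter families of constrained disks, discarding all boundary strata by means of~\eqref{eq:bdchdfn} together with the unit property of the Fukaya $A_\infty$ algebra of $L$, or --- more economically --- by reading off, in the case $[L]\neq 0$ and in the appropriate degree, the chain-map identity for the tensor potential $\memh$ proved in Theorem~\ref{lm:assoc}, which encodes precisely this degeneration.

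Given that identification, the final assertion is immediate: each term carries $T^{\varpi(\beta')}$ with $\beta'\in H_2(X;\Z)$, hence lies in $\Lc$, the Gromov--Witten invariants are real numbers, and $\gamma$ has coefficients in $\Qh_W$; since $\Qh_W=\Lc\otimes\R[[W[2]]]$ is a subring of $R_W$ closed under these operations, $c\in\Qh_W$. The main obstacle is the moduli-theoretic step in the previous paragraph: proving that the Maurer--Cartan equation annihilates all of the genuinely open strata, that the point-like boundary constraint interacts with the unit so as to collapse the surviving configurations onto spheres through $PD([L])$, and that the resulting orientations assemble into the sign $(-1)^{n+1+w_\s(\beta')}$.
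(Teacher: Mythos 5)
Your starting identity (wedge the Maurer--Cartan equation with $i^*\eta$ and integrate over $L$) is a correct rearrangement, but the step you yourself flag as ``the main obstacle'' --- collapsing the resulting sum of disk integrals to closed invariants --- is genuinely missing, and it is the entire content of the theorem. The $A_\infty$ relations for $\q_{k,l}$ with $k\geq 0$ (Proposition~\ref{q_rel}) produce only disk bubbles; the sphere contribution $\int_L i^*\q_{\emptyset,l}(\gamma)$ enters only through the $k=-1$ structure equation (Proposition~\ref{q-1_rel}). So the Gromov-limit analysis you sketch for disks with a point-like boundary constraint does not, on its own, produce spheres; one must route through Proposition~\ref{q-1_rel}. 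You do gesture at the right shortcut --- using the chain-map property of $\memh$ --- but attribute it to Theorem~\ref{lm:assoc} (the flatness relation), whereas the relevant statement is Theorem~\ref{thm:mp}, and concretely its second-component identity, which is Lemma~\ref{lm:ceq}.

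The paper avoids your disk integrals altogether. Lemma~\ref{lm:ceq} (proved from Proposition~\ref{q-1_rel} together with the Maurer--Cartan equation, the unit property, and the top-degree vanishing of Lemma~\ref{no_top_deg}) gives, for closed $\eta$, Corollary~\ref{cor:cinQ}: $c\int_L i^*\eta=-\int_L i^*\qg_{\emptyset,1}(\eta)$. With $\int_L\eta=1$ this yields $c=-\int_L i^*\qg_{\emptyset,1}(\eta)$, and a push-pull along $i:L\hookrightarrow X$ turns this into $(-1)^{n+1}\langle\qg_{\emptyset,1}(\eta),i_*1\rangle_X$, which is the stated sum of Gromov--Witten invariants; the sign $(-1)^{n+1+w_\s(\beta)}$ combines the push-pull sign with the $(-1)^{w_\s(\beta)}$ already built into definition~\eqref{eq:qemptyset}. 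One further small point: your invariance argument addresses only perturbations $\eta\mapsto\eta+\zeta$ with $\zeta|_L=0$, whereas the condition $\int_L\eta=1$ allows any $\zeta$ with $\int_L\zeta=0$. The full invariance is in fact a consequence of Corollary~\ref{cor:cinQ} rather than an a priori input.
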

\begin{cor}
The product $c \,\Ob \in \Coker \i$ is well-defined.
\end{cor}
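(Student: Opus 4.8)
The plan is to observe that $\Coker\i$ is naturally a module over $Q_W$ but \emph{not} over all of $R_W$ when $[L]\ne 0$, so a product $c\,\Ob$ with $c$ a general element of $R_W$ would not descend to the cokernel; the corollary then follows once one knows, from Theorem~\ref{prop:formforc}, that the coefficient $c$ attached to the bounding pair actually lies in the subring $Q_W\leqslant R_W$.

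First I would recall the construction: $\Ob(\gamma,b)=P\Psi(\gamma,b)\in\Coker\i$, where $\Coker\i$ is the cokernel of the map $H^*(X;Q_W)\to R_W[-n-1]$ induced by $\i$ in the long exact sequence of Section~\ref{sssec:esp}. Both source and target are $Q_W$-modules and the map is $Q_W$-linear, since $Q_W\leqslant R_W$; hence $\Coker\i$ inherits a $Q_W$-module structure. Concretely, when $[L]\ne 0$ one has the identification $\Coker\i\simeq(R_W/Q_W)[-n-1]$ recorded there, the relevant point being that $\im\i$ equals $Q_W$ inside $R_W[-n-1]$: one uses a closed form $\eta_0$ on $X$ with $\int_L i^*\eta_0=1$ to see $Q_W\subseteq\im\i$, and the fact that integration over $L$ does not touch the $Q_W$-coefficients to see $\im\i\subseteq Q_W$.

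Next I would split into the two cases. If $[L]=0$, the induced map on cohomology vanishes, so $\Coker\i\simeq R_W[-n-1]$ is an honest $R_W$-module and $c\,\Ob$ is well-defined for any $c\in R_W$, with nothing further to check. If $[L]\ne 0$, the last assertion of Theorem~\ref{prop:formforc} gives $c\in Q_W$; since $\Coker\i$ is a $Q_W$-module — equivalently, since $c\cdot Q_W\subseteq Q_W$ so that multiplication by $c$ preserves $\im\i$ and hence descends to the quotient $\Coker\i=R_W[-n-1]/\im\i$ — the product $c\,\Ob\in\Coker\i$ is well-defined.

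I do not anticipate any real obstacle: the corollary is a formal consequence of Theorem~\ref{prop:formforc}. The only point needing a little care is to make sure the identification $\im\i=Q_W[-n-1]$ is matched with the isomorphism $\Coker\i\simeq(R_W/Q_W)[-n-1]$ of Section~\ref{sssec:esp}, so that ``multiplying by $c\in Q_W$'' is literally the module action on $\Coker\i$ rather than multiplication against some other lift of $Q_W$ into $R_W$.
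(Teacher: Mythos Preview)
Your proposal is correct and matches the paper's approach: the corollary is stated there without proof as an immediate consequence of Theorem~\ref{prop:formforc}, and the point is precisely that $\Coker\i$ is only a $Q_W$-module when $[L]\ne 0$, so one needs $c\in Q_W$. The same reasoning (splitting into the cases $[L]=0$ and $[L]\ne 0$, invoking $c\in Q_W$ for the latter) appears explicitly later in the paper in the proof of Lemma~\ref{lm:memofx}.
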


Theorem~\ref{prop:formforc} has the following consequences for open Gromov-Witten invariants when $[L] \neq 0.$ See Lemma~\ref{lm:dsomega}. Recall the map $\rho$ from~\eqref{eq:rho}.

\begin{cor}\label{cor:van}
Suppose that $[L] \neq 0$
and $(\gamma_W,b)$ is a bounding pair over $W \subset \Hh^*(X,L)$ with $b$ point-like. Let $\eta_1,\ldots,\eta_l \in W.$ If $k\ge 2$, or if $k\ge 1$ and $\beta \notin \Im \varpi$, then
\[
\ogwb_{\beta,k}(\eta_1,\ldots,\eta_l)=0.
\]
Moreover, for $\beta\in \Im(\varpi)$ and $\eta \in H^*(X;\R)$ such that $\int_L \eta = 1,$
\[
\ogwb_{\beta,1} (\eta_1,\ldots\eta_l)
=
\sum_{\substack{\hat\beta \in H_2(X;\Z)\\  \varpi(\hat\beta)=\beta}}
(-1)^{n+1+w_\s(\hat\beta)}
\GW_{\hat\beta}(\eta,PD([L]),\rho(\eta_1), \ldots,\rho(\eta_l)).
\]
\end{cor}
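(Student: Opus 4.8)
The plan is to deduce the corollary from Theorem~\ref{prop:formforc} together with a formula for the derivative of the superpotential in the point-like variable $s$, which is the content of Lemma~\ref{lm:dsomega}; I sketch that formula first. Write $\psi(\gamma_W,b)=(\psi_1,\psi_2)$ for the two components of the cochain in~\eqref{eq:psidfn}. Since $\gamma_W$ has coefficients in $Q_W$, which involves no variable from $S$, the component $\psi_1=\sum_l\frac1{l!}\q_{\emptyset,l}(\gamma_W^{\otimes l})$ and the summand $\sum_l\frac1{l!}\q_{-1,l}(\gamma_W^{\otimes l})$ of $\psi_2$ are independent of $s$; only the cyclic-potential summand $\vartheta:=\sum_{k,l\ge 0}\frac1{l!(k+1)}\langle\qkl(b^{\otimes k};\gamma_W^{\otimes l}),b\rangle$ of $\psi_2$ depends on $s$. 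Differentiating $\vartheta$ by the Leibniz rule and using the cyclic symmetry of the operators $\qkl$ from~\cite{ST1} --- with the interior inputs $\gamma_W^{\otimes l}$ acting as spectators to the cyclic rotation of the boundary inputs --- the $(k+1)$-fold multiplicity produced by cyclic symmetry cancels the factor $\frac1{k+1}$ and yields the classical variational identity $\d_s\vartheta=\big\langle\sum_{k,l}\frac1{l!}\qkl(b^{\otimes k};\gamma_W^{\otimes l}),\ \d_s b\big\rangle$. By the Maurer-Cartan equation~\eqref{eq:bdchdfn} the left entry of this pairing equals $c\cdot 1$, so $\d_s\vartheta=c\cdot\langle 1,\d_s b\rangle$; and since $b$ is point-like, $\int_L\d_s b=\d_s\!\int_L b=\d_s s=1$, whence $\langle 1,\d_s b\rangle=\pm 1$ and $\d_s\vartheta=\pm c$.

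In particular $\d_s\vartheta\in Q_W$ is independent of $s$, so $\vartheta$, hence $\psi_2$, hence the cohomology class $\Psi(\gamma_W,b)$, are affine-linear in $s$. Since $\psi_1$ is $s$-free and the choices entering $P$ and the normalization $D$ are made over $Q_W$ and over the $s$-free subring of $R_W$, passing from $\psi$ to $\Omega$ leaves the coefficients of positive powers of $s$ unchanged; after a sign verification through the conventions of~\eqref{eq:psidfn} and of the Poincar\'e pairing on $A^*(L)$, this gives $\d_s\Omega=c$ and $\d_s^2\Omega=0$, i.e.
\[
\Omega(\gamma_W,b)=\Omega(\gamma_W,b)\big|_{s=0}+s\,c\qquad\text{in }R_W .
\]
This is consistent with $\d_s\Ob=0$ in $\Coker\i$: although $c\in Q_W$, the term $s\,c$ is nonzero in $R_W/Q_W$ because the quotient map $q$ is only $Q_W$-linear and not a ring homomorphism, so $q(s\,c)=c\cdot q(s)\ne 0$.

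Now I read off coefficients in~\eqref{eq:Ob}, with $\Ob$ replaced by $\Omega$ and $\ogwb$ by $\ogw$. Affine-linearity in $s$ kills the coefficient of $T^\beta s^k\prod_i t_i^{r_i}$ for $k\ge 2$, so $\ogwb_{\beta,k}=0$ there. The coefficient of $s$ in $\Omega$ is $c\in Q_W=\Lc\otimes\R[[W[2]]]$, which only involves monomials $T^{\varpi(\hat\beta)}$; hence the coefficient of $T^\beta s\prod_i t_i^{r_i}$ vanishes whenever $\beta\notin\Im\varpi$, so $\ogwb_{\beta,1}=0$ in that case. Finally, for $\beta\in\Im\varpi$, substitute the formula for $c$ from Theorem~\ref{prop:formforc}, expand $\GW_{\hat\beta}(\eta,PD([L]),[\gamma_W]^{\otimes l})$ multilinearly in the interior input $[\gamma_W]=\sum_i t_i[\gamma_i]$, and compare with~\eqref{eq:Ob}; the reordering signs match by the graded symmetry of $\GW$, giving
\[
\ogwb_{\beta,1}(\eta_1,\dots,\eta_l)=\sum_{\substack{\hat\beta\in H_2(X;\Z)\\ \varpi(\hat\beta)=\beta}}(-1)^{n+1+w_\s(\hat\beta)}\,\GW_{\hat\beta}(\eta,PD([L]),\rho(\eta_1),\dots,\rho(\eta_l)),
\]
where $\rho$ enters because $\q_{\emptyset,l}$, and hence $\GW$, only sees the image of a class in $H^*(X;\R)$.

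The main obstacle is the sign bookkeeping behind $\d_s\Omega=c$ (as opposed to $-c$): one must carry the shifted-degree conventions of~\cite{ST1} through the cyclic symmetry of $\qkl$, through the factor $(-1)^{n+1}$ in~\eqref{eq:psidfn}, and through the sign convention of the Poincar\'e pairing on $A^*(L)$, and check that these combine to reproduce exactly the sign $(-1)^{n+1+w_\s(\hat\beta)}$ of Theorem~\ref{prop:formforc}. A secondary point needing care is the claim that $\d_s$ commutes with the passage $\psi\mapsto\Psi\mapsto\Ob\mapsto\Omega$ on the coefficients of positive powers of $s$; this rests on $\gamma_W$ --- and therefore $\psi_1$ and the $\q_{-1,l}$-terms --- lying in the subring of $R_W$ that is free of the variable $s$.
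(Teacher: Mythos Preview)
Your proposal is correct and follows essentially the same approach as the paper: the paper deduces the corollary from Lemma~\ref{lm:dsomega}, whose part (b) states $\d_s\Omega = \d_s f \cdot c$ (so $\d_s\Omega = c$ for $b$ point-like), together with Theorem~\ref{prop:formforc}, which gives $c\in Q_W$ and its explicit closed Gromov--Witten formula. You reprove Lemma~\ref{lm:dsomega} inline via the cyclic symmetry of the $\qkl$ and the Maurer--Cartan equation, then read off coefficients exactly as the paper intends; the sign you flag is resolved in the paper's proof of Lemma~\ref{lm:dsomega} by $\langle 1,\d_s b\rangle = (-1)^n$ combined with the $(-1)^n$ in $\Oh = (-1)^n\qbg_{-1,0}$.
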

Theorem~\ref{prop:formforc} does not a priori give information about the invariants $\ogwb_{\beta,0}(\eta_1,\ldots,\eta_l).$

\subsubsection{Open WDVV equations}\label{sssec:owdvv}
Recall the map $\rho$ from~\eqref{eq:rho}.
To formulate the open WDVV equations, we need the following two assumptions:
\begin{enumerate}[leftmargin=2cm,labelsep=.5cm,align=left,label=\text{(A.\arabic*)}]\setcounter{enumi}{0}
\item\label{assump:sfa}
$U \subset H^*(X;\R)$ is a subspace such that $U\otimes Q_U \subset QH_U(X)$ is a Frobenius subalgebra.
\item\label{assump:b}
$(\gamma_W,b)$ is a bounding pair over $W = \rho^{-1}(U)\subset \Hh^*(X,L;\R)$ with $b$ separated.
\end{enumerate}
More explicitly, assumption~\ref{assump:sfa} means that $U$ is a subalgebra with respect to the big quantum product $\star_U$, and the restriction of the Poincar\'e pairing to $U$ is non-degenerate. All point-like bounding chains satisfy assumption~\ref{assump:b}.
Both assumptions are satisfied in the cases discussed in~\cite{ST2} as explained in Section~\ref{ssec:examples} below. The map $P_\R$ from~\eqref{eq:iPR} determines a complement $W' = \Ker(P_\R|_W) \subseteq W$ to the image of the map $y$ from the exact sequence~\eqref{eq:yrho}. In particular, $\rho|_{W'}$ is injective.
Choose index sets $I_{W'} \subseteq I_U$, a basis $\Delta_i \in U, i \in I_U,$ and a basis $\Gamma_i \in W', i \in I_{W'},$ such that $\rho(\Gamma_i) = \Delta_i.$ By abuse of notation, denote by $\partial_i : Q_U \to Q_U$ (resp. $\partial_i : R_W \to R_W$) the derivations corresponding to $\Delta_i, i \in I_U$ (resp. $\Gamma_i, i \in I_{W'}$).
Let
\[
g_{ij}:=\int_X \Delta_i\cup \Delta_j, \qquad i,j \in I_U,
\]
and let $(g^{ij})$ be the inverse matrix to $(g_{ij})$, which exists by assumption~\ref{assump:sfa}. Abbreviate $\Phi = \Phi_U \in Q_U$ and $\Ob = \Ob(\gamma_W,b) \in R_W.$ Let $\rho^* : Q_U \to Q_W$ denote the induced ring homomorphism as in Section~\ref{sssec:intropsi}.
We are now ready to formulate the open WDVV equations.
\begin{thm}[Open WDVV equations]\label{thm:OWDVV}\label{2nd}
Let $c$ be the coefficient of the Maurer-Cartan equation~\eqref{eq:bdchdfn} for the bounding pair $(\gamma_W,b),$ and let $u,v \in W \oplus S, \, w \in W.$
Let $u_W,v_W,$ denote the projections of $u,v,$ to $W,$ and let $\bar{w} = \rho(w), \bar{u} = \rho(u_W), \bar{v} = \rho(v_W).$ Then,
\begin{multline}\label{1st}
\sum_{l \in I_{W'}, m \in I_U}\d_u\d_l\Ob\cdot g^{lm}\cdot
\rho^*\d_m\d_{\bar w}\d_{\bar v}\Phi
-\d_u c\cdot\d_w\d_v\Ob=\\
=
\sum_{l \in I_U, m \in I_{W'}}\rho^*\d_{\bar u}\d_{\bar w}\d_l\Phi \cdot g^{lm}\cdot
\d_m\d_v\Ob
-\d_u\d_w\Ob\cdot\d_vc.
\end{multline}
\end{thm}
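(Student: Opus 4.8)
The plan is to derive the open WDVV equations~\eqref{1st} from the master commutation relation for the tensor potential $\memh$ stated in Theorem~\ref{lm:assoc}, using the relation between $\memh$, the enhanced superpotential $\Ob$, and the closed potential $\Phi$. First I would recall that $\memh : \cone \to \cone$ is a chain map (this is the content of the ``$\memh$ is a chain map'' statement referenced just before Theorem~\ref{prop:formforc}) and that, by Theorem~\ref{lm:assoc}, the mixed partial derivatives $\d_u\d_v\memh$ and $\d_v\d_u\memh$ agree up to $\dcone$-chain homotopy. Passing to cohomology $H^*(\cone)$ this gives an honest identity $\d_u\d_v\memh = \d_v\d_u\memh$ (of maps on $H^*(\cone)$, or of the relevant components). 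The next step is to unpack this identity into its two components under the splitting of $\cone$ as $A^*(X;\Qh_W)\oplus R_W[-n-1]$: the $A^*(X)$-component reproduces the usual (closed) WDVV equation for $\Phi_U$ — which is consistent because $U$ is a Frobenius subalgebra by~\ref{assump:sfa} — while the $R_W$-component, combined with $\pi(\Psi) = \rho^*(\nabla\Phi_U)$ from Lemma~\ref{lm:phipsi} and $\Ob = P\Psi$, yields~\eqref{1st}.

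Concretely, I would evaluate $\d_u\d_v\memh$ on a basis vector. Writing things out, $\d_l\memh$ applied to (the class corresponding to) $\Delta_m$ produces a term whose $A^*(X)$-part is governed by $\rho^*\d_l\d_m\Phi$ (the third derivatives of the closed potential, i.e. the quantum product structure constants of $\star_U$) and whose $R_W$-part is governed by $\d_l\Ob$. Inserting the resolution of the identity $\sum_{l,m} \d_l(\cdot) g^{lm} \d_m(\cdot)$ coming from the Frobenius pairing on $U$ (here is where the nondegeneracy in~\ref{assump:sfa} and the inverse matrix $(g^{ij})$ enter), the equality $\d_u\d_w(\memh(\d_v(\cdot))) = \d_v\d_w(\memh(\d_u(\cdot)))$ becomes, after projecting to $\Coker\i$ via $P$, precisely the four-term identity~\eqref{1st}: the two ``mixed'' terms $\sum \d_u\d_l\Ob\cdot g^{lm}\cdot\rho^*\d_m\d_{\bar w}\d_{\bar v}\Phi$ and $\sum \rho^*\d_{\bar u}\d_{\bar w}\d_l\Phi\cdot g^{lm}\cdot\d_m\d_v\Ob$ come from the cross terms where one derivative hits the closed factor and the other hits the open factor, and the two ``$c$'' terms $\d_u c\cdot\d_w\d_v\Ob$ and $\d_u\d_w\Ob\cdot\d_v c$ arise from the Maurer-Cartan coefficient $c$ appearing when a derivative lands on the $R_W[-n-1]$-summand and interacts with the unit of the Fukaya $A_\infty$ algebra (this is the ``remarkable'' interaction flagged in the introduction, and $c\,\Ob$ is well-defined by the Corollary to Theorem~\ref{prop:formforc}). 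The asymmetry between $I_U$ and $I_{W'}$ in the two sides of~\eqref{1st} reflects the asymmetry in the chain map $\memh$ between its interior-input slot (indexed by $U$) and the slot tied to the bounding chain / lift $W'$; the condition $\rho(\Gamma_i) = \Delta_i$ and the injectivity of $\rho|_{W'}$ make the index bookkeeping consistent.

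The main obstacle I anticipate is the careful translation of the chain-homotopy identity in $\cone$ into the stated equation in $R_W$, keeping track of signs and of the two different projections ($P$ onto $\Coker\i$, and the various derivations $\d_i$ which act differently on $Q_U$, on $R_W$, and through $\rho^*$). In particular one must check that the homotopy term, which lives in the image of $\dcone$, genuinely dies after applying $P$ and the series-expansion of Definition~\eqref{eq:Ob} — i.e. that $P$ is compatible with the cone differential in the way set up in Section~\ref{ssec:omegabar} — and that the separatedness of $b$ (assumption~\ref{assump:b}) is exactly what is needed for the boundary-variable derivatives $\d_u, \d_v$ with $u,v \in W\oplus S$ to interact correctly with $\memh$. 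A secondary technical point is justifying that $\d_u c \cdot \d_w\d_v\Ob$ makes sense and has the claimed form; this is where Theorem~\ref{prop:formforc} (giving $c \in \Qh_W$) is invoked, so that multiplication by $c$ and its derivatives is well-defined on $\Coker\i$ and passes to the superpotential. Once these compatibilities are in place, the derivation is a matter of expanding $\d_u\d_v\memh = \d_v\d_u\memh$ and matching terms.
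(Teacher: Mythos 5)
Your proposal is correct and follows essentially the same route as the paper: the paper's proof of Theorem~\ref{thm:OWDVV} starts from Theorem~\ref{lm:assoc}, passes to the induced maps $\nund$ on $H^*(\cone)$, evaluates $\d_u\nund\circ\d_v\nund$ on $\d_w\Upsilon$, and decomposes the result via Lemma~\ref{lm:memofg} (giving the $\xb(\d_v\d_u\Ob)$ piece plus the $\sum \rho^*\d_{\bar v}\d_{\bar u}\d_l\Phi\cdot g^{lm}\cdot\Upsilon_m$ piece) and Lemma~\ref{lm:memofx} (giving $-\d_u c\cdot\xb(\cdot)$), before applying $P$. Your ``obstacles'' are in fact resolved automatically once one passes to cohomology before applying $P$ — the chain-homotopy term vanishes there, and $P$ is a map on cohomology, so no further compatibility with $\dcone$ is needed — while the decomposition you describe loosely as ``$A^*(X)$-component vs.\ $R_W$-component'' is made precise in the paper as the direct sum $\Im\bar x\oplus a(W'\otimes Q_W)=V$ of Lemma~\ref{lm:inj}.
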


\begin{cor}\label{cor:owdvv}
Suppose $b$ is point-like and let $u,v,w \in W.$ Let $\bar{u} = \rho(u),\bar{v} = \rho(v),\bar{w} =\rho(w).$ If $[L]=0,$ then
\begin{multline}\label{eq:cor1}
\sum_{l \in I_{W'}, m \in I_U}\d_u\d_l\Ob\cdot g^{lm}\cdot
\rho^*\d_m\d_{\bar w}\d_{\bar v}\Phi
-\d_{u}\d_s\Ob\cdot\d_w\d_v\Ob=\\
=
\sum_{l \in I_U, m \in I_{W'}}\rho^*\d_{\bar u}\d_{\bar w}\d_l\Phi \cdot g^{lm}\cdot
\d_m\d_v\Ob
-\d_u\d_w\Ob\cdot\d_{v}\d_s\Ob,
\end{multline}
and
\begin{equation}\label{eq:cor2}
\sum_{l \in I_{W'}, m \in I_U}\d_s\d_l\Ob\cdot g^{lm}\cdot
\rho^*\d_m\d_{\bar w}\d_{\bar v}\Phi
-\d_s^2\Ob\cdot\d_w\d_v\Ob=
-\d_s\d_w\Ob\cdot\d_{v}\d_s\Ob.
\end{equation}
If $[L]\ne 0,$ then equation~\eqref{eq:cor1} holds with $\Ob$ replaced by $\Omega.$
\end{cor}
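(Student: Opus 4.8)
The plan is to derive Corollary~\ref{cor:owdvv} directly from the open WDVV equations of Theorem~\ref{thm:OWDVV} by specializing to a point-like bounding chain and tracking which terms survive. When $b$ is point-like, the vector space $S$ is one-dimensional with single coordinate $s$, and by Corollary~\ref{cor:van} we have strong vanishing: $\ogwb_{\beta,k} = 0$ for $k \ge 2$, and for $k \ge 1$ with $\beta \notin \im\varpi$. First I would substitute $u, v \in W$ (as opposed to $W \oplus S$) into \eqref{1st}; then $u_W = u$, $v_W = v$, so $\bar u = \rho(u)$, $\bar v = \rho(v)$, and the equation becomes exactly \eqref{eq:cor1} except that the terms $\d_u c \cdot \d_w\d_v\Ob$ and $\d_u\d_w\Ob \cdot \d_v c$ appear in place of $\d_u\d_s\Ob\cdot\d_w\d_v\Ob$ and $\d_u\d_w\Ob\cdot\d_v\d_s\Ob$. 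So the heart of the argument is to show that, for a point-like bounding chain, $\d_u c = \d_u\d_s\Ob$ (equivalently $\d_v c = \d_v\d_s\Ob$) for $u, v \in W$.

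Next I would establish this identity $\d_u c = \d_u \d_s \Ob$ for $u \in W$. The cleanest route is via Theorem~\ref{prop:formforc} together with Corollary~\ref{cor:van}. By Theorem~\ref{prop:formforc}, $c$ is the closed generating series $\sum_{\beta, l} (-1)^{n+1+w_\s(\beta)} \frac{T^{\varpi(\beta)}}{l!}\GW_\beta(\eta, PD([L]), [\gamma]^{\otimes l})$, which lies in $\Qh_W$; applying $\d_u$ for $u \in W$ pulls down one factor of $\rho(u)$ in the Gromov-Witten argument. On the other side, by the formula in Corollary~\ref{cor:van} for $\ogwb_{\beta,1}$, the coefficient of $T^\beta \prod t_i^{r_i}$ (with $\beta \in \im\varpi$) in $\d_s\Ob$ is precisely $\sum_{\varpi(\hat\beta) = \beta}(-1)^{n+1+w_\s(\hat\beta)}\GW_{\hat\beta}(\eta, PD([L]), \rho(\eta_1), \ldots)$; since $\Ob$ is at most linear in $s$ when $b$ is point-like (again by Corollary~\ref{cor:van}, $\ogwb_{\beta,k} = 0$ for $k \ge 2$), $\d_s\Ob$ is $s$-independent and $\d_u\d_s\Ob$ matches $\d_u c$ term by term. (There is a compatibility to check: when $[L] \ne 0$ one works with $\Omega$ rather than $\Ob$, and $\ogw$ rather than $\ogwb$, but since $c \in \Qh_W$ kills exactly the ambiguous coefficients, the substitution $\Ob \rightsquigarrow \Omega$ is consistent — this is the content of the last sentence of the corollary.)

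For \eqref{eq:cor2}, I would instead take $u = v = e_s$ the basis vector of $S$ dual to $s$ (so $\d_u = \d_v = \d_s$), keeping $w \in W$, and substitute into \eqref{1st}. Then $u_W = v_W = 0$, so $\bar u = \bar v = 0$ and all derivatives $\d_{\bar u}, \d_{\bar v}$ annihilate $\Phi$ (which depends only on the $t_i$); this kills the entire first sum on the right-hand side of \eqref{1st}. Also $\d_u c = \d_s c = 0$ because $c \in \Qh_W$ is independent of $s$ (by Theorem~\ref{prop:formforc}), so the term $\d_u c\cdot\d_w\d_v\Ob$ vanishes as well. What remains is $\sum_{l,m}\d_s\d_l\Ob\, g^{lm}\,\rho^*\d_m\d_{\bar w}\d_{\bar v}\Phi - 0 = 0 - \d_s\d_w\Ob\cdot\d_v\d_s\Ob$, i.e. exactly \eqref{eq:cor2} after renaming $\d_v\d_s\Ob = \d_s\d_v\Ob$; here $\d_v = \d_s$ so with the convention $\bar v$ being zero the middle term $\d_m\d_{\bar w}\d_{\bar v}\Phi$ should be read as $\d_m\d_{\bar w}\Phi$ contracted appropriately — I would double-check the indexing here, since in the statement \eqref{eq:cor2} the factor is written $\rho^*\d_m\d_{\bar w}\d_{\bar v}\Phi$, which suggests $v$ still has a nonzero $W$-component in that particular equation; more likely \eqref{eq:cor2} comes from taking $u \in S$, $v, w \in W$, in which case only $\d_u c = 0$ and $\d_u\d_s\Ob = \d_s^2\Ob$ are used and no term drops by the $\Phi$-argument.

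The main obstacle I anticipate is the bookkeeping around the identity $\d_u c = \d_u\d_s\Ob$ and the $[L] = 0$ versus $[L] \ne 0$ dichotomy: when $[L] \ne 0$ one must pass from $\Ob \in \Coker\i$ to the honest element $\Omega \in R_W$, verify that $\d_u\d_s$ commutes past the quotient $\bar q$ and the normalization $D$ defining $\Omega$, and confirm that the potentially-ambiguous coefficients $\ogwb_{\beta,0}$ (about which Theorem~\ref{prop:formforc} says nothing) never enter \eqref{eq:cor1} — which holds precisely because every term in \eqref{eq:cor1} that involves $\Ob$ carries either a $\d_s$ or is multiplied by $c \in \Qh_W$, both of which land in the unambiguous part. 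Once that is pinned down, the corollary is a direct specialization of Theorem~\ref{thm:OWDVV}.
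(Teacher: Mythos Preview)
Your overall strategy—specialize Theorem~\ref{thm:OWDVV} and replace $c$ by a derivative of the (enhanced) superpotential—is exactly right, and the choice of inputs ($u,v,w\in W$ for \eqref{eq:cor1}, and $u\in S$, $v,w\in W$ for \eqref{eq:cor2}) is the correct one. But the route you take to the key identity is backwards with respect to the hypotheses.

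You propose to establish $\d_u c=\d_u\d_s\Ob$ via Theorem~\ref{prop:formforc} and Corollary~\ref{cor:van}. Both of those statements are proved \emph{only} under the assumption $[L]\neq 0$; they say nothing about $c$ when $[L]=0$, which is precisely the case in which \eqref{eq:cor1} and \eqref{eq:cor2} are asserted for $\Ob$. In particular, your claim that ``$\d_s c=0$ because $c\in\Qh_W$'' invokes Theorem~\ref{prop:formforc} in the regime where it does not apply; for $[L]=0$ there is no a priori reason $c$ should be independent of $s$, and if it is not then your derivation of \eqref{eq:cor2} produces the wrong equation (you would drop the $\d_s^2\Ob$ term that is actually present).

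The paper's argument avoids this entirely by using Lemma~\ref{lm:dsomega}: for point-like $b$ one has $\d_s f=1$, so Lemma~\ref{lm:dsomega} gives directly $\d_s\Ob=c$ when $[L]=0$ (since then $p$ is the identity on $R_W$) and $\d_s\Omega=c$ in general. Substituting $c=\d_s\Ob$ into~\eqref{1st} yields \eqref{eq:cor1} and \eqref{eq:cor2} immediately; no term-by-term matching of Gromov--Witten coefficients is needed. For $[L]\neq 0$, the paper then lifts \eqref{1st} from $\Coker\i=R_W/\Qh_W$ to $R_W$ using the section $z:R_W/\Qh_W\to R_W$ with $D\circ z=0$ (so $z(\Ob)=\Omega$), which commutes with $\d_u$ for $u\in W$; this is the clean version of the ``bookkeeping'' you anticipated.
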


\subsubsection{The tensor potential and the relative quantum connection}
To prove Theorem~\ref{thm:OWDVV}, we construct an invariant $\memh \in End(\cone)$ called the tensor potential. The tensor potential $\memh$ is closely related to the total derivative of the relative potential $\psi.$ The derivative of the tensor potential is the connection $1$-form of the relative quantum connection.

In greater detail, let $(\gamma,b) \in \mI_W\Ah^*(X,L;\Qh_W )\oplus \mJ_{W}A^*(L;R_W)$ be a bounding pair. We define \textbf{the tensor potential}
\footnote{Read as ``noon.''}
\[
\memh = \memh^{\gamma,b}: \cone \lrarr \cone
\]
by
\begin{multline}\label{eq:memhdfn1}
\memh(\eta,\xi)=
\Big(\sum_{l\ge 0}\frac{1}{l!}\q_{\emptyset,l+1}(\eta\otimes\gamma^{\otimes l}), (-1)^{n+1}\big(\sum_{k,l\ge 0}\frac{1}{l!(k+1)}\langle\q_{k,l+1}(b^{\otimes k};\eta\otimes\gamma^{\otimes l}),b\rangle+\\
+\sum_{l\ge 0}\frac{1}{l!}\q_{-1,l+1}(\eta\otimes\gamma^{\otimes l})\big)
-c\cdot \xi\Big).
\end{multline}
\begin{thm}\label{thm:mp}
The tensor potential $\memh$ is a chain map. If the bounding pairs $(\gamma,b)$ and $(\gamma',b')$ are gauge equivalent, then the tensor potentials $\memh^{\gamma,b}$ and $\memh^{\gamma',b'}$ are chain homotopic.
\end{thm}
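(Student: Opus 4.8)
The chain-map assertion is the identity $\dcone\circ\memh=\memh\circ\dcone$, which I would verify componentwise in $\cone=A^*(X;\Qh_W)\oplus R_W[-n-1]$. Write $\memh(\eta,\xi)=\bigl(\memh_1(\eta),\ \memh_2(\eta)-c\,\xi\bigr)$, where
\[
\memh_1(\eta)=\sum_{l\ge0}\frac1{l!}\q_{\emptyset,l+1}(\eta\otimes\gamma^{\otimes l})
\]
and $\memh_2(\eta)$ is the $\xi$-independent part of the second component of~\eqref{eq:memhdfn1}. Since $R_W[-n-1]$ carries the trivial differential and $\dcone(\eta,\xi)=(d\eta,\i(\eta))$, the chain-map condition splits into the two identities
\[
d\,\memh_1(\eta)=\memh_1(d\eta),\qquad
\i\bigl(\memh_1(\eta)\bigr)+c\cdot\i(\eta)=\memh_2(d\eta).
\]
Conceptually, $\memh(\eta,\cdot)$ is the derivative of the relative potential $\psi(\gamma,b)$ in the direction $\eta$, and the two identities are the derivative of $\dcone\psi(\gamma,b)=0$, corrected for the fact that perturbing $\gamma$ destroys the Maurer-Cartan equation; that correction is exactly the right-hand side $\memh\circ\dcone$.

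The first identity says that $\memh_1$ is a chain endomorphism of $A^*(X;\Qh_W)$. As in the proof that $\nabla\Phi_U$ is well defined in Section~\ref{sssec:gwp}, it follows from Stokes' theorem on the closed orbifold $\Mbar_{l+1}(\beta)$, whose codimension-one boundary is empty, together with $d\gamma=0$, which annihilates every term except $\q_{\emptyset,l+1}(d\eta\otimes\gamma^{\otimes l})$.

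The second identity is the heart of the matter, and the step I expect to be the main obstacle. I would derive it from the structure equations for the operations $\q$ of~\cite{ST1} --- in particular the relations coupling the closed operations $\q_{\emptyset,\ast}$ to the open operations $\q_{k,l}$ through $i^*$ and integration over $L$, the cyclic symmetry of $\q_{k,l}$ with respect to the pairing $\langle\cdot,\cdot\rangle$ on $A^*(L)$, and the unit axioms for $\q_{k,l}$ --- combined with the Maurer-Cartan equation~\eqref{eq:bdchdfn} for $(\gamma,b)$. Feeding $b^{\otimes k}$, $\gamma^{\otimes l}$ and the distinguished input $d\eta$ into these relations and summing with the combinatorial weights of~\eqref{eq:memhdfn1}, the Maurer-Cartan equation collapses every boundary degeneration whose component disjoint from $d\eta$ carries only $b$'s and $\gamma$'s; such degenerations produce $c$ times the corresponding $\eta$-contribution, which --- together with the occurrence of $\i(\eta)$ --- is precisely what the summand $-c\,\xi$ in~\eqref{eq:memhdfn1} is designed to absorb. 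The surviving degenerations reassemble, after integration over $L$ and use of the cyclic structure, into $\i(\memh_1(\eta))$ on one side and into the disk and sphere-bubble terms comprising $\memh_2(d\eta)$ on the other. This is the ``insert one interior marked point $\eta$'' enhancement of the proof that $\dcone\psi(\gamma,b)=0$ carried out in Section~\ref{ssec:psi}, and I would organize the enumeration of degenerations in parallel with it. The delicate points are the signs attached to the various degenerations, the correct matching of the sphere operation $\q_{\emptyset,l+1}(\eta\otimes\gamma^{\otimes l})$ integrated over $L$ against the disk and cyclic contributions --- i.e.\ handling the closed--open interaction --- and confirming that it is exactly the $c$-weighted degenerations that are cancelled by $-c\,\xi$.

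For the homotopy statement, recall from Definition~\ref{dfn:gequiv} that a gauge equivalence between $(\gamma,b)$ with respect to $J$ and $(\gamma',b')$ with respect to $J'$ is witnessed by a pseudoisotopy: a path of almost complex structures in $\J$ together with a bounding pair $(\tilde\gamma,\tilde b)$ for the associated family $A_\infty$ algebra over $[0,1]$ that restricts to $(\gamma,b)$ and $(\gamma',b')$ at the endpoints. Using the family operations $\widetilde\q$ of~\cite{ST1}, as in the proof of Theorem~\ref{thm:psiinv}, I would define the family tensor potential $\widetilde{\memh}$ by the formula~\eqref{eq:memhdfn1} with $\q,\gamma,b,c$ replaced by their family analogues, acting on the cone complex of the family analogue of $\i$. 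The computation above applies verbatim in the family and shows $\widetilde{\memh}$ is a chain map. Splitting $\widetilde{\memh}$ into its components along functions and along $dt$-forms on $[0,1]$ and integrating the $dt$-component over $[0,1]$ then produces a chain homotopy between $\memh^{\gamma,b}$ and $\memh^{\gamma',b'}$: restriction of $\widetilde{\memh}$ to the two endpoints of $[0,1]$ recovers these tensor potentials, while the interior degenerations cancel by the Maurer-Cartan equation for $(\tilde\gamma,\tilde b)$. This last part is a routine one-parameter elaboration of the chain-map argument, just as the gauge invariance of $\Psi$ in Theorem~\ref{thm:psiinv} is the one-parameter version of $\dcone\psi(\gamma,b)=0$.
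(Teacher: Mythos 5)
Your proposal matches the paper's proof: the chain-map identity is split into the same two componentwise equations (the first handled by the closed-sector chain-map property of $\q_{\emptyset}$, the second by the $k=-1$ structure equation combined with the Maurer--Cartan relation $\qbg_{0,0}=c\cdot 1$, the unit/zero-energy lemmas, and the pairing symmetry --- this is exactly the content of Lemma~\ref{lm:ceq}), and the homotopy is produced by the pseudoisotopy operators $\qt$, with your ``integrate the $dt$-component over $[0,1]$'' being precisely the push-forwards $(p_X)_*$ and $pt_*$ appearing in the explicit homotopy $H$ constructed in Lemma~\ref{lm:memhinvt}. The only cosmetic difference is that the paper works throughout with the deformed operators $\qbg$, $\qg$, which absorbs the combinatorial sums over $b$- and $\gamma$-insertions that you describe carrying out by hand.
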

The open WDVV equations and in fact the closed WDVV equations as well are a consequence of the following theorem. The notation $\d_u\memh$ is explained in detail in Section~\ref{ssec:flatness}.
\begin{thm}\label{lm:assoc}
For all formal vector fields $u,v \in Q_W \otimes W \oplus R_W \otimes S,$ the composition
$\d_u\memh\circ \d_v\memh$ is chain homotopic to $(-1)^{|u||v|}\d_v\memh\circ\d_u\memh.$
\end{thm}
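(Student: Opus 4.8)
The plan is to realize the chain homotopy geometrically, using the moduli spaces of $J$-holomorphic disks with three marked points constrained to a geodesic and of $J$-holomorphic spheres with four marked points of real cross ratio, as described after Figure~\ref{pic:geodbubbling}. One fixes an auxiliary metric on $L$ and, for a generic pair of points of the domain disk, the geodesic through them; the precise choice will not matter up to homotopy. Using these moduli one builds geodesic-constrained analogues of the operators $\qkl$, $\q_{\emptyset,l}$ and $\q_{-1,l}$, in which three distinguished interior marked points must lie on the geodesic, and then assembles them with the bounding pair $(\gamma,b)$, the $\q_{-1,l}$ terms, the pairing $\langle\cdot,\cdot\rangle$ and the summand $-c\cdot\xi$ exactly as in~\eqref{eq:memhdfn1}, obtaining for each pair of formal vector fields $u,v$ an operator $H = H^{\gamma,b}_{u,v}:\cone\to\cone$ of the appropriate degree in which the insertions $\d_u\gamma,\d_u b$ and $\d_v\gamma,\d_v b$ occupy two of the three geodesic-constrained slots and the cone input $\eta$ occupies the third. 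The claim is that $H$ is the desired homotopy, i.e. $\d_u\memh\circ\d_v\memh - (-1)^{|u||v|}\d_v\memh\circ\d_u\memh = \dcone\circ H \pm H\circ\dcone$. Note the statement is well posed: differentiating Theorem~\ref{thm:mp} shows each $\d_u\memh$ is a chain map, since $\dcone$ does not depend on the formal parameters, so both sides are chain maps.

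To extract the homotopy identity I would analyze the codimension-one boundary of the one-dimensional components of the geodesic moduli spaces, which come in the three disk families (one, two, or three of the geodesic-constrained points interior) together with the sphere family of real cross ratio. The boundary strata are of two kinds. First, genuine bubbling of the stable map, which produces an $A_\infty$-type relation whose terms are compositions of a geodesic operator with ordinary $\q$-operations on the bubbled-off component. Second, degeneration of the geodesic constraint itself — two constrained points colliding, the geodesic shrinking, or a constrained point meeting a node — which produces the $\dcone H$ and $H\dcone$ terms together with diagonal-type contributions. Counting signed boundary points of the one-dimensional components yields a master equation relating $\dcone H$, $H\dcone$, and a sum of compositions of $\d_\bullet\memh$-type operators; the remaining problem is to show that all but four of the latter cancel and that the survivors are exactly $\d_u\memh\circ\d_v\memh$ and $(-1)^{|u||v|}\d_v\memh\circ\d_u\memh$.

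The heart of the argument — and the step I expect to be the main obstacle — is this cancellation, and two mechanisms drive it. Many bubbling terms carry the factor $\sum_{k,l}\frac1{l!}\qkl(b^{\otimes k};\gamma^{\otimes l})$ on the bubbled component; by the Maurer--Cartan equation~\eqref{eq:bdchdfn} this equals $c\cdot 1$, and the resulting insertions of the $A_\infty$ unit $1$ either vanish by the unit property of the $\q$-operations or reassemble into the $c$-dependent contributions already built into $\memh$ via $-c\cdot\xi$ and into $\d_uc,\d_vc$. The delicate case is the bubbling on the right of Figure~\ref{pic:geodbubbling}, where one disk component \emph{retains} the geodesic constraint — two of the three constrained interior points and the node lie on the geodesic — so a priori it computes neither a derivative of $\memh$ nor of $\Phi$. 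One must show, using the Maurer--Cartan equation together with the way the node, seen as a boundary input of the companion disk, interacts with the $A_\infty$ unit, that this interaction strips the geodesic constraint off that component, converting the stratum into an expression quadratic in $\memh$. This is the open analogue of the much easier left-hand bubbling, where the sphere bubble carrying the node and all three constrained points contributes $\nabla\Phi$-factors, essentially by the classical degeneration of the cross ratio on the moduli of four-pointed rational curves.

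Finally, I would track the Koszul signs arising from reordering the interior and boundary inputs, from the grading shift in $\cone$, and from the orientations of the geodesic moduli spaces, checking that the sign on the second grouping is precisely $(-1)^{|u||v|}$ and that the homotopy identity holds with a consistent overall sign; this uses only the conventions already fixed for the $\q$-operations and for $\memh$ in~\eqref{eq:memhdfn1}, so no new normalization is needed. Specializing $u,v$ to interior directions and discarding the $R_W$-components recovers the classical WDVV relations for $\Phi$ as the closed shadow of the computation, and projecting the full homotopy identity to cohomology and pairing with suitable classes then yields the open WDVV equations of Theorem~\ref{thm:OWDVV} and the associativity statement of Theorem~\ref{thm:mem}, as indicated in Section~\ref{sssec:owdvv}.
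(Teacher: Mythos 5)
Your proposal matches the paper's proof in both strategy and key ingredients: the homotopy $H_{uv}$ is assembled from the geodesic-constrained operators $\q_{k,l;a,e}$, $\q_{-1,l;0,3}$ and $\q_{\emptyset,l;\chi_0}$ with insertions of $\d_u\gamma,\d_ub,\d_v\gamma,\d_vb$ in the geodesic-constrained slots, and the homotopy identity is verified by combining the geodesic structure equations (Propositions~\ref{prop:21},~\ref{prop:12},~\ref{prop:03k-1},~\ref{lm:clst}) with the Maurer--Cartan equation, the geodesic unit properties (Lemmas~\ref{lm:unitgeod},~\ref{lm:geodunit-1}, Corollary~\ref{cor:geodunit}), and the cyclic properties, with the unit interaction stripping the geodesic constraint off the surviving disk exactly as you anticipate. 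One small correction to your setup: the geodesic lives in the domain disk with its canonical hyperbolic metric, so no auxiliary metric on $L$ is chosen and there is no choice to vary up to homotopy; and the relevant boundary identities are chain-level Stokes relations (as in Section~\ref{ssec:geo_a_infty}) applied to forms of arbitrary degree, not a signed count of boundary points of one-dimensional components.
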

Since $H^*(\cone)$ is a free $Q_W$ module, it can be viewed as the formal sections of a vector bundle over $W.$ The tensor potential $\memh$ induces a map $\nund : H^*(\cone) \to H^*(\cone).$ We define the \textbf{relative quantum connection} $\nabla$ on $H^*(\cone)$ by
\[
\nabla_u(\Upsilon) = \d_u \Upsilon + \d_u\nund(\Upsilon), \qquad u \in Q_W \otimes W, \quad \Upsilon \in H^*(\cone).
\]
A straightforward calculation using Theorem~\ref{lm:assoc} gives the following.
\begin{cor}\label{cy:flat}
The relative quantum connection is flat.
\end{cor}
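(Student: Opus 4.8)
The plan is to compute the curvature $F_\nabla$ of the relative quantum connection directly from the definition $\nabla_u(\Upsilon) = \d_u\Upsilon + \d_u\nund(\Upsilon)$ and show it vanishes as an endomorphism of $H^*(\cone)$. Recall that the curvature is
\[
F_\nabla(u,v) = \nabla_u\nabla_v - (-1)^{|u||v|}\nabla_v\nabla_u - \nabla_{[u,v]},
\]
evaluated on formal vector fields $u,v \in Q_W\otimes W$. Since the underlying ``bundle'' is the trivial bundle with fiber $H^*(\cone)$ and the flat reference connection $\d$, I would first note that $\d_u\d_v - (-1)^{|u||v|}\d_v\d_u = \d_{[u,v]}$, so all the terms involving only the trivial connection cancel against $-\nabla_{[u,v]}$, leaving only the connection-form contributions. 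Expanding, the curvature reduces to
\[
F_\nabla(u,v) = \d_u\d_v\nund - (-1)^{|u||v|}\d_v\d_u\nund + \d_u\nund\circ\d_v\nund - (-1)^{|u||v|}\d_v\nund\circ\d_u\nund,
\]
where I have used that $\d$ is a flat connection commuting with the bracket so that the mixed terms $\d_u(\d_v\nund)$ organize into $\d_u\d_v\nund$ modulo a $\d_{[u,v]}\nund$ term that is absorbed.

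The next step is to recognize that $\nund$ is the map on cohomology $H^*(\cone)\to H^*(\cone)$ induced by the chain map $\memh$, so $\d_u\nund$ is induced by $\d_u\memh$ (the derivative of a chain map is a chain map, since differentiation commutes with the differential $\dcone$, which does not depend on the deformation parameters), and likewise $\d_u\d_v\nund$ is induced by $\d_u\d_v\memh$. Hence each of the four terms in the displayed formula for $F_\nabla(u,v)$ is the map on cohomology induced by the corresponding operator on $\cone$:
\[
\d_u\d_v\memh - (-1)^{|u||v|}\d_v\d_u\memh + \d_u\memh\circ\d_v\memh - (-1)^{|u||v|}\d_v\memh\circ\d_u\memh.
\]
The first difference $\d_u\d_v\memh - (-1)^{|u||v|}\d_v\d_u\memh$ vanishes identically as an operator on $\cone$, because partial derivatives of a fixed element of $\mathrm{End}(\cone)$ with respect to formal coordinates commute (with the Koszul sign). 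The remaining piece $\d_u\memh\circ\d_v\memh - (-1)^{|u||v|}\d_v\memh\circ\d_u\memh$ is exactly the expression that Theorem~\ref{lm:assoc} asserts is chain homotopic to zero. Since a chain-homotopically trivial chain map induces the zero map on cohomology, this piece induces $0$ on $H^*(\cone)$. Therefore $F_\nabla(u,v) = 0$ for all $u,v$, which is the flatness claim.

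The only genuine subtlety — and the step I expect to require the most care — is the bookkeeping that shows the trivial-connection terms cancel cleanly against $\nabla_{[u,v]}$ and that the cross terms reorganize into the combination appearing in Theorem~\ref{lm:assoc} rather than some twisted variant; this involves tracking Koszul signs coming from the grading shifts in $\cone = A^*(X;\Qh_W)\oplus R_W[-n-1]$ and from the degree $|u|,|v|$ of the formal vector fields, together with the Leibniz rule $\d_u(\d_v\memh\circ\Upsilon) = (\d_u\d_v\memh)\circ\Upsilon \pm \d_v\memh\circ\d_u\Upsilon$. Once the signs are pinned down, the statement follows formally from Theorem~\ref{lm:assoc} together with the elementary commutation of mixed partials, so no further geometric input about moduli spaces is needed.
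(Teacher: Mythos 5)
Your proposal is correct and carries out exactly the ``straightforward calculation'' the paper alludes to but does not write out: you decompose the curvature of $\nabla = \d + \d_\bullet\nund$ into the $dA$ piece (which vanishes because $A_u = \d_u\nund$ is the derivative of a single endomorphism-valued function, so mixed partials commute, with $\d_{[u,v]}\nund$ absorbing the tensoriality correction) plus the graded commutator $[\d_u\nund, \d_v\nund]$, and then invoke Theorem~\ref{lm:assoc} to kill the latter on cohomology. The key supporting observations — that $\dcone$ is independent of the formal coordinates so that $\d_u\memh$ is again a chain map, and that a chain-homotopically trivial map induces zero on $H^*(\cone)$ — are all stated correctly, and the Koszul-sign caveat you flag at the end is an honest assessment of where the remaining bookkeeping lives (the signs do in fact work out because $\memh$ has even degree). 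This is the intended argument.
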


\subsubsection{Wall-crossing formula}\label{sssec:wallcross}
Recall the long exact sequence~\eqref{eq:yrho}. Let $U \subset H^*(X;\R)$ be a subspace, and let $(\gamma_W,b)$ be a bounding pair over $W = \rho^{-1}(U) \subset \Hh^*(X,L;\R)$. Let
\[
\Gamma_\diamond := y(1) \in W.
\]
The following result relates open Gromov-Witten invariants with boundary constraints to open Gromov-Witten invariants with interior constraints $\Gamma_\diamond$.
\begin{thm}[Wall-crossing]\label{prop:wallcross}
Suppose $[L] = 0$ and $b$ is point-like. The invariants $\ogwb$ satisfy
\[
\ogwb_{\beta,k+1}(\eta_1,\ldots,\eta_l)
=
-\ogwb_{\beta,k}(\Gamma_\diamond,\eta_1,\ldots,\eta_l).
\]
\end{thm}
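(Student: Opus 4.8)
The plan is to derive the wall-crossing formula from the fact that $\memh = \memh^{\gamma_W,b}$ is a chain map (Theorem~\ref{thm:mp}), applied to the specific element $\Gamma_\diamond = y(1) \in W$ and specialized to a point-like bounding chain. First I would identify $\memh$ with the operation of taking a partial derivative of $\psi(\gamma,b)$ in the $\Gamma_\diamond$-direction: unwinding the definitions \eqref{eq:psidfn} and \eqref{eq:memhdfn1}, one sees that $\d_{\Gamma_\diamond}\psi(\gamma,b)$ equals $\memh(\Gamma_\diamond \otimes \text{(the } \xi\text{-component})) $ up to the explicit terms involving $\q_{\emptyset,l+1}$, $\q_{k,l+1}$, $\q_{-1,l+1}$ that appear in $\memh$; the key point is that inserting $\Gamma_\diamond$ as an interior marked point is, on cohomology, the same as differentiating the $\gamma_W$-dependence in the direction of the cohomology class $[\Gamma_\diamond]$. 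Thus on $H^*(\cone)$ we get $\nund(\Upsilon) = \d_{\Gamma_\diamond}\Upsilon + (\text{correction from } c\cdot\xi)$-type identities; more precisely I expect $\pi \circ \memh$ applied to a suitable generator computes $\d_{\Gamma_\diamond}$ of the closed potential, while the cone component computes $\d_{\Gamma_\diamond}\Omega$.

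Next I would extract the enhanced superpotential side. Since $[L] = 0$, the exact sequence~\eqref{eq:xbar} gives $\Coker\i \simeq R_W[-n-1]$, so applying $P$ to $\psi$ loses no information in the cone component, and $\Ob(\gamma_W,b)$ is literally (up to the sign $(-1)^{n+1}$) the bracket-plus-$\q_{-1,l}$ expression in \eqref{eq:psidfn}. Differentiating \eqref{eq:Ob} in the single boundary variable $s$ (using that $b$ is point-like, so $\int_L b = s$ and the $s$-dependence of $\Ob$ is the generating-function variable for the number of boundary point constraints) relates $\d_s\Ob$ to the generating series with one extra boundary marked point, whose boundary constraint is Poincaré dual to a point. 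The crux is then a geometric identity: a boundary point constraint on a disk can be traded, via the chain-map property of $\memh$ together with the Maurer-Cartan equation for $b$, for an interior constraint by the class $\Gamma_\diamond = y(1)$. Concretely, the chain map equation $\dcone \circ \memh = \memh \circ \dcone$ forces a relation between the $R_W[-n-1]$-component of $\memh$ (which encodes both the $\d_s$-type boundary insertion after pairing, and the interior-$\Gamma_\diamond$ insertion) — and reading off the coefficient of $T^\beta s^k \prod t_i^{r_i}$ yields exactly $\ogwb_{\beta,k+1}(\eta_1,\dots,\eta_l) = -\ogwb_{\beta,k}(\Gamma_\diamond,\eta_1,\dots,\eta_l)$, with the sign coming from the $(-1)^{n+1}$ and the sign in the definition of $\i$.

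In more detail, the mechanism I expect to use is: $\memh$ applied to the element $(\text{class dual to } \Gamma_\diamond, 0) \in \cone$ produces, in its $R_W$-component, a sum of two contributions — one from $\q_{k,l+1}(b^{\otimes k}; \Gamma_\diamond \otimes \gamma^{\otimes l})$ paired with $b$ (an interior $\Gamma_\diamond$-insertion), and one from the $-c\cdot\xi$ term — and the chain-map / homotopy-triviality statement identifies this with $\d_{\Gamma_\diamond}$ of $\psi$; meanwhile, the pairing structure $\langle \q_{k,l}(b^{\otimes k};\gamma^{\otimes l}), b\rangle$ together with $\i(\eta) = (-1)^{n+|\eta|}\int_L i^*\eta$ shows that $\d_s$ of the superpotential (inserting one more copy of $b$, i.e. one more boundary point constraint since $b$ is point-like) equals, after applying $P$ and using $\Gamma_\diamond = y(1)$, the same expression up to sign. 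Comparing the two gives the formula. The main obstacle I anticipate is bookkeeping the signs and the combinatorial factors $\tfrac{1}{k!}$ vs.\ $\tfrac{1}{l!(k+1)}$ correctly: the factor $(k+1)$ in the symmetrized bracket in \eqref{eq:psidfn} must be reconciled with the factor $\tfrac{1}{l!}$ (no $(k+1)$) in \eqref{eq:memhdfn1}, and the translation between "differentiate the generating function in $s$" and "insert one more boundary marked point with a point constraint" requires care precisely because point-likeness means $\int_L b = s$ is a single variable rather than a form — I would handle this by working with $\d_s$ directly on the power series \eqref{eq:Ob} and matching coefficients, rather than trying to commute $\d_s$ past the chain-level operations.
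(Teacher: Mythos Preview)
Your proposal circles around the right ingredients but misses the single clean observation that makes the proof a two-liner, and in its place substitutes a vaguer mechanism (``the chain-map equation forces a relation'') that is not actually what drives the argument.

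The paper's proof rests on two facts. First, Lemma~\ref{lm:dpsi}: for $u\in W$ one has $\d_u\psi=\memh(\d_u\gamma,0)$, while for $s\in S$ one has $\d_s\psi=\d_sf\cdot\memh(0,1)$; here $f=\int_Lb$. Second, the cohomological identity $[(\gamma_\diamond,0)]=[(0,-1)]$ in $H^*(\cone)$, which is an immediate consequence of the definition $\Gamma_\diamond=y(1)$ together with the compatibility of $a$ and $x$ in diagram~\eqref{eq:tris}. Combining these gives
\[
\d_s\Psi=\d_sf\cdot\nund[(0,1)]=-\d_sf\cdot\nund[(\gamma_\diamond,0)]=-\d_sf\cdot\d_\diamond\Psi,
\]
and applying $P$ yields $\d_s\Ob=-\d_sf\cdot\d_\diamond\Ob$ (this is Proposition~\ref{prop:wallseries}). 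Point-likeness gives $\d_sf=1$, and since $[L]=0$ one has $\Coker\i\simeq R_W$, so coefficients can be read off directly.

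What you are missing is precisely the second fact: that $\Gamma_\diamond=y(1)$ translates, in the cone complex, into the statement that the class $[(\gamma_\diamond,0)]$ equals $-[(0,1)]$. This is a purely homological-algebra identity and has nothing to do with the chain-map property of $\memh$; the chain-map property is only needed to ensure $\nund$ descends to cohomology so that one can evaluate it on cohomologous representatives. Your proposal instead tries to extract the relation from $\dcone\circ\memh=\memh\circ\dcone$ applied to some element, which would at best reprove that $\memh$ is a chain map but would not by itself compare an interior $\Gamma_\diamond$-insertion with a boundary $s$-insertion. The anticipated sign/combinatorics obstacles you flag are real if one works at chain level, but they evaporate once one passes to cohomology via Lemma~\ref{lm:dpsi} and uses the cone identity.
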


Geometrically, we can understand Theorem~\ref{prop:wallcross} as follows. Poincar\'e-Lefschetz duality gives an isomorphism
\[
H_{n-1}(X \setminus L;\R) \simeq H^{n+1}(X,L;\R) \simeq \Hh^{n+1}(X,L;\R).
\]
Under this isomorphism, $\Gamma_\diamond \in \Hh^{n+1}(X,L;\R)$ corresponds to the class in $H_{n-1}(X \setminus L;\R)$ of a small $(n-1)$-dimensional sphere $\Sigma$ linked with $L$. As we shrink $\Sigma,$ it converges to a point in $L.$ Thus, at an intuitive level, it makes sense that the interior constraint $\g_\diamond$ can be swapped with a point boundary constraint.
To see why Theorem~\ref{prop:wallcross} is called the wall-crossing formula, consider the following scenario. Let $M$ be an $(n-1)$-dimensional manifold, and let $g : [0,1] \times M \to X$ be a map transverse to $L$ such that $g^{-1}(L)$ is a single point in $(t_L,m_L) \in (0,1) \times M.$ Let $g^t : M \to X$ be given by $g^t(p) = g(t,p).$
For $i = 0,1,$ let $\eta^i_0 \in \Hh^{n+1}(X,L;\R)$ be the class corresponding to $g^i_*([M]) \in H_{n-1}(X\setminus L, \R)$ under Poincar\'e-Lefschetz duality. Then, it is easy to see that $\eta_0^1 - \eta_0^0 = \Gamma_\diamond.$ So,
\[
\ogwb_{\beta,k}(\eta_0^1,\eta_1,\ldots,\eta_l) - \ogwb_{\beta,k}(\eta_0^0,\eta_1,\ldots,\eta_l) = \ogwb_{\beta,k}(\Gamma_\diamond,\eta_1,\ldots,\eta_l).
\]
On the other hand, roughly speaking, the invariant $\ogwb_{\beta,k}(\eta_0^i,\eta_1,\ldots,\eta_l)$ counts disks of degree $\beta$ with boundary constrained to pass through $k$ points in $L$ and interior constrained to pass through $g^i(M)$ as well as Poincar\'e duals of $\eta_1,\ldots,\eta_l.$ To compare the invariants for $i = 0,1,$ consider the one dimensional family of interior constraints $g^t(M).$ At times $t \neq t_L,$ the invariant is constant. At time $t_L,$ the interior constraint $g^{t_L}(M)$ intersects $L$ at the unique point $g_{t_L}(m_L).$ As $t \to t_L$ from below, the interior intersection points with $g_{t}(M)$ of a subset $\mathcal B$ of the disks being counted limit to a boundary point. These disks are no longer counted for $t > t_L.$ So, the number of disks in $\mathcal B$ is $\ogwb_{\beta,k}(\eta_0^1,\eta_1,\ldots,\eta_l) - \ogwb_{\beta,k}(\eta_0^0,\eta_1,\ldots,\eta_l).$ On the other hand, at $t = t_L,$ the boundaries of the disks in $\mathcal B$ pass through $k+1$ points in $L$, one of which is $g^{t_L}(m_L),$ and the interiors of these disks pass through Poincar\'e duals to $\eta_1,\ldots,\eta_l.$ So, the number of disks in $\mathcal B$ is $\ogwb_{\beta,k+1}(\eta_1,\ldots,\eta_l).$

\subsubsection{Relative quantum cohomology}\label{sssec:memintro}
Suppose again that assumptions~\ref{assump:sfa} and~\ref{assump:b} hold. Define $\Qhh_W = \Qh_W \otimes_{\Lc} \L$ and
\[
\coneq:=(\cone/x(m_S))\otimes_{\Qh_W}\Qhh_W.
\]
The map $\memh : \cone \to \cone$ induces a map $\hat \memh : \coneq \to \coneq,$ which in turn induces a map $\nundh : H^*(\coneq) \to H^*(\coneq).$ Since $\coneq$ is the cone of the map $\i : A^*(X;\Qhh_W)\to \Qhh_W$, we have a canonical isomorphism
\[
\Hh^*(X,L;\Qhh_W)\simeq H^*(\coneq).
\]
We define the relative quantum cohomology as a vector space by $QH_U(X,L) = W \otimes \Qhh_W,$ and we define the relative quantum product
\footnote{Read as ``mem''.}
\[
\mem: QH_U(X,L)\otimes QH_U(X,L)\lrarr QH_U(X,L)
\]
by
\[
\mem(w,v)=\d_w\nundh(v).
\]
A more explicit formula for $\mem$ is given in Lemma~\ref{lm:memform}.
\begin{thm}\label{thm:mem}
The relative quantum product $\mem$ is graded commutative, associative, and
depends only on the gauge-equivalence class of the bounding pair $(\gamma_W,b)$.
\end{thm}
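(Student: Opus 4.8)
The plan is to deduce all three assertions from the properties of the tensor potential $\memh$ established earlier, passing everywhere to the quotient complex $\coneq$. First I would treat \textbf{associativity}. By definition $\mem(w,v) = \d_w \nundh(v)$, where $\nundh : H^*(\coneq) \to H^*(\coneq)$ is induced by $\hat\memh$. Unwinding, $\mem(u, \mem(w,v)) = \d_u\nundh(\d_w\nundh(v))$; since $\nundh$ is $\Qhh_W$-linear and the formal vector fields commute, this equals (up to the bookkeeping of the identity section, see below) the class of $\d_u\hat\memh \circ \d_w\hat\memh$ applied to the appropriate section. Theorem~\ref{lm:assoc} says $\d_u\memh\circ\d_v\memh$ is chain homotopic to $(-1)^{|u||v|}\d_v\memh\circ\d_u\memh$; this descends to $\coneq$ because the chain homotopy is built from the same geometric operators and respects the ideal $x(m_S)$ and the base change $\otimes_{\Lc}\L$. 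Passing to cohomology, $\d_u\nundh\circ\d_v\nundh = (-1)^{|u||v|}\d_v\nundh\circ\d_u\nundh$ on $H^*(\coneq)$. The standard manipulation then converts this ``flatness/commutation'' identity into associativity of $\mem$: one writes $\mem(u,\mem(w,v))$ and $\mem(\mem(u,w),v)$ in terms of iterated $\d\nundh$'s, and the difference is governed precisely by the failure of $\d_u\nundh$ and $\d_w\nundh$ to commute, which vanishes. Here one uses that $\Gamma_\diamond$-type unit considerations identify $w$ with $\d_w$ acting on the canonical section corresponding to $\Gamma_W \in \Qhh_W \otimes W$, exactly as the big quantum product $\star_U$ is recovered from $\nabla\Phi_U$ in Section~\ref{sssec:qprod}.

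Next, \textbf{graded commutativity}: $\mem(w,v) = \d_w\nundh(v)$ must equal $(-1)^{|w||v|}\d_v\nundh(w)$. This should follow by the same mechanism used classically for the big quantum product, namely from the symmetry of the second derivatives $\d_w\d_v$ of a potential. Concretely, $\hat\memh$ is, up to the structure constant term $-c\cdot\xi$ and up to chain homotopy, the total derivative of the relative potential $\psi(\gamma,b)$ (this is exactly the relationship flagged in Section~\ref{sssec:memintro}: ``$\memh$ is closely related to the total derivative of $\psi$''). Thus $\d_w\hat\memh$ is, up to homotopy, $\d_w\d_v$ of a scalar-valued-in-$\coneq$ object in the $v$-slot, and the mixed partials commute with the Koszul sign. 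Making this precise requires identifying the ``$v$'' input slot of $\memh(\eta,\xi)$ with differentiation: $\memh$ takes one input $\eta$, and $\d_v\memh$ differentiates the remaining $\gamma^{\otimes l}$'s, so $\d_w\hat\memh$ evaluated on the section $v$ symmetrizes into $\d_v\d_w$ of the relative potential. The $c\cdot\xi$ term contributes symmetrically as well since $c$ is a scalar.

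Finally, \textbf{gauge invariance}: Theorem~\ref{thm:mp} states that gauge-equivalent bounding pairs $(\gamma,b)\sim(\gamma',b')$ give chain-homotopic tensor potentials $\memh^{\gamma,b} \simeq \memh^{\gamma',b'}$. A chain homotopy $\memh^{\gamma,b} - \memh^{\gamma',b'} = \dcone H + H\dcone$ descends to $\coneq$ (again using that the construction is compatible with the quotient by $x(m_S)$ and the base change), hence induces the same map $\nundh$ on cohomology $H^*(\coneq)$. Therefore $\d_w\nundh$ and the product $\mem$ are unchanged. One should also check that the identifications $QH_U(X,L) = W\otimes\Qhh_W \simeq \Hh^*(X,L;\Qhh_W) \simeq H^*(\coneq)$ are canonical and independent of the auxiliary choices (in particular $P$), which follows from the discussion around the exact sequence~\eqref{eq:xbar} together with Lemma~\ref{lm:OP}-type independence.

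The main obstacle I expect is not any single step but the careful translation between the homotopy-level statement of Theorem~\ref{lm:assoc} and the honest algebraic identity (commutativity, associativity) at the level of cohomology, together with the correct handling of the unit: one must verify that passing to $H^*(\coneq)$ genuinely kills the homotopy terms, that the derivations $\d_w$ interact correctly with the induced map $\nundh$ (as opposed to the chain-level $\hat\memh$, which need not satisfy the commutation on the nose), and that the ``evaluation at the canonical section $\Gamma_W$'' used to turn a connection/operator into a product is consistent across all three properties. The compatibility of the chain homotopies with the quotient $\cone/x(m_S)$ and the base change $\otimes_{\Lc}\L$ is routine but must be stated, since the homotopies produced in the proofs of Theorems~\ref{thm:mp} and~\ref{lm:assoc} live on $\cone$ and one needs them to survive the passage to $\coneq$.
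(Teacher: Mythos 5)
Your plan matches the paper's proof almost exactly: the paper deduces gauge-invariance from Lemma~\ref{cor:assocnunh}\ref{it:nunha} (itself a descent of Theorem~\ref{thm:mp} to $\coneq$), graded commutativity from a chain-level computation showing $\mem(\d_u\gamma_W,\d_v\gamma_W)$ has second component $(-1)^{n+1}\d_u\d_v\q^{b,\gamma_W}_{-1,0}$ (Lemma~\ref{lm:memcommut}), and associativity by combining commutativity with the descended commutation relation of Theorem~\ref{lm:assoc} (Lemmas~\ref{cor:assocnunh}\ref{it:nunhb} and~\ref{lm:memassoc}).

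Two places where you flag ``needs to be made precise'' are exactly where the substance lies, and are worth naming explicitly. First, in the commutativity step, turning $\d_u\memhh(\d_v\gamma_W)$ into an honest symmetric second derivative $\d_u\d_v\q^{b,\gamma_W}_{-1,0}$ requires cancelling the extra term $\langle\q^{b,\gamma_W}_{0,0},\d_v b\rangle = c\,\langle 1,\d_v b\rangle$ coming from differentiating $b$; this vanishes only because $b$ is \emph{separated}, i.e.\ $\d_v\int_L b = 0$ for $v\in W$ (assumption~\ref{assump:b}). It is not a matter of the $c\cdot\xi$ term ``contributing symmetrically''; that term simply isn't present since the class $\d_v\gamma_W$ has zero second component in $\coneq$, while the actual obstruction is the $\langle 1,\d_v b\rangle$ term, killed by separatedness. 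Second, your associativity sketch compares $\mem(u,\mem(w,v))$ with $\mem(\mem(u,w),v)$; the latter is $\d_{\mem(u,w)}\nundh(v)$ and is only expressible as an iterated $\d\nundh$ after first applying graded commutativity. So commutativity is not an independent bullet: it must be established first and is used as an ingredient in the associativity argument, exactly as in Lemma~\ref{lm:memassoc}.
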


\begin{rem}
Theorem~\ref{thm:mem} is a consequence of Theorem~\ref{lm:assoc} for $u,v \in Q_W \otimes W.$
On the face of it, the relative quantum cohomology algebra $QH_U(X,L)$ contains no information about open Gromov-Witten invariants with boundary constraints. Indeed, in the definition of $QH_U(X,L),$ we have quotiented by $m_S,$ the ideal generated by the parameters that keep track of boundary constraints. However, by Corollary~\ref{cor:van}, when $[L] \neq 0 \in H_n(X;\R),$ open Gromov-Witten invariants with at least one boundary constraint contain no information beyond closed Gromov-Witten invariants. By the wall-crossing formula of Theorem~\ref{prop:wallcross}, when $[L] = 0,$ open Gromov-Witten invariants with boundary constraints are equivalent to open Gromov-Witten invariants with interior constraints. Thus, the associativity of $\mem$ is essentially equivalent to Theorem~\ref{lm:assoc}.
\end{rem}

\subsubsection{Examples}\label{ssec:examples}
In this section, we give examples where assumptions~\ref{assump:sfa} and~\ref{assump:b} hold.
The following is an immediate consequence of Theorem 2 of~\cite{ST2}. See Section 5.4 of~\cite{ST2} for further details.
\begin{thm}\label{thm:esphere}
Suppose $H^*(L;\R) \simeq H^*(S^n,\R)$. Let $U = H^*(X)$, so $W = \rho^{-1}(U) =  \Hh^*(X,L).$ Then, there exists a unique up to gauge equivalence bounding pair $(\gamma_W,b)$ over $W$ with $b$ point-like. In particular, assumptions~\ref{assump:sfa} and~\ref{assump:b} hold.
\end{thm}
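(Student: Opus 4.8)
The statement packages a genuinely geometric existence-and-uniqueness result together with two essentially formal observations, and the plan is to dispose of the latter first and then reduce the former to known results. Taking $U = H^*(X;\R)$, assumption~\ref{assump:sfa} is automatic: $U \otimes Q_U$ is all of $QH_U(X)$, which is graded commutative and associative (cf.\ Remark~\ref{rem:clst}) and Frobenius for the full, non-degenerate Poincar\'e pairing, so $U$ is trivially a Frobenius subalgebra of itself. Moreover $W = \rho^{-1}(U) = \widehat H^*(X,L;\R)$ tautologically, and since a point-like bounding chain is in particular separated, assumption~\ref{assump:b} holds as soon as a point-like bounding pair exists. So everything reduces to the claim: \emph{if $H^*(L;\R) \simeq H^*(S^n;\R)$, there is a point-like bounding pair $(\gamma_W,b)$ over $W = \widehat H^*(X,L;\R)$, unique up to gauge equivalence.} This is Theorem~2 of~\cite{ST2} (see Section~5.4 there), which I would simply invoke; the remaining paragraphs sketch how that theorem is proved.

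\emph{Existence.} Fix a closed $\gamma_W \in \mI_W\Ah^*(X,L;\Qh_W)$ with $[\gamma_W] = \Gamma_W$, represented by closed forms in $\Ah^*(X,L)$ spanning $\widehat H^*(X,L;\R)$, and let $S = \R$ in the degree forced by $\int_L b = s$. Build $b$ of degree $1$ by obstruction theory along the decreasing filtration of $\mJ_W A^*(L;R_W)$ coming from $\omega$-energy together with the $m_V$- and $m_S$-adic filtrations; the associated graded pieces are finite dimensional in each total degree, so the iteration converges. Suppose $b_{<N}$, of degree $1$ with $\int_L b_{<N} = s$, solves the Maurer-Cartan equation~\eqref{eq:bdchdfn} modulo the $N$-th level, with coefficient $c_{<N} \in \mJ_W$. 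The error
\[
\mathfrak{o}_N := \sum_{k,l\ge 0}\tfrac1{l!}\q_{k,l}(b_{<N}^{\otimes k};\gamma_W^{\otimes l}) - c_{<N}\cdot 1
\]
reduces, modulo the next level, to a $d$-closed degree-$2$ element of $(\mathrm{gr})\otimes A^*(L)$, closedness following from the $A_\infty$-relations for the $\q_{k,l}$ and the inductive hypothesis. Hodge-decompose it on $L$. Because $H^*(L;\R)\simeq H^*(S^n;\R)$, its harmonic part is concentrated in form-degrees $0$ and $n$. The $d$-exact part, present in form-degrees $1,\ldots,n$, is $d(\delta b)$ for a degree-$1$ correction $\delta b$ at the current filtration level — note $R_W$ carries formal variables of negative degree, so $\delta b$ genuinely has components in every form-degree — and $\delta b$ can be chosen (e.g.\ with co-exact primitive) so as not to disturb $\int_L b_{<N} = s$. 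The form-degree-$0$ harmonic part is a multiple of $1$ and is absorbed into $c_N$. The sole remaining obstruction is the form-degree-$n$ harmonic part, equivalently the class of $\int_L\mathfrak{o}_N$. Showing it vanishes is the crux: cyclicity of the $\q$-operations together with the unit property $\q_{k,l}(\ldots,1,\ldots) = 0$ for $(k,l)\ne(2,0)$ eliminates all terms with a boundary input, and the surviving terms $\sum_l\frac1{l!}\int_L\q_{0,l}(\gamma_W^{\otimes l})$ vanish for reasons special to cohomology-sphere Lagrangians — this is the essential use of the hypothesis $H^*(L;\R)\simeq H^*(S^n;\R)$, of $\int_L\gamma_W = 0$, and of the precise orientation and unit conventions for the $\q$-operations established in~\cite{ST1}. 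Correcting $b_{<N}$ and passing to the limit yields the desired $b$.

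\emph{Uniqueness.} Given two point-like bounding pairs over $W$, a preliminary gauge transformation reduces to the case where their bulk terms agree (both representing $\Gamma_W$). One then builds a gauge transformation carrying one bounding chain to the other by the same order-by-order scheme: at each stage the obstruction to extending the gauge equivalence lands in a cohomology group of $A^*(L)$ that vanishes once its form-degree-$0$ part is absorbed into the coefficient $c$ of~\eqref{eq:bdchdfn} and its form-degree-$n$ part is handled by the top-degree vanishing argument above, the intermediate degrees being killed outright since $H^*(L;\R)\simeq H^*(S^n;\R)$.

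I expect the decisive difficulty to be precisely the top-degree vanishing $\int_L \mathfrak{o}_N = 0$ and its analogue in the uniqueness step: the rest is the standard $A_\infty$ obstruction-theory bookkeeping, whereas this is where the geometry of the disk moduli spaces and the delicate sign and unit conventions genuinely enter. For the complete argument I defer to~\cite{ST2}.
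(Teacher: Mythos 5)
Your proposal is correct and takes the same route as the paper: the paper's entire proof is the one-line citation to Theorem~2 of~\cite{ST2} (together with the observation that, for $U = H^*(X)$, assumptions~\ref{assump:sfa} and~\ref{assump:b} are formal), which is exactly what you do after disposing of those formal points. Your sketch of the underlying obstruction-theoretic argument is a reasonable outline of the cited theorem's proof, and since you explicitly defer to~\cite{ST2} for the details, the proposal matches the paper's level of rigor for this result.
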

The open Gromov-Witten invariants $\ogw$ associated with such $(\gamma_W,b)$ coincide with those of~\cite{ST2}.

A \textbf{real setting} is a quadruple $(X,L,\omega,\phi)$ where $\phi:X\to X$ is an anti-symplectic involution such that $L\subset \fix(\phi)$. Whenever we discuss a real setting, we fix a connected subset $\J_\phi \subset \J$
consisting of $J \in \J$ such that $\phi^*J = -J.$ All almost complex structures of a real setting are taken from $\J_\phi.$ If we use virtual fundamental class techniques, we can treat any $\omega$-tame almost complex structure $J$ satisfying $\phi^*J = -J.$ In addition, whenever we discuss a real setting, we take $S_L \subset  H_2(X,L;\Z)$ with $\Im(\Id+\phi_*) \subset S_L,$ so $\phi_*$ acts on $\sly_L = H_2(X,L;Z)/S_L$ as $-\Id.$ Also, the formal variables $t_j$ have even degree. Given a real setting, let $H^{even}_\phi(X)$ (resp. $\Hh^{even}_\phi(X,L)$) denote the direct sum over $k$ of the $(-1)^{k}$-eigenspaces of $\phi^*$ acting on $H^{2k}(X;\R)$ (resp. $\Hh^{2k}(X,L;\R)$).
Extend the action of $\phi^*$ to $\L,Q,R,C,$ and $D,$ by taking
\begin{equation}\label{eq:phi*ext}
\phi^*T^\beta = (-1)^{\mu(\beta)/2}T^\beta, \qquad \phi^* t_i = (-1)^{\deg t_i/2}t_i, \qquad \phi^* s = -s.
\end{equation}
Elements $a \in \L,Q,R,C,D,$ and pairs thereof are called \textbf{real} if
\begin{equation}\label{eq:relt}
\phi^* a = -a.
\end{equation}
The main ingredient in the following is Theorem 3 of~\cite{ST2}. See Appendix~\ref{app:a} for details.
\begin{thm}\label{thm:cases}
Suppose $(X,L,\omega,\phi)$ is a real setting,
$\s$ is a spin structure,
and $n \not \equiv 1 \pmod 4.$ Moreover,
\begin{itemize}
\item
if $n \equiv 3 \pmod 4,$ assume $H^i(L;\R) \simeq H^i(S^n;\R)$ for $i \equiv 0,3 \pmod 4$;
\item
if $n \equiv 2 \pmod 4,$ assume $H^i(L;\R) \simeq H^i(S^n;\R)$ for $i \not \equiv 1 \pmod 4$;
\item
if $n \equiv 0 \pmod 4,$ assume $H^i(L;\R) \simeq H^i(S^n;\R)$ for $i \not \equiv 2 \pmod 4$.
\end{itemize}
Let $U = H^{even}_\phi(X)\subset H^*(X)$. Then,
\begin{enumerate}
\item\label{it:realthma}
$U\otimes Q_U \subset QH_U(X)$ is a Frobenius subalgebra.
\item\label{it:realthmb}
$W = \rho^{-1}(U) = \Hh^{even}_\phi(X,L)$.
\item\label{it:realthmc}
There exists a unique up to gauge equivalence real bounding pair $(\gamma_W,b)$ over $W$ such that $b$ is point-like.
\end{enumerate}
In particular, assumptions~\ref{assump:sfa} and~\ref{assump:b} hold.
\end{thm}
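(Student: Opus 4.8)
The plan is to establish the three assertions in turn, treating the first two by linear algebra governed by the anti-symplectic involution $\phi$ and the grading, and deducing the third from Theorem 3 of~\cite{ST2}. The geometric substance lies in that cited theorem; the work here is the bookkeeping of degrees, $\phi^*$-eigenvalues, and the sign conventions~\eqref{eq:phi*ext}--\eqref{eq:relt} needed to compare frameworks, together with an analysis of the exact sequence~\eqref{eq:yrho}.

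\emph{The Frobenius subalgebra.} First I would recall that $\phi$ induces an involution on each moduli space $\M_{l+1}(\beta)$, obtained by post-composing stable maps with $\phi$ and pre-composing with complex conjugation of the domain --- legitimate because $\phi^*J=-J$ for $J\in\J_\phi$ --- which intertwines the evaluation maps with $\phi$ and carries $\beta$ to $\phi_*\beta$. With the conventions~\eqref{eq:phi*ext} and the sign bookkeeping of~\cite{ST1}, this shows that $\phi^*$ is a graded algebra automorphism of $QH(X)$ bulk-deformed by any real closed form $\gamma_U$, and such a real representative of $\Gamma_U$ exists because $U$ is spanned by $\phi^*$-eigenvectors; hence $\phi^*$ is an automorphism of $QH_U(X)$. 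Next, since $Q_U$ lies in even degrees (the Maslov indices that occur being even) and~\eqref{eq:phi*ext} makes $\phi^*$ act on a degree-$d$ homogeneous element of $Q_U$ by $(-1)^{d/2}$, a short computation with structure constants --- comparing $\phi^*(\Delta_i\star_U\Delta_j)$ with $\phi^*\Delta_i\star_U\phi^*\Delta_j$ and matching both degrees and $\phi^*$-eigenvalues --- shows that $U\otimes Q_U$ is closed under $\star_U$. Finally, non-degeneracy of the Poincar\'e pairing on $U$ follows from Poincar\'e duality on $X$ and the fact that $\phi$ has mapping degree $(-1)^n$: given a nonzero homogeneous $\eta\in U$ and $\zeta$ with $\int_X\eta\wedge\zeta\ne 0$, the component of $\zeta$ outside $U$ pairs trivially with $\eta$, so $\eta$ already pairs non-trivially with the $U$-component of $\zeta$. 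Since $1\in U$, this gives assumption~\ref{assump:sfa}.

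\emph{The identification $W=\Hh^{even}_\phi(X,L)$.} Here I would analyze the long exact sequence~\eqref{eq:yrho}. The map $\rho$ of~\eqref{eq:rho} is degree preserving, $Q$-linear, and $\phi^*$-equivariant, and $\Ker\rho\cong\Coker\i_\R$ is concentrated in degree $n+1$ and at most one-dimensional, vanishing precisely when $[L]\ne 0$. When $\Ker\rho=0$ the identification is immediate from equivariance and the fact that $U$ sits in even degrees. When $\Ker\rho\ne 0$ one must locate the offending class and compute its $\phi^*$-eigenvalue; the hypotheses $H^i(L;\R)\cong H^i(S^n;\R)$ for the prescribed residues of $i$ modulo $4$ are precisely what force, in each of the three cases on $n\bmod 4$, either that $[L]\ne 0$ --- e.g.\ for $n\equiv 0\pmod 4$ the constraints on the Betti numbers give $\chi(L)\ne 0$, hence $[L]\cdot[L]=(-1)^n\chi(L)\ne 0$ --- or that $\Ker\rho$ sits in an even degree and carries the $\phi^*$-eigenvalue demanded by the definition of $\Hh^{even}_\phi(X,L)$ (that eigenvalue being $+1$, because $\i_\R$, and hence $y$, is $\phi^*$-equivariant for the trivial action on $\R[-n]$ since $\phi|_L=\Id$). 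Then any $w$ with $\rho(w)\in U$ has all of its odd-degree and wrong-eigenvalue components in $\Ker\rho$, whence $w\in\Hh^{even}_\phi(X,L)$, and the reverse inclusion is clear. This case-by-case check is the delicate part of the argument.

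\emph{The real point-like bounding pair.} With $W$ identified, I would invoke Theorem 3 of~\cite{ST2}. Its hypotheses --- a real setting, an honest spin structure $\s$, $n\not\equiv 1\pmod 4$, and exactly the sphere-like conditions on $H^*(L;\R)$ listed above --- are tailored so that the Maurer-Cartan equation~\eqref{eq:bdchdfn} can be solved recursively, in the $T$-degree and the order of the formal variables, by a real point-like bounding chain $b$ together with a real closed form $\gamma_W$ with $[\gamma_W]=\Gamma_W$, uniquely up to gauge equivalence among real bounding pairs: at each step the obstruction either lies in a cohomology group of $L$ killed by the assumption on $H^*(L;\R)$ or cancels for parity reasons tied to $n\bmod 4$ once all auxiliary data are chosen $\phi$-equivariantly. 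The main obstacle, and the content of Appendix~\ref{app:a}, is the translation: identifying the space of bulk deformations used in~\cite{ST2} with $\Hh^{even}_\phi(X,L)$ --- via the identification just made --- together with its shift $W[2]\oplus S[1]$, matching the reality conditions with~\eqref{eq:phi*ext}--\eqref{eq:relt}, matching point-likeness with $\int_L b=s$, and transporting the existence and uniqueness statement through these identifications into the language of Definition~\ref{dfn:bdpair}. Once this is done, the final sentence follows at once: assumption~\ref{assump:sfa} is the Frobenius subalgebra just established, and since every point-like bounding chain is separated, assumption~\ref{assump:b} holds as well.
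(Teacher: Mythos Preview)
Your approach is the same as the paper's: part~\ref{it:realthma} is the content of Corollary~\ref{cor:realfrob} (proved there via the explicit sign computation of Lemma~\ref{lm:qpinvt}, which your ``short computation with structure constants'' sketches), part~\ref{it:realthmc} is a direct citation of \cite[Theorem~3]{ST2}, and part~\ref{it:realthmb} reduces to locating $\Ker\rho=\langle y(1)\rangle$ inside $\Hh^{even}_\phi(X,L)$ via the $\phi^*$-equivariance of $\i_\R$ (and hence of $y$).

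On part~\ref{it:realthmb} you are actually more thorough than the paper, which only writes out the case $n\equiv 3\pmod 4$. You correctly identify that for $n$ even the degree $n+1$ is odd, so one needs $[L]\ne 0$ rather than an eigenvalue check, and your self-intersection argument $[L]\cdot[L]=(-1)^n\chi(L)\ne 0$ for $n\equiv 0\pmod 4$ is valid and fills a gap the paper leaves. One caution: for $n\equiv 2\pmod 4$ your asserted dichotomy is not justified by the same argument --- the hypotheses leave $b_i(L)$ for $i\equiv 1\pmod 4$ unconstrained, so $\chi(L)=2-\sum_{i\equiv 1\,(4)}b_i$ can vanish and the self-intersection does not immediately force $[L]\ne 0$. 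The paper's printed proof does not address this case either, so this is a shared soft spot rather than a divergence in approach.
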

The open Gromov-Witten invariants $\ogw$ associated with such $(\gamma_W,b)$ again coincide with those of~\cite{ST2}.

\subsubsection{Special case: projective space}\label{sssec:cpn}
Consider the special case $(X,L)=(\P^n,\RP^n)$ with $\omega=\oFS$ the Fubini-Study form, $J=J_0$ the standard complex structure, and $n$ odd. We normalize $\omega$ so that $\int_{\C P^1} \omega = 1.$ Take $\sly = H_2(X,L;\Z)$ and identify $\sly$ with $\Z$ in such a way that $\beta \in \sly$ with $\omega(\beta)\geq 0$ are identified with non-negative integers. Similarly, identify $H_2(X;\Z)$ with $\Z$ in such a way that $\hat\beta \in H_2(X;\Z)$ with $\omega(\hat \beta) \geq 0$ are identified with non-negative integers. So, the map $\varpi : H_2(X;\Z) \to \sly$ is given by multiplication by $2.$
By Theorem~\ref{thm:esphere}, our results apply with $W = \Hh^*(X,L;\R).$
Write $\g_j = [\omega^j] \in \Hh^*(X,L;\R)$ and $\ga_j = [\omega^j] \in H^*(X;\R).$ We take the map $P_\R$ of~\eqref{eq:iPR} to be the unique left inverse of $\bar y$ such that $W' = \Ker(P_\R|_W) = \spa\{\g_j\}_{j= 0}^n$ and this determines the map $P.$ The following is a consequence of Corollary~\ref{cor:owdvv}, the Kontsevich-Manin axioms~\cite{KontsevichManin}, and analogous axioms for open Gromov-Witten invariants given by Proposition~\ref{axioms}. The proof is given in Section~\ref{comp}.
\begin{thm}\label{recursion}\label{thm:recursion}
The invariants $\ogwb_{\beta,k}$ satisfy the following recursions.
\begin{enumerate}
	\item\label{reduce_smallest} 
	Let $l\ge 2$ and
let $I:=\{j_3,\cdots, j_l\}$ (possibly $I=\emptyset$).
Then
	\begin{align*}
	\ogwb_{\beta,k}&(\g_{j_1},\ldots,\g_{j_l})=
	\ogwb_{\beta,k}(\g_{j_1-1},\g_{j_2+1},\g_{j_3},\ldots,\g_{j_l})+\\
	&+\sum_{\substack{\varpi(\hat\beta)+\beta_1=\beta\\I_1\sqcup I_2= I}}\sum_{i=0}^n(-1)^{\frac{(n+1)\hat\beta}{2}}\cdot \Big(\GW_{\hat\beta}(\ga_1,\ga_{j_2},\ga_{I_1},\ga_i)\ogwb_{\beta_1,k}(\g_{n-i},\g_{j_1-1},\g_{I_2})-\\ 
	&\hspace{9em}-\GW_{\hat\beta}(\ga_1,\ga_{j_1-1},\ga_{I_1},\ga_i)\ogwb_{\beta_1,k}(\g_{n-i},\g_{j_2},\g_{I_2}) \Big)+\\ 
&+\sum_{\substack{\beta_1+\beta_2=\beta\\k_1+k_2=k\\I_1\sqcup I_2=I}}\binom{k}{k_1}\Big(
\ogwb_{\beta_1,{k_1}}(\g_1,\g_{j_1-1},\g_{I_1}) \ogwb_{\beta_2,{k_2+1}}(\g_{j_2},\g_{I_2})-\\
&\quad-
\ogwb_{\beta_1,{k_1}}(\g_1,\g_{j_2},\g_{I_1}) \ogwb_{\beta_2,{k_2+1}}(\g_{j_1-1},\g_{I_2})
\Big).
	\end{align*}
	\item\label{reduce_bd} 
	Let $k\ge 2$ and let $I:=\{j_1,\ldots,j_l\}$. Then
	\begin{align*}
(-1)^{\frac{n+3}{2}}&
	\ogwb_{\beta,k}(\g_{j_1},\ldots,\g_{j_l})=\\
& =
2\sum_{\substack{\varpi(\hat\beta)+\beta_1=\beta+1\\I_1\sqcup I_2=I}}\sum_{i=0}^n (-1)^{\frac{(n+1)\hat\beta}{2}} \GW_{\hat\beta}(\ga_1,\ga_{n},\ga_{I_1},\ga_i) \ogwb_{\beta_1,{k-1}}(\g_{n-i},\g_{I_2})-\\ 
	&-2\sum_{\substack{\beta_1+\beta_2=\beta+1\\ 2\le\beta_1\le \beta\\ k_1+k_2=k-2\\I_1\sqcup I_2=I}}\binom{k-2}{k_1} \ogwb_{\beta_1,{k_1}}(\g_1,\g_{n},\g_{I_1}) \ogwb_{\beta_2,{k_2+2}}(\g_{I_2})+\\
	&+ 2\sum_{\substack{\beta_1+\beta_2=\beta+1\\ 1\le\beta_1\le \beta\\ k_1+k_2=k-2\\I_1\sqcup I_2=I}}\binom{k-2}{k_1}\ogwb_{\beta_1,{k_1+1}}(\g_1,\g_{I_1}) \ogwb_{\beta_2,{k_2+1}}(\g_{n},\g_{I_2}).
	\end{align*}
\end{enumerate}
From the definition, one computes $\ogw_{1,2}= 2$ for an appropriately chosen relative spin structure.
It then follows from the open WDVV equations that
\[
\ogw_{1,1}(\g_{\frac{n+1}{2}})=0,\quad
\ogw_{1,0}(\g_n)=(-1)^{\frac{n+3}{2}}.
\]
\end{thm}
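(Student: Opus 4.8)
The plan is to obtain the two recursions by evaluating the open WDVV equations of Corollary~\ref{cor:owdvv} on suitably chosen classes $u,v,w$ and extracting coefficients, and then to pin down the base values. For $(\C P^n,\RP^n)$ with $n$ odd one has $H^*(\RP^n;\R)\simeq H^*(S^n;\R)$, so Theorem~\ref{thm:esphere} provides a point-like bounding pair $(\gamma_W,b)$ over $W=\Hh^*(X,L;\R)$ with $U=H^*(X;\R)$, and assumptions~\ref{assump:sfa} and~\ref{assump:b} hold; since $[L]=[\RP^n]=0$ in $H_n(\C P^n;\R)=0$, both~\eqref{eq:cor1} and~\eqref{eq:cor2} apply. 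With the prescribed $P_\R$, for which $W'=\spa\{\g_j\}_{j=0}^n$, reading the coefficient of $T^\beta s^k\prod_it_{j_i}$ in a derivative of~\eqref{eq:Ob} extracts the $\ogwb_{\beta,k}$: an $s$-derivative lowers $k$ by one, a $\g_a$-derivative extracts a $\g_a$-insertion, and the Leibniz rule applied to a product of two such series produces precisely the sums over $I_1\sqcup I_2=I$ and the binomials $\binom{k}{k_1}$ (from distributing the $k$ copies of $s$); likewise coefficients of~\eqref{eq:Phi} produce the closed invariants $\GW_{\hat\beta}$.

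Next I assemble the closed-string input. The third derivatives of $\Phi=\Phi_U$ appearing in open WDVV are, up to Novikov and factorial factors, the invariants $(-1)^{w_\s(\hat\beta)}\GW_{\hat\beta}(\ga_a,\ga_b,\ga_c,\dots)$ of $\C P^n$, with $(-1)^{w_\s(\hat\beta)}=(-1)^{(n+1)\hat\beta/2}$ by~\eqref{eq:qemptyset}. The Kontsevich--Manin point-mapping axiom isolates the $\hat\beta=0$ contribution as the classical triple intersection $\int_X\ga_a\cup\ga_b\cup\ga_c=\delta_{a+b+c,n}$, and the fundamental-class axiom discards insertions of $\ga_0=1$ in positive degree. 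In the contracted sums $\sum_{l,m}(\cdots)\,g^{lm}\,(\cdots)$ of open WDVV one has $g^{lm}=\delta_{l+m,n}$ for $\C P^n$, so the $\hat\beta=0$ part of a closed factor acts on the neighbouring open factor by \emph{merging}: contracted against $\delta_{l+m,n}\,\delta_{m+\bar w+\bar v,n}$ it turns $\d_u\d_{\g_l}\Ob$ into $\d_u\d_{\g_{\bar w+\bar v}}\Ob$. The open analogues in Proposition~\ref{axioms}---effectivity, dimension, and the fundamental-class and divisor axioms for $\ogwb$---play the parallel role on the open factors: the fundamental-class axiom kills $\g_0$-insertions (so the case $j_1=1$, where $\g_{j_1-1}=\g_0$, is benign), and effectivity and dimension bound $\beta,k,l$, which will give termination.

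For part~(a) I substitute $u=\g_{j_1-1}$, $v=\g_{j_2}$, $w=\g_1$ into~\eqref{eq:cor1}, carry the remaining $\g_{j_3},\dots,\g_{j_l}$ along, and read off the coefficient of $T^\beta s^k\prod_{i\in I}t_{j_i}$. The $\hat\beta=0$ part of the closed factor in the first contracted sum merges $\g_{j_1-1}$ with $\bar w+\bar v=1+j_2$, producing $\ogwb_{\beta,k}(\g_{j_1-1},\g_{j_2+1},\g_{j_3},\dots)$; that in the second merges $\bar u+\bar w=(j_1-1)+1$ with the contracted index, producing the sought $\ogwb_{\beta,k}(\g_{j_1},\dots,\g_{j_l})$; the $\hat\beta>0$ parts give the two $\GW_{\hat\beta}\cdot\ogwb$ sums (the index $i$ running over the contracted index and carrying $(-1)^{(n+1)\hat\beta/2}$); and the open products $\d_u\d_s\Ob\cdot\d_w\d_v\Ob$ and $\d_u\d_w\Ob\cdot\d_v\d_s\Ob$, after relabelling $\beta_1\leftrightarrow\beta_2$, give the last two sums, with the $\g_1$ retained as an insertion. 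Part~(b) is obtained analogously from~\eqref{eq:cor2} with $w=\g_1$ the divisor and $v=\g_n=[\omega^n]$ the point class: since $\ga_1\cup\ga_n=0$ in $H^*(\C P^n)$ no $\hat\beta=0$ merge survives, whence no ``main term'' appears, and the overall factor $2$ together with the Novikov shift $\beta\mapsto\beta+1$ is the contribution of the two degree-one disks into which $\RP^1\subset\C P^1$ divides the unique complex line---this is the base value $\ogw_{1,2}=2$, entering through the leading term of $\d_s^2\Ob$---while the extremal terms $i=0,n$ in the $\sum_i\GW_{\hat\beta}(\ga_1,\ga_n,\cdots,\ga_i)$ drop by the fundamental-class axioms. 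Finally, $\ogw_{1,2}=2$ is the direct count: the moduli space of degree-one disks with two marked boundary points constrained to two generic points of $L$ consists of the two disks into which $\RP^1$ divides the unique complex line through those points, the signs being fixed by the relative spin structure. Feeding this into~\eqref{eq:cor2} at Novikov degree $T^1$ with $\{v,w\}=\{\g_1,\g_{(n-1)/2}\}$---where a $\hat\beta=0$ merge produces $\pm\ogw_{1,1}(\g_{(n+1)/2})$ and all other terms vanish by the dimension axiom---forces $\ogw_{1,1}(\g_{(n+1)/2})=0$; and a similar specialization, together with the wall-crossing formula of Theorem~\ref{prop:wallcross} (or, directly, the count of the unique real line through a generic point and its conjugate, exactly one of whose two disks contains that point in its interior), gives $\ogw_{1,0}(\g_n)=(-1)^{(n+3)/2}$, the sign being $(-1)^{w_\s}$ of the line class.

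The main obstacle is the bookkeeping, and above all checking that every term of open WDVV not displayed in the recursions genuinely vanishes or cancels. This uses the special structure of $(\C P^n,\RP^n)$: the explicit $\hat\beta=0$ parts of $\Ob$ and $\Phi$ (the classical cup-product structure of $\C P^n$, in which $\ga_j=\g_j=0$ for $j>n$, so the recursion's ``main term'' dies once an index exceeds $n$), the separatedness of the point-like $b$, and the form $\Coker\i\simeq R_W[-n-1]$ (so all $\ogwb_{\beta,k}$, including $k=0$, are defined). Reconciling the sign conventions of Corollary~\ref{cor:owdvv} with those of the Kontsevich--Manin and open axioms is delicate but routine. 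Once the recursions are established, effectivity and dimension show each step strictly decreases a well-founded quantity---roughly $(\beta,\sum_i j_i)$ for~(a) and $(\beta,k)$ for~(b)---so, together with the three base values, the system determines every $\ogwb_{\beta,k}(\g_{j_1},\dots,\g_{j_l})$, which is the content of Corollary~\ref{cor:computable}.
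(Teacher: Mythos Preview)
Your approach is the paper's: derive~(a) from~\eqref{eq:cor1} with $\{u,v,w\}=\{\g_{j_1-1},\g_{j_2},\g_1\}$ and~(b) from~\eqref{eq:cor2} with $\{v,w\}=\{\g_1,\g_n\}$, extract the $T^\beta s^k\prod t_{j_i}$ coefficients, and use the axioms of Proposition~\ref{axioms} and Kontsevich--Manin. Your identification of the two $\hat\beta=0$ merges producing the two ``main terms'' in~(a) is correct, and your remark that no $\hat\beta=0$ merge survives in~(b) because $\ga_1\cup\ga_n=0$ is also correct.

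There is, however, a concrete bookkeeping error in your account of~(b). In the product $\d_s^2\Ob\cdot\d_1\d_n\Ob$, the summand carrying the full $\ogwb_{\beta,k}(\g_I)$ indeed comes from $\d_s^2\Ob$ (after the further $\d_s^{k-2}\d_I$), but its coefficient at $T^{\beta+1}$ is the $T^1$-constant of the \emph{other} factor, namely $\ogwb_{1,0}(\g_1,\g_n)=\tfrac12\,\ogwb_{1,0}(\g_n)$ by the divisor axiom. Dividing by this $\tfrac12(-1)^{(n+3)/2}$ is what produces both the sign on the left and the overall $2$ on the right of~(b). The invariant $\ogw_{1,2}=2$ does not enter this coefficient extraction; your attribution of the factor $2$ to ``the leading term of $\d_s^2\Ob$'' is mistaken.

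This also means the derivation of~(b) in its stated form \emph{presupposes} $\ogwb_{1,0}(\g_n)=(-1)^{(n+3)/2}$, so the base values must be established first, not last. The paper obtains this from~\eqref{eq:cor2} with $v=\g_1,w=\g_n$ at $T^{2}$, $s=t_j=0$: one finds $(-1)^{w_\s(1)}\GW_1(\ga_1,\ga_n,\ga_n)\cdot\ogwb_{0,1}(\g_0)-\ogwb_{1,0}(\g_1,\g_n)\cdot\ogwb_{1,2}=0$, and plugging in $\GW_1(\ga_1,\ga_n,\ga_n)=1$, $\ogwb_{0,1}(\g_0)=-1$, $\ogwb_{1,2}=2$, and the divisor axiom gives the claimed value. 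No wall-crossing is used; your parenthetical geometric count is heuristic, not the argument.
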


\begin{cor}\label{cor:computable}
The open Gromov-Witten invariants of $(\P^n,\RP^n)$ are entirely determined by the open WDVV equations, the axioms of $\ogwb$, the wall-crossing formula Theorem~\ref{prop:wallcross}, the closed Gromov-Witten invariants of $\C P^n$, and $\ogw_{1,2}= 2$.
\end{cor}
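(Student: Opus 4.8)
The plan is to deduce the corollary from Theorem~\ref{thm:recursion} by a well-founded induction. All the ingredients of the two recursions there, and the derivation of the base values $\ogw_{1,1}(\g_{(n+1)/2})=0$ and $\ogw_{1,0}(\g_n)=(-1)^{(n+3)/2}$, use only the open WDVV equations (Corollary~\ref{cor:owdvv}), the axioms of $\ogwb$ (Proposition~\ref{axioms}), the wall-crossing formula (Theorem~\ref{prop:wallcross}), the closed Gromov-Witten invariants of $\P^n$, and the single datum $\ogw_{1,2}=2$; so it suffices to show that these recursions, the axioms, and the wall-crossing formula together pin down all $\ogwb_{\beta,k}$.

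First I would reduce to the numbers $\ogwb_{\beta,k}(\g_{j_1},\ldots,\g_{j_l})$ with $0\le j_i\le n$. Since $\ogwb_{\beta,k}$ is multilinear and $\{\g_0,\ldots,\g_n,\Gamma_\diamond\}$ is a basis of $W=\Hh^*(\P^n,\RP^n;\R)$, it is enough to evaluate on tuples of basis elements, and the wall-crossing formula, $\ogwb_{\beta,k}(\Gamma_\diamond,\eta_1,\ldots,\eta_l)=-\ogwb_{\beta,k+1}(\eta_1,\ldots,\eta_l)$, lets us exchange every interior $\Gamma_\diamond$ for a boundary constraint, leaving only tuples of the $\g_j$. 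Moreover, the fundamental class axiom discards any insertion of $\g_0$ --- for $\beta\neq\beta_0$ the resulting invariant vanishes --- and the divisor axiom, using $\omega(\beta)=\beta/2\neq 0$ for $\beta\neq\beta_0$ together with an explicit correction term, removes any insertion of $\g_1$ at the cost of one interior marked point. Hence I may assume all $j_i\ge 2$, or $l\le 1$.

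Then I would induct on the lexicographically ordered pair $(\beta,\,k+l)$, with $-\sum_i j_i^2$ as a tiebreaker inside a fixed $(\beta,k,l)$. The base cases are: $\beta=\beta_0$, where the operations $\q$ degenerate to the classical ones so that the degree-zero axiom for $\ogwb$ fixes every $\ogwb_{\beta_0,k}$; $\beta=1$, where the dimension constraint leaves only $\ogw_{1,2}=2$, $\ogw_{1,1}(\g_{(n+1)/2})=0$, $\ogw_{1,0}(\g_n)=(-1)^{(n+3)/2}$, all among the given or derived data; and, in general, invariants killed by the dimension constraint, in particular those in which some index would exceed $n$, since $\g_{n+1}=\g_{n+2}=\cdots=0$. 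For the inductive step, apply the second recursion of Theorem~\ref{thm:recursion} when $k\ge 2$ and the first when $k\le 1$ and $l\ge 2$; after discarding $\g_0$-insertions and removing $\g_1$-insertions by the axioms as above, each term on the right is a product of a closed Gromov-Witten invariant of $\P^n$ (taken as input) with open invariants of strictly smaller complexity, or an open invariant of strictly smaller complexity, or zero --- except that the ``weight-shifting'' term $\ogwb_{\beta,k}(\g_{j_1-1},\g_{j_2+1},\g_{j_3},\ldots,\g_{j_l})$ keeps the same $(\beta,k,l)$. The leftover cases $l\le 1$, $k\le 1$, $\beta\ge 2$ are handled by applying the first recursion to a two-interior-point invariant, shifting weight until a $\g_0$- or $\g_1$-insertion appears, and then solving the divisor axiom for the surviving one-interior-point invariant.

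I expect the main obstacle to be verifying that the complexity is well-founded and strictly decreases, which comes down to the weight-shifting term and the boundary-constraint exchange. For the weight-shifting term one uses $j_1\le j_2$ to see $\sum_i j_i^2$ strictly increases, and one must check that iterating the shift terminates: $\sum_i j_i$ is pinned by the dimension constraint while $\g_{n+1}=0$ makes the shifted term vanish outright once the weight passes index $n$, and a $\g_0$ appears otherwise (one should also confirm the left-hand side survives with a nonzero coefficient after collecting the degree-$\beta_0$ Gromov-Witten terms on the right, which merely reproduce multiples of the weight-shifted term). For the second recursion one checks that in each of its sums the open factors carry strictly smaller degree, or, after the divisor reduction, strictly smaller $k+l$, the one exception being a term $\ogwb_{1,1}(\g_1)\,\ogwb_{\beta,k-1}(\g_n,\g_I)$ whose first factor vanishes by the dimension constraint for $n\ge 3$. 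Finally, the same dimension constraint bounds $k+l$ for each fixed $\beta$ among invariants in reduced form (all $j_i\ge 2$), so the induction terminates and the corollary follows.
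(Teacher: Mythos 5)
Your argument is essentially the paper's: eliminate $\Gamma_\diamond$ by wall-crossing and $\g_0,\g_1$ by the unit and divisor axioms, then induct on a lexicographic order combining $\beta$ with the number of insertions (the paper uses $(\beta,k,l)$ directly, you use $(\beta,k+l)$ with a $\sum j_i^2$ tiebreaker, which comes to the same thing), invoking the two recursions of Theorem~\ref{thm:recursion} together with the degree axiom to control termination. The one superfluous step is your ``leftover cases $l\le 1$, $k\le 1$, $\beta\ge 2$'': after the reduction, all interior insertions have degree at least $4$, and the degree axiom then forces $k\ge 2$ whenever $\beta\ge 2$ and $l\le 1$, so this case never arises and the backward weight-shifting maneuver you sketch for it can simply be dropped.
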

Moreover, the recursion process readily implies Corollary~\ref{cor:integer}, which says the invariants are rational numbers with denominator a power of $2$. The denominators arise from the divisor axiom.

\subsubsection{Sample values for projective space}\label{ssec:sample}
We continue with the setting and notation of the preceding section. Below, we write $\ogw^n_{\beta,k}$ for invariants of $(\P^n, \R P^n).$

\begin{table}[ht]
\centering
\begin{tabular}{|l| l| l| c}
    $n$ & $\beta$ & $k$  & $\ogw_{\beta,k}^n$ \\
\hline
        $3$ & $3$ & $6$ & $-2$\\
    $3$ & $5$ & $10$ & $90$\\
    $3$ & $7$ & $14$ & $-29178$\\
		    $5$ & $5$ & $8$ & $2$\\
    $5$ & $9$ & $14$ & $1974$\\
    $5$ & $13$ & $20$ & $42781410$\\
		$5$ & $17$ & $26$ & $7024726794150$\\
    $7$ & $7$ & $10$ & $-2$\\
    $7$ & $13$ & $18$ & $35498$\\
    $7$ & $19$ & $26$ & $-40083246650$\\
		$7$ & $25$ & $34$ & $680022893749060370$\\
		$9$ & $9$ & $12$  & $2$\\
		$9$ & $17$ & $22$ & $587334$\\
		$9$ & $25$ & $32$ & $31424766229890$\\
		$9$ & $33$ & $42$ & $49920592599715322910150$\\
		$15$ & $15$ & $18$ & $-2$\\
		$15$ & $29$ & $34$ & $2247512778$
\end{tabular}
\caption{Sample values with only boundary constraints}\label{values}
\end{table}

The invariants $\ogw^3_{\beta,k}$ coincide with the analogous invariants of Welschinger~\cite{Welschinger2} up to a factor of $\pm 2^{1-l}$ by Theorem~5 of~\cite{ST2}. We have verified this for small values of $n,l,\beta,$ by comparing the tables in~\cite{BrugalleMikhalkin1,BrugalleGeorgieva} with computer calculations based on Theorem~\ref{thm:recursion}.

On the other hand, we are not aware of a definition of open Gromov-Witten invariants generalizing Welschinger's invariants with $k > 0$ real point constraints in dimensions $n > 3$ besides the invariants $\ogw_{\beta,k}$. In Table~\ref{values}, we present the results of computer calculations based on Theorem~\ref{thm:recursion}, which show these invariants are non-trivial.

For $i_1,\ldots,i_l,$ odd, the invariants $\ogw^n_{\beta,0}(\Gamma_{i_1},\ldots,\Gamma_{i_l})$ coincide with the analogous invariants of Georgieva~\cite{Georgieva} up to a factor of $2^{1-l}$ by Theorem~6 of~\cite{ST2}. We have verified this for small values of $n,l,\beta,$ by comparing the tables in~\cite{GZ0} with computer calculations based on Theorem~\ref{thm:recursion}.

On the other hand, if one or more of $i_1,\ldots,i_l$ is even, the invariants of~\cite{Georgieva} vanish, while the invariants $\ogw^n_{\beta,0}(\Gamma_{i_1},\ldots,\Gamma_{i_l})$ are often non-vanishing. See Tables~\ref{table:dim5} and~\ref{table:dim7}, which present results of computer computations based on Theorem~\ref{thm:recursion}.

\begin{table}[ht]
\centering
\begin{tabular}{l|c c c c c}
\backslashbox{$l_2$}{$\beta$}& 1 & 3 & 5 & 7 & 9 \\
\hline
0 & $\frac{1}{8}$ & $-\frac{43515}{512}$ & $\frac{601224741985}{32768}$ & $-\frac{116238642273889476915}{2097152}$ & $\frac{140294698313130485254672005681}{134217728}$ \\
1 & $\frac{1}{2}$ & $-\frac{255}{32}$ & $\frac{602002259}{1024}$ & $-\frac{113202029455499631}{131072}$ & $\frac{20113658490667274313737811}{2097152}$ \\
2 & 0 & $-\frac{11}{32}$ & $\frac{42354213}{2048}$ & $-\frac{1929681317485627}{131072}$ & $\frac{797025207129184980573997}{8388608}$ \\
3 & 0 & $\frac{3}{8}$ & $\frac{88007}{128}$ &$-\frac{136049411385}{512}$ & $\frac{526360518604567156539}{524288}$ \\
\end{tabular}
\caption{Values of $\ogw^5_{\beta,0}(\g_2^{\otimes l_1}\otimes \g_4^{\otimes l_2})$. The value of $l_1$ is determined by $\beta$ and $l_2$.}\label{table:dim5}
\end{table}

\begin{table}[ht]
\centering
\begin{tabular}{l|c c c c c}
\backslashbox{$l_2$}{$\beta$}& 1 & 3 & 5 & 7 \\
\hline
0 & $-\frac{1}{32}$ & $-\frac{40990173}{8192}$ & $-\frac{679329355023877929}{2097152}$ & $-\frac{304348375415626014616826203269}{536870912}$ \\
1 & $-\frac{1}{2}$ & $-\frac{23229}{512}$ & $-\frac{32941896518265}{131072}$ & $-\frac{3267223276596315843861141}{33554432}$ \\
2 & 0 & $-\frac{11}{32}$ & $-\frac{2874828463}{8192}$ & $-\frac{54161348711499099171}{2097152}$ \\
3 & 0 & 0 & $-\frac{338633}{512}$ & $-\frac{1318018642164857}{131072}$ \\
\end{tabular}
\caption{Values of $\ogw^7_{\beta,0}(\g_2^{\otimes l_1}\otimes \g_6^{\otimes l_2})$. The value of $l_1$ is determined by $\beta$ and $l_2$. Here we chose to take no constraints in $\g_4$.}\label{table:dim7}
\end{table}

The reliance on general bounding chains is the main difference between the invariants $\ogw_{\beta,k}$ and the invariants of Welschinger and Georgieva. In the situations where the invariants $\ogw_{\beta,k}$ coincide with Welschinger's and Georgieva's invariants, bounding chains become explicit: either zero or an $n$-form with integral $s.$ However, the general construction of bounding chains in Theorems~2 and~3 of~\cite{ST2}, upon which we rely in Theorems~\ref{thm:esphere} and~\ref{thm:cases}, uses an inductive argument based on the obstruction theory of~\cite{FOOO}. It is difficult to give an explicit description of the resulting bounding chain. Nonetheless, the results of this paper allow explicit calculations of the invariants~$\ogw_{\beta,k}.$

Write $n = 2r+1.$ To illustrate the geometric significance of the relative cohomology $\Hh^*(X,L)$ and the wall-crossing formula, we consider the real analog of the classical result that there are $r+1$ complex lines in $\P^n$ through $4$ generic complex subspaces of dimension $r$. Real lines correspond to conjugate pairs of holomorphic disks of degree $1.$ When $\beta = 1$, it is not hard to see that the invariants $\ogw_{\beta,k}$ enumerate disks of degree $1$; the bounding chain plays a role only when $\beta > 1.$ Recall that the class $\Delta_{r+1} = [\omega^{r+1}] \in H^{n+1}(\P^n)$ is Poincar\'e dual to the class of an $r$ plane in $H_{n-1}(\P^n).$ However, the class $\Gamma_{r+1} = [\omega^{r+1}] \in \Hh^{n+1}(\P^n,\R P^n)$ is not Poincar\'e dual to the class of an $r$ plane in $H_{n-1}(\C P^n\setminus \R P^n).$ Rather, the classes
\[
\lambda^{\pm} = \Gamma_{r+1} \mp \frac{1}{2}\Gamma_\diamond
\]
are Poincar\'e dual to two distinct classes of $r$ planes in $H_{n-1}(\C P^n\setminus\R P^n).$ We have
\[
\ogw^n_{1,2} = 2, \qquad \ogw^n_{1,1}(\Gamma_{r+1}) = 0, \qquad \ogw_{1,0}^n(\Gamma_{\!{r+1}},\Gamma_{\!{r+1}}) = -\frac{1}{2},
\]
The first two values are stated in Theorem~\ref{recursion} and the third is a consequence of equation~\ref{reduce_smallest} and the divisor axiom. Applying the wall-crossing formula of Theorem~\ref{prop:wallcross}, we obtain
\[
 \ogw^n_{1,0}(\Gamma_\diamond,\Gamma_\diamond) = 2, \qquad \ogw^n_{1,0}(\Gamma_{r+1},\Gamma_\diamond) = 0.
\]
Thus, it follows by multi-linearity that
\begin{equation*}
 \ogw^n_{1,0}(\lambda^\pm, \lambda^\pm) = 0, \qquad \ogw^n_{1,0}(\lambda^\pm,\lambda^\mp) = -1
\end{equation*}
When $n \equiv 1 \pmod 4$ the classes $\lambda^\pm$ are anti-conjugate, so the Poincar\'e dual of a conjugate pair of complex $r$ planes is $2\Gamma_{r+1} = \lambda^+ + \lambda^-.$ Thus, an invariant count of real lines through two conjugate pairs of complex $r$ planes is half of the corresponding disk count:
\[
\frac{1}{2} \ogw_{1,0}(2\Gamma_{r+1},2\Gamma_{r+1}) = -1.
\]
As expected, this agrees with the complex count mod $2.$ When $n \equiv 3 \pmod 4,$ the classes $\lambda^\pm$ are conjugation invariant, so the Poincar\'e dual of a conjugate pair of complex $r$ planes may be either $2\lambda^+$ or $2\lambda^-.$ Thus, there are four possible invariant counts of real lines through two conjugate pairs of complex $r$ planes:
\[
\frac{1}{2}\ogw_{1,0}(2\lambda^\pm,2\lambda^\pm) = 0 \quad\text{and}\quad \frac{1}{2}\ogw_{1,0}(2\lambda^\pm,2\lambda^\mp) = -2.
\]
Again, these invariants agree with the complex count mod $2.$
In~\cite[Example 12]{Kollar}, Koll\'ar constructs a generic configuration of two conjugate pairs of complex $r$ planes of the same class, such that there is no real line that intersects them. This shows that the vanishing invariant is optimal for such pairs. However, for conjugate pairs of complex $r$ planes of different classes, we obtain a positive lower bound of $2.$

\subsubsection{Regularity assumptions}\label{sssec:reg}
We proceed with the regularity assumptions set in~\cite{ST1}, namely, that moduli spaces are smooth orbifolds with corners and the evaluation maps at the zero point are proper submersions. To that we add in Section~\ref{sec:geod} the assumption that the zero evaluation maps remain submersions after restricting to a subspace of open stable maps where certain marked points are constrained to lie on a geodesic of the hyperbolic metric of the disk.

In~\cite[Example 1.5 and Remark 1.6]{ST1} we show that the regularity assumptions hold
for homogeneous spaces. The additional assumption concerning moduli spaces of open stable maps with geodesic constraints on marked points holds for homogeneous spaces as well. Indeed, suppose $J$ is integrable and suppose there exists a Lie group $G_X$ that acts transitively on $X$ by $J$-holomorphic diffeomorphisms. Furthermore, suppose there exists a subgroup $G_L \subset G_X$ that preserves $L$ and acts transitively on $L.$ Let $\M_{k,l;a,b}(\beta)\subset\M_{k,l}(\beta)$ be a moduli space with a geodesic constraint, as defined in Section~\ref{sec:geod}. Then $G_L$ acts on $\M_{k,l;a,b}(\beta)$ as well, and the evaluation maps are equivariant.
Since $G_L$ acts transitively on $L$, we see that $evb_0$ remains a submersion after restricting to $\M_{k,l;a,b}(\beta).$

In particular, $(\P^n,\RP^n)$ with the standard symplectic and complex structures, or more generally, Grassmannians, flag varieties and products thereof, satisfy our regularity assumptions.
Using the theory of the virtual fundamental class from ~\cite{Fukaya,Fukaya2,FOOOtoricI,FOOOtoricII,FOOO1} or~\cite{HoferWysockiZehnder,HoferWysockiZehnder1,HoferWysockiZehnder2,HoferWysockiZehnder3,LiWehrheim}, our results extend to general target manifolds.

\subsection{Acknowledgments}
The authors would like to thank M. Abouzaid, D. Auroux, V.~Kharlamov, N.~Sheridan, E.~Shustin, and A. Zernik, for helpful conversations.
The authors were partially supported by ERC starting grant 337560. The first author was partially supported by ISF Grant 569/18.
The second author was partially supported by the Canada Research Chairs Program and by NSF grants Nos. DMS-163852 and DMS-1440140.

\section{Background}

\subsection{Integration properties}
In the following, $M,N$ and $P$ are orbifolds with corners. We follow the conventions of~\cite{ST4} concerning smooth maps and orientations of orbifolds with corners except that here we require by definition that a proper submersion satisfies the additional property of strong smoothness. Let $f: M\to N$ be a proper submersion with fiber dimension $\rdim f =r,$ and let $\Upsilon$ be a graded-commutative algebra over $\R$. Denote by
\[
f_* : A^*(M;\Upsilon) \to A^*(N;\Upsilon)[-r]
\]
the push-forward of forms along $f$, that is, integration over the fiber, as defined in~\cite{ST4}. We will need the following properties of $f_*$ proved in~\cite{ST4}. In particular, property~\ref{normalization} of Proposition~\ref{prop:iof} is immediate from Definition~4.14 of~\cite{ST4} while the remaining properties of $f_*$ are given in Theorem 1.
\begin{prop}\leavevmode\label{prop:iof}
\begin{enumerate}
	\item \label{normalization}
	Let $f:M\to pt$ and $\alpha\in A^m(M)\otimes\Upsilon$. Then
	\[
	f_*\alpha=\begin{cases}
	\int_M\alpha,& m=\dim M,\\
	0,&\text{otherwise}.
	\end{cases}
	\]
	\item\label{prop:pushcomp}
		Let $g: P\to M$, $f:M\to N,$ be proper submersions. Then
		\[
		f_*\circ g_*=(f\circ g)_*.
		\]
	\item\label{prop:pushpull}
		Let $f:M\to N$ be a proper submersion, $\alpha\in A^*(N;\Upsilon),$ $\beta\in A^*(M;\Upsilon)$. Then
		\[
		f_*(f^*\alpha\wedge\beta)=\alpha\wedge f_*\beta.
		\]
	\item\label{prop:pushfiberprod}
		Let
		\[
		\xymatrix{
		{M\times_N P}\ar[r]^{\quad p}\ar[d]^{q}&
        {P}\ar[d]^{g}\\
        {M}\ar[r]^{f}&N
		}
		\]
		be a pull-back diagram of smooth maps, where $g$ is a proper submersion. Let $\alpha\in A^*(P).$ Then
		\[
		q_*p^*\alpha=f^*g_*\alpha.
		\]
\end{enumerate}
\end{prop}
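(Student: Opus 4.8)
The plan is to reduce all four identities to a single local model of a proper submersion of orbifolds with corners, and then verify each identity there by elementary calculus. First I would record that $f_*$ is local on the base: for $U\subset N$ open, $(f_*\alpha)|_U$ depends only on $\alpha|_{f^{-1}(U)}$, and $f_*$ is additive in $\alpha$. So, fixing a partition of unity on $M$ subordinate to a cover by orbifold charts over which $f$ is trivialized, it suffices to prove each statement when $f$ is a projection $\pi: F\times V\to V$, where $V$ is a uniformizing chart of $N$, $F$ is a chart of the fiber — an open subset of a corner $[0,\infty)^a\times\R^{r-a}$ — and one has averaged over the relevant finite isotropy group. The role of strong smoothness is exactly to guarantee that such a product decomposition exists, i.e. that near a corner of $M$ the fibers of $f$ are genuine $r$-dimensional orbifolds with corners; this is the only point at which the corner structure does real work. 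In this model $\pi_*$ is fiberwise integration over the $F$-directions, which extracts the part of the form of top vertical degree.

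Granting the reduction, the four properties become routine. For~\ref{normalization}, when $N$ is a point the decomposition is trivial and $\pi_*$ is ordinary integration over $M$ of the top-degree part of $\alpha$; a form of degree $\neq\dim M$ has no such part and integrates to $0$. For~\ref{prop:pushpull}, $f^*\alpha$ has, in the local model, no one-forms in the fiber directions, hence factors out of the $F$-integral — this is the projection formula, once one checks the resulting sign is the one already built into the definition of $f_*$. For~\ref{prop:pushcomp}, choose charts adapted simultaneously to $f\circ g$, $g$, and $f$ (possible since a composite of proper strongly smooth submersions is again one), so that locally the situation is $F_2\times F_1\times V\to F_1\times V\to V$; then $f_*\circ g_* = (f\circ g)_*$ is Fubini's theorem for the iterated fiber integral, with orientations matching by the product-orientation convention. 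For~\ref{prop:pushfiberprod}, trivialize $g$ over a chart of $N$; the pullback square then becomes, locally, $F\times M'\to M'$ over $F\times V'\to V'$ with the same fiber $F$, and integrating over $F$ before or after pulling back along $f$ gives the same result, again up to matching the orientation conventions on fiber products.

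The genuine obstacle, which absorbs essentially all of the work, is legitimizing this reduction in the orbifold-with-corners category — the content of~\cite[Theorem 1]{ST4}. One must check that the fiberwise integral is independent of the chosen charts, trivializations, and partition of unity (a change-of-variables computation, together with equivariance so that the averaged push-forward descends from uniformizing charts to the orbifold); that near corners of $M$ the submersion genuinely restricts to a fibration on each boundary face, so that the face poset of $M$ maps compatibly to that of $N$; and that all the sign conventions — the ones making $f_*$ anticommute with $d$ up to the expected boundary term, and the ones in the definitions of $f^*$, $\wedge$, and the fiber-product orientation — are mutually consistent, so that the signs factored out in~\ref{prop:pushpull} and~\ref{prop:pushfiberprod} really are trivial. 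Once that bookkeeping is in place, (a)--(d) reduce to exactly the four elementary local statements above. I would reuse the conventions and the change-of-variables machinery of~\cite{ST4} throughout, since reproving them here would add nothing.
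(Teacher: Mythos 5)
Your proposal is correct and takes essentially the same route as the paper: the paper offers no proof of its own, stating only that property~\ref{normalization} is immediate from Definition~4.14 of~\cite{ST4} and that~\ref{prop:pushcomp}--\ref{prop:pushfiberprod} are given by Theorem~1 there, and your sketch likewise defers the substantive work (chart-independence, corner compatibility, sign bookkeeping) to~\cite{ST4}. The local-model outline you add for each item is a reasonable account of how those results are obtained, but it does not constitute an independent proof, and you correctly flag that.
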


\begin{prop}[Stokes' theorem]\label{stokes}
Let $f:M\to N$ be a proper submersion with $\dim M=s$, and let $\xi\in A^t(M;\Upsilon)$. Then
\[
d (f_*\xi)=f_*(d \xi)+(-1)^{s+t}\big(f\big|_{\partial M}\big)_*\xi,
\]
where $\d M$ is understood as the fiberwise boundary with respect to $f.$
\end{prop}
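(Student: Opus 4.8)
\emph{Proof proposal.} The plan is to reduce to the classical Stokes theorem on the fibers after localizing on $N$ and trivializing $f$. First, by $\R$-linearity of both sides and the identification $A^*(M;\Upsilon)=A^*(M)\otimes\Upsilon$, it suffices to treat $\Upsilon=\R$. Next, observe that both sides of the asserted identity are local over $N$: the operator $d$ is local, and $f_*$ commutes with restriction to open subsets of $N$, which is the special case of the base-change property of Proposition~\ref{prop:iof} in which the pulled-back map is an open inclusion. Since $f$ is a proper submersion satisfying strong smoothness, the Ehresmann fibration theorem for manifolds with corners applies and shows that $f$ is locally, over $N$, isomorphic to a product projection $\mathrm{pr}\colon U\times F\to U$, where $F$ is a compact manifold with corners and $U$ is an open subset of a model corner; under this isomorphism the fiberwise boundary $\partial M$ corresponds to $U\times\partial F$. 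Choosing a partition of unity on $F$, we further reduce to $\xi$ supported in $U\times V$ with $V$ a coordinate chart of $F$, i.e.\ to the model case $M=U\times V$ with coordinates $x=(x_1,\dots,x_m)$ on $U$, $y=(y_1,\dots,y_r)$ on $V$, $r=\rdim f$, and $f=\mathrm{pr}_U$.

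In the model case I would use the explicit coordinate description of $f_*$ from the definition in~\cite{ST4}: $f_*\xi$ is obtained by extracting the coefficient of the fiber volume form $dy_1\wedge\cdots\wedge dy_r$ and integrating it over the fiber. Decompose $\xi=\xi^{\mathrm{top}}+\xi^{\mathrm{low}}$ according to fiber degree, where $\xi^{\mathrm{low}}$ has fiber degree $<r$, so $f_*\xi^{\mathrm{low}}=0$, and $\xi^{\mathrm{top}}=\sum_{|I|=t-r} a_I(x,y)\,dx_I\wedge dy_1\wedge\cdots\wedge dy_r$. Then $f_*\xi=\epsilon\sum_I\big(\int_V a_I(x,y)\,dy\big)\,dx_I$ for a fixed sign $\epsilon$ dictated by the placement convention for the fiber variables in~\cite{ST4}.

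Differentiating under the integral sign gives $d(f_*\xi)=\epsilon\sum_{I,k}\big(\int_V \partial_{x_k}a_I\,dy\big)\,dx_k\wedge dx_I$. On the other hand, the terms of $d\xi$ of top fiber degree arise in two ways: the $dx$-derivatives of $\xi^{\mathrm{top}}$, which reproduce $d(f_*\xi)$ up to the fixed sign $\epsilon$ after commuting $d$ past $dx_I$; and the $dy$-derivatives of the fiber-codegree-one part of $\xi^{\mathrm{low}}$, whose coefficient of $dy_1\wedge\cdots\wedge dy_r$ is a fiberwise divergence. Integrating this divergence over $V$ and applying the classical Stokes theorem for manifolds with corners produces exactly $(f|_{\partial M})_*\xi$, up to the induced-orientation sign on $\partial F$ fixed in~\cite{ST4}. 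Collecting the three sign contributions---the fiber-variable placement convention, the factor $(-1)^{|I|}=(-1)^{t-r}$ from commuting $d$ past $dx_I$, and the boundary-orientation convention---should yield the overall factor $(-1)^{s+t}$ with $s=\dim M$, completing the model case and hence, by the reductions above, the proposition.

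The one genuine difficulty is the sign bookkeeping: the factor $(-1)^{s+t}$ must be matched precisely against the conventions of~\cite{ST4} for (i) where the fiber coordinates sit in the definition of integration over the fiber, and (ii) the induced orientation of the fiberwise boundary. A useful consistency check is the case $N=\mathrm{pt}$, where $f_*\xi=\int_M\xi$ and the assertion reduces to $\int_M d\xi=\int_{\partial M}\xi$, i.e.\ the classical Stokes theorem---this both pins down the sign and is exactly the input invoked on the fibers in the general case.
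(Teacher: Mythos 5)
The paper does not prove this proposition; it is quoted as Theorem 1 from~\cite{ST4}, so there is no internal proof to compare against. That said, the localize--trivialize--compute strategy you sketch (restrict over small opens in $N$, invoke the Ehresmann fibration theorem for manifolds with corners to write $f$ as a projection $U\times F\to U$, identify the fiberwise boundary with $U\times\partial F$, use a partition of unity on $F$, and reduce to a coordinate model where $f_*$ is extraction of the $dy$-top coefficient followed by integration) is exactly the standard route to fiberwise Stokes, and is in the spirit of the proof in~\cite{ST4}.

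The one step you should not wave through is the very first reduction, ``by linearity it suffices to treat $\Upsilon=\R$.'' The sign $(-1)^{s+t}$ in the statement uses the \emph{total} degree $t$ of $\xi\in A^t(M;\Upsilon)$, including the $\Upsilon$-component; the remark immediately following the proposition in the paper emphasizes exactly this (``the sign arises from the possibly non-trivial grading of the coefficient ring''). If you write $\xi=\xi_0\otimes\upsilon$ with $\xi_0\in A^{t_0}(M)$ and $\upsilon\in\Upsilon^{t_1}$, then naively multiplying the $\Upsilon=\R$ identity for $\xi_0$ by $\upsilon$ would produce the boundary sign $(-1)^{s+t_0}$, which differs from $(-1)^{s+t}$ by $(-1)^{t_1}$. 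The reconciliation comes from the fact that $f_*$ in~\cite{ST4} is a degree-$(-r)$ operator and so, on $A^*(M)\otimes\Upsilon$, carries a Koszul sign against the coefficient: $f_*(\xi_0\otimes\upsilon)=(-1)^{r|\upsilon|}(f_*\xi_0)\otimes\upsilon$, and the restriction $f|_{\partial M}$ has relative dimension $r-1$, contributing $(-1)^{(r-1)|\upsilon|}$. Once these two Koszul signs are inserted, the $\Upsilon=\R$ case with sign $(-1)^{s+t_0}$ does propagate to the general case with sign $(-1)^{s+t}$. So your reduction is correct in outcome, but as written it silently drops precisely the signs the proposition is designed to track; you should make the $\Upsilon$-bookkeeping explicit before passing to scalar coefficients. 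The remaining coordinate computation and the orientation conventions on $\partial F$ you already flag as the points to nail down; your consistency check via $N=\mathrm{pt}$ is a good one, but note that for $\Upsilon=\R$ it only tests the case $t=\dim M-1$, which is sign-trivial, so it does not by itself pin down the $t$-dependence.
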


\begin{rem}
Proposition~\ref{stokes} applied to $f:M\to pt$ yields the classical Stokes' theorem up to a sign,
\begin{equation*}
\int_M d\xi = (-1)^{\dim M+|\xi|+1}\int_{\d M} \xi.
\end{equation*}
The sign arises from the possibly non-trivial grading of the coefficient ring. See~\cite[Remark~2.3]{ST1} for an extended discussion.
\end{rem}

The following result is Lemma~5.4 of~\cite{Solomon2018}.
\begin{lm}\label{lm:pulltopush}
Let $f:M\to M$ be a diffeomorphism and let $\a\in A^*(M)$. Then $f^*\a=(-1)^{sgn(f)}f_*^{-1}\a.$
\end{lm}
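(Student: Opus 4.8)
The plan is to reduce the claim to the computation of $f_*(1)$, where $1 \in A^0(M)$ denotes the constant function, and then to identify this function with $(-1)^{sgn(f)}$ using the definition of integration over the fiber. Since $f$ is a diffeomorphism it is in particular a proper submersion with $\rdim f = 0$, so $f_* : A^*(M) \to A^*(M)$ is degree preserving, while $f^* : A^*(M) \to A^*(M)$ is an isomorphism with inverse $(f^{-1})^*$. First I would apply the push-pull formula, property~\ref{prop:pushpull} of Proposition~\ref{prop:iof}, with an arbitrary $\alpha \in A^*(M)$ and $\beta = 1$; since $f_*(1) \in A^0(M)$ is a function this gives
\[
f_*(f^*\alpha) = \alpha \wedge f_*(1) = f_*(1)\cdot\alpha .
\]
As $f^*$ is surjective, every form may be written as $f^*\alpha$, so this identity asserts that, as operators on $A^*(M)$,
\[
f_* = f_*(1)\cdot(f^{-1})^* .
\]

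Next I would show $f_*(1) = (-1)^{sgn(f)}$. The fibers of $f$ are single points, hence have empty boundary, so the fiberwise boundary of the submersion $f$ is empty; Stokes' theorem, Proposition~\ref{stokes}, then gives $d\,f_*(1) = f_*(d1) = 0$, so $f_*(1)$ is a locally constant function. To evaluate it, let $\omega$ be a compactly supported form of top degree. Using the first display above with $\alpha = \omega$, then property~\ref{prop:pushcomp} of Proposition~\ref{prop:iof} applied to $f$ followed by the map $M \to pt$ together with property~\ref{normalization}, and finally the classical change-of-variables formula for the diffeomorphism $f$,
\[
\int_M f_*(1)\wedge\omega = \int_M f_*(f^*\omega) = \int_M f^*\omega = (-1)^{sgn(f)}\int_M\omega .
\]
Since this holds for all such $\omega$, we conclude $f_*(1) = (-1)^{sgn(f)}$. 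Equivalently, this is immediate from the construction of $f_*$ in~\cite{ST4}: integration over the zero-dimensional fiber $f^{-1}(p)$ is evaluation at $f^{-1}(p)$ weighted by the coorientation sign of $f$, which is precisely $(-1)^{sgn(f)}$.

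Combining the two displays, $f_* = (-1)^{sgn(f)}(f^{-1})^* = (-1)^{sgn(f)}(f^*)^{-1}$, so $f_*$ is invertible; since $(-1)^{sgn(f)} = \pm 1$ is its own inverse, this rearranges to $f^* = (-1)^{sgn(f)} f_*^{-1}$, which is the assertion. The only genuinely delicate step is the sign in the evaluation of $f_*(1)$, where one must match the fiberwise-orientation and coorientation conventions of~\cite{ST4} against the definition of $sgn(f)$; everything else is a formal consequence of the properties recorded in Propositions~\ref{prop:iof} and~\ref{stokes}.
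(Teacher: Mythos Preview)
The paper does not supply its own proof of this lemma; it simply cites Lemma~5.4 of~\cite{Solomon2018}. Your argument is correct and is the natural one: reduce to computing $f_*(1)$ via the projection formula, then identify $f_*(1)=(-1)^{sgn(f)}$ by pairing against a compactly supported top form and using $(pt)_*\circ f_*=(pt)_*$ together with the classical change-of-variables sign. One small remark: the step ``since this holds for all such $\omega$, we conclude $f_*(1)=(-1)^{sgn(f)}$'' uses that $f_*(1)$ is locally constant and that compactly supported top forms separate locally constant functions on each component; this is fine for the applications in the paper, where the relevant $M$ are connected, but if you want the statement in full generality you should note that $sgn(f)$ is to be read componentwise.
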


\subsection{Open stable maps}
Here, we recall definitions and notations for open stable maps and moduli spaces thereof from~\cite[Section 2.2.1]{ST1}. A $J$-holomorphic genus-$0$ \textbf{open stable map} to $(X,L)$ of degree $\beta \in \sly$ with one boundary component, $k+1$ boundary marked points, and $l$ interior marked points, is a quadruple $(\Sigma, u,\vec{z},\vec{w})$ as follows. The domain $\Sigma$ is a genus-$0$ nodal Riemann surface with boundary consisting of one connected component,
\[
u: (\Sigma,\d\Sigma) \to (X,L)
\]
is a continuous map, $J$-holomorphic on each irreducible component of $\Sigma,$ with
\[
u_*([\Sigma,\partial\Sigma]) = \beta,
\]
and
\[
\vec{z} = (z_0,\ldots,z_k), \qquad \vec{w} = (w_1,\ldots,w_l),
\]
with $z_j \in \partial \Sigma, \, w_j \in int(\Sigma),$ distinct. The labeling of the marked points $z_j$ respects the cyclic order given by the orientation of $\partial \Sigma$ induced by the complex orientation of $\Sigma.$
Stability means that
if $\Sigma_i$ is an irreducible component of $\Sigma$, then either $u|_{\Sigma_i}$ is nonconstant or it satisfies the following requirement: If $\Sigma_i$ is a sphere, the number of marked points and nodal points on $\Sigma_i$ is at least 3; if $\Sigma_i$ is a disk, the number of marked and nodal boundary points plus twice the number of marked and nodal interior points is at least $3$.
An open stable map is called \textbf{irreducible} if its domain consists of a single irreducible component. An isomorphism of open stable maps $(\Sigma,u,\vec{z},\vec{w})$ and $(\Sigma',u',\vec{z}',\vec{w}')$ is a homeomorphism $\theta : \Sigma \to \Sigma'$, biholomorphic on each irreducible component, such that
\[
u = u' \circ \theta, \qquad\qquad  z_j' = \theta(z_j), \quad j = 0,\ldots,k, \qquad w_j' = \theta(w_j), \quad j = 1,\ldots,l.
\]

Denote by $\M_{k+1,l}(\beta) = \M_{k+1,l}(\beta;J)$ the moduli space of $J$-holomorphic genus zero open stable maps to $(X,L)$ of degree $\beta$ with one boundary component, $k+1$ boundary marked points, and $l$ internal marked points.
Denote by
\begin{gather*}
evb_j^\beta:\M_{k+1,l}(\beta)\to L, \qquad  \qquad j=0,\ldots,k, \\
evi_j^\beta:\M_{k+1,l}(\beta) \to X, \qquad \qquad j=1,\ldots,l,
\end{gather*}
the evaluation maps given by $evb_j^\beta((\Sigma,u,\vec{z},\vec{w}))=u(z_j)$ and $evi_j^\beta((\Sigma,u,\vec{z},\vec{w}))= u(w_j).$
We may omit the superscript $\beta$ when the omission does not create ambiguity.

\subsection{Structure equations and properties}\label{ssec:prop}

For all $\beta\in\sly$, $k,l\ge 0$, and $(k,l,\beta) \not\in\{ (1,0,\beta_0),(0,0,\beta_0)\}$, define
\[
\qkl^\beta:A^*(L;R)^{\otimes k}\otimes A^*(X;Q)^{\otimes l} \lrarr A^*(L;R)
\]
by
\begin{align*}
\q^{\beta}_{k,l}(\alpha_1\otimes\cdots\otimes\alpha_k; \gamma_1\otimes\cdots\otimes\gamma_l):=
(-1)^{\varepsilon(\alpha)}
(evb_0^\beta)_* \left(\bigwedge_{j=1}^l(evi_j^\beta)^*\gamma_j\wedge \bigwedge_{j=1}^k (evb_j^\beta)^*\alpha_j\right)
\end{align*}
with
\[
\varepsilon(\alpha):=\sum_{j=1}^kj(|\alpha_j|+1)+1.
\]
In addition, define $\q_{1,0}^{\beta_0}(\alpha):=d\alpha$ and $\q_{0,0}^{\beta_0}:=0.$
Set
\begin{align*}
\qkl:=\sum_{\beta\in\sly}T^{\beta}\qkl^{\beta}.
\end{align*}

For $l\ge 0$, $(l,\beta)\neq (1,\beta_0),(0,\beta_0),$ define
\[
\q_{-1,l}^\beta:A^*(X;Q)^{\otimes l}\lrarr Q
\]
by
\begin{gather*}
\q_{-1,l}^\beta(\gamma_1\otimes\cdots\otimes\gamma_l)
:= \int_{\M_{0,l}(\beta)} \bigwedge_{j=1}^l (evi_j^\beta)^*\gamma_j,\\
\q_{-1,l}(\gamma_1\otimes\cdots\otimes\gamma_l):=\sum_{\beta\in\sly}T^{\beta} \q_{-1,l}^\beta(\gamma_1\otimes\cdots\otimes\gamma_l).
\end{gather*}
Define also
$
\q_{-1,1}^{\beta_0}:=0$
and $\q_{-1,0}^{\beta_0}:=0$.
Lastly, define similar operations using spheres,
\[
\q_{\emptyset,l}:A^*(X;Q)^{\otimes l}\lrarr A^*(X;R),
\]
as follows. For $\beta\in H_2(X;\Z)$ let $\M_{l+1}(\beta)$ be the moduli space of genus zero $J$-holomorphic stable maps with $l+1$ marked points indexed from 0 to $l,$ representing the class $\beta$. Denote by $ev_j^\beta:\M_{l+1}(\beta)\to X$ the evaluation map at the $j$-th marked point. Assume that all the moduli spaces $\M_{l+1}(\beta)$ are smooth orbifolds and $ev_0$ is a submersion. Let $\pr: H_2(X;\Z) \to \sly$ denote the projection and let $w_\s\in H^2(X;\Z)$ be the class given by the relative spin structure on $L$. For $l\ge 0$, $(l,\beta)\ne (1,0),(0,0)$, set
\[
\q_{\emptyset,l}^\beta(\gamma_1,\ldots,\gamma_l):=
(-1)^{w_\s(\beta)}
(ev_0^\beta)_*(\wedge_{j=1}^l(ev_j^\beta)^*\gamma_j),
\]
and define
\[
\q_{\emptyset,1}^0:= 0,\qquad \q_{\emptyset,0}^0:= 0,
\]
and
\[
\q_{\emptyset,l}(\gamma_1,\ldots,\gamma_l):=
\sum_{\beta\in H_2(X;\Z)}T^{\pr(\beta)}\q_{\emptyset,l}^\beta(\gamma_1,\ldots,\gamma_l).
\]
Denote by $\langle\;,\,\rangle: A^*(L;R) \otimes A^*(L;R) \to R$ the signed Poincar\'e pairing on $L$:
\begin{equation}\label{eq:pairing}
\langle\xi,\eta\rangle=(-1)^{|\eta|}\int_L\xi\wedge\eta.
\end{equation}
It satisfies
\begin{equation}\label{eq:psgn}
\langle\xi,\eta\rangle:=(-1)^{|\eta|}\int_L\xi\wedge\eta
=(-1)^{|\eta|+|\eta|\cdot|\xi|}\int_L\eta\wedge\xi
=(-1)^{(|\eta|+1)(|\xi|+1)+1}\langle\eta,\xi\rangle.
\end{equation}

\begin{cl}[Structure equations for {$k\ge 0$},~{\cite[Proposition 2.4]{ST1}}]\label{q_rel}
For any fixed $\alpha=(\alpha_1,\ldots,\alpha_k)$, $\gamma=(\gamma_1,\ldots,\gamma_l)$,
\begin{align*}
&0=\sum_{i=1}^l(-1)^{1+\sum_{j=1}^{i-1}|\gamma_j|}\qkl(\alpha;
\otimes_{j=1}^{i-1}\gamma_j\otimes d\gamma_i\otimes\otimes_{j=i+1}^l\gamma_j)+\mbox{}\\
&+\sum_{\substack{k_1+k_2=k+1\\1\le i\le k_1\\I\sqcup J=\{1,\ldots,l\}}}
(-1)^{\iota(\alpha,\gamma;i,I)}
\q_{k_1,|I|}(\otimes_{j=1}^{i-1}\alpha_j\otimes \q_{k_2,|J|}(\otimes_{j=1}^{k_2}\alpha_{j+i-1};\otimes_{j \in J}\gamma_j)\otimes\otimes_{j=i+k_2}^k\alpha_j;\otimes_{j \in I}\gamma_j),
\end{align*}
where
\[
\iota(\alpha,\gamma;i,I)
=(1+\sum_{j\in J}|\gamma_j|)\cdot\sum_{j=1}^{i-1}(|\alpha_j|+1)+\sum_{j\in I}|\gamma_j| +\sum_{\substack{m\in I,j\in J\\ j<m}}|\gamma_m|\cdot|\gamma_j|.
\]
\end{cl}

\begin{cl}[Structure equation for {$k=-1$},~{\cite[Proposition 2.5]{ST1}}]\label{q-1_rel}
For any fixed $\gamma=(\gamma_1,\ldots,\gamma_l)$,
\begin{align*}
0=&\sum_{i=1}^l(-1)^{1+\sum_{j=1}^{i-1}|\gamma_j|} \q_{-1,l}(\otimes_{j=1}^{i-1}\gamma_j\otimes d\gamma_i\otimes \otimes_{j=i+1}^l\gamma_j)+\\
&+\frac{1}{2}\sum_{I\sqcup J=\{1,\ldots,l\}}
(-1)^{\iota(\gamma;I)}
\langle\q_{0,|I|}(\otimes_{j\in I}\gamma_j),\q_{0,|J|}( \otimes_{j\in J}\gamma_j)\rangle
+(-1)^{|\gamma|+1}\int_L i^* \q_{\emptyset,l}(\gamma).
\end{align*}
\end{cl}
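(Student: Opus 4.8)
The plan is to obtain the claim as the Stokes boundary identity for the moduli spaces $\M_{0,l}(\beta)$ of stable maps from a disk with no boundary marked points. Fix $\beta\in\sly$ and set $\xi_\beta:=\bigwedge_{j=1}^l(evi_j^\beta)^*\gamma_j\in A^*(\M_{0,l}(\beta))$. Applying the classical Stokes theorem of Proposition~\ref{stokes} to the projection $\M_{0,l}(\beta)\to pt$ gives $\int_{\M_{0,l}(\beta)}d\xi_\beta=\pm\int_{\partial\M_{0,l}(\beta)}\xi_\beta$. Expanding $d\xi_\beta=\sum_i\pm(evi_i^\beta)^*d\gamma_i\wedge\bigwedge_{j\ne i}(evi_j^\beta)^*\gamma_j$, then multiplying by $T^\beta$ and summing over $\beta$, the left-hand side becomes precisely the first sum in the statement, the one built from $\q_{-1,l}(\gamma^{(1:3)}\otimes d\gamma_j\otimes\gamma^{(3:3)})$. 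So everything reduces to identifying the codimension-one boundary $\partial\M_{0,l}(\beta)$ and computing the integrals over its strata.

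The codimension-one boundary has two kinds of strata. First, those in which the domain breaks into two disk components along a boundary node; such a stratum is a fiber product $\M_{1,|I|}(\beta_1)\times_L\M_{1,|J|}(\beta_2)$ formed using the two evaluation maps at the node, over all $\beta_1+\beta_2=\beta$ and $I\sqcup J=\{1,\ldots,l\}$. Using the definition of $\q_{0,\ast}$ as a push-forward along $evb_0$ together with Proposition~\ref{prop:iof}, integration of $\xi_\beta$ over this stratum yields $\langle\q_{0,|I|}(\gamma^I),\q_{0,|J|}(\gamma^J)\rangle$; since the two components carry no distinguished structure, summing over ordered pairs $(I,\beta_1),(J,\beta_2)$ double-counts each stratum, which accounts for the factor $\tfrac12$. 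Second, there is the stratum in which the boundary circle of the disk collapses to a point: the domain becomes a genus-zero sphere carrying the $l$ interior points together with one extra marked point, the collapse point, constrained to lie on $L$. This stratum is the fiber product $\M_{l+1}(\hat\beta)\times_X L$ taken over $ev_0$ and the inclusion $i\colon L\hookrightarrow X$, ranging over $\hat\beta\in H_2(X;\Z)$ with $\pr(\hat\beta)=\beta$; integrating $\xi_\beta$ over it produces $\int_L i^*\q_{\emptyset,l}(\gamma)$ once the sign $(-1)^{w_\s(\hat\beta)}$ from the definition of $\q_{\emptyset,\ast}$ is included. Summing over $\beta$ and assembling the Novikov weights turns these contributions into the last two terms of the claim; the degenerate cases are matched by the conventions $\q_{-1,0}^{\beta_0}=\q_{-1,1}^{\beta_0}=0$ and $\q_{\emptyset,0}^0=\q_{\emptyset,1}^0=0$.

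The hard part will be the orientation and sign bookkeeping. One must propagate the orientation conventions for orbifolds with corners from~\cite{ST4}, compare the boundary orientation on each stratum of $\M_{0,l}(\beta)$ with the orientations implicit in $\q_{0,\ast}$, $\q_{\emptyset,\ast}$, and the signed Poincar\'e pairing~\eqref{eq:pairing}, and check that the accumulated Koszul signs from reordering the pulled-back forms assemble exactly into the exponents $\iota(\gamma;I)$ and $|\gamma|+1$. A second delicate point is justifying that the boundary-collapse configurations genuinely occur as a codimension-one boundary stratum of the compactified moduli space; this is the place where the disk theory interacts with closed Gromov-Witten theory. Both points are carried out in~\cite[Proposition~2.5]{ST1}, which is the source of the statement.
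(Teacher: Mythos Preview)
Your sketch is correct and follows exactly the approach of~\cite[Proposition~2.5]{ST1}, which the paper simply cites without reproving: Stokes' theorem on $\M_{0,l}(\beta)$, with the codimension-one boundary decomposing into disk-pair bubbles (giving the $\tfrac12\langle\q_{0,|I|},\q_{0,|J|}\rangle$ term) and the boundary-collapse sphere stratum (giving the $\int_L i^*\q_{\emptyset,l}$ term). As you note, the substantive content lies in the orientation and sign analysis and in verifying the boundary-collapse stratum, both of which are handled in the cited reference.
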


\begin{lm}[Linearity, {\cite[Proposition 3.1]{ST1}}]\label{lm:qlinear}
The $\q$ operators are multilinear, in the sense that for $a \in R$ we have
\begin{multline*}
\qquad\q_{k,l}(\a_1,\ldots,\a_{i-1},a\cdot\a_i,\ldots,\a_k;\gamma_1,\ldots,\gamma_l)=\\
		=(-1)^{|a|\cdot\big(i+\sum_{j=1}^{i-1}|\a_j|+\sum_{j=1}^l|\gamma_j|\big)}
		a\cdot\q_{k,l}(\a_1,\ldots,\a_k;\gamma_1,\ldots,\gamma_l),
\end{multline*}
and for $a \in Q$ we have
\[
\q_{k,l}(\a_1,\ldots,\a_k; \gamma_1,\ldots,a\cdot\gamma_i,\ldots,\gamma_l) =(-1)^{|a|\cdot\sum_{j=1}^{i-1}|\gamma_j|}
		a\cdot\q_{k,l}(\a_1,\ldots,\a_k;\gamma_1,\ldots,\gamma_l),
\]
and
\[
\q_{\emptyset,l}(\gamma_1,\ldots,a\cdot\gamma_i,\ldots,\gamma_l)=
(-1)^{|a|\cdot\sum_{j=1}^{i-1}|\gamma_j|}a\cdot\q_{\emptyset,l}(\gamma_1,\ldots,\gamma_l).
\]
In addition, the pairing $\langle\;,\,\rangle$ defined by~\eqref{eq:pairing} is $R$-bilinear in the sense that
\[
\langle a\a_1,\a_2\rangle = a\langle \a_1,\a_2\rangle,
\quad
\langle \a_1,a\a_2\rangle=(-1)^{|a|\cdot (1+|\a_1|)}a\cdot\langle\a_1,\a_2\rangle,
\quad
\forall a\in R, \quad \a_1,\a_2\in A^*(L;R).
\]
\end{lm}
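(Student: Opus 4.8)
The plan is to deduce all of the stated identities directly from the definitions of $\qkl$, $\q_{-1,l}$, $\q_{\emptyset,l}$, and of the pairing $\langle\cdot,\cdot\rangle$ in Section~\ref{ssec:prop}, using only three ingredients: that pullback along any evaluation map is a homomorphism of graded algebras commuting with extension of scalars, so $ev^*(a\cdot\omega)=a\cdot ev^*\omega$ for $a$ in the coefficient ring; that fiber integration is strictly linear over the coefficient ring, which follows by applying property~\ref{prop:pushpull} of Proposition~\ref{prop:iof} to the ``constant'' form $f^*a=a$ (and property~\ref{normalization} of Proposition~\ref{prop:iof} for the bare integrals appearing in $\q_{-1,l}$ and in $\langle\cdot,\cdot\rangle$); and the Koszul sign rule. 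No geometry beyond what has already been set up is needed; the entire content is sign bookkeeping.

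First I would treat the boundary insertion. Substituting $a\cdot\a_i$ into $\q^\beta_{k,l}(\a;\gamma)=(-1)^{\varepsilon(\a)}(evb_0^\beta)_*\big(\bigwedge_j(evi_j^\beta)^*\gamma_j\wedge\bigwedge_j(evb_j^\beta)^*\a_j\big)$, I track two contributions to the sign. The combinatorial prefactor $\varepsilon(\a)=\sum_j j(|\a_j|+1)+1$ depends on $|\a_i|$; raising that degree by $|a|$ changes $\varepsilon$ by $i|a|$, giving a factor $(-1)^{i|a|}$. Next, after $(evb_i^\beta)^*(a\a_i)=a\,(evb_i^\beta)^*\a_i$, I move $a$ to the front of the wedge, past $(evi_1^\beta)^*\gamma_1,\dots,(evi_l^\beta)^*\gamma_l$ and then $(evb_1^\beta)^*\a_1,\dots,(evb_{i-1}^\beta)^*\a_{i-1}$, at a cost of $(-1)^{|a|(\sum_{j=1}^l|\gamma_j|+\sum_{j=1}^{i-1}|\a_j|)}$; extracting $a$ from $(evb_0^\beta)_*$ is then free. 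The product of these two signs, summed over $\beta$, is exactly the exponent $|a|\big(i+\sum_{j=1}^{i-1}|\a_j|+\sum_{j=1}^l|\gamma_j|\big)$ in the statement. The degenerate terms $\q_{1,0}^{\beta_0}=d$ and $\q_{0,0}^{\beta_0}=0$ are consistent with the convention $d(a\xi)=(-1)^{|a|}a\,d\xi$.

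The interior insertions for $\qkl$, $\q_{\emptyset,l}$, and $\q_{-1,l}$ go the same way, with one simplification that accounts for the asymmetry between the two displayed signs: the prefactors ($\varepsilon(\a)$, the factor $(-1)^{w_\s(\beta)}$, and so on) do not involve the $\gamma_j$, and the $\gamma$-factors already sit leftmost in the wedge, so replacing $\gamma_i$ by $a\gamma_i$ only requires commuting $a$ past $(evi_1^\beta)^*\gamma_1,\dots,(evi_{i-1}^\beta)^*\gamma_{i-1}$, producing precisely $(-1)^{|a|\sum_{j=1}^{i-1}|\gamma_j|}$. For the pairing, $\langle a\xi,\eta\rangle=a\langle\xi,\eta\rangle$ is immediate since $a$ is already leftmost and $\int_L$ is coefficient-linear; and $\langle\xi,a\eta\rangle$ follows by expanding $(-1)^{|a\eta|}=(-1)^{|a|+|\eta|}$, commuting $a$ past $\xi$ inside the wedge at cost $(-1)^{|a||\xi|}$, and extracting $a$ from $\int_L$, which collects to $(-1)^{|a|(1+|\xi|)}a\langle\xi,\eta\rangle$.

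The main obstacle, such as it is, is purely organizational: keeping the Koszul signs straight through the pull--push composition. The one genuinely non-mechanical point to notice is that the prefactor $\varepsilon(\a)$ is itself degree-dependent, so it, and not only the reordering of the wedge, contributes to the sign for boundary insertions; isolating that $i|a|$ term is what makes the boundary and interior formulas differ.
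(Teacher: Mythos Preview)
Your proof is correct. The paper itself does not prove this lemma; it is quoted from \cite[Proposition~3.1]{ST1}, and your argument is precisely the routine Koszul bookkeeping one expects (and that is carried out in \cite{ST1}): track the change in $\varepsilon(\alpha)$ by $i|a|$, commute $a$ to the front of the wedge, and use $R$-linearity of push-forward via Proposition~\ref{prop:iof}\ref{prop:pushpull}. Your observation that the degree-dependent prefactor $\varepsilon(\alpha)$ is the source of the asymmetry between the boundary and interior formulas is exactly the point.
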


\begin{lm}[Cyclic structure, {\cite[Proposition 3.3]{ST1}}]\label{cl:cyclic}
For any $\alpha_1,\ldots,\alpha_{k+1}\in A^*(L;R)$ and $\gamma_1,\ldots,\gamma_l\in A^*(X;Q)$,
\begin{align*}
\langle\qkl(\alpha_1,\ldots&,\alpha_k;\gamma_1,\ldots\gamma_l),\alpha_{k+1}\rangle=\\
&(-1)^{(|\alpha_{k+1}|+1)\sum_{j=1}^{k}(|\alpha_j|+1)}\cdot
\langle \qkl(\alpha_{k+1},\alpha_1,\ldots,\alpha_{k-1};\gamma_1,\ldots,\gamma_l),\alpha_k\rangle.
\end{align*}
\end{lm}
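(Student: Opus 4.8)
The plan is to turn both pairings into integrals over a single moduli space and then exploit the cyclic relabeling symmetry of that space. First I would reduce the statement: since $\langle\cdot,\cdot\rangle$ is $R$-bilinear and the $\q$-operators are $R$-multilinear (Lemma~\ref{lm:qlinear}), and $\qkl=\sum_\beta T^\beta\qkl^\beta$, it suffices to prove the identity with $\qkl^\beta$ in place of $\qkl$ for each fixed $\beta\in\sly$, the $\alpha_j,\gamma_j$ having real coefficients. The degenerate case $(k,l,\beta)=(1,0,\beta_0)$, where $\q_{1,0}^{\beta_0}(\alpha)=d\alpha$, follows directly from Stokes' theorem on the closed manifold $L$ together with the sign identity~\eqref{eq:psgn}; the case $(0,0,\beta_0)$ is vacuous. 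So assume $(k,l,\beta)$ is non-degenerate and set $\M:=\M_{k+1,l}(\beta)$, $r:=\rdim evb_0^\beta=\dim\M-n$.

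Next I would rewrite each side as an integral over $\M$. Using the definition of $\langle\cdot,\cdot\rangle$, the projection formula Proposition~\ref{prop:iof}(\ref{prop:pushpull}), and parts~(\ref{normalization}),~(\ref{prop:pushcomp}) of Proposition~\ref{prop:iof} applied to $evb_0^\beta$ followed by $\M\to pt$, one obtains
\[
\langle\q_{k,l}^\beta(\alpha_1,\ldots,\alpha_k;\gamma_1,\ldots,\gamma_l),\alpha_{k+1}\rangle=(-1)^{A}\int_{\M}\omega,\quad
\omega:=(evb_0^\beta)^*\alpha_{k+1}\wedge\bigwedge_{j=1}^l(evi_j^\beta)^*\gamma_j\wedge\bigwedge_{j=1}^k(evb_j^\beta)^*\alpha_j,
\]
with $A$ an explicit exponent built from $\varepsilon(\alpha_1,\ldots,\alpha_k)$, the factor $(-1)^{|\alpha_{k+1}|}$ in the pairing, and the Koszul sign for pulling $(evb_0^\beta)^*\alpha_{k+1}$ inside $(evb_0^\beta)_*$; likewise
\[
\langle\q_{k,l}^\beta(\alpha_{k+1},\alpha_1,\ldots,\alpha_{k-1};\gamma_1,\ldots,\gamma_l),\alpha_k\rangle=(-1)^{B}\int_{\M}\omega',\quad
\omega':=(evb_0^\beta)^*\alpha_k\wedge\bigwedge_{j=1}^l(evi_j^\beta)^*\gamma_j\wedge(evb_1^\beta)^*\alpha_{k+1}\wedge\bigwedge_{j=2}^k(evb_j^\beta)^*\alpha_{j-1}.
\]
Both integrals vanish unless $\sum_{j=1}^{k+1}|\alpha_j|+\sum_{j=1}^l|\gamma_j|=\dim\M$, so in the only interesting case this relation may be used to eliminate $\dim\M$ (hence $\sum_j|\gamma_j|$) from $A$ and $B$; this is why the target sign involves only the $|\alpha_j|$.

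Then I would introduce the symmetry. Let $\phi:\M\to\M$ be the isomorphism of moduli spaces reindexing the boundary marked points by the cyclic shift $z_i\mapsto z_{i+1\bmod(k+1)}$; it is well defined because the new labeling still respects the cyclic order on $\partial\Sigma$, and it is a diffeomorphism of orbifolds with corners satisfying $evb_j^\beta\circ\phi=evb_{j-1\bmod(k+1)}^\beta$ and $evi_j^\beta\circ\phi=evi_j^\beta$. Hence $\phi^*\omega'$ has exactly the same wedge factors as $\omega$, only permuted; reordering gives $\phi^*\omega'=(-1)^{\kappa}\omega$ with an explicit Koszul exponent $\kappa$, again reducible via the dimension relation to a function of the $|\alpha_j|$, $n$, and $\dim\M$. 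By Lemma~\ref{lm:pulltopush} and Proposition~\ref{prop:iof}(\ref{normalization}),(\ref{prop:pushcomp}), $\int_\M\phi^*\omega'=(-1)^{sgn(\phi)}\int_\M\omega'$, so
\[
\langle\q_{k,l}^\beta(\alpha_{k+1},\alpha_1,\ldots,\alpha_{k-1};\gamma),\alpha_k\rangle=(-1)^{B+sgn(\phi)+\kappa}\int_\M\omega,
\]
and the asserted identity becomes the numerical congruence $A\equiv B+sgn(\phi)+\kappa+(|\alpha_{k+1}|+1)\sum_{j=1}^k(|\alpha_j|+1)\pmod 2$.

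The main obstacle is the determination of $sgn(\phi)$, that is, whether cyclic relabeling preserves or reverses the orientation of $\M_{k+1,l}(\beta)$. This is \emph{not} merely the sign $(-1)^k$ of the $(k+1)$-cycle as a permutation: the orientation of $\M_{k+1,l}(\beta)$ is the one induced by the relative spin structure $\s$ in the conventions of~\cite{ST1} (following~\cite{FOOO}), in which the distinguished boundary marked point and the Maslov trivialization enter, so the change in orientation under relabeling carries an additional Maslov-type contribution and depends on $n,k,l,$ and $\mu(\beta)$. I would compute $sgn(\phi)$ by unwinding this orientation convention — equivalently, through the local model for the orientation near the boundary strata where two adjacent boundary marked points collide — obtaining its explicit value. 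Substituting this value together with the explicit $A,B,\kappa$ into the displayed congruence and simplifying with $\sum_{j=1}^{k+1}|\alpha_j|+\sum_j|\gamma_j|=\dim\M$ then finishes the proof; the verification of the congruence itself is routine sign bookkeeping.
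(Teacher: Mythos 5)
The paper does not actually prove this lemma here; it cites it as Proposition~3.3 of~\cite{ST1}, so I can only evaluate your argument on its own terms. The route you take is the natural one and is, to the best of my knowledge, what~\cite{ST1} carries out: express both pairings as integrals over $\M_{k+1,l}(\beta)$ by pushing/pulling along $evb_0^\beta$ and $\M\to pt$, relate the two integrands by the cyclic relabeling diffeomorphism $\phi$, and invoke Lemma~\ref{lm:pulltopush} to convert $\phi^*$ into $(-1)^{sgn(\phi)}(\phi^{-1})_*$. Your handling of the degenerate case $(1,0,\beta_0)$ via Stokes and~\eqref{eq:psgn} is also correct.

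The genuine gap is the one you flag yourself and then do not close. You correctly observe that $sgn(\phi)$ is not just the permutation sign $(-1)^k$: the orientation on $\M_{k+1,l}(\beta)$ is fixed by the relative spin structure together with the ordering/labeling conventions, so cyclic relabeling changes orientation by something that a priori depends on $n$, $k$, and $\mu(\beta)$. You also correctly note that the degree relation $\sum_{j=1}^{k+1}|\alpha_j|+\sum_j|\gamma_j|=\dim\M$ must be used to eliminate $\dim\M$. But notice what has to come out at the end: the asserted sign $(-1)^{(|\alpha_{k+1}|+1)\sum_{j=1}^{k}(|\alpha_j|+1)}$ depends \emph{only} on the shifted degrees of the $\alpha_j$, with no residual dependence on $n$, $k$, $\mu(\beta)$, or $\dim\M$. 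Each of the ingredients you introduce --- $sgn(\phi)$, the Koszul exponent $\kappa$, the exponents $A$ and $B$ (which carry $\varepsilon(\alpha)$, the factor $(-1)^{|\alpha_{k+1}|}$ from $\langle\cdot,\cdot\rangle$, and a term of the form $|\alpha_{k+1}|\cdot\rdim evb_0$ from commuting the pullback past the integrand) --- individually does depend on these quantities, so the whole content of the proposition is that everything cancels. Promising to ``compute $sgn(\phi)$ by unwinding the orientation convention'' and describing the remaining congruence as ``routine'' defers precisely this content; as written the argument is a reduction of the lemma to an unproved orientation identity rather than a proof of it.
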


The following is a straightforward generalization of~\cite[Proposition 3.5]{ST1} and~\cite[Lemma~5.8]{ST2}.
\begin{lm}[Degree]\label{deg_str_map}
For all $k,l\ge 0$ and $\beta\in \sly$, the map
\[
\qkl:A^*(L;R)^{\otimes k}\otimes A^*(X;Q)^{\otimes l} \lrarr A^*(L;R)
\]
is of degree $2-k-2l$. For $l \geq 0$ and $\beta \in \sly,$ the map
\[
\q_{-1,l} : A^*(X;Q)^{\otimes l} \lrarr R
\]
is of degree $3-n - 2l.$
\end{lm}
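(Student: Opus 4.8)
The plan is to establish both statements by a dimension count that simultaneously tracks the de Rham degree of differential forms and the degree $\mu(\beta)$ carried by the Novikov variable $T^\beta$. The one external ingredient I would use is the standard dimension formula for the moduli spaces: for $(k,l,\beta)\notin\{(1,0,\beta_0),(0,0,\beta_0)\}$ the orbifold $\M_{k+1,l}(\beta)$ has dimension $n+\mu(\beta)+k+2l-2$, and $\M_{0,l}(\beta)$ (one boundary component, no boundary marked points, $l$ interior marked points) has dimension $n+\mu(\beta)+2l-3$. Both come from the Riemann--Roch index $n+\mu(\beta)$ of the linearized Cauchy--Riemann operator on a disk of class $\beta$, corrected by $-3$ for the automorphism group of the disk and by $+1$, resp.\ $+2$, for each boundary, resp.\ interior, marked point, as in the set-up of~\cite{ST1}.

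For the first assertion I would fix $\beta$ and compute the degree of the summand $T^\beta\qkl^\beta$ of $\qkl=\sum_{\beta}T^\beta\qkl^\beta$. By Lemma~\ref{lm:qlinear} the operations are multilinear over $R$ and $Q$, with degree-homogeneous Koszul signs, so it suffices to work with inputs $\alpha_j\in A^*(L)$ and $\gamma_j\in A^*(X)$ having scalar coefficients. Since the pullbacks $(evb_j^\beta)^*$ and $(evi_j^\beta)^*$ preserve degree, the form $\bigwedge_{j=1}^l(evi_j^\beta)^*\gamma_j\wedge\bigwedge_{j=1}^k(evb_j^\beta)^*\alpha_j$ has degree $\sum_j|\alpha_j|+\sum_j|\gamma_j|$. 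The map $evb_0^\beta\colon\M_{k+1,l}(\beta)\to L$ is a proper submersion with
\[
\rdim\bigl(evb_0^\beta\bigr)=\dim\M_{k+1,l}(\beta)-\dim L=\mu(\beta)+k+2l-2,
\]
so the fiber integral $(evb_0^\beta)_*$ lowers degree by exactly this amount, while the sign $(-1)^{\varepsilon(\alpha)}$ is irrelevant. Hence $\qkl^\beta$ has degree $2-\mu(\beta)-k-2l$, and multiplying by $T^\beta$, which has degree $\mu(\beta)$, cancels the $\beta$-dependence: $\qkl$ has degree $2-k-2l$. In the excluded cases, $\q_{1,0}^{\beta_0}=d$ has degree $1=2-1-0$ and $\q_{0,0}^{\beta_0}=0$ is of every degree.

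For $\q_{-1,l}$ I would run the same argument with $evb_0^\beta$ replaced by the map to a point. By Proposition~\ref{prop:iof}\ref{normalization}, $\q_{-1,l}^\beta(\gamma_1,\dots,\gamma_l)=\int_{\M_{0,l}(\beta)}\bigwedge_{j=1}^l(evi_j^\beta)^*\gamma_j$ is the push-forward of a form of degree $\sum_j|\gamma_j|$ along $\M_{0,l}(\beta)\to pt$, so it has degree $-\dim\M_{0,l}(\beta)=3-n-\mu(\beta)-2l$ into the target ring regarded as sitting in degree $0$. Multiplying by $T^\beta$ of degree $\mu(\beta)$ gives degree $3-n-2l$; the unstable cases $\q_{-1,1}^{\beta_0}=\q_{-1,0}^{\beta_0}=0$ are vacuous, and nothing changes when the target is taken to be $R\supseteq Q$.

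The computation involves no geometric obstacle; the only point requiring attention is the bookkeeping. One must verify that the $\mu(\beta)$ appearing in the fiber dimension of $evb_0^\beta$, and in $\dim\M_{0,l}(\beta)$, is precisely the degree assigned to $T^\beta$ in the Novikov ring, so that the resulting degree is independent of $\beta$, and that the reduction to scalar coefficients via Lemma~\ref{lm:qlinear} is harmless. Beyond this, the argument is the evident extension of~\cite[Proposition 3.5]{ST1} and~\cite[Lemma 5.8]{ST2} to accommodate interior insertions and the operators $\q_{-1,l}$.
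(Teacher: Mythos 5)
Your proof is correct, and it is essentially the argument the paper has in mind: the paper gives no details, only citing that the statement is a straightforward generalization of~\cite[Proposition 3.5]{ST1} and~\cite[Lemma~5.8]{ST2}, and those sources use exactly this dimension-count bookkeeping (fiber dimension of $evb_0^\beta$, resp.\ $\dim\M_{0,l}(\beta)$, cancels against the grading $\mu(\beta)$ of $T^\beta$). Your computation, including the handling of the excluded $(k,l,\beta)$ cases, is accurate.
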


For a permutation $\sigma \in S_l$ and $\gamma = (\gamma_1,\ldots,\gamma_l)$ with $\gamma_j \in A^*(X;Q),$ define
\begin{equation}\label{eq:sgnsigmagamma}
s_\sigma(\gamma):=
\sum_{\substack{i<j\\ \sigma^{-1}(i)>\sigma^{-1}(j)}}|\gamma_i|\cdot|\gamma_j|
=
\sum_{\substack{i>j\\ \sigma(i)<\sigma(j)}}|\gamma_{\sigma(i)}|\cdot|\gamma_{\sigma(j)}|\pmod 2.
\end{equation}

\begin{lm}[Symmetry, {\cite[Proposition 3.6]{ST1}}]\label{cl:symmetry}
Let $k\ge -1$. For any permutation $\sigma\in S_l,$
\[
\qkl(\alpha_1,\ldots,\alpha_k;\gamma_1,\ldots,\gamma_l)=
(-1)^{s_\sigma(\gamma)}\qkl(\alpha_1,\ldots,\alpha_k;\gamma_{\sigma(1)},\ldots,\gamma_{\sigma(l)}).
\]
\end{lm}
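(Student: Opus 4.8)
The plan is to realize a permutation of the interior inputs geometrically, by an orientation-preserving relabeling diffeomorphism of the moduli space. Since adjacent transpositions $\tau_i = (i\ i{+}1)$ generate $S_l$ and the sign $(-1)^{s_\sigma(\gamma)}$ is the Koszul reordering sign, which is a $1$-cocycle on $S_l$ valued in $\Z/2\Z$, it suffices to prove the identity for $\sigma = \tau_i$, where $s_{\tau_i}(\gamma) \equiv |\gamma_i|\,|\gamma_{i+1}| \pmod 2$; the general case then follows by writing $\sigma$ as a word in the $\tau_i$ and iterating, the signs accumulating to $s_\sigma(\gamma)$. For $k \ge 0$, the formula for $\qkl^\beta$ carries the prefactor $(-1)^{\varepsilon(\alpha)}$, which depends only on $\alpha$, and $\qkl = \sum_\beta T^\beta\qkl^\beta$ is evaluated term by term; hence it is enough to compare, for each fixed $\beta$, the forms $(evb_0^\beta)_*(\bigwedge_{j=1}^l (evi_j^\beta)^*\gamma_j \wedge \bigwedge_{j=1}^k (evb_j^\beta)^*\alpha_j)$ before and after swapping $\gamma_i$ with $\gamma_{i+1}$. (The exceptional operators $\q_{1,0}^{\beta_0}, \q_{0,0}^{\beta_0}, \q_{-1,1}^{\beta_0}, \q_{-1,0}^{\beta_0}$ have $l \le 1$, so there is nothing to prove.)

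Fix $\beta$ and $i$, and let $\phi \colon \M_{k+1,l}(\beta) \to \M_{k+1,l}(\beta)$ be the map sending an open stable map $(\Sigma, u, \vec z, \vec w)$ to the same stable map with the labels of $w_i$ and $w_{i+1}$ interchanged. Then $\phi$ is an involutive diffeomorphism with $evb_j^\beta \circ \phi = evb_j^\beta$ for $j = 0, \ldots, k$, $evi_j^\beta \circ \phi = evi_j^\beta$ for $j \notin \{i, i+1\}$, $evi_i^\beta \circ \phi = evi_{i+1}^\beta$, and $evi_{i+1}^\beta \circ \phi = evi_i^\beta$. The key point is that $\phi$ preserves orientation, i.e.\ $sgn(\phi) = 0$: in the orientation convention for $\M_{k+1,l}(\beta)$ of \cite{ST1}, each interior marked point enters through a $2$-real-dimensional (complex) block of the ordered product defining the orientation, and transposing two such blocks changes the orientation by $(-1)^{2\cdot 2} = +1$. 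By contrast, transposing two boundary marked points can reverse the orientation, which is the source of the extra signs in the analogous statement for boundary inputs.

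Let $\Theta = \bigwedge_{j=1}^l (evi_j^\beta)^*\gamma_j \wedge \bigwedge_{j=1}^k (evb_j^\beta)^*\alpha_j$ and let $\Theta'$ be the same wedge product with $\gamma_i$ and $\gamma_{i+1}$ interchanged. From the relations above, $\phi^*\Theta$ agrees with $\Theta'$ except that its two interior factors carrying $\gamma_i$ and $\gamma_{i+1}$ occur in the opposite order; restoring the order contributes the Koszul sign $(-1)^{|\gamma_i||\gamma_{i+1}|}$, so $\phi^*\Theta = (-1)^{|\gamma_i||\gamma_{i+1}|}\,\Theta'$. By Lemma~\ref{lm:pulltopush} and $sgn(\phi) = 0$ we have $\phi^*\Theta = \phi_*^{-1}\Theta$, hence $\Theta = (-1)^{|\gamma_i||\gamma_{i+1}|}\,\phi_*\Theta'$. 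Pushing forward by $evb_0^\beta$, using the composition property of Proposition~\ref{prop:iof} together with $evb_0^\beta \circ \phi = evb_0^\beta$, yields $(evb_0^\beta)_*\Theta = (-1)^{|\gamma_i||\gamma_{i+1}|}(evb_0^\beta)_*\Theta'$, which is the claim for $k \ge 0$. For $k = -1$, one has $\q_{-1,l}^\beta(\gamma_1,\ldots,\gamma_l) = \int_{\M_{0,l}(\beta)} \bigwedge_j (evi_j^\beta)^*\gamma_j$, and the identity follows in the same way from the relabeling diffeomorphism of $\M_{0,l}(\beta)$ and the fact that $\int_{\M_{0,l}(\beta)} \phi^*(\cdot) = \int_{\M_{0,l}(\beta)}(\cdot)$, since $\phi$ preserves orientation. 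Summing over $\beta$ and composing adjacent transpositions completes the argument.

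The only step that is not pure bookkeeping is the orientation claim in the second paragraph — that interchanging the labels of two interior marked points preserves the orientation of $\M_{k+1,l}(\beta)$; everything else follows formally from Proposition~\ref{prop:iof} and Lemma~\ref{lm:pulltopush}. Establishing the orientation claim requires unwinding the chosen orientation on the moduli space, the essential observation being that interior marked points contribute complex, hence even-dimensional, factors, so transposing two of them is orientation-neutral. This is exactly the input we take from \cite{ST1}.
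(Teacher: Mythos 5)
Your proof is correct and follows what is essentially the standard argument: reduce to adjacent transpositions, realize the transposition by the relabeling diffeomorphism of $\M_{k+1,l}(\beta)$ swapping the two interior marked points, observe that this diffeomorphism preserves orientation because interior marked points contribute complex (even-real-dimensional) factors to the orientation convention, and track the Koszul sign coming from reordering the wedge factors via Lemma~\ref{lm:pulltopush} and the composition property $(evb_0^\beta)_* \circ \phi_* = (evb_0^\beta \circ \phi)_*$. The paper itself gives no proof --- the lemma is cited directly to Proposition~3.6 of~\cite{ST1} --- but your argument is the expected one, and the single genuine input you correctly isolate (that transposing interior marked points is orientation-preserving) is exactly the content that must be extracted from the orientation conventions established there.
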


\begin{lm}[Zero energy, {\cite[Proposition 3.8]{ST1}}]\label{lm:qzero}
For $k\ge 0,$
\[
\qkl^{\beta_0}(\alpha_1,\ldots,\alpha_k;\gamma_1,\ldots,\gamma_l)=
\begin{cases}
d\alpha_1, & (k,l)=(1,0),\\
(-1)^{|\alpha_1|}\alpha_1\wedge\alpha_2, & (k,l)=(2,0),\\
-\gamma_1|_L, & (k,l)=(0,1),\\
0, & \text{otherwise}.
\end{cases}
\]
Furthermore,
\[
\q_{-1,l}^{\beta_0}(\gamma_1,\ldots,\gamma_l)=0.
\]
\end{lm}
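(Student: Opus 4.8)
The plan is to exploit the fact that $\beta_0$ is the zero class, so that every $J$-holomorphic open stable map of degree $\beta_0$ is constant, with image a single point of $L$. I would run a case analysis on $(k,l)$ according to the stability of the domain and, when the domain is stable, the dimension of the moduli space of domains.

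The cases $(k,l)=(1,0)$ and $(k,l)=(0,0)$ are immediate from the definitions $\q_{1,0}^{\beta_0}(\alpha_1)=d\alpha_1$ and $\q_{0,0}^{\beta_0}=0$. For $(k,l)=(2,0)$, a disk with three boundary marked points has automorphism group $PSL(2,\R)$ acting simply transitively on ordered triples of distinct boundary points, so $\M_{3,0}(\beta_0)$ is canonically identified with $L$ via the value of the constant map, and under this identification each of $evb_0,evb_1,evb_2$ becomes $\Id_L$. Since $evb_0$ then has zero-dimensional fibers, $(evb_0)_*$ is the identity, and the prefactor $\varepsilon(\alpha)=(|\alpha_1|+1)+2(|\alpha_2|+1)+1\equiv|\alpha_1|\pmod 2$ yields $(-1)^{|\alpha_1|}\alpha_1\wedge\alpha_2$. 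For $(k,l)=(0,1)$, a disk with one interior and one boundary marked point has trivial automorphism group, so again $\M_{1,1}(\beta_0)\cong L$, now with $evb_0=\Id_L$ and $evi_1=i\colon L\hookrightarrow X$; here $\varepsilon=1$, giving $-\,i^*\gamma_1=-\gamma_1|_L$.

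For every remaining $(k,l)$ with $k\ge 0$ — namely $l=0,\ k\ge 3$, or $l=1,\ k\ge 1$, or $l\ge 2$ — the moduli space of stable domains is nonempty of positive dimension, and $\M_{k+1,l}(\beta_0)$ projects onto $L$, via the value of the constant map, with positive-dimensional fibers. As $u$ is constant, every evaluation map factors through this projection $\pi$, so the integrand defining $\q_{k,l}^{\beta_0}$ has the form $\pi^*\zeta$ for a single form $\zeta\in A^*(L)$; the projection formula (Proposition~\ref{prop:iof}\ref{prop:pushpull}) then gives $(evb_0^{\beta_0})_*(\pi^*\zeta)=\zeta\wedge(evb_0^{\beta_0})_*(1)$, and $(evb_0^{\beta_0})_*(1)$ vanishes since it would be a form of negative degree. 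Hence $\q_{k,l}^{\beta_0}=0$, the outstanding ``otherwise'' case. The same argument disposes of $\q_{-1,l}^{\beta_0}$: for $l\ge 2$ the moduli space $\M_{0,l}(\beta_0)$ fibers over a positive-dimensional moduli of domains with fiber $L$, so $\bigwedge_{j}(evi_j)^*\gamma_j$ is pulled back from the $n$-dimensional $L$ and has no component of top degree, whence the integral vanishes by Proposition~\ref{prop:iof}\ref{normalization}; the cases $l=0,1$ are zero by definition.

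The one delicate point, and the step I expect to demand the most care, is orientations: one must check that the canonical identifications $\M_{3,0}(\beta_0)\cong L$ and $\M_{1,1}(\beta_0)\cong L$ are orientation-preserving for the orientation of $\M_{k+1,l}(\beta_0)$ induced by the relative spin structure, so that no sign appears beyond the combinatorial $\varepsilon(\alpha)$. This is exactly where the orientation conventions of~\cite{ST1} come in.
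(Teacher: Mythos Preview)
The paper does not prove this lemma; it is quoted verbatim from \cite[Proposition~3.8]{ST1}, so there is no ``paper's own proof'' to compare against. Your argument is correct and is essentially the standard one: the zero-energy moduli spaces are products of $L$ with a moduli of stable domains, all evaluation maps factor through the projection to $L$, and the push-forward/integration vanishes for dimension reasons once the fiber has positive dimension. Your identification of the orientation check for $\M_{3,0}(\beta_0)\cong L$ and $\M_{1,1}(\beta_0)\cong L$ as the only subtle point is accurate; this is handled in \cite{ST1} via the conventions of \cite{FOOO,SolomonThesis}.
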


The following lemma is an immediate consequence of~\cite[Proposition 3.12]{ST1}.
\begin{lm}\label{no_top_deg}
Suppose $(k,l,\beta)\not\in\{(1,0,\beta_0),(0,1,\beta_0),(2,0,\beta_0)\}$.
Then $\int_L\qkl^\beta(\alpha;\gamma)=0$ for all lists $\alpha,\gamma$.
\end{lm}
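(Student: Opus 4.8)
The plan is to rewrite $\int_L\qkl^\beta(\alpha;\gamma)$ as an integral over the moduli space $\M_{k+1,l}(\beta)$ of a form that does not involve the evaluation map at the zeroth boundary marked point $z_0$, and then to display that form as the pullback, along the map forgetting $z_0$, of a form on a moduli space of one lower dimension. This is the mechanism of~\cite[Proposition 3.12]{ST1}, now carried out in the presence of interior marked points. Concretely, composing $evb_0^\beta:\M_{k+1,l}(\beta)\to L$ with the map $L\to pt$ and applying property~\ref{prop:pushcomp} of Proposition~\ref{prop:iof} together with property~\ref{normalization} of Proposition~\ref{prop:iof}, one obtains
\[
\int_L\qkl^\beta(\alpha;\gamma)=(-1)^{\varepsilon(\alpha)}\int_{\M_{k+1,l}(\beta)}\bigwedge_{j=1}^l(evi_j^\beta)^*\gamma_j\wedge\bigwedge_{j=1}^k(evb_j^\beta)^*\alpha_j ,
\]
where $\int_{\M_{k+1,l}(\beta)}$ denotes the pushforward to a point, which vanishes on forms that are not of top degree. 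The key observation is that the right-hand integrand involves only $evb_1^\beta,\dots,evb_k^\beta$ and $evi_1^\beta,\dots,evi_l^\beta$, not $evb_0^\beta$.

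Next I would introduce the forgetful map $\pi:\M_{k+1,l}(\beta)\to\M_{k,l}(\beta)$ that drops $z_0$ and contracts the components of the domain that thereby become unstable. Because a component that is destabilized merely by removing one marked point must already carry a constant map, contracting such a ghost component does not alter the values of the remaining evaluation maps; hence $evb_1^\beta,\dots,evb_k^\beta$ and $evi_1^\beta,\dots,evi_l^\beta$ on $\M_{k+1,l}(\beta)$ are pulled back via $\pi$ from the analogous maps on $\M_{k,l}(\beta)$, and the integrand above equals $\pi^*\omega$ for the form $\omega$ on $\M_{k,l}(\beta)$ given by the same wedge product. The hypothesis $(k,l,\beta)\notin\{(1,0,\beta_0),(0,1,\beta_0),(2,0,\beta_0)\}$ is exactly the condition that makes $\M_{k,l}(\beta)$ a legitimate moduli space — i.e.\ that a constant disk carrying $k$ boundary and $l$ interior marked points is not forced to be unstable — so that $\pi$ is defined; and if $\M_{k,l}(\beta)=\emptyset$, then also $\M_{k+1,l}(\beta)=\emptyset$ and there is nothing to prove.

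Finally, adjoining one boundary marked point raises the dimension by exactly one, so $\dim\M_{k+1,l}(\beta)=\dim\M_{k,l}(\beta)+1$; hence no term of $\pi^*\omega$ attains degree $\dim\M_{k+1,l}(\beta)$, and the integral vanishes. Equivalently, $\pi$ is a proper submersion with one-dimensional fibers, so by property~\ref{prop:pushpull} of Proposition~\ref{prop:iof} one has $\int_{\M_{k+1,l}(\beta)}\pi^*\omega=\int_{\M_{k,l}(\beta)}\omega\wedge\pi_*1=0$, since $\pi_*1$ would be a form of degree $-1$. I expect the one point needing care to be the verification that $\pi$ is defined precisely off the three excluded triples and that the evaluation maps $evb_j^\beta$ ($j\ge1$) and $evi_j^\beta$ genuinely factor through it; both follow from the stability condition for open stable maps recalled above together with the observation that the contracted components are ghosts. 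As a sanity check, the exclusions are necessary: by Lemma~\ref{lm:qzero}, $\q_{2,0}^{\beta_0}(\alpha_1,\alpha_2)=(-1)^{|\alpha_1|}\alpha_1\wedge\alpha_2$ and $\q_{0,1}^{\beta_0}(\gamma_1)=-\gamma_1|_L$ need not have vanishing integral over $L$.
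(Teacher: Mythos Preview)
Your proposal is correct and follows exactly the approach the paper indicates: the paper does not give a proof but simply states that the lemma is an immediate consequence of~\cite[Proposition 3.12]{ST1}, and you have spelled out precisely that forgetful-map argument (pulling back along the map that forgets $z_0$ and using the one-dimensional fiber to kill the top-degree integral). One small remark: the case $(k,l,\beta)=(0,0,\beta_0)$ is not among the three excluded triples, and there the target $\M_{0,0}(\beta_0)$ is also unstable, but this causes no trouble since $\q_{0,0}^{\beta_0}=0$ by definition.
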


\begin{lm}[Chain map, {\cite[Proposition 3.13]{ST1}}]\label{lm:qecm}
The operator
\[
\q_{\emptyset}=\oplus_{l\ge 0} \q_{\emptyset,l}
: \bigoplus_{l\ge 0}A^*(X;Q)^{\otimes l} \lrarr A^*(X;Q)
\]
is a chain map.
\end{lm}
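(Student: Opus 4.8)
The plan is to reduce the chain-map property to one application of Stokes' theorem, carried out one Novikov degree at a time. Since the de Rham differential $d$ acts coefficient-wise in the Novikov variable $T$, it suffices to prove that for every $\beta\in H_2(X;\Z)$ and every list $\gamma=(\gamma_1,\ldots,\gamma_l)$ of forms on $X$,
\begin{equation}\label{eq:qecm-beta}
d\,\q_{\emptyset,l}^\beta(\gamma_1,\ldots,\gamma_l)=\sum_{i=1}^{l}(-1)^{\sum_{j<i}|\gamma_j|}\,\q_{\emptyset,l}^\beta(\gamma_1,\ldots,d\gamma_i,\ldots,\gamma_l),
\end{equation}
and then to sum \eqref{eq:qecm-beta} against the monomials $T^{\pr(\beta)}$; the sums involved are precisely those already occurring in the definition of $\q_{\emptyset,l}$, hence converge in $A^*(X;Q)$. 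The right-hand side of \eqref{eq:qecm-beta} is, by construction, the image under $\q_{\emptyset,l}$ of the standard tensor-product de Rham differential on $\bigoplus_{l}A^*(X;Q)^{\otimes l}$, so \eqref{eq:qecm-beta} is exactly the chain-map identity.

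First I would dispose of the degenerate pairs $(l,\beta)\in\{(1,0),(0,0)\}$ by inspection: the definitions give $\q_{\emptyset,1}^0=0$ and $\q_{\emptyset,0}^0=0$, so for $l=1$ both sides of \eqref{eq:qecm-beta} vanish, while for $l=0$ the source $A^*(X;Q)^{\otimes 0}=Q$ carries the zero differential and there is nothing to prove.

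For all remaining $(l,\beta)$, I would apply Proposition~\ref{stokes} to the proper submersion $ev_0^\beta:\M_{l+1}(\beta)\to X$ with $\xi=\bigwedge_{j=1}^{l}(ev_j^\beta)^*\gamma_j$, obtaining
\[
d\big((ev_0^\beta)_*\xi\big)=(ev_0^\beta)_*(d\xi)+(-1)^{s+t}\big(ev_0^\beta\big|_{\d\M_{l+1}(\beta)}\big)_*\xi,\qquad s=\dim\M_{l+1}(\beta),\quad t=\sum_{j}|\gamma_j|.
\]
The crucial point is that the boundary term vanishes: $\M_{l+1}(\beta)$ parametrizes \emph{closed} genus zero stable maps, so every point whose domain carries a node lies in a stratum of codimension at least two, and under the regularity assumptions of the paper $\M_{l+1}(\beta)$ is a compact smooth orbifold without boundary, whence $\d\M_{l+1}(\beta)=\emptyset$. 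This absence of codimension-one boundary is precisely what separates the closed operations $\q_{\emptyset,l}$ from the open operations $\q_{k,l}$, whose underlying disk moduli spaces do acquire codimension-one boundary from disk and sphere bubbling and therefore satisfy the full structure equations of Propositions~\ref{q_rel} and~\ref{q-1_rel} rather than a bare chain-map identity. It then remains to expand $d\xi$ using the Leibniz rule together with the naturality $d\circ(ev_i^\beta)^*=(ev_i^\beta)^*\circ d$, which produces $\sum_i(-1)^{\sum_{j<i}|\gamma_j|}\bigwedge_j(ev_j^\beta)^*(\gamma_1,\ldots,d\gamma_i,\ldots,\gamma_l)$; the scalar sign $(-1)^{w_\s(\beta)}$ commutes with $(ev_0^\beta)_*$ and with $d$, and \eqref{eq:qecm-beta} follows.

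The only step demanding genuine care --- and the closest this argument comes to an obstacle --- is the sign bookkeeping: one must verify that the Koszul sign $(-1)^{\sum_{j<i}|\gamma_j|}$ produced by the Leibniz rule matches the sign convention fixed for the differential on $\bigoplus_l A^*(X;Q)^{\otimes l}$, and that the factor $(-1)^{s+t}$ in Proposition~\ref{stokes} genuinely plays no role --- which it does not, simply because $\d\M_{l+1}(\beta)$ is empty and not through any cancellation. Once these conventions are pinned down, \eqref{eq:qecm-beta} says precisely that $\q_\emptyset$ intertwines the two differentials, which is the assertion.
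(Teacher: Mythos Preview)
The paper does not prove this lemma; it is quoted verbatim from \cite[Proposition~3.13]{ST1} and used as a black box. Your argument is correct and is essentially the standard proof one would expect to find in that reference: Stokes' theorem for the pushforward along $ev_0^\beta$, combined with the observation that the closed moduli space $\M_{l+1}(\beta)$ has no codimension-one boundary, so the boundary term drops and only the Leibniz expansion of $d\xi$ survives. Your handling of the degenerate cases $(l,\beta)\in\{(1,0),(0,0)\}$ and the sign bookkeeping is also sound.
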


The following lemmas are well known.
\begin{lm}[Closed unit]\label{lm:cunit}
For $\gamma_1,\ldots,\gamma_{l-1} \in A^*(X),$
\[
\q_{\emptyset,l}^\beta(1,\gamma_1,\ldots,\gamma_{l-1}) =
\begin{cases}
\gamma_1, & \beta = \beta_0 \text{ and } l = 2, \\
0, & \text{otherwise}.
\end{cases}
\]
\end{lm}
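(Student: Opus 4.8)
The plan is to argue by cases on $(l,\beta)$, the basic tool being the forgetful map that drops the marked point carrying the unit. Unraveling the definition, $\q_{\emptyset,l}^\beta(1,\gamma_1,\ldots,\gamma_{l-1})$ equals $(-1)^{w_\s(\beta)}(ev_0^\beta)_*$ of the form $(ev_1^\beta)^*1\wedge\bigwedge_{j=2}^l(ev_j^\beta)^*\gamma_{j-1}$ on $\M_{l+1}(\beta)$, where the unit $1\in A^0(X)$ is attached to the marked point labelled $1$.

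First I would dispose of the generic case: either $\beta\ne\beta_0$ and $l\ge 1$, or $\beta=\beta_0$ and $l\ge 3$. In each of these the space $\M_l(\beta)$ is a genuine moduli space of stable maps, and there is a forgetful map $\f:\M_{l+1}(\beta)\to\M_l(\beta)$ dropping (and stabilizing) the marked point labelled $1$. As in the de Rham framework of~\cite{ST1,ST4}, $\f$ is a proper submersion with $\rdim\f=2$, and the evaluation maps are compatible: writing $ev_i'$ for the evaluation maps on $\M_l(\beta)$, one has $ev_0^\beta=ev_0'\circ\f$ and $ev_j^\beta=ev_{j-1}'\circ\f$ for $2\le j\le l$. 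Since $(ev_1^\beta)^*1=1$, the integrand is a pullback: it equals $\f^*\bigl(\bigwedge_{i=1}^{l-1}(ev_i')^*\gamma_i\bigr)$. By Proposition~\ref{prop:iof}\ref{prop:pushcomp} we have $(ev_0^\beta)_*=(ev_0')_*\circ\f_*$, and by Proposition~\ref{prop:iof}\ref{prop:pushpull} applied with the form $1$, $\f_*\f^*\zeta=\zeta\wedge\f_*1$. Since $\f_*1\in A^{-2}(\M_l(\beta))=0$, the whole expression vanishes, matching the ``otherwise'' branch.

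It remains to handle the small degree-zero cases. For $\beta=\beta_0$ and $l=1$ the operation is zero by its very definition, so we are again in the ``otherwise'' branch. For $\beta=\beta_0$ and $l=2$ there is no forgetful map; instead I would identify $\M_3(\beta_0)$ with $X$ --- the domain $\P^1$ with three marked points is automorphism-rigid and admits no stable bubbling in degree zero, so a point of $\M_3(\beta_0)$ is just the constant value of the map --- under which all three evaluation maps become $\Id_X$. As $w_\s(\beta_0)=0$ and fiber integration along the identity (point fibers) is the identity, the definition gives $\q_{\emptyset,2}^{\beta_0}(1,\gamma_1)=1\wedge\gamma_1=\gamma_1$, as claimed.

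The only genuinely delicate ingredient is the claim that $\f$ is a proper submersion of fiber dimension $2$ compatible with the evaluation maps in the orbifold-with-corners de Rham setting: this is the closed-string fundamental class axiom, standard in spirit, but the stabilization of components destabilized by forgetting the marked point requires the usual bookkeeping. Granting that, everything else is a mechanical application of the pushforward calculus of Proposition~\ref{prop:iof}.
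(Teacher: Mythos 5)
Your argument is correct and, since the paper declares this lemma ``well known'' and offers no proof of its own, there is nothing in the text to compare it against; you have simply supplied the standard argument the authors are alluding to. The case split is right: the small cases $(\beta_0,0)$, $(\beta_0,1)$ are covered by the paper's conventions $\q^0_{\emptyset,0}=\q^0_{\emptyset,1}:=0$, the case $(\beta_0,2)$ by the identification $\M_3(\beta_0)\simeq X$ (and $w_\s(\beta_0)=0$), and everything else by the forgetful map $\f:\M_{l+1}(\beta)\to\M_l(\beta)$. The push-forward computation $(ev_0^\beta)_*\f^*\zeta=(ev_0')_*\f_*\f^*\zeta=(ev_0')_*(\zeta\wedge\f_*1)$ together with the degree count $\f_*1\in A^{-2}=0$ is exactly the right mechanism, and your verification that the evaluation maps intertwine correctly ($ev_0^\beta=ev_0'\circ\f$, $ev_j^\beta=ev_{j-1}'\circ\f$) is the content that actually needs checking; the degree-zero bubble that can be contracted by stabilization is mapped constantly, so no evaluation data is lost. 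Your flagging of the proper-submersion/stabilization bookkeeping as the one nontrivial ingredient is fair, and it matches the regularity framework the paper assumes throughout.
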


\begin{lm}[Closed degree]\label{lm:cdeg}
For $\gamma = (\gamma_1,\ldots,\gamma_l),$ and $\beta \in H_2(X;\Z),$
\[
|\q^\beta_{\emptyset,l}(\gamma)| = |\gamma| - 2l - 2c_1(\beta) + 4.
\]
Equivalently, the map
\[
\q_{\emptyset,l} : A^*(X;Q)^{\otimes l} \to A^*(X;Q)
\]
is of degree $4-2l.$
\end{lm}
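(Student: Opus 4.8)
The plan is to derive the statement as a routine dimension count, reading the degree directly off the definition of $\q_{\emptyset,l}^\beta$ as a fiber integral. First I would record the virtual dimension of the relevant moduli space: the space $\M_{l+1}(\beta)$ of genus zero stable maps to $X$ in class $\beta$ with $l+1$ marked points has real dimension
\[
\dim_\R \M_{l+1}(\beta) = 2n + 2c_1(\beta) + 2(l+1) - 6 = 2n + 2c_1(\beta) + 2l - 4 .
\]
Since by our standing assumptions $ev_0^\beta : \M_{l+1}(\beta) \to X$ is a submersion and $\dim_\R X = 2n$, its fiber dimension is
\[
\rdim ev_0^\beta = \dim_\R \M_{l+1}(\beta) - 2n = 2c_1(\beta) + 2l - 4 .
\]

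Next I would track degrees through the definition
\[
\q_{\emptyset,l}^\beta(\gamma_1,\ldots,\gamma_l) = (-1)^{w_\s(\beta)}\,(ev_0^\beta)_*\!\Big(\bigwedge_{j=1}^l (ev_j^\beta)^*\gamma_j\Big).
\]
Pullback preserves form degree and wedge product adds degrees, so the integrand is homogeneous of degree $\sum_{j=1}^l |\gamma_j| = |\gamma|$. By the definition of integration over the fiber, $(ev_0^\beta)_* : A^*(\M_{l+1}(\beta)) \to A^*(X)[-\rdim ev_0^\beta]$ lowers degree by $\rdim ev_0^\beta$, and the scalar sign $(-1)^{w_\s(\beta)}$ is degree-neutral. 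Hence
\[
|\q_{\emptyset,l}^\beta(\gamma)| = |\gamma| - \rdim ev_0^\beta = |\gamma| - 2c_1(\beta) - 2l + 4 ,
\]
which is the first assertion. In the degenerate cases $(l,\beta) \in \{(1,0),(0,0)\}$ one has $\q_{\emptyset,l}^\beta \equiv 0$ by convention, and if $\M_{l+1}(\beta) = \emptyset$ the output is again zero, so the homogeneity statement holds vacuously in these cases.

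For the equivalent formulation, I would recall that the grading on $\Lc$ declares $T^\gamma$ to have degree $\mu(\gamma)$, and that the image of a sphere class $\beta \in H_2(X;\Z)$ in $H_2(X,L;\Z)$ has Maslov index $2c_1(\beta)$; thus $T^{\varpi(\beta)}$ has degree $2c_1(\beta)$. Consequently, in $\q_{\emptyset,l}(\gamma) = \sum_{\beta} T^{\varpi(\beta)}\,\q_{\emptyset,l}^\beta(\gamma)$ every summand has degree $2c_1(\beta) + |\gamma| - 2c_1(\beta) - 2l + 4 = |\gamma| - 2l + 4$, independent of $\beta$, so $\q_{\emptyset,l}$ is homogeneous of degree $4 - 2l$. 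There is essentially no obstacle beyond this bookkeeping: the only inputs are the regularity hypothesis that $ev_0^\beta$ is a submersion (guaranteeing that the fiber dimension is the expected one and that $(ev_0^\beta)_*$ is defined with the stated degree shift) and the compatibility of the Novikov grading with the Chern number of sphere classes; the mild care needed is simply to keep the excluded and empty cases in mind, where both sides are trivial.
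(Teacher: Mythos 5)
Your argument is correct and is exactly the standard dimension count that the expression "well known" in the paper is pointing to: the paper itself gives no proof of this lemma, stating it (along with its neighbors) as well known. The one fact you invoke beyond the definitions — that the Maslov index of a sphere class equals twice its first Chern number, so $T^{\varpi(\beta)}$ has degree $2c_1(\beta)$ — is indeed what makes the $\beta$-dependence cancel and gives the uniform degree $4-2l$; everything else is bookkeeping with the fiber dimension of $ev_0^\beta$.
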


\begin{lm}[Closed symmetry]\label{lm:csymm}
For $\gamma_1,\ldots,\gamma_l\in A^*(X)$ and a permutation $\sigma\in S_l$,
\[
\q_{\emptyset,l}(\gamma_1,\ldots,\gamma_l)
=(-1)^{s_\sigma(\gamma)}
\q_{\emptyset,l}(\gamma_{\sigma(1)},\ldots,\gamma_{\sigma(l)})
\]
with $s_\sigma(\gamma)$ defined by equation~\eqref{eq:sgnsigmagamma}.
\end{lm}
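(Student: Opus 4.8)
The plan is to reduce the identity to the case of an adjacent transposition and deduce that case from the symmetry of the moduli space $\M_{l+1}(\beta)$ under relabeling of its interior marked points. First I would note that $s_\sigma(\gamma)$ is exactly the Koszul sign of $\sigma$ acting on the graded symbols $\gamma_1,\dots,\gamma_l$, so it obeys the cocycle relation $s_{\sigma\tau}(\gamma)\equiv s_\sigma(\gamma)+s_\tau(\gamma_{\sigma(1)},\dots,\gamma_{\sigma(l)})\pmod 2$. Since the adjacent transpositions generate $S_l$ and $\tau\cdot(\sigma\cdot\gamma)=(\sigma\tau)\cdot\gamma$, it suffices to prove the statement for $\sigma=(a\ \ a{+}1)$, for which $s_\sigma(\gamma)=|\gamma_a|\,|\gamma_{a+1}|$. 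Writing $\q_{\emptyset,l}=\sum_{\beta}T^{\pr(\beta)}\q_{\emptyset,l}^\beta$, it is then enough to fix a single $\beta\in H_2(X;\Z)$; the degenerate cases $(l,\beta)\in\{(1,0),(0,0)\}$ are immediate since $\q_{\emptyset,l}^\beta=0$ there.

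For $\sigma=(a\ \ a{+}1)$ and fixed $\beta$, let $\phi\colon\M_{l+1}(\beta)\to\M_{l+1}(\beta)$ be the relabeling involution exchanging the $a$-th and $(a{+}1)$-th marked points. It satisfies $ev_0^\beta\circ\phi=ev_0^\beta$, $ev_a^\beta\circ\phi=ev_{a+1}^\beta$, $ev_{a+1}^\beta\circ\phi=ev_a^\beta$, and $ev_j^\beta\circ\phi=ev_j^\beta$ for $j\ne a,a{+}1$. Setting $\omega=\bigwedge_{j=1}^l(ev_j^\beta)^*\gamma_{\sigma(j)}$ and $\omega'=\bigwedge_{j=1}^l(ev_j^\beta)^*\gamma_j$, pulling back by $\phi$ and applying graded-commutativity of the wedge product to swap the two adjacent factors $(ev_{a+1}^\beta)^*\gamma_{a+1}$ and $(ev_a^\beta)^*\gamma_a$ gives $\phi^*\omega=(-1)^{|\gamma_a|\,|\gamma_{a+1}|}\omega'$. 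Assuming the orientation claim $sgn(\phi)=0$, Lemma~\ref{lm:pulltopush} gives $(\phi^{-1})^*\omega'=\phi_*\omega'$, so by part~\ref{prop:pushcomp} of Proposition~\ref{prop:iof} and $ev_0^\beta\circ\phi=ev_0^\beta$,
\[
(ev_0^\beta)_*\omega=(-1)^{|\gamma_a|\,|\gamma_{a+1}|}(ev_0^\beta)_*(\phi^{-1})^*\omega'=(-1)^{|\gamma_a|\,|\gamma_{a+1}|}(ev_0^\beta)_*\phi_*\omega'=(-1)^{|\gamma_a|\,|\gamma_{a+1}|}(ev_0^\beta)_*\omega'.
\]
Multiplying by $(-1)^{w_\s(\beta)}$ and summing over $\beta$ with weights $T^{\pr(\beta)}$ yields $\q_{\emptyset,l}(\gamma_{\sigma(1)},\dots,\gamma_{\sigma(l)})=(-1)^{s_\sigma(\gamma)}\q_{\emptyset,l}(\gamma_1,\dots,\gamma_l)$, which is the desired identity for adjacent transpositions; the general case follows by the cocycle bookkeeping above.

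The hard part will be justifying $sgn(\phi)=0$, i.e.\ that relabeling interior marked points of a $J$-holomorphic sphere is orientation preserving for the standard orientation on $\M_{l+1}(\beta)$. This is the closed counterpart of the corresponding step in the proof of Lemma~\ref{cl:symmetry} (Proposition~3.6 of~\cite{ST1}); there the delicate point concerns boundary marked points, which do not occur here, while each interior marked point contributes an even-dimensional (complex) factor $T_{x_j}\Sigma$ to the orientation, and a transposition of two such factors is orientation preserving. Since $\M_{l+1}(\beta)$ carries no boundary, the determinant line of the linearized Cauchy--Riemann operator is oriented independently of the marked-point labels, so $\phi$ is orientation preserving; this is standard for moduli of $J$-holomorphic spheres, consistent with the lemma being well known. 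With that input in place, the remaining content is the Koszul-sign combinatorics and the elementary push-forward manipulations spelled out above.
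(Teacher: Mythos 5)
The paper offers no proof of this lemma, merely recording it among ``well known'' facts. Your argument is the standard one and it is correct: the reduction to adjacent transpositions via the Koszul cocycle for $s_\sigma(\gamma)$, the pull-push computation through the relabeling involution $\phi$ on $\M_{l+1}(\beta)$, and the identification $\phi^*\omega=(-1)^{|\gamma_a||\gamma_{a+1}|}\omega'$ are all sound, and the orientation input $\mathrm{sgn}(\phi)=0$ is indeed the standard fact that relabeling interior marked points on closed genus-zero moduli preserves the complex orientation (each interior marked point contributes a complex line to the tangent space, and there is no boundary ordering to worry about, in contrast to the open case of Lemma~\ref{cl:symmetry}).
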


\begin{lm}[Closed zero energy]\label{lm:czero}
For $\gamma_1,\ldots,\gamma_l \in A^*(X),$
\[
\q_{\emptyset,l}^0(\gamma_1,\ldots,\gamma_l) =
\begin{cases}
\gamma_1 \wedge \gamma_2, & l = 2, \\
0,  & \text{otherwise.}
\end{cases}
\]
\end{lm}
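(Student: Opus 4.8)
The plan is to use the fact that a degree-zero genus-zero $J$-holomorphic stable map is constant. The cases $l = 0$ and $l = 1$ are immediate, since $\q_{\emptyset,0}^0$ and $\q_{\emptyset,1}^0$ are defined to vanish (consistently with $\M_1(0) = \M_2(0) = \emptyset$, as no genus-zero configuration of constant components can carry fewer than three special points on every component).

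For $l \ge 2$, I would first identify the moduli space. Each irreducible component of a genus-zero degree-zero $J$-holomorphic stable map represents a class of non-negative symplectic area, and these areas sum to zero; hence every component, and thus the whole map, is constant. Stability then forces the marked domain to be a stable $(l+1)$-pointed genus-zero curve, so there is a canonical isomorphism $\M_{l+1}(0) \simeq \overline{\M}_{0,l+1} \times X$ in which the factor $X$ records the common value of the map and every evaluation map $ev_j^0$, $j = 0,\dots,l$, becomes the projection $\pi_X : \overline{\M}_{0,l+1}\times X \to X$. Since $w_\s(0) = 0 \in \Z/2\Z$, the sign in the definition of $\q_{\emptyset,l}^0$ is trivial; moreover $\M_{l+1}(0)$ is manifestly a smooth orbifold with $ev_0 = \pi_X$ a submersion, so the standing regularity assumptions hold automatically.

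It then remains to compute. Using the identification above together with property~\ref{prop:pushpull} of Proposition~\ref{prop:iof},
\[
\q_{\emptyset,l}^0(\gamma_1,\dots,\gamma_l) = (\pi_X)_* \pi_X^*(\gamma_1\wedge\cdots\wedge\gamma_l) = (\gamma_1\wedge\cdots\wedge\gamma_l)\wedge (\pi_X)_* 1.
\]
The form $(\pi_X)_* 1$ on $X$ has degree $-\dim_\R \overline{\M}_{0,l+1} = 4 - 2l$, which is negative for $l \ge 3$; hence $(\pi_X)_* 1 = 0$ and the operation vanishes in that range. For $l = 2$ the fiber $\overline{\M}_{0,3}$ is a single point, so property~\ref{normalization} of Proposition~\ref{prop:iof}, applied fiberwise via property~\ref{prop:pushfiberprod}, gives $(\pi_X)_* 1 = 1$, whence $\q_{\emptyset,2}^0(\gamma_1,\gamma_2) = \gamma_1 \wedge \gamma_2$. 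The only point that needs care is the compatibility of orientations in the identification $\M_{l+1}(0)\simeq\overline{\M}_{0,l+1}\times X$; but the sole surviving contribution comes from the zero-dimensional space $\overline{\M}_{0,3}$, where orientation conventions are vacuous, so this is not a genuine obstacle.
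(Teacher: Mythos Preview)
The paper does not give a proof of this lemma; it is listed among several ``well known'' closed properties and left without argument. Your proof is correct and is exactly the standard argument one would expect: identify $\M_{l+1}(0)\simeq\overline{\M}_{0,l+1}\times X$ via constancy of degree-zero stable maps, then use the push-pull formula together with the dimension of the Deligne--Mumford fiber.
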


\begin{lm}[Closed divisor]\label{lm:cdiv}
For $\gamma_1,\ldots,\gamma_l \in A^*(X)$ with $d\gamma_1 = 0, \,|\gamma_1| = 2,$ and $\beta \in H_2(X;\Z),$ we have
\[
\q_{\emptyset,l}^\beta(\gamma_1,\ldots,\gamma_l) = \left(\int_\beta \gamma_1\right) \cdot\q_{\emptyset,l-1}^\beta(\gamma_2,\ldots,\gamma_l).
\]
\end{lm}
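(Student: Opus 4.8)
The plan is to deduce the relation from the forgetful map that drops the interior marked point carrying $\gamma_1$. Assume first $\beta\neq\beta_0$, and let
\[
\pi:\M_{l+1}(\beta)\lrarr\M_l(\beta)
\]
be the map that forgets the marked point labeled $1$ and contracts the resulting unstable components. Under our regularity assumptions both spaces are smooth orbifolds with corners, the map $\pi$ is proper, and its fiber over a stable map $[(\Sigma,u,\vec{w})]$ is the domain curve $\Sigma$ with its complex orientation, on which $ev_1^\beta$ restricts to $u:\Sigma\to X$. Since forgetting the first marked point changes neither the map nor the positions of the other marked points, the remaining evaluation maps factor as $ev_j^\beta=\overline{ev}_j\circ\pi$ for $j\in\{0,2,\ldots,l\}$, where $\overline{ev}_j$ is the corresponding evaluation map on $\M_l(\beta)$ (after the evident relabeling $0,2,\ldots,l\mapsto 0,1,\ldots,l-1$).

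With $\eta:=\bigwedge_{j=2}^l\overline{ev}_j^*\gamma_j$, first I would write
\[
\q_{\emptyset,l}^\beta(\gamma_1,\ldots,\gamma_l)=(-1)^{w_\s(\beta)}(ev_0^\beta)_*\big((ev_1^\beta)^*\gamma_1\wedge\pi^*\eta\big),
\]
and then use the functoriality of fiber integration ($ev_0^\beta=\overline{ev}_0\circ\pi$ gives $(ev_0^\beta)_*=(\overline{ev}_0)_*\circ\pi_*$) together with the projection formula of Proposition~\ref{prop:iof}. Since $|\gamma_1|=2$ is even, $(ev_1^\beta)^*\gamma_1$ commutes past $\pi^*\eta$ with no sign, so this becomes
\[
\q_{\emptyset,l}^\beta(\gamma_1,\ldots,\gamma_l)=(-1)^{w_\s(\beta)}(\overline{ev}_0)_*\big(\eta\wedge\pi_*((ev_1^\beta)^*\gamma_1)\big).
\]
It remains to identify $\pi_*\big((ev_1^\beta)^*\gamma_1\big)$. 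Its fibers being two-dimensional and $|\gamma_1|=2$, this pushforward is a $0$-form; by the base-change and normalization properties of fiber integration (Proposition~\ref{prop:iof}), its value over $[(\Sigma,u,\vec{w})]$ equals $\int_\Sigma u^*\gamma_1$. This is exactly where the hypothesis $d\gamma_1=0$ enters: for a closed form, $\int_\Sigma u^*\gamma_1$ is a homological quantity, equal (summed over the components of the possibly nodal $\Sigma$) to $\int_\beta\gamma_1$, hence constant over the base. Plugging this in and pulling the scalar $\int_\beta\gamma_1$ out of $(\overline{ev}_0)_*$ gives
\[
\q_{\emptyset,l}^\beta(\gamma_1,\ldots,\gamma_l)=\Big(\int_\beta\gamma_1\Big)(-1)^{w_\s(\beta)}(\overline{ev}_0)_*\eta=\Big(\int_\beta\gamma_1\Big)\q_{\emptyset,l-1}^\beta(\gamma_2,\ldots,\gamma_l),
\]
the factor $(-1)^{w_\s(\beta)}$ being identical on the two sides. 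For $\beta=\beta_0$ one argues directly from Lemma~\ref{lm:czero} (the degree-zero configuration $l=2$ being the usual exceptional case), since then $\int_{\beta_0}\gamma_1=0$ and the relevant operations vanish for $l\neq 2$.

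I expect the principal obstacle to be geometric rather than algebraic, namely justifying the fiber integration along $\pi$: the forgetful map fails to be a submersion over the locus of nodal domains, so to push forms along it one must either invoke the virtual fundamental class formalism or argue directly via the universal-curve description of $\M_{l+1}(\beta)$ over $\M_l(\beta)$, handling with care the stable reduction that occurs when the forgotten point collides with another special point. Once $\pi$ and the factorization $ev_j^\beta=\overline{ev}_j\circ\pi$ are in hand, the remainder is a routine manipulation with the properties of fiber integration collected in Proposition~\ref{prop:iof}.
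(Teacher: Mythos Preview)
The paper does not supply a proof of this lemma: it is grouped with Lemmas~\ref{lm:cunit}--\ref{lm:czero} under the heading ``The following lemmas are well known.'' Your argument via the forgetful map $\pi:\M_{l+1}(\beta)\to\M_l(\beta)$, the projection formula, and fiberwise integration of the closed $2$-form $\gamma_1$ over the universal curve is precisely the standard proof the paper is alluding to, so there is nothing to compare.

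Two minor remarks. First, you correctly flag that $(l,\beta)=(2,0)$ is exceptional: indeed $\q_{\emptyset,2}^{0}(\gamma_1,\gamma_2)=\gamma_1\wedge\gamma_2$ by Lemma~\ref{lm:czero} while the right-hand side vanishes, so the lemma as literally stated fails there; the paper's applications (the divisor axiom in Proposition~\ref{axioms}) implicitly avoid this case. Second, your stated concern about $\pi$ failing to be a submersion over the nodal locus is the right thing to worry about in general, but under the paper's standing regularity hypotheses (all $\M_{l+1}(\beta)$ smooth orbifolds, $ev_0$ a submersion --- in practice homogeneous targets) the forgetful map is the universal curve and the fiber integral is unproblematic; the virtual formalism is only needed in the general setting the paper defers to~\cite{Fukaya,FOOO1}.
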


\subsection{Pseudoisotopies}\label{ssec:isot}
Let $I=[0,1]$ and
let $\{J_t\}_{t\in I}$ be a path in $\J$ from $J = J_0$ to $J'=J_1.$
For each $\beta, k, l,$ set
\[
\Mt_{k+1,l}(\beta):=\{(t,u)\,|\,u\in\M_{k+1,l}(\beta;J_t)\}.
\]
We have evaluation maps
\begin{gather*}
\evbt_j:\Mt_{k+1,l}(\beta)\lrarr I\times L, \quad j\in\{0,\ldots,k\},\\
\evbt_j(t,[u,\vec{z},\vec{w}]):=(t,u(z_j)).
\end{gather*}
and
\begin{gather*}
\evit_j:\Mt_{k+1,l}(\beta)\lrarr I\times X, \quad j\in\{1,\ldots,l\}\\
\evit_j(t,[u,\vec{z},\vec{w}]):=(t,u(w_j)).
\end{gather*}
It follows from the assumption on $\J$ that all $\Mt_{k+1,l}(\beta)$ are smooth orbifolds with corners, and $\evbt_0$ is a proper submersion.
Let
\[
p_I:I\times L\lrarr I,\qquad p_{\M}:\Mt_{k+1,l}(\beta)\lrarr I,
\]
be the projections.
For $k,l\ge 0,$ define
\[
\qt_{k,l}^{\beta}:A^*(I\times L;R)^{\otimes k}\otimes A^*(I\times X;Q)^{\otimes l}\lrarr A^*(I\times L;R)
\]
by
\begin{gather*}
\qt_{1,0}^{\beta_0=0}(\at)=d\at,\quad \qt_{k,l}^{\beta}(\otimes_{j=1}^k\at_j;\otimes_{j=1}^l\gt_j):= (-1)^{\varepsilon(\at)}(\evbt_0)_*(\wedge_{j=1}^k\evbt_j^*\at_j\wedge \wedge_{j=1}^l\evit_j^*\gt_j),\\
\quad \at,\at_j\in A^*(I\times L),\qquad\gt_j\in A^*(I\times X).
\end{gather*}
Define also
\[
\qt_{-1,l}^{\beta}:A^*(I\times X;Q)^{\otimes l}\lrarr A^*(I;Q),\quad l\ge 0,
\]
by
\[
\qt_{-1,l}^{\beta}(\otimes_{j=1}^l\gt_j):= (p_\M)_*\wedge_{j=1}^l\evit_j^*\gt_j.
\]
As before, denote the sum over $\beta$ by
\begin{gather*}
\qt_{k,l}(\otimes_{j=1}^k\at_j;\otimes_{j=1}^l\gt_j):=
\sum_{\beta\in \sly}
T^{\beta}\qt_{k,l}^{\beta}(\otimes_{j=1}^k\at_j;\otimes_{j=1}^l\gt_j),\\
\qt_{-1,l}(\otimes_{j=1}^l\gt_j):=\sum_{\beta\in \sly}T^{\beta}\qt^\beta_{-1,l}(\otimes_{j=1}^l\gt_j).
\end{gather*}
Lastly, define similar operations using spheres,
\[
\qt_{\emptyset,l}:A^*(I\times X;Q)^{\otimes l}\lrarr A^*(I\times X;R),
\]
as follows. For $\beta\in H_2(X;\Z)$ let
\[
\Mt_{l+1}(\beta):=\{(t,u)\;|\,u\in \M_{l+1}(\beta;J_t)\}.
\]
For $j=0,\ldots,l,$ let
\begin{gather*}
\evt_j^\beta:\Mt_{l+1}(\beta)\to I\times X,\\
\evt_j^\beta(t,[u,\vec{w}]):=(t,u(w_j)),
\end{gather*}
be the evaluation maps. Assume that all the moduli spaces $\Mt_{l+1}(\beta)$ are smooth orbifolds and $\evt_0$ is a submersion. For $l\ge 0$, $(l,\beta)\ne (1,0),(0,0)$, set
\begin{gather*}
\qt_{\emptyset,l}^\beta(\gt_1,\ldots,\gt_l):=
(-1)^{w_\s(\beta)}
(\evt_0^\beta)_*(\wedge_{j=1}^l(\evt_j^\beta)^*\gt_j),\\
\qt_{\emptyset,l}(\gt_1,\ldots,\gt_l):=
\sum_{\beta\in H_2(X)}T^{\pr(\beta)}\qt_{\emptyset,l}^\beta(\gt_1,\ldots,\gt_l),
\end{gather*}
and define
\[
\qt_{\emptyset,1}^0:= 0,\qquad \qt_{\emptyset,0}^0:= 0.
\]

\begin{cl}[Structure equations for {$k\ge 0$}, {\cite[Proposition 4.3]{ST1}}]\label{qt_rel}
For any fixed $\at=(\at_1,\ldots,\at_k)$, $\gt=(\gt_1,\ldots,\gt_l)$,
\begin{align*}
&0=\sum_{i=1}^l(-1)^{1+\sum_{j=1}^{i-1}|\gt_j|}\qt_{k,l}(\at;
\otimes_{j=1}^{i-1}\gt_j\otimes d\gt_i\otimes\otimes_{j=i+1}^l\gt_j)+\mbox{}\\
&+\sum_{\substack{k_1+k_2=k+1\\1\le i\le k_1\\I\sqcup J=\{1,\ldots,l\}}}
(-1)^{\iota(\at,\gt;i,I)}
\qt_{k_1,|I|}(\otimes_{j=1}^{i-1}\at_j\otimes \qt_{k_2,|J|}(\otimes_{j=1}^{k_2}\at_{j+i-1};\otimes_{j \in J}\gt_j)\otimes\otimes_{j=i+k_2}^k\at_j;\otimes_{j \in I}\gt_j).
\end{align*}
\end{cl}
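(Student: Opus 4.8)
\emph{Proof proposal.} This is the family (pseudoisotopy) counterpart of the structure equations of Proposition~\ref{q_rel}, and the plan is to prove it by the identical mechanism -- Stokes' theorem for the evaluation map $\evbt_0$ -- with the interval $I$ playing a purely inert role. Fix $\beta\in\sly$, integers $k,l\ge 0$, and forms $\at_j\in A^*(I\times L)$, $\gt_j\in A^*(I\times X)$, and set
\[
\xi:=\bigwedge_{j=1}^k\evbt_j^*\at_j\wedge\bigwedge_{j=1}^l\evit_j^*\gt_j\in A^*\!\big(\Mt_{k+1,l}(\beta)\big),
\]
so that $\qt_{k,l}^\beta(\at;\gt)=(-1)^{\varepsilon(\at)}(\evbt_0)_*\xi$. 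Since $\evbt_0$ is a proper submersion, Proposition~\ref{stokes} gives
\[
d\big((\evbt_0)_*\xi\big)=(\evbt_0)_*(d\xi)+(-1)^{\dim\Mt_{k+1,l}(\beta)+|\xi|}\big(\evbt_0|_{\d\Mt_{k+1,l}(\beta)}\big)_*\xi .
\]
The plan is to identify each of these three terms with a part of the asserted identity and then sum over $\beta$ against $T^\beta$.

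Since $\evbt_j,\evit_j$ are smooth, $d$ commutes with their pullbacks, so Leibniz expands $d\xi$ into a signed sum of forms with a single $d\at_j$ or $\evit_i^*(d\gt_i)$ inserted. After restoring the prefactor $(-1)^{\varepsilon(\at)}$ and tracking Koszul signs, the $d\gt_i$ part reproduces the first sum $\sum_{i=1}^l(-1)^{\sum_{j=1}^{i-1}|\gt_j|+1}\qt_{k,l}(\at;\ldots\otimes d\gt_i\otimes\ldots)$ of the claim. Using the convention $\qt_{1,0}^{\beta_0}=d$, the remaining $d\at_j$ terms, together with the left-hand term $d\big((\evbt_0)_*\xi\big)=\pm\,d(\qt_{k,l}^\beta(\at;\gt))$, are precisely the $(k_2,|J|,\beta_2)=(1,0,\beta_0)$, resp.\ $(k_1,|I|,\beta_1)=(1,0,\beta_0)$, terms of the second sum; transposing them to the other side accounts for both. (No $\q$-operator acts as $d$ on an interior input, which is why the $d\gt_i$ terms alone survive as a separate sum.)

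The fiberwise boundary of $\Mt_{k+1,l}(\beta)$ with respect to $\evbt_0$ is the locus where a boundary node forms and the disk splits in two. Hence it is the union, over $\beta_1+\beta_2=\beta$, $I\sqcup J=\{1,\ldots,l\}$, $k_1+k_2=k+1$, and node position $1\le i\le k_1$, of fiber products
\[
\Mt_{k_1+1,|I|}(\beta_1)\times_{I\times L}\Mt_{k_2+1,|J|}(\beta_2),
\]
the first factor carrying boundary point $0$ and glued along $\evbt_i$ to $\evbt_0$ of the second, with interior points distributed by $I,J$. Pulling $\xi$ back along the gluing map and applying the pushforward identities of Proposition~\ref{prop:iof} -- composition~\ref{prop:pushcomp}, projection formula~\ref{prop:pushpull}, and base change~\ref{prop:pushfiberprod} -- identifies the pushforward over each stratum, up to sign, with the nested operator $\qt_{k_1,|I|}(\ldots\otimes\qt_{k_2,|J|}(\ldots;\otimes_J\gt)\otimes\ldots;\otimes_I\gt)$. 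Strata in which one component is constant but still stable, i.e.\ $(k_2,|J|)\in\{(2,0),(0,1)\}$ or the mirror cases for the first factor, are evaluated by the family analog of the zero-energy Lemma~\ref{lm:qzero}. Together with the terms transposed above, the totality of strata assembles the second sum $\sum_{k_1+k_2=k+1}(-1)^{\iota(\at,\gt;i,I)}\qt_{k_1,|I|}(\ldots)$ of the claim.

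The main obstacle is the sign analysis. One must verify that the product of the prefactor $(-1)^{\varepsilon(\at)}$, the Koszul signs from commuting $d$ past the wedge factors, the Stokes sign $(-1)^{\dim\Mt_{k+1,l}(\beta)+|\xi|}$, and -- most delicately -- the orientation sign comparing the boundary orientation of each codimension-one stratum with its fiber-product orientation, combines to exactly $(-1)^{\iota(\at,\gt;i,I)}$, including the contribution $sgn(\sigma^{\gt}_{I\cup J})$ from reordering the interior inputs between the two components. This is precisely the computation carried out for Proposition~\ref{q_rel} in~\cite[Proposition~2.4]{ST1}; the only new ingredient is the factor of $I$, which is orientable, carries no grading, and is preserved by every evaluation and gluing map, so it contributes $+1$ to each orientation and enters no Koszul sign. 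The family version therefore follows verbatim, and summing over $\beta$ against $T^\beta$ yields the stated identity; this is~\cite[Proposition~4.3]{ST1}.
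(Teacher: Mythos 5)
The paper does not prove this statement in-text; it cites \cite[Proposition~4.3]{ST1}, and your outline is precisely the argument that reference executes: Stokes' theorem (Proposition~\ref{stokes}) applied to $\evbt_0:\Mt_{k+1,l}(\beta)\to I\times L$, with the fiberwise boundary identified as the family of boundary-node strata, the $d\at_j$ and $d(\qt_{k,l})$ terms absorbed into the second sum via the conventions $\qt_{1,0}^{\beta_0}=d$, and the observation that the interval factor is inert for orientations and Koszul signs. Your proposal is correct and takes essentially the same route; the one small imprecision is attributing the constant-component strata to the zero-energy Lemma~\ref{lm:qzero} (they are handled by the definitional conventions for $\qt_{k,l}^{\beta_0}$, with the zero-energy lemma only computing their values, not justifying their inclusion), which does not affect the argument.
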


To formulate the next results, define
\[
\ll \xit,\etat\gg :=
(-1)^{|\etat|}(p_I)_*(\xit\wedge\etat),
\qquad
\xit,\etat\in A^*(I\times L;R).
\]
\begin{cl}[Structure equations for {$k=-1$}, {\cite[Proposition 4.4]{ST1}}]\label{cl:qt_-1}
For any fixed $\gt=(\gt_1,\ldots,\gt_l)$,
\begin{align*}
-d\qt_{-1,l}(\gt)=&\sum_{i=1}^l(-1)^{1+\sum_{j=1}^l |\gt_j|} \qt_{-1,l}(\otimes_{j=1}^{i-1}\gt_j\otimes d\gt_i\otimes \otimes_{j=i+1}^l\gt_j)+\\
&+\frac{1}{2}\sum_{I\sqcup J=\{1,\ldots,l\}}
(-1)^{\iota(\gt;I)}
\ll\qt_{0,|I|}(\otimes_{j\in I}\gt_j),\qt_{0,|J|}(\otimes_{j\in J}\gt_j)\gg
+ (-1)^{|\gt|+1}(p_I)_* i^* \qt_{\emptyset,l}(\gt).
\end{align*}
\end{cl}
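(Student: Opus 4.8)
The plan is to reduce the identity to Stokes' theorem for differential forms on a moduli space with corners, exactly as in the proof of the non-family structure equation Proposition~\ref{q-1_rel}; the one new feature is that $\qt_{-1,l}(\gt)$ is now a form on $I$ rather than an element of $Q$, so that Stokes produces the term $-d\qt_{-1,l}(\gt)$ on the left. Fix $\beta\in\sly$ and prove the $T^\beta$-component of the identity, summing over $\beta$ at the end. By definition $\qt_{-1,l}^\beta(\gt)=(p_\M)_*\xi$, where $\xi:=\bigwedge_{j=1}^{l}\evit_j^*\gt_j$ and $p_\M:\Mt_{0,l}(\beta)\to I$ is the projection. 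Applying Stokes' theorem (Proposition~\ref{stokes}) to $p_\M$ and $\xi$,
\[
d\qt_{-1,l}^\beta(\gt)=(p_\M)_*(d\xi)+(-1)^{\dim\Mt_{0,l}(\beta)+|\gt|}\big(p_\M\big|_{\d\Mt_{0,l}(\beta)}\big)_*\xi,
\]
where $|\gt|=\sum_j|\gt_j|$ and $\d\Mt_{0,l}(\beta)$ is the fiberwise boundary with respect to $p_\M$. Expanding $d\xi$ by the Leibniz rule and pushing forward term by term gives $(p_\M)_*(d\xi)=\sum_{(2:3)=\{j\}}(-1)^{|\gt^{(1:3)}|}\qt_{-1,l}^\beta(\gt^{(1:3)}\otimes d\gt_j\otimes\gt^{(3:3)})$, which, after transposing, accounts for the first sum on the right-hand side of the claim.

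Next I would enumerate the codimension-one fiberwise boundary strata of $\Mt_{0,l}(\beta)$. There are two families. The disk-breaking strata, in which the domain disk splits into two disk components joined at a single boundary node, are fiber products $\Mt_{1,|I|}(\beta_1)\times_{I\times L}\Mt_{1,|J|}(\beta_2)$ over the evaluation maps at the node, summed over $I\sqcup J=\{1,\ldots,l\}$ and $\beta_1+\beta_2=\beta$. The remaining stratum is the one in which the disk component contracts to a point of $L$ while a single sphere bubble carries the entire degree and all $l$ interior marked points; a dimension count using Lemmas~\ref{deg_str_map} and~\ref{lm:cdeg} shows this also has codimension one, and it is canonically identified with $\Mt_{l+1}(\hat\beta)\times_{I\times X}(I\times L)$, summed over $\hat\beta\in H_2(X;\Z)$ with $\pr(\hat\beta)=\beta$. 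On the disk-breaking strata, the base-change identity for fiber products (property~\ref{prop:pushfiberprod} of Proposition~\ref{prop:iof}) together with the definitions of $\qt_{0,\ast}$ and of the pairing $\ll\cdot,\cdot\gg$ turns the corresponding piece of $(p_\M|_{\d\Mt_{0,l}(\beta)})_*\xi$ into $\tfrac12\sum_{I\sqcup J}(-1)^{\iota(\gt;I)}\ll\qt_{0,|I|}(\gt^I),\qt_{0,|J|}(\gt^J)\gg$, the factor $\tfrac12$ compensating for the two labellings of the components. On the sphere-bubbling stratum, a further application of property~\ref{prop:pushfiberprod} of Proposition~\ref{prop:iof} turns the corresponding piece into $(p_I)_*i^*\qt_{\emptyset,l}(\gt)$, the sign $(-1)^{w_\s(\hat\beta)}$ in the definition of $\qt_{\emptyset,l}$ being exactly the one needed to match the orientation induced by gluing a sphere to a disk, cf.\ \cite[Lemma 2.12]{ST1}.

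Combining these computations, multiplying through by the overall Stokes sign $-(-1)^{\dim\Mt_{0,l}(\beta)+|\gt|}$, and transposing $d\qt_{-1,l}^\beta(\gt)$ to the left yields the $T^\beta$-component of the asserted identity; summing over $\beta$ with the appropriate powers of $T$ finishes the proof. The differentiations and pushforwards themselves hide no conceptual difficulty; the main obstacle, as already in \cite[Proposition 4.4]{ST1}, is the orientation bookkeeping. One must check that the boundary orientations of $\Mt_{0,l}(\beta)$, combined with the sign conventions in the definitions of the $\qt_{k,l}$, the pairing $\ll\cdot,\cdot\gg$, and fiber integration, produce precisely the combinatorial signs $(-1)^{|\gt^{(1:3)}|+1}$, $(-1)^{\iota(\gt;I)}$, and $(-1)^{|\gt|+1}$ appearing in the statement; this is carried out by the same normal-form analysis of the boundary strata used throughout \cite{ST1}.
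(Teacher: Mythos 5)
Your proof is correct and follows the same strategy that underlies the cited reference [ST1, Proposition 4.4]: apply the fiberwise Stokes' theorem (Proposition~\ref{stokes}) to the proper submersion $p_\M:\Mt_{0,l}(\beta)\to I$, expand $(p_\M)_*(d\xi)$ by Leibniz to produce the $d\gt_j$ sum, and identify the two families of codimension-one fiberwise boundary strata --- boundary-node disk breaking, giving the $\ll\qt_{0,|I|},\qt_{0,|J|}\gg$ term with its $\tfrac12$ from the symmetry $(I,J,\beta_1,\beta_2)\leftrightarrow(J,I,\beta_2,\beta_1)$, and boundary collapse onto a sphere meeting $L$ at its $0$th marked point, giving the $(p_I)_*i^*\qt_{\emptyset,l}$ term via the base-change property~\ref{prop:pushfiberprod}. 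The only ingredient you leave unverified is the orientation bookkeeping producing the precise signs $(-1)^{|\gt^{(1:3)}|+1}$, $(-1)^{\iota(\gt;I)}$, and $(-1)^{|\gt|+1}$, which you correctly attribute to the normal-form analysis of boundary strata and the relative spin sign $(-1)^{w_\s(\hat\beta)}$ from [ST1]; this is the same deferral the present paper makes by citing [ST1, Proposition 4.4] directly.
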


\begin{lm}[{\cite[Proposition 4.18]{ST1}}]\label{lm:mt0}
For all lists $\gt=(\gt_1,\ldots,\gt_l)$, we have
\[
\ll\qt_{0,l}(\gt),1\gg=
\begin{cases}
0, & l\ge 1,\\
-(p_I)_*(\gt_1|_{I\times L}), & l=1.
\end{cases}
\]
\end{lm}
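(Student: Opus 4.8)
The plan is to push $\qt_{0,l}(\gt)$ all the way down to $I$, express it as an integral over the moduli space carrying one boundary marked point, and then exploit the map that forgets that marked point. Only constant maps will survive, and there the answer is explicit. Concretely, using the definitions of $\ll\cdot,\cdot\gg$ and of $\qt_{0,l}$ (so that $\varepsilon(\emptyset)=1$), I would first write
\[
\ll\qt_{0,l}(\gt),1\gg \;=\; (p_I)_*\,\qt_{0,l}(\gt)\;=\;-\sum_{\beta\in\sly}T^{\beta}\,(p_I)_*\,(\evbt_0)_*\Big(\bigwedge_{j=1}^{l}\evit_j^{*}\gt_j\Big),
\]
where $\evbt_0,\evit_j$ are the evaluation maps of $\Mt_{1,l}(\beta)$. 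Since $p_I\circ\evbt_0$ is the structure projection $p_{\M}\colon\Mt_{1,l}(\beta)\to I$, functoriality of push-forward (Proposition~\ref{prop:iof}) collapses the two push-forwards, so the $\beta$-summand becomes $-(p_{\M})_*\big(\bigwedge_{j}\evit_j^{*}\gt_j\big)$.

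For $\beta\ne\beta_0$ I would bring in the forgetful map $\mathit{fgt}\colon\Mt_{1,l}(\beta)\to\Mt_{0,l}(\beta)$ that deletes $z_0$ and stabilizes. By the regularity hypothesis $\Mt_{0,l}(\beta)$ is again a smooth orbifold with corners, and $\mathit{fgt}$ is a proper submersion of relative dimension one, its fiber being the (possibly nodal) boundary circle along which $z_0$ ranges. The interior evaluations of $\Mt_{1,l}(\beta)$ factor through $\mathit{fgt}$, so $\bigwedge_{j}\evit_j^{*}\gt_j=\mathit{fgt}^{*}\theta$ for a form $\theta$ on $\Mt_{0,l}(\beta)$, and $p_{\M}=p_{\M}^{(0)}\circ\mathit{fgt}$. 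Functoriality of push-forward and the projection formula (Proposition~\ref{prop:iof}) then give
\[
(p_{\M})_*\mathit{fgt}^{*}\theta \;=\; (p_{\M}^{(0)})_*\,\mathit{fgt}_*\mathit{fgt}^{*}\theta \;=\; (p_{\M}^{(0)})_*\big(\theta\wedge\mathit{fgt}_*1\big)\;=\;0,
\]
because $\mathit{fgt}_*1$ has degree $-1$ and so vanishes. Hence only $\beta=\beta_0$ contributes.

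It remains to compute the constant-map term. Constant maps give $\Mt_{1,l}(\beta_0)\cong I\times L\times\overline{\mathcal{M}}_{1,l}$, where $\overline{\mathcal{M}}_{1,l}$ is the moduli of genus-zero disks with one boundary and $l$ interior marked points, $\evbt_0$ is the projection onto $I\times L$, and each $\evit_j$ is the composite $I\times L\times\overline{\mathcal{M}}_{1,l}\to I\times L\hookrightarrow I\times X$. For $l=0$ this space is empty; for $l\ge 2$ one has $\dim\overline{\mathcal{M}}_{1,l}=2l-2>0$ and $\bigwedge_{j}\evit_j^{*}\gt_j$ is pulled back along $\evbt_0$, so its push-forward vanishes by the projection formula exactly as before; and for $l=1$, $\overline{\mathcal{M}}_{1,1}$ is a point, $\evbt_0=\mathrm{id}_{I\times L}$, and $\qt_{0,1}^{\beta_0}(\gt_1)=-\gt_1|_{I\times L}$. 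Since $T^{\beta_0}=1$, assembling the cases gives $\ll\qt_{0,l}(\gt),1\gg=0$ for $l\ne1$ and $-(p_I)_*(\gt_1|_{I\times L})$ for $l=1$.

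I expect the only step with genuine geometric content to be the assertion that $\mathit{fgt}$ is a proper submersion of relative dimension one, uniformly across the corner strata; this involves the combinatorics of restabilizing an open stable map after deleting a boundary point, for instance contracting a disk component left unstable by the deletion. It is the pseudoisotopy analog of the input behind Lemma~\ref{no_top_deg} and is handled exactly as in~\cite{ST1}; everything else is bookkeeping with Proposition~\ref{prop:iof}.
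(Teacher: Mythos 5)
The paper does not prove this lemma itself; it cites it verbatim from Proposition 4.18 of~\cite{ST1}. Your reconstruction is correct and follows what is surely the intended argument: collapse the two pushforwards to $(p_\M)_*$, dispose of $\beta\ne\beta_0$ by factoring the integrand through the boundary forgetful map $\Mt_{1,l}(\beta)\to\Mt_{0,l}(\beta)$ and observing that $\mathit{fgt}_*1$ lies in negative degree, and then read off the $\beta_0$ contribution from the explicit product structure $\Mt_{1,l}(\beta_0)\cong I\times L\times\overline{\M}_{1,l}$ (empty for $l=0$, a point for $l=1$, and killed by the projection formula along $\evbt_0$ for $l\ge 2$). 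You also correctly locate the one nontrivial input, namely that $\mathit{fgt}$ is a proper submersion of relative dimension one uniformly over the corner strata (its fiber over $(\Sigma,u)$ is $\partial\Sigma$, with ghost disk bubbling absorbing collisions with nodes and marked points), which is exactly the fact~\cite{ST1} establishes and reuses for the family of ``unit/forgetful'' lemmas of which this is the pseudoisotopy instance.

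One small remark: the case split in the statement as printed, ``$l\ge 1$'' versus ``$l=1$'', overlaps; it should read ``$l\ne 1$'', and your proof correctly treats it that way. Also, your $\beta=\beta_0$, $l\ge 2$ step could just as well be subsumed under the forgetful-map argument (the constant-map moduli is nonempty there), but the projection formula along $\evbt_0$ is equally valid and arguably cleaner since $\evbt_0$ is literally a product projection in that case.
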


\begin{lm}[Chain map, {\cite[Proposition 4.19]{ST1}}]\label{lm:qtchain}
The operator
\[
\qt_{\emptyset}=\oplus_{l\ge 0} \qt_{\emptyset,l}
: \bigoplus_{l\ge 0}A^*(I\times X;Q)^{\otimes l} \lrarr A^*(I\times X;Q)
\]
is a chain map.
\end{lm}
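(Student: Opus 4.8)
\emph{Proof plan.} The statement asserts that $\qt_\emptyset$ intertwines the bar differential on $\bigoplus_{l\ge 0}A^*(I\times X;Q)^{\otimes l}$ with the de Rham differential $d$ on $A^*(I\times X;Q)$. Unwinding the definitions, the plan is to prove, for every $l\ge 0$ and all $\gt_1,\ldots,\gt_l\in A^*(I\times X;Q)$, the identity
\[
d\,\qt_{\emptyset,l}(\gt_1,\ldots,\gt_l)=\sum_{i=1}^{l}(-1)^{\sum_{j<i}|\gt_j|}\,\qt_{\emptyset,l}(\gt_1,\ldots,\gt_{i-1},d\gt_i,\gt_{i+1},\ldots,\gt_l),
\]
which is the chain-map condition for the bar differential on the source (cf.\ the first term of Claim~\ref{qt_rel}). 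Since $d$ is linear over the coefficient ring, I would first reduce to the corresponding identity for each $\qt_{\emptyset,l}^\beta$ with the coefficient $T^{\pr(\beta)}$ stripped off; the special values $(l,\beta)\in\{(1,\beta_0),(0,\beta_0)\}$, where $\qt_{\emptyset,l}^{\beta_0}$ is declared to vanish, then make the identity read $0=0$.

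\emph{The main step.} For $(l,\beta)$ outside the trivial cases, set $\xi:=\bigwedge_{j=1}^{l}(\evt_j^\beta)^*\gt_j$. The evaluation map $\evt_0^\beta:\Mt_{l+1}(\beta)\to I\times X$ is a proper submersion by hypothesis, so Stokes' theorem (Proposition~\ref{stokes}) gives
\[
d\big((\evt_0^\beta)_*\xi\big)=(\evt_0^\beta)_*(d\xi)+(-1)^{\dim\Mt_{l+1}(\beta)+|\xi|}\big(\evt_0^\beta\big|_{\partial\Mt_{l+1}(\beta)}\big)_*\xi,
\]
where $\partial\Mt_{l+1}(\beta)$ denotes the fiberwise boundary with respect to $\evt_0^\beta$. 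The crux of the argument — and the one point that requires care — is that this fiberwise boundary term vanishes. The reason is that for each fixed $t$ the compactified moduli space $\M_{l+1}(\beta;J_t)$ is a compact smooth orbifold \emph{without boundary}: its Gromov compactification adds only nodal configurations, which form suborbifolds of real codimension at least two and therefore constitute no codimension-one stratum. Consequently the only boundary of the family $\Mt_{l+1}(\beta)=\{(t,u)\mid u\in\M_{l+1}(\beta;J_t)\}$ arises from $t$ reaching an endpoint of $I=[0,1]$, so $\partial\Mt_{l+1}(\beta)=\M_{l+1}(\beta;J_0)\sqcup\M_{l+1}(\beta;J_1)$, and $\evt_0^\beta$ maps these two pieces into $\{0\}\times X$ and $\{1\}\times X$ respectively; that is, $\partial\Mt_{l+1}(\beta)=(\evt_0^\beta)^{-1}(\partial(I\times X))$ is entirely horizontal, whence the fiberwise boundary relevant to Proposition~\ref{stokes} is empty and $d\big((\evt_0^\beta)_*\xi\big)=(\evt_0^\beta)_*(d\xi)$. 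This is precisely the feature that makes the sphere case simpler than the disk case of Claim~\ref{qt_rel}: disk bubbling produces a genuine codimension-one boundary and hence quadratic composition terms, while spheres have no codimension-one degenerations and so no terms $\qt_\emptyset(\cdots\qt_\emptyset(\cdots)\cdots)$ appear.

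\emph{Conclusion.} It then remains to expand $d\xi$ by the Leibniz rule, using that $d$ commutes with pullback,
\[
d\xi=\sum_{i=1}^{l}(-1)^{\sum_{j<i}|\gt_j|}\,(\evt_1^\beta)^*\gt_1\wedge\cdots\wedge(\evt_i^\beta)^*(d\gt_i)\wedge\cdots\wedge(\evt_l^\beta)^*\gt_l,
\]
apply $(-1)^{w_\s(\beta)}(\evt_0^\beta)_*$ to both sides of $d\big((\evt_0^\beta)_*\xi\big)=(\evt_0^\beta)_*(d\xi)$, and then multiply by $T^{\pr(\beta)}$ and sum over $\beta\in H_2(X;\Z)$ to recover the chain-map identity for $\qt_\emptyset$. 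The whole argument runs in parallel with the non-isotopic statement Lemma~\ref{lm:qecm}. I do not anticipate any conceptual obstacle beyond the vanishing of the boundary term discussed above; the only tedious part is the sign bookkeeping — keeping track of the factor $(-1)^{w_\s(\beta)}$, of the grading conventions on the coefficient ring, and of the bar-differential sign convention used in Claim~\ref{qt_rel}.
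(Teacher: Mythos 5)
Your proposal is correct and follows the standard route: apply Stokes' theorem (Proposition~\ref{stokes}) to the proper submersion $\evt_0^\beta$, observe that the fiberwise boundary vanishes because each $\M_{l+1}(\beta;J_t)$ is a compact orbifold with all degenerations in codimension $\ge 2$ (so $\partial\Mt_{l+1}(\beta)$ lies entirely over $\partial(I\times X)$), and expand $d\xi$ by the Leibniz rule. This is exactly the argument of~\cite[Proposition~4.19]{ST1} that the paper invokes, and you correctly isolate the key geometric input — no codimension-one sphere degenerations — as the reason no composition terms appear, in contrast to the disk case of Claim~\ref{qt_rel}. One small terminological quibble: the differential on $\bigoplus_l A^*(I\times X;Q)^{\otimes l}$ you write out is the total tensor (de Rham) differential, not the bar differential, which would also include multiplication terms; the formula you give is the correct one for the statement, just mislabeled.
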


Recall the notion of bounding pairs from Definition~\ref{dfn:bdpair}.
\begin{dfn}\label{dfn:gequiv}
We say a bounding pair $(\gamma,b)$ with respect to $J$ is \textbf{gauge-equivalent} to a bounding pair $(\gamma',b')$ with respect to $J'$, if there exist
$\gt\in\mI A^*(X;Q)$ and $\bt\in\mJ A^*(L;R)$ such that
\begin{gather*}
j_0^*\gt=\gamma,\quad j_1^*\gt=\gamma',\quad j_0^*\bt=b,\quad j_1^*\bt=b',\quad d\gt=0,\quad |\gt|=2,\\
\sum_{k,l\ge 0}\qt_{k,l}(\bt^{\otimes k};\gt^{\otimes l})=\ct\cdot 1,\qquad \ct\in \mJ,\quad |\ct| = 2.
\end{gather*}
In this case, we say that $(\gt,\bt)$ is a pseudo-isotopy from $(\gamma,b)$ to $(\gamma',b')$ and write $(\gamma,b)\sim(\gamma',b')$.
\end{dfn}

\section{Geodesic conditions}\label{sec:geod}

\subsection{Geodesic operators}\label{ssec:geoddfn}
Let $a,e\in \Z_{\ge 0}$ such that $a+e=3$, and let $\M_{k+1,l;a,e}(\beta)\subset \M_{k+1,l}(\beta)$ be the closure of the subspace consisting of one-component maps such that $a$ of the boundary points and the first $e$ of the interior points lie on a common geodesic in the domain with respect to the hyperbolic metric.
When we need to specify which of the boundary points are taken to lie on a geodesic, we add their labels as sub-indices to $a$, in which case the order of the indices indicates the order in which the points appear on the geodesic. If not indicated explicitly, the points are assumed to appear according to their labeling order. For example, $\M_{k+1,l;2_{0,k},1}(\beta)$ is the space of stable disks with $k+1$ boundary and $l$ marked points, such that the first interior point lies on the geodesic between the zeroth and last boundary points. In $\M_{k+1,l;1_0,2}(\beta)$, the geodesic starts at the zeroth boundary point and passes through the first and second interior points, in that order. As mentioned in Section~\ref{sssec:reg}, we assume that $evb_0|_{\M_{k+1,l;a,e}(\beta)}$ is a proper submersion.

To determine the orientation on $\M_{k+1,l;a,e}(\beta)$, it is useful to identify it with a fiber product of oriented orbifolds, as follows.
Denote by $v_1,v_2,v_3\in\{z_0,\ldots,z_{k},w_1,\ldots,w_l\}$ the marked points that lie on the geodesic, labeled according to the order in which they appear on the geodesic.
Given a nodal Riemann surface with boundary $\Sigma$ with complex structure $j,$ denote by $\overline \Sigma$ a copy of $\Sigma$ with the opposite complex structure $-j.$ For a point $v \in \Sigma,$ let $\bar v \in \overline \Sigma$ denote the corresponding point. The complex double $\Sigma_\C = \Sigma\coprod_{\d\Sigma} \overline{\Sigma}$ is a closed nodal Riemann surface, so it is possible to define the cross ratio of four points on $\Sigma_\C$ as in~\cite[Appendix~D.4]{MS}.
We define
\begin{gather*}
\chi:\M_{k+1,l}(\beta)\lrarr \C,\\
\chi(u,\Sigma,\vec{z},\vec{w})
:=(\bar{v}_2,v_1,v_3,v_2).
\end{gather*}
On the irreducible locus of $\M_{k+1,l}(\beta)$, the domain $\Sigma$ can be identified with the upper half plane and $\chi$ has the explicit formula
\[
\chi(u,\vec{z},\vec{w})
=\frac{(v_3-\bar{v}_2)(v_2-v_1)}{(v_3-v_1)(v_2-\bar{v}_2)}.
\]
Note that the second marked point on the geodesic is necessarily an interior point, so $\bar{v}_2\ne v_2$, and thus $\chi$ is well defined.
Then the condition that $v_1,v_2,v_3,$ lie on a geodesic is equivalent to the condition $\chi(u,\vec{z},\vec{w})\in [0,1]$.
Thus, we have
\begin{equation*}
\M_{k+1,l;a,e}(\beta)= \M_{k+1,l;\chi}=
\xymatrix{
[0,1]\times_{\C}\M_{k+1,l}(\beta)\ar[r]\ar[d] & \M_{k,l}(\beta)\ar[d]^{\chi}\\
[0,1]\ar@{^{(}->}[r]& \C ,
}
\end{equation*}
and the fiber product identification determines orientation, as in~\cite{ST4}.

Denote by
\[
\q_{k,l;\chi}^\beta=\q_{k,l;a,e}^\beta: A^*(L; R)^{\otimes k}\otimes A^*(X;Q)^{\otimes l}\lrarr A^*(L;R)
\]
the operators defined analogously to $\qkl^\beta$  with $\M_{k+1,l;a,e}(\beta)$ in place of $\M_{k+1,l}(\beta)$. Explicitly,
\begin{align*}
\q_{k,l;a,e}^\beta(\alpha_1\otimes\cdots\otimes\alpha_k; \gamma_1\otimes\cdots\otimes\gamma_l):=
(-1)^{\varepsilon_\chi(\alpha)}
(evb_0^\beta)_* \left(\bigwedge_{j=1}^l(evi_j^\beta)^*\gamma_j\wedge \bigwedge_{j=1}^k (evb_j^\beta)^*\alpha_j\right)
\end{align*}
with
\[
\varepsilon_\chi(\alpha):=\varepsilon(\alpha) +|\alpha|+k
=\sum_{j=1}^k(j+1)(|\alpha_j|+1)+1.
\]
In addition, define
\[
\q_{-1,l;0,e}^\beta:(A^*(X;Q))^{\otimes l}\lrarr R
\]
by
\[
\q_{-1,l;0,e}^\beta(\gamma_1\otimes\cdots\otimes\gamma_l)
:=
-\int_{\M_{0,l;0,e}(\beta)} \bigwedge_{j=1}^l (evi_j^\beta)^*\gamma_j.
\]
Set
\begin{align*}
\q_{k,l;\chi}=\q_{k,l;a,e}:=\sum_{\beta\in\sly}T^{\beta}\q_{k,l;a,e}^{\beta}.
\end{align*}
Again, specifying boundary points and the order of the points on the geodesic can be done by adding a sub-index to $a$ and $e$.

Lastly,
consider the moduli space $\M_{l+1}(\beta)$ of spheres with $l+1$ marked points $w_0,\ldots,w_l$.
Let
\begin{gather*}
\chi_0:\M_{l+1}(\beta)\lrarr \C,\\
\chi_0(u,\Sigma,\vec{w})
:=(w_0,w_1,w_3,w_2),
\end{gather*}
be the cross ratio map. Let $\M_{l+1;\chi_0}(\beta)=[0,1]\times_{\C}\M_{l+1}(\beta)$ be the associated geodesic moduli space
and denote by
\[
\q_{\emptyset,l;\chi_0},\q^\beta_{\emptyset,l;\chi_0}: A^*(X;Q)^{\otimes l}\lrarr A^*(X;Q)
\]
the associated operators.
Explicitly,
\[
\q^\beta_{\emptyset,l;\chi_0} (\gamma_1\otimes\cdots\otimes \gamma_l)=
(-1)^{w_\s(\beta)}
(ev_0^\beta)_*\Big(\bigwedge_{j=1}^l(ev_j^\beta)^*\gamma_j\Big),
\]
and
\[
\q_{\emptyset,l;\chi_0}
:=\sum_{\beta\in H_2(X;\Z)}T^{\varpi(\beta)}
\q^\beta_{\emptyset,l;\chi_0}.
\]

\subsection{Structure equations}\label{ssec:geo_a_infty}
Here we formulate structure equation for the geodesic $\q$ operations, similarly to the structure equations that govern the usual $\q$ operators.
The proofs are similar to those of~\cite[Propositions 2.4-2.5]{ST1}, and are based on Stokes' theorem, Proposition~\ref{stokes}.

For ordered lists $B = (b_1,\ldots,b_k)$ and $I=(i_1,\ldots,i_l),$ denote by $B\circ I$ the ordered list resulting from concatenation, i.e., $B\circ I:=(b_1,\ldots,b_k,i_1,\ldots,i_l).$

Let $B$ be a list of indices and let $\eta=(\eta_j)_{j\in B}\subset A^*(X;\Qh)$. For a sublist $I\subset B$, denote by $\eta^I$ the list $(\eta_j)_{j\in I}$.
For a partition $I\sqcup J$ of $B$ into two ordered sub-lists, define
$sgn(\sigma^\eta_{I\sqcup J})$ by the equation
\[
\bigwedge_{i \in I} \eta_i \wedge \bigwedge_{j \in J} \eta_j = (-1)^{sgn(\sigma^\eta_{I\sqcup J})}\bigwedge_{k \in B} \eta_k,
\]
where the wedge products are taken in the order of the respective lists.
Explicitly,
\[
sgn(\sigma^\eta_{I\sqcup J})\equiv
\sum_{\substack{i\in I,j\in J\\ j<i}}|\eta_i|\cdot|\eta_j|\pmod 2.
\]

Throughout, let $\alpha=(\alpha_1,\ldots,\alpha_k)$ and $\gamma=(\gamma_1,\ldots,\gamma_l)$ be lists with $\alpha_j\in A^*(L;R)$ and $\gamma_j\in \Ah^*(X,L;\Qh)$.

Use the following notation for signs, modulo $2$:

\begin{gather*}
\iota_1 
:=\iota_1(\a,\gamma;i,I)
:=(|\gamma^J|+1)\sum_{j=1}^{i-1}(|\alpha_j|+1) +sgn(\sigma^\gamma_{I\sqcup J})+|\gamma|,\\
\iota_2 
:=\iota_2(\a,\gamma;i,I)
:=|\gamma^J|\sum_{j=1}^{i-1}(|\alpha_j|+1)+|\gamma^I| +sgn(\sigma^\gamma_{I\sqcup J})+1,\\
\iota_3 
:=\iota_3(\a,\gamma;I)
:=|\a|+k+|\gamma|+n+1+sgn(\sigma^\gamma_{J\sqcup I}).
\end{gather*}

\begin{prop}[The spaces $\M_{k+1,l;2_{0,m},1}(\beta)$]\label{prop:21}
\begin{align*}
&0=\sum_{i=1}^l
(-1)^{\sum_{j=1}^{i-1}|\gamma_j|}
\q_{k,l;2_{0,m},1}(\otimes_{j=1}^k\alpha_j; \otimes_{j=1}^{i-1}\gamma_j\otimes d\gamma_i\otimes \otimes_{j=i+1}^l\gamma_j)+\tag{AI}\label{AI}\\
&+\hspace{-1em}\sum_{\substack{k_1+k_2=k+1\\m+1\le i\le k_1\\I\sqcup J=\{2,\ldots,l\}}}\hspace{-1em}
(-1)^{\iota_1(\alpha,\gamma;i, (1)\circ I)} \q_{k_1,l_1;2_{0,m},1}(\otimes_{j=1}^{i-1}\alpha_j\otimes \q_{k_2,l_2}(\otimes_{j=1}^{k_2}\alpha_{j+i-1};\otimes_{j\in J}\gamma_j)\otimes\\
&\hspace{22em}
\otimes \otimes_{j=i+1}^{k_1}\alpha_{j+k_2-1}; \gamma_1\otimes\otimes_{j\in I}\gamma_j)+\tag{AII}\label{AII}\\
&+\hspace{-0.75em}\sum_{\substack{k_1+k_2=k+1\\1\le i\le m-k_2\\I\sqcup J=\{2,\ldots,l\}}}\hspace{-1em}
(-1)^{\iota_1(\alpha,\gamma;i,(1)\circ I)} \q_{k_1,l_1;2_{0,m-k_2+1},1}(\otimes_{j=1}^{i-1}\alpha_j \otimes \q_{k_2,l_2}(\otimes_{j=1}^{k_2}\alpha_{j+i-1};\otimes_{j\in J}\gamma_j)\otimes \\
&\hspace{21em}
\otimes\otimes_{j=i+1}^{k_1}\alpha_{j+k_2-1};\gamma_1\otimes\otimes_{j\in I}\gamma_j)+\tag{AIII}\label{AIII}\\
&+\hspace{-1em}\sum_{\substack{k_1+k_2=k+1\\ m-k_2+1 \le i\le m \\I\sqcup J=\{2,\ldots,l\}}}\hspace{-1em}
(-1)^{\iota_2(\alpha,\gamma;i,I)} \q_{k_1,l_1}(\otimes_{j=1}^{i-1}\alpha_j\otimes \q_{k_2,l_2;2_{0,m-i+1},1}(\otimes_{j=1}^{k_2}\alpha_{j+i-1}; \gamma_1\otimes\otimes_{j\in J}\gamma_j)\otimes \\
&\hspace{23em}
\otimes\otimes_{j=i+1}^{k_1}\alpha_{j+k_2-1};\otimes_{j\in I}\gamma_j)+\tag{AIV}\label{AIV}\\
&+\hspace{-0.5em}\sum_{\substack{k_1+k_2=k+1\\ m-k_2+1 \le i\le m\\I\sqcup J=\{2,\ldots,l\}}}\hspace{-1em}
(-1)^{\iota_1(\alpha,\gamma;i,(1)\circ I)} \q_{k_1,l_1;2_{0,i},1}(\otimes_{j=1}^{i-1}\alpha_j\otimes \q_{k_2,l_2}(\otimes_{j=1}^{k_2}\alpha_{j+i-1};\otimes_{j\in J}\gamma_j)\otimes\\
&\hspace{23em}
\cdot\otimes_{j=i+1}^{k_1}\alpha_{j+k_2-1}; \gamma_1\otimes\otimes_{j\in I}\gamma_j).\tag{AV}\label{AV}
\end{align*}
\end{prop}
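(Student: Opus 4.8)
The proof follows the template of the structure equations for the $\q$-operators, Proposition~\ref{q_rel} (\cite[Proposition~2.4]{ST1}): it is an application of Stokes' theorem, Proposition~\ref{stokes}, to a pushforward of differential forms over the geodesic moduli space $\M_{k+1,l;2_{0,m},1}(\beta)$, summed over $\beta$ with the weights $T^\beta$.

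The plan is as follows. First I would fix orientations using the identification of $\M_{k+1,l;2_{0,m},1}(\beta)$ with the fiber product $[0,1]\times_{\C}\M_{k+1,l}(\beta)$ over the cross-ratio map $\chi$ of Section~\ref{ssec:geoddfn}; the orientation conventions of \cite{ST4}, together with Proposition~\ref{prop:iof}(\ref{prop:pushfiberprod}), then control how the pushforward $(evb_0^\beta)_*$ interacts with the orientation and with the sign $\varepsilon_\chi$ built into the definition of $\q_{k,l;2_{0,m},1}$. Applying Proposition~\ref{stokes} to $(evb_0^\beta)_*$ of the form $\bigwedge_{j=1}^l(evi_j^\beta)^*\gamma_j\wedge\bigwedge_{j=1}^k(evb_j^\beta)^*\alpha_j$ and moving all terms to one side, one obtains three contributions: the exterior derivative of the pushforward, the pushforward of the exterior derivative, and the fiberwise-boundary term. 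The pushforward of the exterior derivative produces the summands containing $d\gamma_i$, namely family~(AI), together with summands containing some $d\alpha_j$; as in \cite{ST1}, these, and the exterior-derivative-of-the-pushforward contribution, are incorporated into the composition families (AII)--(AV) via the convention $\q_{1,0}=d$ (the summands in which an inner operator is $\q_{1,0}$, and the $k_1=1$, $i=1$, $I=\emptyset$ summand of (AIV), which equals $d\,\q_{k,l;2_{0,m},1}(\alpha;\gamma)$).

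The core of the argument is the identification, with orientations, of the codimension-one fiberwise boundary of $\M_{k+1,l;2_{0,m},1}(\beta)$ relative to $evb_0$. Its strata are of two kinds. In the first, the domain degenerates by forming a boundary node; the three geodesic points $z_0,w_1,z_m$ and the node then distribute between the two components, and the geodesic condition descends to whichever component carries two of the three with the node suitably placed. One enumerates the possibilities according to: whether the bubble is attached before or after $z_m$ in the cyclic order of boundary points; whether $z_m$ passes onto the bubble, in which case its role on the surviving geodesic is taken over by the node, shifting the index $m$; and whether the interior point $w_1$ remains on the component carrying the output point $z_0$ (producing an outer geodesic operator with index $m$, $m-k_2+1$, or $i$ --- the three families (AII), (AIII), (AV)) or passes onto the bubble (producing an inner geodesic operator with index $m-i+1$ --- family (AIV)). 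Each such stratum is a fiber product of moduli spaces in the sense of \cite{ST4}, and Proposition~\ref{prop:iof} converts it into the corresponding composition of a $\q$-operator with a geodesic $\q$-operator. In the second kind of stratum the cross-ratio $\chi$ reaches an endpoint of $[0,1]$; using the explicit formula for $\chi$ one identifies these with bubbling configurations in which a disk bubble carries $w_1$ together with one of $z_0,z_m$ while the node replaces the other, and checks that they are already counted among the composition families (or, where the relevant pulled-back form restricts to zero, contribute nothing).

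Finally I would carry out the sign bookkeeping --- the Koszul reordering signs of the pulled-back forms, the orientation sign of each fiber-product stratum, the Stokes sign $(-1)^{s+t}$, and the sign $\varepsilon_\chi(\alpha)=\varepsilon(\alpha)+|\alpha|+k$ --- and verify that the totals agree with the prescribed exponents $\iota_1$ and $\iota_2$. The main obstacle is exactly this boundary analysis together with its signs: correctly matching every codimension-one stratum --- especially the index shifts that separate the families (AII)--(AV), and the strata arising from $\chi\in\{0,1\}$ --- to the right composition summand with the right sign. The argument runs parallel to the proof of Proposition~\ref{q_rel} in \cite{ST1}, but the presence of the geodesic constraint makes the enumeration and the index bookkeeping considerably more intricate.
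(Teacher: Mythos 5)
The paper gives essentially no proof of Proposition~\ref{prop:21} beyond the sentence preceding it, which states that the geodesic structure equations follow from Stokes' theorem applied to the geodesic moduli spaces, as in \cite[Propositions 2.4--2.5]{ST1}. Your proposal fills in exactly this outline: Stokes' theorem via Proposition~\ref{stokes} for $(evb_0^\beta)_*$, with the two kinds of fiberwise boundary strata of $\M_{k+1,l;2_{0,m},1}(\beta) = [0,1]\times_\C\M_{k+1,l}(\beta)$ (disk bubbling from $\partial\M_{k+1,l}(\beta)$, and the $\chi\in\{0,1\}$ loci where $w_1$ degenerates onto $z_0$ or $z_m$) matched against the composition families (AII)--(AV), and the exterior-derivative terms absorbed via $\q_{1,0}=d$ and (AI); this is the same approach the paper invokes.
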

See Figure~\ref{pic:21}.
\begin{figure}[ht]
\centering
\includegraphics{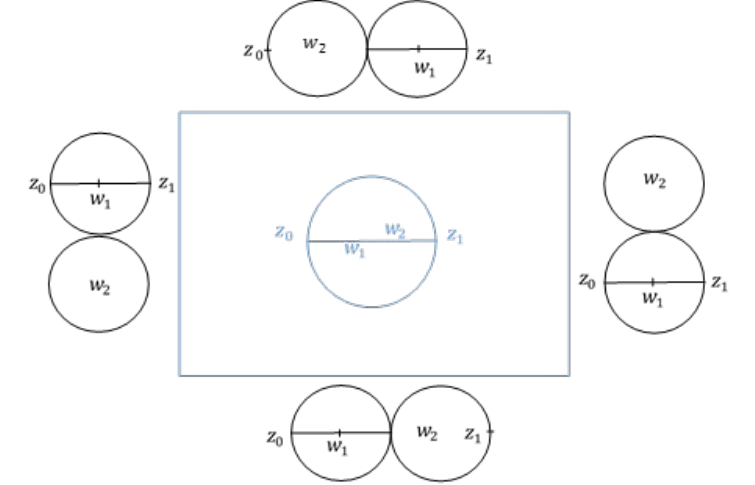}
\caption{Boundary components of $\M_{2,2;2_{0,1},1}(\beta_0)$}\label{pic:21}
\end{figure}

\begin{prop}[The spaces $\M_{k+1,l;1_{0},2}(\beta)$]\label{prop:12}
\begin{align*}
&0=\sum_{i=1}^l
(-1)^{\sum_{j=1}^{i-1}|\gamma_j|} \q_{k,l;1_0,2}(\otimes_{j=1}^k\alpha_j; \otimes_{j=1}^{i-1}\gamma_j\otimes d\gamma_i\otimes \otimes_{j=i+1}^l\gamma_j)+\tag{BI}\label{BI}\\
& +\hspace{-1em}\sum_{\substack{k_1+k_2=k+1\\ 1\le i\le k_1\\I\sqcup J=\{3,\ldots,l\}}}\hspace{-1em}
(-1)^{\iota_1(\alpha,\gamma;i, (1,2)\circ I)} \q_{k_1,l_1;1_0,2} (\otimes_{j=1}^{i-1}\alpha_j\otimes \q_{k_2,l_2}(\otimes_{j=i}^{i+k_2-1}\alpha_j;\otimes_{j\in J}\gamma_j)\otimes \notag\\
&\hspace{20em}\otimes \otimes_{j=i+k_2}^k\alpha_j;\gamma_1\otimes\gamma_2\otimes\otimes_{j\in I}\gamma_j)+\tag{BII}\label{BII}\\
& +\hspace{-1em}\sum_{\substack{k_1+k_2=k+1\\ 1\le i\le k_1\\I\sqcup J=\{3,\ldots,l\}}}\hspace{-1em}
(-1)^{\iota_2(\alpha,\gamma;i,I)} \q_{k_1,l_1}( \otimes_{j=1}^{i-1}\alpha_j\otimes \q_{k_2,l_2;1_0,2}(\otimes_{j=i}^{i+k_2-1}\alpha_j; \gamma_1\otimes\gamma_2\otimes\otimes_{j\in J}\gamma_j)\otimes\notag\\
&\hspace{23em}\otimes \otimes_{j=i+k_2}^k\alpha_j;\otimes_{j\in I}\gamma_j)+\tag{BIII}\label{BIII}\\
& +\hspace{-1em}\sum_{\substack{k_1+k_2=k+1\\ 1\le i\le k_1\\I\sqcup J=\{3,\ldots,l\}}}\hspace{-1em}
(-1)^{\iota_1(\alpha,\gamma;i, (1)\circ I)}\q_{k_1,l_1;2_{0,i},1} (\otimes_{j=1}^{i-1}\alpha_j\otimes \q_{k_2,l_2}(\otimes_{j=i}^{i+k_2-1}\alpha_j; \gamma_2\otimes\otimes_{j\in J}\gamma_j)\otimes\notag\\
&\hspace{23em}\otimes \otimes_{j=i+k_2}^k\alpha_j; \gamma_1\otimes\otimes_{j\in I}\gamma_j)+\tag{BIV}\label{BIV}\\
&+\hspace{-1em}\sum_{I\sqcup J=\{3,\ldots,l\}}\hspace{-1em}
(-1)^{\iota_3(\alpha,\gamma;I)} \q_{k,l_1}(\alpha; \q_{\emptyset,l_2}(\gamma_1\otimes\gamma_2\otimes\otimes_{j\in J}\gamma_j)\otimes\otimes_{j\in I}\gamma_j).\tag{BV}\label{BV}
\end{align*}
\end{prop}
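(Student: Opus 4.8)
The plan is to mimic the proof of~\cite[Propositions 2.4--2.5]{ST1}, applying the version of Stokes' theorem in Proposition~\ref{stokes} to the proper submersion $evb_0 : \M_{k+1,l;1_0,2}(\beta) \to L$ and the form
\[
\xi = \bigwedge_{j=1}^l (evi_j^\beta)^*\gamma_j \wedge \bigwedge_{j=1}^k (evb_j^\beta)^*\alpha_j.
\]
Since each $\gamma_j$ is closed (we are in the bounding-pair context, $\gamma_j \in \Ah^*(X,L;\Qh)$ need not be closed in general, so the $d\gamma_i$ terms are retained), the term $(evb_0)_*(d\xi)$ produces exactly the sum (BI). The term $d((evb_0)_*\xi)$ vanishes after we compose with $\int_L$ against a test form and integrate, or more precisely we argue as in~\cite{ST1} that it contributes a coboundary that does not affect the structure equation at the level of the $\q$ operators; in fact the cleaner route is to keep $d((evb_0)_*\xi)$ on the left-hand side — but since the statement as written has $0$ on the left, one observes that for degree reasons (Lemma~\ref{deg_str_map} and its analogue for the geodesic operators, which follows from the fiber-product description in Section~\ref{ssec:geoddfn} together with $\dim_\C \C = 1$ reducing the virtual dimension by $2$) the relevant output lands in a fixed top degree on $L$ where $d$ acts as zero, or one absorbs it into the $d\gamma$ terms. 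I would follow the ST1 bookkeeping verbatim here rather than re-derive it.

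The heart of the argument is the identification of the fiberwise boundary $\d \M_{k+1,l;1_0,2}(\beta)$ and the matching of each boundary stratum with one of the terms (BII)--(BV). Using the fiber-product description $\M_{k+1,l;1_0,2}(\beta) = [0,1] \times_\C \M_{k+1,l}(\beta)$ via the cross-ratio map $\chi_0$ restricted appropriately (here $v_1 = z_0$, $v_2 = w_1$, $v_3 = w_2$), the boundary consists of: (i) the preimages of $\{0\}$ and $\{1\} \subset [0,1]$, i.e., the loci where the cross ratio degenerates, which geometrically force a node to appear separating the three special points into two groups on the two sides of the node; and (ii) the contributions from $\d\M_{k+1,l}(\beta)$ itself, i.e., the usual disk and sphere bubbling of open stable maps, now carrying the geodesic constraint on whichever component retains two or more of the special points. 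Case (i) splits according to which of the three geodesic points ends up on the bubble: when the bubble is a disk carrying $z_0$ together with one of $w_1, w_2$ and the node, the node plus that interior point plus $z_0$ lie on a geodesic of the bubble, giving a $\q_{\bullet,\bullet;1_0,2}$ on one factor (terms BII and BIII, depending on whether the geodesic-constrained factor is the ``inner'' or ``outer'' $\q$); when the bubble is a disk carrying $w_1$ and $w_2$ but not $z_0$, the node plays the role of a boundary point and we get a $\q_{\bullet,\bullet;2_{0,i},1}$ factor (term BIV, where the index $i$ records the position of the node among the boundary marked points, summed over — this is the subtle one); and when the bubble is a \emph{sphere} carrying $w_1$ and $w_2$, the constraint collapses and we get $\q_{\emptyset,\bullet}$ composed into $\q_{k,\bullet}$ with no residual geodesic condition (term BV). Case (ii), ordinary bubbling of the ambient moduli space where the geodesic points stay on the same irreducible component, is already accounted for inside the fiber-product boundary analysis and does not produce new terms beyond (BII)--(BV).

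The main obstacle will be the sign bookkeeping: tracking $sgn(\sigma^\gamma_{I\sqcup J})$ through the fiber-product reordering, accounting for the extra $+|\alpha|+k$ shift (the difference between $\varepsilon_\chi$ and $\varepsilon$) on each geodesic-constrained factor, handling the orientation of $[0,1] \times_\C \M$ as prescribed in~\cite{ST4}, and the $(-1)^{n+1}$-type contributions in $\iota_3$ coming from the sphere factor in (BV) (the $n$ enters because $\q_{\emptyset,l_2}(\cdots)|_L$ is an $(n)$-relevant insertion and the sphere gluing sign interacts with $w_\s$ as in Lemma~2.12 of~\cite{ST1}). I would handle this by first proving the unsigned statement — that the listed strata are exactly the codimension-one boundary — and then fixing the signs by comparison with the known structure equations (Claim~\ref{q_rel}) in the special case where the geodesic constraint is ``invisible'' (e.g., tracking how a term of (BIV) reduces to a term of Claim~\ref{q_rel} under a further degeneration), which pins down the constants $\iota_1, \iota_2, \iota_3$ uniquely. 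The other genuine subtlety is the range of the summation index $i$ in (BIV): one must check that when the node separates $z_0$ from $\{w_1,w_2\}$, the position $i$ of the node on the boundary of the component retaining $z_0$ ranges over all admissible values compatible with the cyclic order, exactly as encoded; this is a direct combinatorial check on the degeneration of boundary marked points, identical in spirit to the analysis underlying Proposition~\ref{prop:21}.
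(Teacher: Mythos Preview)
Your overall strategy—Stokes' theorem for $evb_0$ on the geodesic moduli space, then matching the fiberwise boundary strata to (BI)--(BV)—is exactly the paper's approach. But two of the details are genuinely wrong and would derail the argument.

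First, the term $d\big((evb_0)_*\xi\big)$ does \emph{not} vanish for degree reasons: by Lemma~\ref{lm:deggeod} the output of $\q_{k,l;1_0,2}$ has degree $3-k-2l$ (the geodesic constraint is real codimension~$1$, not~$2$), which is not generically top-degree on $L$. Just as in Proposition~\ref{q_rel}, this term is absorbed into the sum: the summand of (BIII) with $(k_1,l_1,\beta_1)=(1,0,\beta_0)$ is $\q_{1,0}^{\beta_0}\big(\q_{k,l;1_0,2}(\alpha;\gamma)\big)=d\,\q_{k,l;1_0,2}(\alpha;\gamma)$ by the convention $\q_{1,0}^{\beta_0}=d$. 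Likewise the $d\alpha_j$ contributions to $(evb_0)_*(d\xi)$ sit inside (BII) via the inner $\q_{1,0}^{\beta_0}$.

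Second, your geometric assignment of (BII)--(BIV) to boundary strata is inverted. Reading the operators: in (BII) the \emph{outer} factor carries the $1_0,2$ constraint together with $\gamma_1,\gamma_2$, so the disk bubble carries none of $z_0,w_1,w_2$ and the geodesic stays intact on the main component; in (BIII) the \emph{inner} factor carries the $1_0,2$ constraint with $\gamma_1,\gamma_2$, so the bubble carries both $w_1,w_2$ and the node plays the role of its $z_0$; in (BIV) the outer factor is $\q_{k_1,l_1;2_{0,i},1}$ with $\gamma_1$ and the inner factor carries $\gamma_2$, so the bubble carries only $w_2$ and the limiting geodesic on the main component passes through $z_0$, $w_1$, and the boundary node at position $i$. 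Your description of (BIV) as ``bubble carrying $w_1$ and $w_2$'' actually describes (BIII), and your description of (BII)/(BIII) matches neither. Note also the asymmetry: there is no companion term with only $w_1$ on a disk bubble, because $w_1=v_2$ is the middle point of the geodesic and one checks directly from the cross-ratio formula that $w_1$ approaching the boundary forces $\chi\to\infty\notin[0,1]$.
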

See Figure~\ref{pic:12}.
\begin{figure}[ht]
\centering
\includegraphics{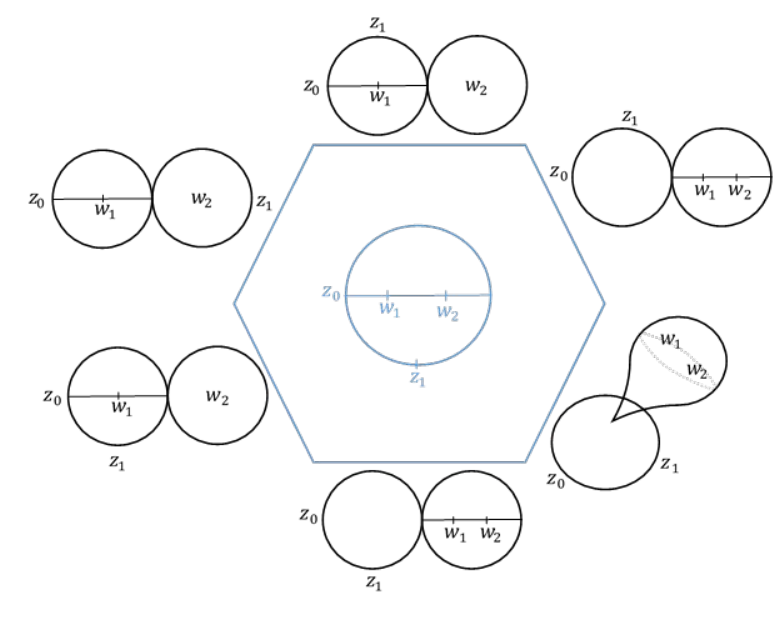}
\caption{Boundary components of $\M_{1,2;1_0,2}(\beta_0)$}\label{pic:12}
\end{figure}

\begin{prop}[The spaces $\M_{k+1,l;0,3}(\beta)$]\label{prop:03}
For $k\ge 0,$
\begin{align*}
&0=\sum_{i=1}^l(-1)^{\sum_{j=1}^{i-1}|\gamma_j|}
\q_{k,l;0,3}(\otimes_{j=1}^k\alpha_j; \otimes_{j=1}^{i-1}\gamma_j\otimes d\gamma_i\otimes \otimes_{j=i+1}^l\gamma_j)+\tag{CI}\label{CI}\\
& +\hspace{-1em}\sum_{\substack{k_1+k_2=k+1\\ 1\le i\le k_1\\I\sqcup J=\{4,\ldots,l\}}}\hspace{-1em}
(-1)^{\iota_1(\alpha,\gamma;i,(1,2,3)\circ I)} \q_{k_1,l_1;0,3}(\otimes_{j=1}^{i-1}\alpha_j\otimes
\q_{k_2,l_2}(\otimes_{j=i}^{i+k_2-1}\alpha_j;\otimes_{j\in J}\gamma_j)\otimes \\
&\hspace{20em}
\otimes \otimes_{j=i+k_2}^k\alpha_j; \gamma_1\otimes\gamma_2\otimes\gamma_3\otimes\otimes_{j\in I}\gamma_j)+\tag{CII}\label{CII}\\
& + \hspace{-1em}\sum_{\substack{k_1+k_2=k+1\\ 1\le i\le k_1\\I\sqcup J=\{4,\ldots,l\}}}\hspace{-1em}
(-1)^{\iota_2(\alpha,\gamma;i,I)} \q_{k_1,l_1}( \otimes_{j=1}^{i-1}\alpha_j\otimes \q_{k_2,l_2;0,3}( \otimes_{j=i}^{i+k_2-1}\alpha_j; \gamma_1\otimes\gamma_2\otimes\gamma_3\otimes\otimes_{j\in J}\gamma_j)\otimes\\
&\hspace{27em}
\otimes \otimes_{j=i+k_2}^k\alpha_j;\otimes_{j\in I}\gamma_j)+\tag{CIII}\label{CII2}\\
& +\hspace{-1em}\sum_{\substack{k_1+k_2=k+1\\ 1\le i\le k_1\\I\sqcup J=\{4,\ldots,l\}}}\hspace{-1em}
(-1)^{\iota_2(\alpha,\gamma;i,(1)\circ I)} \q_{k_1,l_1}(\otimes_{j=1}^{i-1}\alpha_j\otimes \q_{k_2,l_2;1_0,2}(\otimes_{j=i}^{i+k_2-1}\alpha_j; \gamma_2\otimes\gamma_3\otimes\otimes_{j\in J}\gamma_j)\otimes\\
&\hspace{25em}\otimes
\otimes_{j=i+k_2}^k\alpha_j; \gamma_1\otimes\otimes_{j\in I}\gamma_j)+\tag{CIV}\label{CIII}\\
&+\hspace{-1em}\sum_{\substack{k_1+k_2=k+1\\ 1\le i\le k_1\\I\sqcup J=\{4,\ldots,l\}}}\hspace{-1em}
(-1)^{\iota_1(\alpha,\gamma;i,(2,3)\circ I)}
\q_{k_1,l_1;1_i,2}(\otimes_{j=1}^{i-1}\a_j\otimes
\q_{k_2,l_2}(\otimes_{j=i}^{i+k_2-1}\a_j;\gamma_1\otimes\otimes_{j\in J}\gamma_j) \otimes\\
&\hspace{23em}\otimes
\otimes_{j=i+k_2}^k\a_j;\gamma_2\otimes\gamma_3\otimes\otimes_{j\in I} \gamma_j)+\tag{CV}\label{CIII2}\\
& -\hspace{-1em}\sum_{\substack{k_1+k_2=k+1\\ 1\le i\le k_1\\I\sqcup J=\{4,\ldots,l\}}}\hspace{-1em}
(-1)^{\iota_1(\alpha,\gamma;i,(2,1)\circ I)} \q_{k_1,l_1;1_i,2}(\otimes_{j=1}^{i-1}\alpha_j\otimes \q_{k_2,l_2}(\otimes_{j=i}^{i+k_2-1}\alpha_j; \gamma_3\otimes\otimes_{j\in J}\gamma_j)\otimes\\
&\hspace{23em}\otimes
\otimes_{j=i+k_2}^k\alpha_j; \gamma_2\otimes\gamma_1\otimes\otimes_{j\in I}\gamma_j)-\tag{CVI}\label{CIV}\\
& -\hspace{-1em}\sum_{\substack{k_1+k_2=k+1\\ 1\le i\le k_1\\I\sqcup J=\{4,\ldots,l\}}}\hspace{-1em}
(-1)^{\iota_2(\a,\gamma;i,(2,1)\circ I)}
\q_{k_1,l_1}(\otimes_{j=1}^{i-1}\a_j\otimes
\q_{k_2,l_2;1_0,2}(\otimes_{j=i}^{i+k_2-1}\a_j;\gamma_2\otimes \gamma_1\otimes \otimes_{j\in J} \gamma_j)\otimes\\
&\hspace{25em}\otimes
\otimes_{j=i+k_2}^k\a_j;\gamma_3\otimes \otimes_{j\in I} \gamma_j)-\tag{CVII}\label{CIV2}\\
&-\hspace{-1em}\sum_{I\sqcup J=\{4,\ldots,l\}}\hspace{-1em}
(-1)^{\iota_3(\gamma;(3)\circ I)}\q_{k,l_1}(\alpha; \q_{\emptyset,l_2}(\gamma_1\otimes\gamma_2\otimes\otimes_{j\in J}\gamma_j)\otimes\gamma_3\otimes\otimes_{j\in I}\gamma_j)+\tag{CVIII}\label{CV}\\
&+\hspace{-1em}\sum_{I\sqcup J=\{4,\ldots,l\}}\hspace{-1em}
(-1)^{\iota_3(\gamma;(1)\circ I)}\q_{k,l_1}(\alpha; \q_{\emptyset,l_2}(\gamma_2\otimes\gamma_3\otimes\otimes_{j\in J}\gamma_j)\otimes\gamma_1\otimes\otimes_{j\in I}\gamma_j)\tag{CIX}\label{CVI}.
\end{align*}
\end{prop}
See Figure~\ref{pic:03}.
\begin{figure}[ht]
\centering
\includegraphics{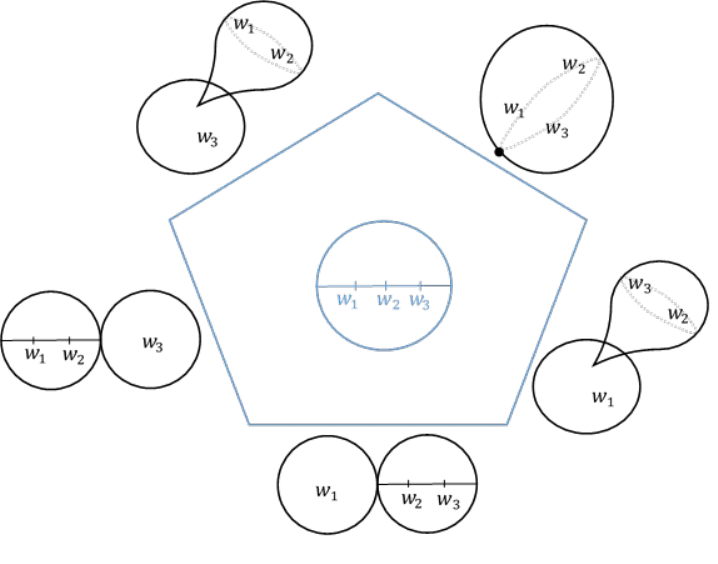}
\caption{Boundary components of $\M_{0,3;0,3}(\beta_0)$}\label{pic:03}
\end{figure}

\begin{prop}[The spaces $\M_{0,l;0,3}(\beta)$]\label{prop:03k-1}
\begin{align*}
&0=\sum_{i=1}^l(-1)^{\sum_{j=1}^{i-1}|\gamma_j|} \q_{-1,l;0,3}( \otimes_{j=1}^{i-1}\gamma_j\otimes d\gamma_i\otimes \otimes_{j=i+1}^l\gamma_j)+\tag{DI}\label{DI}\\
&+\sum_{I\sqcup J=\{4,\ldots,l\}}
(-1)^{\iota_1(\gamma;(1,2,3)\circ I)}\langle\q_{0,|I|;0,3} (\gamma_1\otimes\gamma_2\otimes\gamma_3\otimes\otimes_{j\in I}\gamma_j),\q_{0,|J|}(\otimes_{j\in J}\gamma_j)\rangle+\tag{DII}\label{DII}\\
&+\sum_{I\sqcup J=\{4,\ldots,l\}}
(-1)^{\iota_2(\gamma;(1)\circ I)}
\langle\q_{0,|I|} ( \gamma_1\otimes\otimes_{j\in I}\gamma_j),\q_{0,|J|;1_0,2}( \gamma_2\otimes\gamma_3\otimes\otimes_{j\in J}\gamma_j)\rangle+\tag{DIII}\label{DIII}\\
&-\sum_{I\sqcup J=\{4,\ldots,l\}}
(-1)^{\iota_1(\gamma;(2,1)\circ I)}
\langle\q_{0,|I|;1_0,2} ( \gamma_2\otimes\gamma_1\otimes\otimes_{j\in I}\gamma_j),\q_{0,|J|}(\gamma_3\otimes\otimes_{j\in J}\gamma_j)\rangle+\tag{DIV}\label{DIV}\\
&-\sum_{I\sqcup J=\{4,\ldots,l\}}
(-1)^{\iota_3(\gamma;(3)\circ I)}
\q_{-1,|I|}( \q_{\emptyset,|J|}(\gamma_1\otimes \gamma_2\otimes\otimes_{j\in J}\gamma_j)\otimes\gamma_3\otimes\otimes_{j\in I}\gamma_j)+\tag{DV}\label{DV}\\
&+\sum_{I\sqcup J=\{4,\ldots,l\}}
(-1)^{\iota_3(\gamma;(1)\circ I)}
\q_{-1,|I|}( \q_{\emptyset,|J|}(\gamma_2\otimes\gamma_3\otimes\otimes_{j\in J}\gamma_j)\otimes\gamma_1\otimes\otimes_{j\in I}\gamma_j)+\tag{DVI}\label{DVI}\\
&+(-1)^{\iota_3(\gamma;I=\emptyset)}\int_L i^*(\q_{\emptyset,l;\chi_0} (\gamma_{1}\otimes\gamma_{2}\otimes\gamma_{3}\otimes \otimes_{j=4}^l \gamma_j)).\tag{DVII}\label{DVII}
\end{align*}
\end{prop}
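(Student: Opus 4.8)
The plan is to prove this in the same way as Propositions~\ref{prop:21}--\ref{prop:03} and~\cite[Propositions 2.4--2.5]{ST1}: by an application of Stokes' theorem, Proposition~\ref{stokes}. First I would apply Stokes to the projection $p_\M : \M_{0,l;0,3}(\beta) \to pt$ and the form $\xi_\beta := \bigwedge_{j=1}^l (evi_j^\beta)^*\gamma_j$, then multiply by $T^\beta$, sum over $\beta \in \sly$, and account for the minus sign in the definition of $\q_{-1,l;0,3}^\beta$. Since $d\xi_\beta = \sum_{i=1}^l (-1)^{\sum_{j<i}|\gamma_j|}(evi_i^\beta)^*d\gamma_i \wedge (\cdots)$, the term $\int_{\M_{0,l;0,3}(\beta)}d\xi_\beta$ accounts for~\eqref{DI}, so the real content of the proposition is the identification of the codimension-one boundary faces of $\M_{0,l;0,3}(\beta)$ and of the sign with which each contributes to $\int_{\partial\M}\xi_\beta$.

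To enumerate the faces I would use the fiber-product presentation $\M_{0,l;0,3}(\beta) = [0,1]\times_\C \M_{0,l}(\beta)$ of Section~\ref{ssec:geoddfn}, whose boundary is the union of $\chi^{-1}(\{0,1\})$ with $[0,1]\times_\C\partial\M_{0,l}(\beta)$. From the formula $\chi = \frac{(v_3-\bar v_2)(v_2-v_1)}{(v_3-v_1)(v_2-\bar v_2)}$ one reads off that, on the compactified geodesic locus, $\chi = 0$ occurs only where $v_1=w_1$ and $v_2=w_2$ collide onto a sphere bubble at an interior node of the disk, and $\chi = 1$ only where $v_2=w_2$ and $v_3=w_3$ collide onto such a bubble; the remaining collisions $v_1=v_3$ and $v_2=\bar v_2$ give $\chi=\infty\notin[0,1]$, which in particular shows $v_2$ stays interior, so $\chi$ is defined throughout. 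These two endpoint faces are governed by the closed operator $\q_\emptyset$ on the bubble and by $\q_{-1}$ on the main disk, which keeps the marked point $w_3$, resp.\ $w_1$, but no geodesic constraint, and they contribute~\eqref{DV} and~\eqref{DVI}. The piece $[0,1]\times_\C\partial\M_{0,l}(\beta)$ consists of the disk-breaking faces, where the domain splits at a boundary node into two disks; there the three points $w_1,w_2,w_3$ ordered along the geodesic are split across the node into a prefix and a complementary suffix, so they either all lie on one component --- giving~\eqref{DII} with a residual $\q_{0,\cdot;0,3}$ factor --- or split as $\{w_1,w_2\}\mid\{w_3\}$ or $\{w_1\}\mid\{w_2,w_3\}$, the node becoming a geodesic point of the component carrying two of the $w_i$ --- giving~\eqref{DIV} and~\eqref{DIII} with a residual $\q_{0,\cdot;1_0,2}$ factor. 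Finally, the face whose non-geodesic analog produces the $\int_L i^*\q_{\emptyset,l}(\gamma)$ term of Proposition~\ref{q-1_rel} --- a constant disk with boundary on $L$ carrying a degree-$\beta$ sphere with all $l$ marked points --- contributes~\eqref{DVII}, the hyperbolic geodesic condition on the collapsing disk becoming, in the limit, the reality of the cross ratio $\chi_0$ of the node together with $w_1,w_2,w_3$ on the sphere.

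For each face I would then identify it, as an oriented orbifold with corners, with a fiber product of moduli of open and closed stable maps --- some carrying a residual geodesic constraint $\chi$ or cross-ratio constraint $\chi_0$ --- and use Proposition~\ref{prop:iof} to rewrite the restriction of $\xi_\beta$ as the pairing or push-forward appearing in~\eqref{DII}--\eqref{DVII}; faces with a constant component carrying too few special points are empty, and those producing exceptional-degree operators $\q^{\beta_0}$ or $\q^0_\emptyset$ are discarded using Lemmas~\ref{lm:qzero},~\ref{no_top_deg}, and~\ref{lm:czero}. I expect the main obstacle to be the orientation bookkeeping: checking that the boundary orientation coming from the fiber-product and Stokes conventions of~\cite{ST4} produces precisely the sign $\iota_1$, $\iota_2$, or $\iota_3$ attached to each term of~\eqref{DII}--\eqref{DVII}, and confirming that the faces listed above exhaust the codimension-one boundary. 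This I would handle by the same local-model analysis near each boundary face already used for Propositions~\ref{prop:21}--\ref{prop:03}, so that the entire argument becomes a transcription of the non-geodesic structure equations with the geodesic and cross-ratio constraints tracked along.
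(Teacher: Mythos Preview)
Your proposal is correct and is precisely the approach the paper takes: the paper does not give a separate proof of Proposition~\ref{prop:03k-1}, but states at the start of Section~\ref{ssec:geo_a_infty} that all the geodesic structure equations are proved by the same Stokes-theorem argument as \cite[Propositions~2.4--2.5]{ST1}, with the boundary-face analysis you outline. Your identification of the faces --- the $\chi\in\{0,1\}$ endpoints giving~\eqref{DV}--\eqref{DVI}, the disk-breaking strata giving~\eqref{DII}--\eqref{DIV}, and the boundary-collapse face giving~\eqref{DVII} with the residual $\chi_0$ constraint --- matches the paper's treatment (compare Figure~\ref{pic:03} for the $k\ge 0$ analog), and you have correctly flagged the orientation bookkeeping as the only substantive work remaining.
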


\begin{prop}\label{lm:clst}
\begin{align*}
d\q_{\emptyset,l;\chi_0}&(\gamma_1,\ldots,\gamma_l)=\\
=&
\sum_{i=1}^l(-1)^{\sum_{j=1}^{i-1}|\gamma_j|} \q_{\emptyset,l;\chi_0}( \otimes_{j=1}^{i-1}\gamma_j\otimes d\gamma_i\otimes \otimes_{j=i+1}^l\gamma_j)+\\
&+(-1)^{|\gamma|+1+ sgn(\sigma^\gamma_{(1,2,3)\circ J\sqcup I})}\hspace{-.75em}\sum_{I\sqcup J=\{4,\ldots,l\}}
\hspace{-.75em}\q_{\emptyset,|I|+2}(\gamma_1\otimes \q_{\emptyset,|J|+2}(\gamma_2\otimes\gamma_3\otimes \otimes_{j \in J} \gamma_j) \otimes\otimes_{j \in I}\gamma_j)-\\
&-(-1)^{|\gamma|+1 + sgn(\sigma^\gamma_{(1,2) \circ J\sqcup (3) \circ I})}\hspace{-.75em}\sum_{I\sqcup J=\{4,\ldots,l\}}\hspace{-.75em}
\q_{\emptyset,|I|+2}(\q_{\emptyset,|J|+2}(\gamma_1\otimes\gamma_2\otimes \otimes_{j \in J}\gamma_j)\otimes\gamma_3 \otimes\otimes_{j \in I}\gamma_j).
\end{align*}
\end{prop}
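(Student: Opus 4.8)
The plan is to apply Stokes' theorem to fiberwise integration along $ev_0^\beta$ over the cross-ratio--constrained moduli space $\M_{l+1;\chi_0}(\beta)=[0,1]\times_{\C}\M_{l+1}(\beta)$, following the pattern of~\cite[Propositions~2.4--2.5]{ST1} and of Propositions~\ref{prop:21}--\ref{prop:03k-1} above. The argument is routine but sign-heavy, and the one genuinely new ingredient is the description of the codimension-one boundary of $\M_{l+1;\chi_0}(\beta)$.

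First fix $\beta\in H_2(X;\Z)$ and assume $l\ge 3$, so that the constrained interior points $w_1,w_2,w_3$ exist; by our standing regularity assumptions $ev_0^\beta$ restricts to a proper submersion on $\M_{l+1;\chi_0}(\beta)$. Apply Proposition~\ref{stokes} to $f=ev_0^\beta$ and $\xi=\bigwedge_{j=1}^l(ev_j^\beta)^*\gamma_j$, then multiply by $(-1)^{w_\s(\beta)}$ and sum $T^{\varpi(\beta)}$ over $\beta$. Using that the de Rham differential is $Q$-linear and commutes with pullback, the term $d(f_*\xi)$ produces $d\q_{\emptyset,l;\chi_0}(\gamma)$, and the graded Leibniz rule turns $f_*(d\xi)$ into the first line of the right-hand side. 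What remains is to evaluate the fiberwise-boundary term $(f|_{\partial\M_{l+1;\chi_0}(\beta)})_*\xi$.

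The heart of the proof is the identification of $\partial\M_{l+1;\chi_0}(\beta)$. Since $\M_{l+1}(\beta)$ is a closed orbifold, $\partial\chi_0^{-1}([0,1])=\chi_0^{-1}(0)\cup\chi_0^{-1}(1)$, the two endpoints of the interval entering the boundary with opposite induced orientations. On the irreducible locus $\chi_0$ omits $\{0,1,\infty\}$ because the marked points are distinct, so $\chi_0^{-1}(0)$ and $\chi_0^{-1}(1)$ lie over boundary strata. A dimension count shows that each boundary divisor of $\M_{l+1}(\beta)$ either meets $\chi_0^{-1}([0,1])$ in codimension at least two, or is constant under $\chi_0$; by the cross-ratio formula the latter happens exactly along divisors where the node induces a $2{+}2$ split of $\{w_0,w_1,w_2,w_3\}$ separating $\{w_0,w_1\}$ from $\{w_2,w_3\}$ (where $\chi_0\equiv 1$) or $\{w_0,w_3\}$ from $\{w_1,w_2\}$ (where $\chi_0\equiv 0$), the points $w_4,\ldots,w_l$ distributed arbitrarily. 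Identifying such a divisor with a fiber product $\M_{|I|+2}(\beta_1)\times_X\M_{|J|+2}(\beta_2)$ over the node evaluation, applying Proposition~\ref{prop:iof}\ref{prop:pushfiberprod} and~\ref{prop:pushcomp}, and using the gluing sign identity $(-1)^{w_\s(\beta)}=(-1)^{w_\s(\beta_1)}(-1)^{w_\s(\beta_2)}$ of~\cite[Lemma~2.12]{ST1}, the $\chi_0\equiv 1$ contribution summed over $\beta_1+\beta_2=\beta$, $I\sqcup J=\{4,\ldots,l\}$, becomes $\q_{\emptyset,|I|+2}(\gamma_1\otimes\q_{\emptyset,|J|+2}(\gamma_2\otimes\gamma_3\otimes\otimes_{j\in J}\gamma_j)\otimes\otimes_{j\in I}\gamma_j)$, and the $\chi_0\equiv 0$ contribution becomes $\q_{\emptyset,|I|+2}(\q_{\emptyset,|J|+2}(\gamma_1\otimes\gamma_2\otimes\otimes_{j\in J}\gamma_j)\otimes\gamma_3\otimes\otimes_{j\in I}\gamma_j)$, each up to sign. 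The opposite boundary orientations at $0$ and $1$ account for the relative minus sign between the two terms in the statement.

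The main obstacle is the sign bookkeeping: one must reconcile the fiber-product orientation conventions of~\cite{ST4}, the factor $(-1)^{s+t}$ in Proposition~\ref{stokes}, and the boundary orientation of $\chi_0^{-1}([0,1])$ with the combinatorial signs $(-1)^{|\gamma|+1}$ and $(-1)^{sgn(\sigma^\gamma_{\bullet})}$ in the statement. This is the same type of computation as in the proofs of Propositions~\ref{prop:21}--\ref{prop:03k-1} and of~\cite[Propositions~2.4--2.5]{ST1}, so I would carry it out by the same method; the degree-zero contributions can be checked against Lemma~\ref{lm:czero}, and Lemma~\ref{lm:qecm} provides an additional consistency check.
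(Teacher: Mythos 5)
Your proof sketch is correct and follows the same route the paper indicates for all the geodesic structure equations in Section~3.2: Stokes' theorem (Proposition~\ref{stokes}) applied to $ev_0^\beta$ on the fiber product $\M_{l+1;\chi_0}(\beta)$, with the codimension-one boundary identified as the nodal divisors where $\chi_0$ degenerates to $0$ or $1$. Your identification of the contributing $2{+}2$ splits (and their assignment to $\chi_0 \equiv 1$ and $\chi_0 \equiv 0$ respectively) matches the paper's cross-ratio convention $(a,b,c,d) = \tfrac{(c-a)(d-b)}{(c-b)(d-a)}$, and the level of detail you give on signs is the same as what the paper leaves implicit.
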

See Figure~\ref{fig:bdclgeod}.
\begin{figure}[ht]
\centering
\includegraphics[width=12cm]{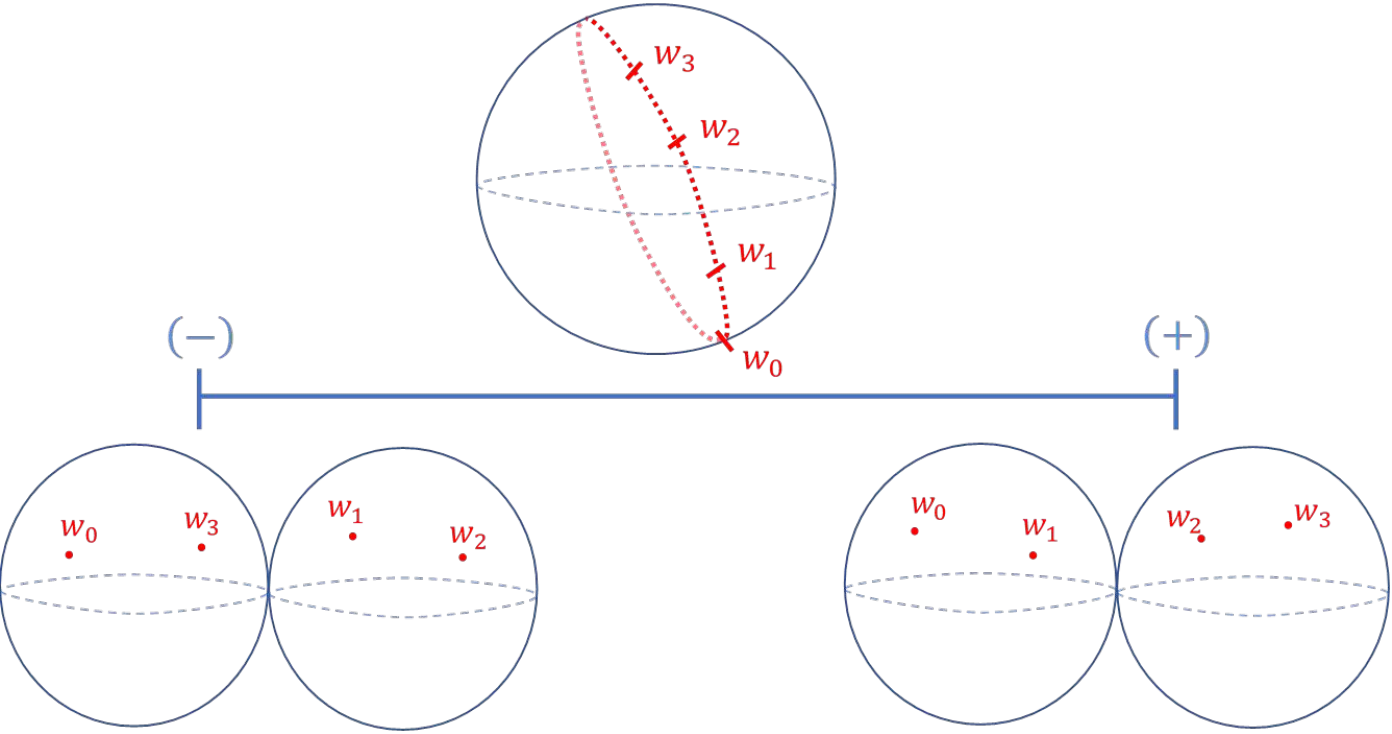}
\caption{Boundary components of $\M_{4;\chi_0}(\beta_0)$}\label{fig:bdclgeod}
\end{figure}
\begin{rem}\label{rem:clst}
Proposition~\ref{lm:clst} implies the WDVV equation for the closed Gromov-Witten potential~\eqref{eq:Phi}. Equivalently, the quantum product $\star_U$ of Section~\ref{sssec:qprod} is associative.
\end{rem}

\subsection{Properties}\label{ssec:prop_geod}

\subsubsection{Degree}
\begin{lm}\label{lm:deggeod}
For any $k,l\ge 0$, and a geodesic condition $\chi$, the map
\[
\q_{k,l;\chi}:A^*(L;R)^{\otimes k}\otimes A^*(X;Q)^{\otimes l} \lrarr A^*(L;R)
\]
is of degree $3-k-2l$. For all $l\ge 0$ and $\beta\in H_2(X;\Z)$, the map
\[
\q_{\emptyset,l;\chi_0}:A^*(X;Q)^{\otimes l} \lrarr A^*(X;Q)
\]
is of degree $5-2l$.
\end{lm}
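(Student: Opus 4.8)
The plan is to reduce the statement to the corresponding non-geodesic degree computations, Lemma~\ref{deg_str_map} and Lemma~\ref{lm:cdeg}, by observing that the geodesic moduli spaces differ from their non-geodesic counterparts by exactly one in dimension. Concretely, the fiber product descriptions recorded in Section~\ref{ssec:geoddfn},
\[
\M_{k+1,l;a,e}(\beta) = [0,1]\times_{\C}\M_{k+1,l}(\beta), \qquad \M_{l+1;\chi_0}(\beta) = [0,1]\times_{\C}\M_{l+1}(\beta),
\]
exhibit the geodesic spaces as the transverse preimages of the codimension-one subset $[0,1]\subset\C$ under the cross-ratio maps $\chi$, $\chi_0$. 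The standing regularity assumptions of Section~\ref{sec:geod} guarantee that these fiber products are smooth orbifolds with corners, that the restricted evaluation maps $evb_0^\beta$ and $ev_0^\beta$ remain proper submersions, and hence that
\[
\dim\M_{k+1,l;a,e}(\beta) = \dim\M_{k+1,l}(\beta) - 1, \qquad \dim\M_{l+1;\chi_0}(\beta) = \dim\M_{l+1}(\beta)-1.
\]

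Next I would read off the effect on degrees. The operator $\q_{k,l;a,e}^\beta$ is defined by the same recipe as $\q_{k,l}^\beta$ — pull back the forms $\gamma_j,\alpha_j$ by the evaluation maps, wedge, multiply by the degree-zero sign $(-1)^{\varepsilon_\chi(\alpha)}$ and by $T^\beta$ — the only change being that $(evb_0^\beta)_*$ is now integration over the fiber of a proper submersion whose relative dimension has dropped by one. Since integration over the fiber $f_*$ along a proper submersion lowers form degree by $\rdim f$ (see the push-forward conventions preceding Proposition~\ref{prop:iof}), the degree of $\q_{k,l;\chi}$ exceeds that of $\q_{k,l}$ by exactly $1$; combined with Lemma~\ref{deg_str_map} this gives $\deg\q_{k,l;\chi} = (2-k-2l)+1 = 3-k-2l$. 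The identical argument applied to $\q_{\emptyset,l;\chi_0}^\beta$ versus $\q_{\emptyset,l}^\beta$, using Lemma~\ref{lm:cdeg}, gives $\deg\q_{\emptyset,l;\chi_0} = (4-2l)+1 = 5-2l$. In both cases the $\beta$-dependence of the moduli dimension is compensated by $\deg T^\beta = \mu(\beta)$ (resp. $\deg T^{\varpi(\beta)} = \mu(\varpi(\beta)) = 2c_1(\beta)$), exactly as in the non-geodesic case, so the degree is independent of $\beta$.

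There is no substantial obstacle here: the only point that genuinely uses the geometry of Section~\ref{sec:geod} rather than bookkeeping is the codimension-one claim, which rests on the transversality of $\chi$ and $\chi_0$ to the inclusion $[0,1]\hookrightarrow\C$ built into the standing regularity hypotheses. If one prefers an argument that does not cite Lemmas~\ref{deg_str_map} and~\ref{lm:cdeg}, the same conclusion follows from the explicit dimension formulas $\dim\M_{k+1,l}(\beta) = n+\mu(\beta)+k+2l-2$ and $\dim\M_{l+1}(\beta) = 2n+2c_1(\beta)+2l-4$: the relative dimensions of the restricted maps $evb_0^\beta$ and $ev_0^\beta$ are then $\mu(\beta)+k+2l-3$ and $2c_1(\beta)+2l-5$ respectively, and pushing a form of degree $|\alpha|+|\gamma|$ (resp. $|\gamma|$) forward and restoring the Novikov grading yields output degree $|\alpha|+|\gamma|-k-2l+3$ (resp. $|\gamma|-2l+5$), as claimed.
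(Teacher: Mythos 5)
Your proof is correct and takes essentially the same approach the paper has in mind when it says the proof is ``similar to that of Proposition~3.5 in~\cite{ST1}'': a dimension count, here cleanly packaged as the observation that the geodesic constraint is codimension one (transverse preimage of $[0,1]\subset\C$ under the cross-ratio map), so the fiber dimension of the relevant evaluation submersion drops by one and the operator degree rises by one relative to Lemmas~\ref{deg_str_map} and~\ref{lm:cdeg}. Both your high-level reduction and your explicit dimension formulas check out, including the compensation by $\deg T^\beta = \mu(\beta)$ (resp.\ $\deg T^{\varpi(\beta)} = 2c_1(\beta)$) making the result independent of $\beta$.
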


The proof is similar to that of Proposition 3.5 in~\cite{ST1}.

\subsubsection{Linearity}
The following is a direct analog of the linearity properties of the usual $\q$ operators.
The signs reflect the fact that the map of shifted complexes
\[
\q_{k,l;\chi}: A^*(L; R)[1]^{\otimes k}\otimes A^*(X;Q)^{\otimes l}\lrarr A^*(L;R)[1]
\]
has degree $0 \pmod 2$.

\begin{lm}\label{lm:geodqlinear}
The geodesic $\q$ operators are multilinear, in the sense that for $a \in R$ we have
\begin{multline*}
\qquad\q_{k,l;\chi}(\a_1,\ldots,\a_{i-1},a\cdot\a_i,\ldots,\a_k;\gamma_1,\ldots,\gamma_l)=\\
		=(-1)^{|a|\cdot\big(\sum_{j=1}^{i-1}(|\a_j|+1)+\sum_{j=1}^l|\gamma_j|\big)}
		a\cdot\q_{k,l;\chi}(\a_1,\ldots,\a_k;\gamma_1,\ldots,\gamma_l),
\end{multline*}
and for $a \in Q$ we have
\[
\q_{k,l;\chi}(\a_1,\ldots,\a_k; \gamma_1,\ldots,a\cdot\gamma_i,\ldots,\gamma_l) =(-1)^{|a|\cdot\sum_{j=1}^{i-1}|\gamma_j|}
		a\cdot\q_{k,l;\chi}(\a_1,\ldots,\a_k;\gamma_1,\ldots,\gamma_l),
\]
and
\[
\q_{\emptyset,l;\chi_0}(\gamma_1,\ldots,a\cdot\gamma_i,\ldots,\gamma_l)=
(-1)^{|a|\cdot\sum_{j=1}^{i-1}|\gamma_j|}a\cdot\q_{\emptyset,l;\chi_0}(\gamma_1,\ldots,\gamma_l).
\]
\end{lm}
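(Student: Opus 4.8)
The plan is to reduce the statement to the linearity of the ordinary $\q$ operators, Lemma~\ref{lm:qlinear}, by isolating the only respect in which the geodesic operators differ from their non-geodesic counterparts, namely the modified sign prefactor. Recall from Section~\ref{ssec:geoddfn} that $\varepsilon_\chi(\alpha)=\varepsilon(\alpha)+|\alpha|+k$, so that
\[
\q_{k,l;\chi}^\beta(\alpha_1,\ldots,\alpha_k;\gamma_1,\ldots,\gamma_l)=(-1)^{|\alpha|+k}\,\hat\q_{k,l;\chi}^\beta(\alpha_1,\ldots,\alpha_k;\gamma_1,\ldots,\gamma_l),
\]
where $\hat\q^\beta_{k,l;\chi}$ denotes the operator defined by the same formula as $\q^\beta_{k,l}$, with the sign $(-1)^{\varepsilon(\alpha)}$, but over the moduli space $\M_{k+1,l;a,e}(\beta)$ rather than $\M_{k+1,l}(\beta)$. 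The proof of Lemma~\ref{lm:qlinear} in~\cite[Proposition 3.1]{ST1} uses nothing about $\M_{k+1,l}(\beta)$ beyond the properties of push-forward along the fiber recorded in Proposition~\ref{prop:iof} together with the fact that $evb_0^\beta$ is a proper submersion; by the regularity assumptions of Section~\ref{sssec:reg}, the latter also holds for $\M_{k+1,l;a,e}(\beta)$. Hence that argument applies verbatim to $\hat\q_{k,l;\chi}:=\sum_\beta T^\beta\hat\q^\beta_{k,l;\chi}$, showing it satisfies the same linearity relations as $\q_{k,l}$.

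First I would treat $R$-linearity in a boundary input. Replacing $\alpha_i$ by $a\cdot\alpha_i$ for $a\in R$ multiplies the prefactor $(-1)^{|\alpha|+k}$ by $(-1)^{|a|}$, since $k$ is unchanged while $|\alpha|=\sum_j|\alpha_j|$ increases by $|a|$. Combining this with the relation for $\hat\q_{k,l;\chi}$ --- whose sign exponent is $|a|\bigl(i+\sum_{j=1}^{i-1}|\alpha_j|+\sum_{j=1}^l|\gamma_j|\bigr)$ exactly as in Lemma~\ref{lm:qlinear} --- produces the exponent $|a|\bigl(i+1+\sum_{j=1}^{i-1}|\alpha_j|+\sum_{j=1}^l|\gamma_j|\bigr)$, which is congruent modulo $2$ to $|a|\bigl(\sum_{j=1}^{i-1}(|\alpha_j|+1)+\sum_{j=1}^l|\gamma_j|\bigr)$, the asserted sign. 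For $Q$-linearity in an interior input $\gamma_i$ the prefactor $(-1)^{|\alpha|+k}$ is untouched, so the relation for $\hat\q_{k,l;\chi}$ transfers unchanged, giving the sign $(-1)^{|a|\sum_{j=1}^{i-1}|\gamma_j|}$ with $\q_{k,l;\chi}(\alpha_1,\ldots,\alpha_k;\gamma_1,\ldots,\gamma_l)$ on the right-hand side. Finally, for $\q_{\emptyset,l;\chi_0}$ there is neither an $\alpha$-dependence nor any analogue of $\varepsilon_\chi$ --- only the overall sign $(-1)^{w_\s(\beta)}$ accompanying $T^{\varpi(\beta)}$ --- so the sphere case is the same computation as the last relation in Lemma~\ref{lm:qlinear} carried out over $\M_{l+1;\chi_0}(\beta)$, using that $ev_0^\beta$ remains a submersion there.

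I do not expect a genuine obstacle: the content is the bookkeeping of Koszul signs, and the reduction above arranges for all of it to take place inside the already-established Lemma~\ref{lm:qlinear}. The only point requiring care is confirming that $\varepsilon_\chi-\varepsilon=|\alpha|+k$ depends on the $\alpha_j$ solely through their degrees, so that the prefactor absorbs precisely the factor $(-1)^{|a|}$ and nothing more; this is immediate from the displayed formula for $\varepsilon_\chi$ in Section~\ref{ssec:geoddfn}. As the remark preceding the lemma suggests, an equivalent way to organize the signs is to observe that $\q_{k,l;\chi}$, regarded as a map $A^*(L;R)[1]^{\otimes k}\otimes A^*(X;Q)^{\otimes l}\to A^*(L;R)[1]$, has even degree, so that the Koszul rule for commuting $a$ past the shifted inputs yields exactly the stated exponents; I would adopt whichever of the two formulations is more transparent when writing out the details.
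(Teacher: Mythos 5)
The paper offers no proof for this lemma, only the remark preceding it that the signs are dictated by the fact that $\q_{k,l;\chi}$ has degree $0 \pmod 2$ as a map of shifted complexes. Your reduction to Lemma~\ref{lm:qlinear} via the auxiliary operator with the unmodified sign $\varepsilon(\alpha)$, together with the bookkeeping of the extra prefactor $(-1)^{|\alpha|+k} = (-1)^{\varepsilon_\chi(\alpha)-\varepsilon(\alpha)}$, is correct and produces exactly the stated exponents; your closing remark about the Koszul rule on shifted complexes is the same observation the paper uses to justify the signs, so the two formulations agree.
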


\subsubsection{Unit on the geodesic}
The followings lemmas concern geodesic operators where the unit is fed to one of the inputs constrained to the geodesic. They have no direct analog for the usual $\q$ operators.
\begin{lm}\label{lm:geodunit-1} 
\[
\q_{-1,l}(\gamma)
= (-1)^{n}\langle\q_{0,l;1_0,2}(\gamma),1\rangle.
\]
\end{lm}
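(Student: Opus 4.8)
The plan is to reduce both sides of the identity to integrals over moduli spaces and then to identify those moduli spaces by forgetting the single boundary marked point.

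First I would unwind the right-hand side. Since $|1| = 0$, the definition of the pairing $\langle\,\cdot\,,\,\cdot\,\rangle$ gives $\langle\q_{0,l;1_{0},2}(\gamma),1\rangle = \int_L \q_{0,l;1_{0},2}(\gamma)$. Writing $\q_{0,l;1_{0},2}(\gamma) = \sum_{\beta\in\sly}T^\beta\q^\beta_{0,l;1_{0},2}(\gamma)$ and noting that for $k = 0$ the sign $\varepsilon_\chi$ in the definition of the geodesic operators equals $1$, we have $\q^\beta_{0,l;1_{0},2}(\gamma) = -(evb_0^\beta)_*\bigl(\bigwedge_{j=1}^l (evi_j^\beta)^*\gamma_j\bigr)$. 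Integrating over $L$ and applying Proposition~\ref{prop:iof}, parts~\ref{normalization} and~\ref{prop:pushcomp}, this becomes $-\int_{\M_{1,l;1_{0},2}(\beta)}\bigwedge_{j=1}^l (evi_j^\beta)^*\gamma_j$. Since $\q_{-1,l}(\gamma) = \sum_{\beta\in\sly}T^\beta\int_{\M_{0,l}(\beta)}\bigwedge_{j=1}^l (evi_j^\beta)^*\gamma_j$ by definition, the lemma is equivalent to the coefficientwise identity
\[
\int_{\M_{0,l}(\beta)}\bigwedge_{j=1}^l (evi_j^\beta)^*\gamma_j = (-1)^{n+1}\int_{\M_{1,l;1_{0},2}(\beta)}\bigwedge_{j=1}^l (evi_j^\beta)^*\gamma_j
\]
for every $\beta$ (here $l \geq 2$, as required for the geodesic condition $1_{0},2$ to make sense).

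Next I would identify the two moduli spaces via the map $\pi_\beta : \M_{1,l;1_{0},2}(\beta) \to \M_{0,l}(\beta)$ that forgets the marked point $z_0$. On the locus of maps with irreducible domain this is a bijection: given interior marked points $w_1 \neq w_2$, there is a unique geodesic of the hyperbolic metric of the domain through $w_1$ and $w_2$, and hence a unique boundary point, namely the endpoint of that geodesic from which $w_1$ precedes $w_2$, that can serve as $z_0$. Because $z_0$ is canonically determined, it is preserved by every automorphism of the underlying configuration, so $\pi_\beta$ is in fact an isomorphism of orbifolds with corners away from a subset of codimension at least $1$, which is irrelevant for integration. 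As $\pi_\beta$ commutes with all interior evaluation maps, $\pi_\beta^*\bigl(\bigwedge_j (evi_j^\beta)^*\gamma_j\bigr) = \bigwedge_j (evi_j^\beta)^*\gamma_j$, and hence $\int_{\M_{1,l;1_{0},2}(\beta)}\bigwedge_j (evi_j^\beta)^*\gamma_j = \deg(\pi_\beta)\cdot\int_{\M_{0,l}(\beta)}\bigwedge_j (evi_j^\beta)^*\gamma_j$.

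It remains to compute the orientation sign $\deg(\pi_\beta)$, and I expect this to be the main obstacle. Using the fiber product presentation $\M_{1,l;1_{0},2}(\beta) = [0,1]\times_{\C}\M_{1,l}(\beta)$ along the cross ratio map $\chi$, oriented by the conventions of~\cite{ST4}, together with the orientation of the circle bundle $\M_{1,l}(\beta)\to\M_{0,l}(\beta)$ obtained by forgetting $z_0$ and the complex orientation of the target $\C$ of $\chi$, one computes $\deg(\pi_\beta) = (-1)^{n+1}$. This sign bookkeeping is of the same nature as, though more delicate than, the orientation computations in~\cite[Section~3]{ST1}, and is the only nontrivial point in the proof. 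Substituting this value into the displayed coefficientwise identity and combining with the first paragraph yields the lemma.
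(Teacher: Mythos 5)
Your proof is correct and follows essentially the same route as the paper's: reduce both sides to integrals over moduli spaces, identify them via the forgetful map $p_0 : \M_{1,l;1_0,2}(\beta)\to\M_{0,l}(\beta)$, and absorb the orientation change $(-1)^{n+1}$. The paper isolates that sign as a separate statement, Lemma~\ref{lm:pchii} (giving $sgn(p_i)=n+i+1$, hence $n+1$ for $i=0$), and then invokes it; your sketch of how to compute it from the $[0,1]\times_{\C}\M_{1,l}(\beta)$ fiber-product orientation and the circle bundle $\M_{1,l}(\beta)\to\M_{0,l}(\beta)$ is exactly what a proof of that lemma would involve.
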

The proofs of this lemma and the next follow after Lemma~\ref{lm:pchii}.
\begin{lm}\label{lm:geodunit_pos}
Set $\a=(\a_1,\ldots,\a_k)$ and $\a'=(\a_2,\ldots,\a_{k}).$
Whenever applicable,
\begin{align*}
\langle\q_{k-1,l}(\a';\gamma),\a_1\rangle
=&
\sum_{i = 1}^k (-1)^{n + \sum_{j = 2}^i (|\alpha_j|+1)}
\langle\q_{k,l;1_i,2}(\a_2,\ldots,\a_{i},1,\a_{i+1},\ldots,\a_k;\gamma),\alpha_1\rangle\\
=&
\sum_{i = 1}^k (-1)^{n + \sum_{j = 2}^i (|\alpha_j|+1)}
\langle\q_{k,l;2_{i,0},1}(\a_2,\ldots,\a_{i},1,\a_{i+1},\ldots,\a_k;\gamma),\a_1\rangle.
\end{align*}
\end{lm}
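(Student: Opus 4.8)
The plan is to deduce both displayed equalities from a single geometric observation: when the unit is fed to a boundary marked point that is pinned to a geodesic, the resulting geodesic moduli space is identified, via the map forgetting that point, with a subspace of $\M_{k,l}(\beta)$, and as the slot index $i$ varies these subspaces exhaust $\M_{k,l}(\beta)$. Thus the left-hand side $\langle\q_{k-1,l}(\a';\gamma),\a_1\rangle$, being a pairing, i.e. an integral of a top-degree form over $\M_{k,l}(\beta)$, will be recovered as a sum of integrals over the pieces.

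First I would fix $\beta,k,l$ and examine $\M_{k+1,l;1_i,2}(\beta)$, whose top stratum (by definition it is the closure of the one-component locus) consists of irreducible maps with boundary points $z_0,\ldots,z_k$ in cyclic order, interior points $w_1,\ldots,w_l$, and $z_i,w_1,w_2$ on a common geodesic in that order. Since $w_1,w_2$ are interior they lie on a unique hyperbolic geodesic, and the ordering condition forces $z_i$ to be its endpoint $p$ on the $w_1$-side. Hence the forgetful map $\mathfrak{f}_i\colon\M_{k+1,l;1_i,2}(\beta)\to\M_{k,l}(\beta)$ dropping $z_i$ (and relabeling $z_{i+1},\ldots,z_k$ as $z_i,\ldots,z_{k-1}$) is injective, with image the locus of $\M_{k,l}(\beta)$ on which $p$ lies in the boundary arc bounded by $z_{i-1}$ and $z_i$ (for $i=k$ this is the arc bounded by $z_{k-1}$ and $z_0$). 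These $k$ arcs partition $\partial\Sigma$ minus its marked points, so the images of the $\mathfrak{f}_i$, $i=1,\ldots,k$, cover $\M_{k,l}(\beta)$ up to the codimension-one set where $p$ meets a marked point, which is negligible for the integral; in particular no Stokes' theorem is needed here, in contrast with the structure equations of Section~\ref{ssec:geo_a_infty}. Since $\mathfrak{f}_i$ intertwines $evb_0$ with $evb_0$, the maps $evb_j$ $(j\ne i)$ with the relabeled $evb_j$, and the $evi_j$ with the $evi_j$, and since $evb_i^*(1)=1$ so the unit drops out of the integrand, $\mathfrak{f}_i$ identifies the pushforward integrand of $\q_{k,l;1_i,2}(\a_2,\ldots,\a_i,1,\a_{i+1},\ldots,\a_k;\gamma)$ paired with $\a_1$ with the restriction of the integrand of $\langle\q_{k-1,l}(\a';\gamma),\a_1\rangle$ to $\im\mathfrak{f}_i$. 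Summing over $i$ then yields the first equality, up to signs.

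For the second equality I would run the same argument with $\M_{k+1,l;2_{i,0},1}(\beta)$, where the geodesic now passes through $z_i,w_1,z_0$ in that order: the fixed points $z_0$ and $w_1$ lie on a unique geodesic, its second endpoint is $p$, again $z_i=p$, and forgetting $z_i$ identifies $\M_{k+1,l;2_{i,0},1}(\beta)$ with the part of $\M_{k,l}(\beta)$ where $p$ lies in the $i$-th boundary arc, so the sum over $i$ recovers all of $\M_{k,l}(\beta)$. Alternatively, once both equalities are proved their right-hand sides are each equal to $\langle\q_{k-1,l}(\a';\gamma),\a_1\rangle$, hence to each other.

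The hard part will be the sign bookkeeping. The constant factor $(-1)^n$ is the same one appearing in Lemma~\ref{lm:geodunit-1}: it comes from comparing the orientation the fiber-product presentation via the cross ratio $\chi$ puts on the geodesic moduli space with the orientation of $\M_{k,l}(\beta)$, following the conventions of~\cite{ST4}. The factor $(-1)^{\sum_{j=2}^i(|\a_j|+1)}$ should record the cost of placing the degree-zero unit input in slot $i$, which shifts the slot indices of $\a_{i+1},\ldots,\a_k$ in the sign $\varepsilon_\chi$ in the definition of $\q_{k,l;1_i,2}$ and in the Koszul sign of the pairing $\langle\cdot,\cdot\rangle$, relative to the case $i=1$ (where the sum is empty and one recovers the bare $(-1)^n$). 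I expect to establish this by verifying the orientation identification for a single value of $i$ against $\langle\q_{k-1,l}(\a';\gamma),\a_1\rangle$ exactly as in the proof of Lemma~\ref{lm:geodunit-1}, and then checking that passing from $i$ to $i+1$ multiplies the relevant sign by $(-1)^{|\a_{i+1}|+1}$.
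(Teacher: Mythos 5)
Your proposal is correct and follows essentially the same route as the paper: the paper's proof also decomposes $\M_{k,l}(\beta)$ via the forgetful maps $p_i$ (its Lemma~\ref{lm:pchii}, whose orientation sign $n+i+1$ is what produces the $i$-dependence), uses Proposition~\ref{prop:iof} rather than Stokes' theorem, and exploits that the unit input drops out of the integrand. Your incremental sign check is consistent with the paper's direct computation — going from slot $i$ to $i+1$ indeed combines the $(-1)$ from $sgn(p_{i+1})-sgn(p_i)$ with a $(-1)^{|\a_{i+1}|}$ from $\varepsilon_\chi$ — though your informal description attributes the sign to the shift of $\a_{i+1},\ldots,\a_k$, which is the complement of the $\a_2,\ldots,\a_i$ appearing in the statement; the two agree only up to an $i$-independent constant that is absorbed into the base case.
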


Let $\chi_i:\M_{k+1,l}(\beta)\to \C$ be the
cross ratio map such that the condition $\chi_i(u)\in [0,1]$ constrains the marked points $v_1=z_i, v_2, v_3,$ to lie on a geodesic in that order.
Let
\[
p_i:\M_{k+1,l;\chi_i}(\beta)\lrarr \M_{k,l}(\beta)
\]
denote the forgetful map given by forgetting $z_i$,
shifting the labels of $z_{i+1},\ldots,z_k,$ down by one,
and stabilizing the resulting open stable map.
\begin{lm}\label{lm:pchii}
The forgetful map $p_i$ restricts to a diffeomorphism from the irreducible locus to an open subset of the irreducible locus
that changes orientation by $sgn(p_i)=n+i+1$.
\end{lm}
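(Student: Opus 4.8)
The plan is to first check that $p_i$ is a diffeomorphism onto an open subset of the irreducible locus of $\M_{k,l}(\beta)$, and then to read off the orientation sign from the fiber product definition of the geodesic moduli space together with the behaviour of the map that forgets $z_i$.

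For the first part, I would work on the irreducible locus and identify the domain of a stable map with the upper half plane $\mathbb{H}$, so that the geodesics of the hyperbolic metric are the vertical half-lines and the semicircles centred on $\partial\mathbb{H}$. Forgetting $z_i=v_1$ from a one-component map with the geodesic constraint again produces a one-component map, and on the open locus where the result remains stable this defines a smooth map $p_i$ to the irreducible locus of $\M_{k,l}(\beta)$. To invert $p_i$, note that the two other marked points on the geodesic, $v_2$ and $v_3$, are not forgotten, and $v_2$ is interior; since a unique geodesic passes through two distinct interior points and also through one interior and one boundary point, the geodesic is recovered from the image, and then $v_1=z_i$ is recovered as the endpoint of this geodesic on $\partial\mathbb{H}$ that realises the prescribed cyclic order $v_1,v_2,v_3$. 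The locus on which the recovered $z_i$ is distinct from the remaining boundary marked points and lies in the correct cyclic interval is open; it is the image of $p_i$, and smoothness of the inverse is immediate from the explicit cross ratio formula for $\chi_i$.

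For the orientation, I would use the presentation $\M_{k+1,l;\chi_i}(\beta)=[0,1]\times_{\C}\M_{k+1,l}(\beta)$ and compute in the tangent space at an irreducible point $x$ with $\chi_i(x)\in(0,1)$. Let $\zeta\in T_x\M_{k+1,l}(\beta)$ be the vector that moves $z_i$ along $\partial\Sigma$ and fixes the rest of the configuration; it spans the fibre direction of the forgetful map $f_i$ dropping $z_i$. With $v_2,v_3$ held fixed, the four points $\bar v_2,z_i,v_3,v_2$ are concyclic only for finitely many positions of $z_i$, so $\zeta$ is transverse to $T_x\M_{k+1,l;\chi_i}(\beta)=\ker(\operatorname{Im}\circ d\chi_i)$ and $T_x\M_{k+1,l}(\beta)=T_x\M_{k+1,l;\chi_i}(\beta)\oplus\R\zeta$. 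The comparison of orientations then needs two ingredients: (i) the fibre product orientation convention of~\cite{ST4}, which expresses the orientation of $\M_{k+1,l;\chi_i}(\beta)$ through that of $\M_{k+1,l}(\beta)$, the standard orientations of $[0,1]$ and $\C$, and $\zeta$; and (ii) the orientation behaviour of $f_i$, that is, how the orientation of $\M_{k+1,l}(\beta)$ decomposes in terms of $\zeta$ and the pullback $f_i^*$ of the orientation of $\M_{k,l}(\beta)$ — this includes the permutation sign for inserting the $z_i$-slot at cyclic position $i$ among the $k+1$ boundary points, and the matching of the $\partial\Sigma$-direction at $z_i$ against the orientation conventions on disk moduli, which is where $\dim_\R L=n$ enters. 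One then checks that the $\mu(\beta)$-dependence of (i) and (ii) cancels, and collecting signs — using Lemma~\ref{lm:pulltopush} to pass between pullback and pushforward — yields $\operatorname{sgn}(p_i)=n+i+1$.

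The main obstacle is ingredient (ii): pinning down precisely how the orientation of $\M_{k+1,l}(\beta)$, as normalised in~\cite{ST1} via $evb_0$ and the relative spin structure, relates to that of $\M_{k,l}(\beta)$ under the forgetful map, and how adjoining a boundary marked point in cyclic position $i$ interacts with the boundary orientation of $\partial\Sigma$. The geometric content — that forgetting $z_i$ is reversible because the geodesic, hence $z_i$, is pinned down by the two marked points that survive — is essentially immediate; the content of the lemma lies in the sign, which comes out of a careful but routine application of the fibre product and pushforward conventions of~\cite{ST4}.
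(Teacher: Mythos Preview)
The paper does not actually supply a proof of this lemma: it is stated, followed by a figure, and the text then moves directly to the proofs of Lemmas~\ref{lm:geodunit-1} and~\ref{lm:geodunit_pos}, which \emph{use} Lemma~\ref{lm:pchii} as a black box. So there is no argument in the paper to compare your proposal against.

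That said, your outline is on the right track and identifies the two pieces correctly. The diffeomorphism claim is exactly as you describe: on the irreducible locus the two surviving geodesic marked points $v_2,v_3$ (with $v_2$ interior) determine a unique hyperbolic geodesic, and the forgotten boundary point $z_i$ is recovered as the boundary endpoint consistent with the prescribed order; the image is the open locus where this reconstructed point respects the cyclic position of $z_i$ among the remaining boundary marked points. This is the content of Figure~\ref{pic:gunit}.

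Your discussion of the sign is honest in flagging that the entire substance lies in the orientation bookkeeping, and that the two inputs are the fiber product convention for $\M_{k+1,l;\chi_i}(\beta)=[0,1]\times_\C\M_{k+1,l}(\beta)$ and the orientation behaviour of the ordinary forgetful map $f_i:\M_{k+1,l}(\beta)\to\M_{k,l}(\beta)$. What you have written for this second part is not yet a proof but a description of what a proof would contain; to complete it one really does have to chase the conventions of~\cite{ST4} and the orientation normalisations of~\cite{ST1}, and verify that the contributions combine to $n+i+1$. Since the paper itself omits this computation, your sketch is at the same level of detail as the paper, and the approach you propose is the natural one.
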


\begin{figure}[ht]
\centering
\includegraphics[width=0.75\textwidth]{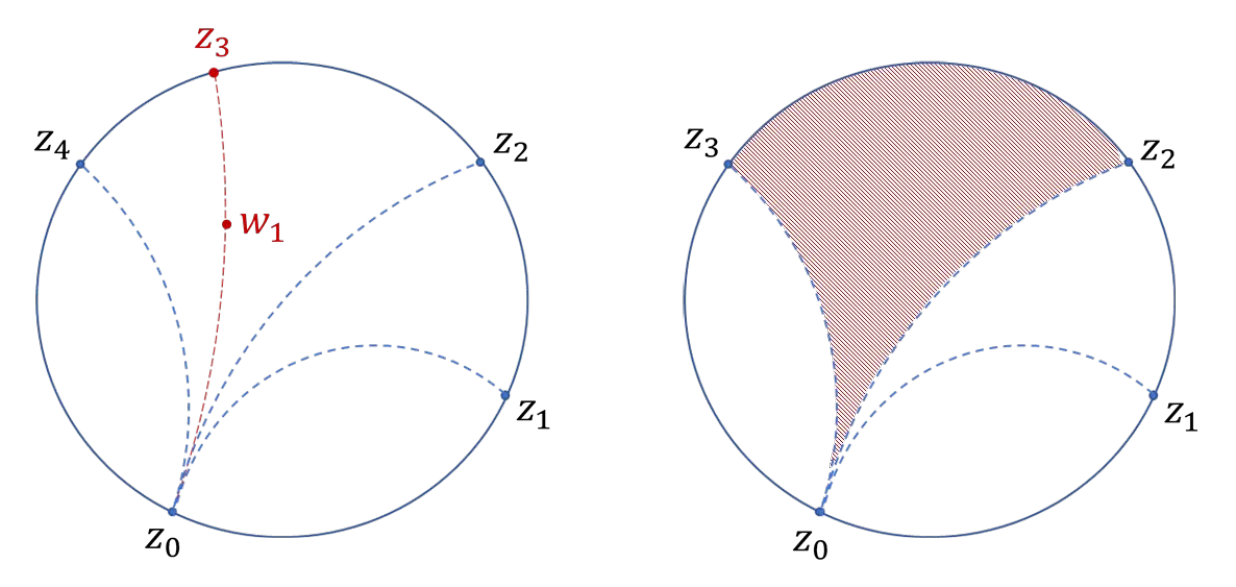}
\caption{On the left is the domain of an irreducible stable map in $\M_{5,1;2_{3,0},1}(\beta).$
On the right is the domain of an irreducible stable map in the image of the forgetful map
$p_3:\M_{5,1;2_{3,0},1}(\beta)\to \M_{4,1}(\beta)$ with the possible locations of $w_1$ shaded. The dotted lines are geodesics.}\label{pic:gunit}
\end{figure}

\begin{proof}[Proof of Lemma~\ref{lm:geodunit-1}]
Let $p=p_0:\M_{1,l;1_0,2}(\beta)\lrarr \M_{0,l}(\beta)$ be the forgetful map as in Lemma~\ref{lm:pchii}. Denote by $evi^1_j$ the evaluation maps at interior marked points on $\M_{1,l;1_0,2}(\beta)$, and by $evi^0_j$ the evaluation maps at interior marked points on $\M_{0,l}(\beta)$. In particular,
$
evi_j^0\circ p =evi_{j+1}^{1}.
$
Set
\[
\xi':=\bigwedge_{j=1}^{l}(evi_j^{0})^*\gamma_{j+1}
\qquad
\mbox{ and }
\qquad
\xi:=\bigwedge_{j=1}^l(evi_j^{1})^*\gamma_j
=p^*\xi'.
\]
For any space, denote by $pt$ the map from it to a point.
In the following calculation we use the fact that
\[
pt_*(p^*\xi')=pt_*p_*(p^*\xi'\wedge 1)
=pt_*(\xi'\wedge p_*1)=(-1)^{sgn(p)}pt_*\xi'\quad \in R.
\]
So,
\begin{align*}
\langle\q_{0,l;1_0,2}(\gamma),1\rangle
&=
pt_*(\q_{0,l;1_0,2}(\gamma))\\
&=
(-1)^{\varepsilon_\chi(\emptyset)} pt_*(evb_0)_*\xi\\
&=
(-1)^{\varepsilon_\chi(\emptyset)}
\int_{\M_{1,l;1_0,2}(\beta)}p^*\xi'\\
&=
(-1)^{\varepsilon_\chi(\emptyset)+sgn(p)}
\int_{\M_{0,l}(\beta)}\xi'\\
&=
(-1)^{\varepsilon_\chi(\emptyset)+sgn(p)}
\int_{\M_{0,l}(\beta)} \bigwedge_{j=0}^{l-1}(evi_j^{0})^*\gamma_{j+1}\\
&=
(-1)^{\varepsilon_\chi(\emptyset)+sgn(p)}
\q_{-1,l}(\gamma).
\end{align*}
The total sign is therefore
\[
\varepsilon_\chi(\emptyset)+sgn(p)
=
1+n+1\equiv n\pmod 2.
\]
\end{proof}

\begin{proof}[Proof of Lemma~\ref{lm:geodunit_pos}]

Let $evi_j, evb_j,$ be the evaluation maps on $\M_{k,l}(\beta)$, and set
\[
\xi':=\bigwedge_{j=1}^levi_j^*\gamma_j\wedge \bigwedge_{j=0}^{k-1}(evb_{j})^*\a_{j+1}.
\]
Then
\begin{align}\label{eq:ptxip}
\begin{split}
pt_*\xi'
=&
pt_*(evb_0)_*\big(
\bigwedge_{j=1}^levi_j^*\gamma_j\wedge (evb_0)^*\a_1\wedge\bigwedge_{j=1}^{k-1}(evb_j)^*\a_{j+1}
\big)\\
=&
(-1)^{|\a_1||\gamma|}
pt_*(evb_0)_*\big(
(evb_0)^*\a_1\wedge
\bigwedge_{j=1}^levi_j^*\gamma_j\wedge \bigwedge_{j=1}^{k-1}(evb_j)^*\a_{j+1}
\big)\\
=&
(-1)^{|\a_1||\gamma|}
pt_*\Big(
\a_1\wedge
(evb_0)_*\big(\bigwedge_{j=1}^levi_j^*\gamma_j\wedge \bigwedge_{j=1}^{k-1}(evb_j)^*\a_{j+1}
\big)\Big)\\
=&
(-1)^{|\a_1||\gamma|+ |\a_1|(|\gamma|+|\a'|-\rdim(evb_0))}
pt_*\Big(
(evb_0)_*\big(\bigwedge_{j=1}^levi_j^*\gamma_j\wedge \bigwedge_{j=1}^{k-1}(evb_j)^*\a_{j+1}
\big)\wedge\a_1\Big)\\
=&
(-1)^{|\a_1|(|\a'|+k-1)+|\a_1|+\varepsilon(\a')}
\langle\q_{k-1,l}(\a';\gamma),\a_1\rangle.
\end{split}
\end{align}

Let $p_i:\M_{k+1,l;\chi_i}(\beta)\to \M_{k,l}(\beta)$ be the forgetful map from Lemma~\ref{lm:pchii} with $v_2,v_3,$ taken to be $w_1,w_2,$ for the first equation, or $w_1,z_0,$ for the second equation.
Varying $i,$ we get a diffeomorphism onto an open dense subset of the irreducible stratum:
\[
p = \coprod_{1 \leq i \leq k} p_i :\coprod_{1\le i\le k}\M_{k+1,l;\chi_i}(\beta)\lrarr \M_{k,l}(\beta).
\]
See Figure~\ref{pic:gunit}.

Let $evi_j, evb_j,$ be the evaluation maps on $\M_{k,l}(\beta)$, and let $evi_j^i, evb_j^i,$ be the evaluation maps on $\M_{k+1,l;\chi_i}(\beta).$
In particular,
\[
evb_j^i=
\begin{cases}
evb_j\circ p_i, & j< i,\\
evb_{j-1}\circ p_i, & j> i.
\end{cases}
\]
Set
\[
\xi_i:=p_i^*\xi'.
\]
Then, by Proposition~\ref{prop:iof}\ref{prop:pushpull},
\begin{equation}\label{eq:pxipxip}
pt_*\xi'
=\sum_{i=1}^k (-1)^{sgn(p_i)} pt_*(\xi'\wedge (p_i)_*1)
=\sum_{i=1}^k (-1)^{sgn(p_i)} pt_*(p_i)_*(p_i^*\xi')
=\sum_{i=1}^k (-1)^{sgn(p_i)} pt_*\xi_i.
\end{equation}

By Proposition~\ref{prop:iof}\ref{prop:pushcomp}-~\ref{prop:pushpull}
we have
\begin{align*}
pt_*\xi_i
=&
pt_*\big(
\bigwedge_{j=1}^levi_j^*\gamma_j\wedge
\bigwedge_{j=1}^{i}(evb^i_{j-1})^*\a_{j}
\wedge
(evb_i^i)^*1
\wedge
\bigwedge_{j=i+1}^{k}(evb^i_j)^*\a_j
\big)\\
=&
(-1)^{|\a_1||\gamma|}
pt_*(evb_0)_*\big(evb_0^*\a_1\wedge
\bigwedge_{j=1}^levi_j^*\gamma_j\wedge
\bigwedge_{j=2}^{i}(evb^i_{j-1})^*\a_{j}
\wedge
(evb_i^i)^*1
\wedge
\bigwedge_{j=i+1}^{k}(evb^i_j)^*\a_j
\big)\\
=&
(-1)^{|\a_1|(|\a'|+\rdim evb_0)}\cdot\\
&\quad\cdot
pt_*\big((evb_0)_*\big(
\bigwedge_{j=1}^levi_j^*\gamma_j\wedge
\bigwedge_{j=2}^{i}(evb^i_{j-1})^*\a_{j}
\wedge
(evb_i^i)^*1
\wedge
\bigwedge_{j=i+1}^{k}(evb^i_j)^*\a_j
\big)\wedge \a_1\big)\\
=&
(-1)^{\varepsilon_{\chi}(\a_2,\ldots,\a_i,1,\a_{i+1},\ldots,\a_k) +|\a_1|
+|\a_1|(|\a'|+k-1)}
\langle\q_{k,l;\chi_i}(\a_2,\ldots,\a_i,1,\a_{i+1},\ldots,\a_k), \a_1\rangle.
\end{align*}
Thus, using equations~\eqref{eq:ptxip},~\eqref{eq:pxipxip}, and Lemma~\ref{lm:pchii}, we obtain
\[
\langle\q_{k-1,l}(\a';\gamma),\a_1\rangle = \sum_{i = 1}^k (-1)^* \langle\q_{k,l;\chi_i}(\a_2,\ldots,\a_i,1,\a_{i+1},\ldots,\a_k), \a_1\rangle
\]
with
\begin{align*}
* &\equiv \varepsilon_{\chi}(\a_2,\ldots,\a_i,1,\a_{i+1},\ldots,\a_k) +\varepsilon(\a') + n+i+1 \\
& = |\alpha'| + k + \varepsilon(\a_2,\ldots,\a_i,1,\a_{i+1},\ldots,\a_k) + \varepsilon(\a') + n + i + 1 \\
& \equiv |\alpha'| + k +  i + \sum_{j = i + 1}^k (|\alpha_j| + 1) + n + i + 1 \\
& \equiv \sum_{j = 2}^i (|\alpha_j| + 1) + n \pmod 2.
\end{align*}
\end{proof}

\subsubsection{Reversing the geodesic}
We consider the effect on the geodesic operators of reversing the order of the marked points constrained to the geodesic.
\begin{lm}\label{lm:geoddirection}
For all lists $\a,\gamma,$ and elements $\eta_1,\eta_2,\eta_3,$ we have
\begin{equation*}
\q_{k,l;2_{a,e},1}(\a;\gamma)
=
-\q_{k,l;2_{e,a},1}(\a;\gamma),
\end{equation*}
and
\begin{multline*}
\q_{k,l+3;0,3}(\a; \eta_1\otimes\eta_2\otimes\eta_3\otimes\otimes_{j=1}^l\gamma_j)
= \\
= (-1)^{1+|\eta_1||\eta_2|+|\eta_1||\eta_3|+|\eta_2||\eta_3|} \q_{k,l+3;0,3}(\a; \eta_3\otimes\eta_2\otimes\eta_1\otimes\otimes_{j=1}^l\gamma_j).
\end{multline*}
\end{lm}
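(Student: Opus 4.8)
The plan is to derive both identities from a single geometric observation: reversing the order in which the marked points occur on the geodesic replaces the cross ratio $\chi$ that cuts out the geodesic moduli space by $1-\chi$, and $z\mapsto 1-z$ restricts to an orientation-reversing diffeomorphism of $[0,1]$.

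For the first identity, recall that $\M_{k+1,l;2_{a,e},1}(\beta)$ and $\M_{k+1,l;2_{e,a},1}(\beta)$ are cut out of $\M_{k+1,l}(\beta)$ as the fiber products $[0,1]\times_\C\M_{k+1,l}(\beta)$ along the cross-ratio maps $\chi_{a,e}$ and $\chi_{e,a}$, which place the boundary point $z_a$ (resp.\ $z_e$), then the first interior point, then $z_e$ (resp.\ $z_a$), on the geodesic. A direct computation with the cross ratio — the same one that shows $\chi$ is well defined, using its explicit form on the irreducible locus together with continuity — gives $\chi_{e,a}=1-\chi_{a,e}$. Hence the two moduli spaces have the same underlying orbifold and the same evaluation maps, and their fiber-product orientations along maps to $\C$ differ precisely by the sign with which $z\mapsto 1-z$ acts on the co-orientation of $[0,1]\subset\C$, namely $-1$. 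Since no marked points are relabeled and the sign $(-1)^{\varepsilon_\chi(\alpha)}$ in the definition of $\q_{k,l;\chi}$ depends only on $\alpha$, this orientation sign is the sole discrepancy, giving $\q_{k,l;2_{a,e},1}(\alpha;\gamma)=-\q_{k,l;2_{e,a},1}(\alpha;\gamma)$, with no Koszul signs.

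For the second identity, the symmetry realizing the geodesic reversal is the relabeling involution $\sigma$ of $\M_{k+1,l+3}(\beta)$ that transposes the interior marked points $w_1$ and $w_3$ and fixes $w_2$. The same cross-ratio computation gives $\chi\circ\sigma=1-\chi$, so $\sigma$ preserves the condition $\chi\in[0,1]$ and restricts to a self-map of $\M_{k+1,l+3;0,3}(\beta)$. Transposing two interior marked points is orientation-preserving on $\M_{k+1,l+3}(\beta)$ — this is essentially the content of Lemma~\ref{cl:symmetry} — so by the orientation computation of the previous paragraph $\sigma$ is orientation-reversing on $\M_{k+1,l+3;0,3}(\beta)$, i.e.\ $sgn(\sigma)=1$. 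Since $evi_1\circ\sigma=evi_3$, $evi_3\circ\sigma=evi_1$, while $evi_2$, $evb_0$ and the remaining evaluation maps are unchanged, pulling back the integrand $\omega$ of $\q_{k,l+3;0,3}(\alpha;\eta_3\otimes\eta_2\otimes\eta_1\otimes\otimes_{j=1}^l\gamma_j)$ by $\sigma$ and then reordering the three affected factors gives $\sigma^*\omega=(-1)^{|\eta_1||\eta_2|+|\eta_1||\eta_3|+|\eta_2||\eta_3|}\,\omega'$, where $\omega'$ is the integrand of $\q_{k,l+3;0,3}(\alpha;\eta_1\otimes\eta_2\otimes\eta_3\otimes\otimes_{j=1}^l\gamma_j)$ and the exponent is the Koszul sign of reversing a three-term list. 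Applying Lemma~\ref{lm:pulltopush} to $\sigma$, using $evb_0\circ\sigma=evb_0$ to absorb $\sigma_*^{-1}$ into $(evb_0)_*$, and accounting for the factor $(-1)^{sgn(\sigma)}=-1$, then yields the stated identity.

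The main obstacle is the orientation bookkeeping inside the fiber-product conventions of~\cite{ST4}: one must confirm that passing from $\chi$ to $1-\chi$ reverses the orientation of the geodesic moduli space and contributes exactly $-1$, and that the transposition $w_1\leftrightarrow w_3$ is orientation-preserving on $\M_{k+1,l+3}(\beta)$. Granting these two orientation statements, the remainder is the routine Koszul-sign computation above together with an application of Lemma~\ref{lm:pulltopush}.
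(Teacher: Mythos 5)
Your proposal is correct and hinges on the same key observation as the paper: reversing the order of the geodesic marked points replaces $\chi$ by $1-\chi$, which flips the fiber-product orientation. The paper's proof is terser (it cites FOOO Lemma~8.2.3(4) for the orientation reversal and leaves the relabeling/Koszul-sign bookkeeping for the second identity implicit); you spell those steps out explicitly, but the approach is essentially the same.
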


\begin{proof}
Let $\chi,\chi':\M_{k+1,l}(\beta)\to \C$ be the
cross ratio maps such that the condition that $\chi(u)\in [0,1]$ (resp. $\chi'(u)\in [0,1]$) constrains the marked points $v_1, v_2, v_3,$ (resp. $v_3,v_2,v_1$) to lie on a geodesic in that order.
Note that
\[
\chi'=(\bar v_2, v_3,v_1,v_2) = 1- (\bar v_2,v_1,v_3,v_2)=1-\chi.
\]
Then we have the following diagram, where the front and back are pullback diagrams.
\begin{equation*}
\xymatrix{
\M_{k+1,l;\chi}(\beta)\ar[rr]\ar[dd]\ar@{-->}[dr]^\varphi & & \M_{k+1,l}(\beta)\ar'[d]^{\chi}[dd]\ar[dr]^{\Id} &\\
& \M_{k+1,l;\chi'}(\beta) \ar[rr]\ar[dd] & & \M_{k+1,l}(\beta)\ar[dd]^{\chi'} \\
[0,1]
\ar@{^{(}->}'[r]^{}[rr] \ar[dr]^{\theta\mapsto 1-\theta}& & \C \ar[dr]^{z\mapsto 1-z}& \\
&[0,1]\ar@{^{(}->}[rr]& &\C .\\
}
\end{equation*}
By~\cite[Lemma 8.2.3(4)]{FOOO}, the induced map $\varphi$ is an orientation-reversing isomorphism.
The result follows from the definition of the geodesic $\q$ operators.
\end{proof}

\subsubsection{Unit}
The following is a direct analog of the unit property of the usual $\q$ operators.

\begin{lm}\label{lm:unitgeod}
Let $\alpha_1,\ldots,\alpha_{k}\in A^*(L;R),$ and $\gamma_1,\ldots,\gamma_l \in A^*(X;Q).$
If $z_i$ is not constrained to lie on the geodesic, then
\[
\q_{\,k+1\!,l;a,e}(\alpha_1,\ldots,\alpha_{i-1},1,\alpha_{i},\ldots,\alpha_k ;\otimes_{r=1}^l\gamma_r)= 0.
\]
Moreover,
if $z_0$ is not constrained to lie on a geodesic, then
\[
\langle\q_{k,l;a,e}(\otimes_{j=1}^k\alpha_j;\otimes_{j=1}^l\gamma_j),1\rangle =0.
\]
\end{lm}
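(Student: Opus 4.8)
The plan is to adapt the proof of the unit property for the operators $\qkl$ to the geodesic setting by means of forgetful maps of boundary marked points. The crucial observation is that if a boundary marked point $z_i$ is not constrained to lie on the geodesic, then the cross-ratio map $\chi$ of Section~\ref{ssec:geoddfn} does not depend on $z_i,$ because $\chi$ is determined by the three marked points on the geodesic together with the complex double of the domain. Hence the ordinary forgetful map $\M_{k+2,l}(\beta)\to\M_{k+1,l}(\beta)$ that forgets $z_i$ and stabilizes commutes with $\chi,$ and in view of the fiber product presentation $\M_{k+1,l;a,e}(\beta)=[0,1]\times_\C\M_{k+1,l}(\beta)$ of Section~\ref{ssec:geoddfn} it descends to a map
\[
p_i:\M_{k+2,l;a,e}(\beta)\lrarr\M_{k+1,l;a,e}(\beta),
\]
which one identifies with the base change of the ordinary forgetful map along $\M_{k+1,l;a,e}(\beta)\to\M_{k+1,l}(\beta).$ In particular $p_i$ is a proper submersion with $\rdim p_i=1.$

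For the first assertion I would let $\Xi$ be the form on $\M_{k+2,l;a,e}(\beta)$ whose push-forward under $evb_0$ gives $\q_{k+1,l;a,e}(\alpha_1,\ldots,\alpha_{i-1},1,\alpha_i,\ldots,\alpha_k;\otimes_r\gamma_r)$ up to the sign $(-1)^{\varepsilon_\chi}.$ Since the input at $z_i$ is the unit, the factor $(evb_i)^*1$ is trivial and the remaining pullbacks factor through $p_i,$ so $\Xi=p_i^*\Xi',$ where $\Xi'$ is the integrand on $\M_{k+1,l;a,e}(\beta)$ defining $\q_{k,l;a,e}(\alpha_1,\ldots,\alpha_k;\otimes_r\gamma_r).$ Because $z_0\ne z_i$ one has $evb_0=evb_0\circ p_i,$ so Proposition~\ref{prop:iof}\ref{prop:pushcomp} and~\ref{prop:pushpull} give
\[
(evb_0)_*\Xi=(evb_0)_*(p_i)_*p_i^*\Xi'=(evb_0)_*\big(\Xi'\wedge(p_i)_*1\big)=0,
\]
since $(p_i)_*1$ is a form of degree $-\rdim p_i=-1$ and hence vanishes; the precise value of $\varepsilon_\chi$ plays no role.

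For the second assertion, since $z_0$ is off the geodesic the same construction produces a proper submersion $p_0:\M_{k+1,l;a,e}(\beta)\to\M_{k,l;a,e}(\beta)$ with $\rdim p_0=1.$ By the definition of the pairing~\eqref{eq:pairing} and Proposition~\ref{prop:iof}\ref{normalization},\ref{prop:pushcomp},
\[
\langle\q_{k,l;a,e}(\otimes_j\alpha_j;\otimes_j\gamma_j),1\rangle=\int_L\q_{k,l;a,e}(\otimes_j\alpha_j;\otimes_j\gamma_j)=\pm\sum_{\beta}T^\beta\int_{\M_{k+1,l;a,e}(\beta)}\Xi_\beta,
\]
where $\Xi_\beta=\bigwedge_j(evi_j^\beta)^*\gamma_j\wedge\bigwedge_{j\ge1}(evb_j^\beta)^*\alpha_j$ involves no evaluation at $z_0,$ hence equals $p_0^*\Xi'_\beta$ for the analogous form on $\M_{k,l;a,e}(\beta).$ Then $\int_{\M_{k+1,l;a,e}(\beta)}p_0^*\Xi'_\beta=pt_*\big(\Xi'_\beta\wedge(p_0)_*1\big)=0$ as before, which finishes the proof.

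The step I expect to be the main obstacle is the construction of the forgetful maps $p_i$ and $p_0$ on the geodesic moduli spaces — in particular handling the stabilization of components that become unstable after forgetting the point, and checking that the resulting maps are proper submersions of relative dimension $1.$ Once the fiber product presentation of $\M_{k+1,l;a,e}(\beta)$ is in place, however, all of this follows from base change exactly as for the ordinary forgetful maps used in the proof of the unit property for $\qkl,$ and the remainder is the standard integration-over-the-fiber argument.
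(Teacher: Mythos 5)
Your proposal is correct and takes the same route the paper intends: the paper's own ``proof'' is the one-line remark that the argument is similar to Propositions~3.2 and~3.12 of~\cite{ST1}, both of which rest on exactly the forgetful-map-and-degree-$(-1)$ push-forward argument you spell out, together with the observation that $\chi$ is independent of a boundary marked point off the geodesic so that the forgetful map descends via base change to the fiber product $[0,1]\times_\C\M_{k+1,l}(\beta)$.

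Your proposal is correct and takes the same route the paper intends: the paper's own ``proof'' is the one-line remark that the argument is similar to Propositions~3.2 and~3.12 of~\cite{ST1}, both of which rest on exactly the forgetful-map-and-degree-$(-1)$ push-forward argument you spell out, together with the observation that $\chi$ is independent of a boundary marked point off the geodesic so that the forgetful map descends via base change to the fiber product $[0,1]\times_\C\M_{k+1,l}(\beta)$.
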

The proof is similar to that of Propositions 3.2 and 3.12 in~\cite{ST1}.

\subsubsection{Symmetry}

\begin{lm}\label{cl:gsymmetry}
Let $k\ge -1$ and let $\chi$ be a geodesic condition. For any permutation $\sigma\in S_l,$
\[
\q_{k,l;\chi}(\alpha_1,\ldots,\alpha_k;\gamma_1,\ldots,\gamma_l)=
(-1)^{s_\sigma(\gamma)}\q_{k,l;\chi_\sigma}(\alpha_1,\ldots,\alpha_k;\gamma_{\sigma(1)},\ldots,\gamma_{\sigma(l)}),
\]
where $s_\sigma(\gamma)$ is defined by equation~\eqref{eq:sgnsigmagamma}, and $\chi_\sigma$ is given by $\chi$ composed with the diffeomorphism of $\M_{k+1,l}(\beta)$ induced by $\sigma^{-1}$ acting on the labels of interior marked points.
\end{lm}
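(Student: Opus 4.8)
The plan is to follow the strategy of Lemma~\ref{cl:symmetry} (Proposition 3.6 of~\cite{ST1}): realize a permutation of the interior inputs as pullback along a relabeling automorphism of the moduli space, verify that this automorphism preserves orientation and the geodesic stratum, and read off the resulting Koszul sign. Since adjacent transpositions generate $S_l$ and the signs $s_\sigma(\gamma)$ of~\eqref{eq:sgnsigmagamma} satisfy a cocycle identity with respect to composition of permutations, it suffices to treat a transposition $\sigma$ that exchanges two adjacent interior indices $j$ and $j+1$, for which $s_\sigma(\gamma) = |\gamma_j|\cdot|\gamma_{j+1}|$.

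First I would introduce the automorphism $\psi_\sigma : \M_{k+1,l}(\beta) \to \M_{k+1,l}(\beta)$ exchanging the labels of the interior marked points $w_j$ and $w_{j+1}$ and fixing the map and all other marked points. It satisfies $evb_r \circ \psi_\sigma = evb_r$ for all $r$, $evi_r \circ \psi_\sigma = evi_r$ for $r \notin \{j,j+1\}$, and $evi_j \circ \psi_\sigma = evi_{j+1}$, $evi_{j+1}\circ\psi_\sigma = evi_j$. Because each interior marked point contributes a factor of even real dimension to the orientation of $\M_{k+1,l}(\beta)$, transposing two of them preserves orientation, so $\psi_\sigma$ is orientation preserving. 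The next step is to check that $\psi_\sigma$ restricts to an orientation-preserving diffeomorphism of the geodesic stratum $\M_{k+1,l;\chi}(\beta)$. Using the fiber-product presentation $\M_{k+1,l;\chi}(\beta) = [0,1]\times_\C \M_{k+1,l}(\beta)$ through the cross-ratio map $\chi$, one has $\chi\circ\psi_\sigma = \chi$ when neither $w_j$ nor $w_{j+1}$ is constrained to the geodesic, and then the induced map of fiber products is orientation preserving by the functoriality of fiber-product orientations in~\cite{ST4}; when $w_j$ or $w_{j+1}$ lies on the geodesic, $\psi_\sigma$ reorders the geodesic marked points, and its orientation effect is recorded by Lemma~\ref{lm:geoddirection}.

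With this in hand the computation is routine. For $k\ge 0$, since $evb_0\circ\psi_\sigma = evb_0$ and $\psi_\sigma$ is an orientation-preserving diffeomorphism of $\M_{k+1,l;\chi}(\beta)$, pulling the integrand back along $\psi_\sigma$ does not change its pushforward under $evb_0$ (Proposition~\ref{prop:iof}\ref{prop:pushcomp} and Proposition~\ref{prop:iof}\ref{prop:pushpull}). By the compatibilities above, $\psi_\sigma^*$ sends $\bigwedge_{r=1}^l (evi_r)^*\gamma_{\sigma(r)}\wedge\bigwedge_{r=1}^k(evb_r)^*\alpha_r$ to $\bigwedge_{r=1}^l (evi_{\sigma(r)})^*\gamma_{\sigma(r)}\wedge\bigwedge_{r=1}^k(evb_r)^*\alpha_r$, and restoring the standard order of the wedge factors contributes exactly $(-1)^{|\gamma_j||\gamma_{j+1}|} = (-1)^{s_\sigma(\gamma)}$. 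As the prefactor $(-1)^{\varepsilon_\chi(\alpha)}$ is the same on both sides, this yields the identity for $\q_{k,l;\chi}$. The cases $k=-1$ and $\q_{\emptyset,l;\chi_0}$ are handled identically, replacing $(evb_0)_*$ by integration over $\M_{0,l;0,3}(\beta)$ (using Proposition~\ref{prop:iof}\ref{normalization}) and by $(ev_0)_*$, respectively.

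The main obstacle is the middle step: confirming that the relabeling automorphism respects the geodesic stratum together with its orientation. For permutations fixing the set of geodesic-constrained interior points this is immediate from the fiber-product description, while the remaining permutations require combining that description with the sign bookkeeping of Lemma~\ref{lm:geoddirection}.
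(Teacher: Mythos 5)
Your core strategy — realize the permutation as a relabeling automorphism of the moduli space, observe that each interior marked point contributes even real dimension so the automorphism preserves orientation, and then read off the Koszul sign from reordering the pulled-back forms — is exactly the approach the paper intends (it cites Proposition~3.6 of~\cite{ST1} as the model). For an adjacent transposition $(j,j+1)$ in which neither $w_j$ nor $w_{j+1}$ is constrained to the geodesic, the argument goes through just as you describe, using the fiber-product presentation and the functoriality of its orientation.

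The gap is in your treatment of adjacent transpositions that touch a geodesic-constrained interior marked point. You claim these are handled by ``combining [the fiber-product description] with the sign bookkeeping of Lemma~\ref{lm:geoddirection}.'' That does not work, for two separate reasons. First, Lemma~\ref{lm:geoddirection} addresses only the full geodesic reversal (e.g.\ $(1,3)$ in the $\chi = (0,3)$ case), which is not an adjacent transposition, and it already records an \emph{extra} factor of $-1$ beyond the Koszul sign — so if it applied it would contradict the formula $(-1)^{s_\sigma(\gamma)}$ you are trying to prove. Second, and more fundamentally, for an adjacent transposition of two geodesic-constrained indices, such as $(1,2)$ or $(2,3)$ for $\chi = (0,3)$, the relabeling automorphism does not preserve the geodesic stratum at all. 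Concretely, with $w_1 = i$, $w_2 = 2i$, $w_3 = 3i$ one has $\chi = 5/8 \in [0,1]$, but after swapping labels $1$ and $2$ the cross ratio becomes $-2 \notin [0,1]$, so the relabeled map lies outside $\M_{k+1,l;\chi}(\beta)$; there is no ``orientation effect'' to record because there is no induced self-map. The same problem occurs for transpositions moving an index into or out of the constrained set. The lemma should therefore be read as holding for permutations that fix the geodesic-constrained interior marked points pointwise — which is how it is applied in the paper, since the deformed geodesic operators always feed specific inputs to the constrained slots — and your proof is correct and complete in that regime. You should simply drop the claim to cover the remaining permutations via Lemma~\ref{lm:geoddirection}; they are governed instead by Lemma~\ref{lm:geoddirection} itself, which gives a different sign.
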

The proof is similar to that of Proposition 3.6 in \cite{ST1}.

\subsubsection{Cyclic symmetry}
The following is a direct analog of the cyclic symmetry property of the usual $\q$ operators.
\begin{lm}\label{cl:cyc}
Let $\chi$ be a cross ratio map, and let $\chi'$ be the map obtained from $\chi$ by shifting boundary indices up by one, modulo $k+1$.
For all $\alpha_1,\ldots,\alpha_{k+1}\in A^*(L;R)$ and $\gamma_1,\ldots,\gamma_l\in A^*(X;Q)$, we have
\begin{align*}
\langle\q_{k,l;\chi}(\alpha_1,\ldots&,\alpha_k;\gamma_1,\ldots\gamma_l),\alpha_{k+1}\rangle=\\
&(-1)^{(|\alpha_{k+1}|+1)\cdot\sum_{j=1}^k(|\alpha_j|+1)}\cdot
\langle \q_{k,l;\chi'}(\alpha_{k+1},\alpha_1,\ldots,\alpha_{k-1};\gamma_1,\ldots,\gamma_l),\alpha_k\rangle.
\end{align*}
\end{lm}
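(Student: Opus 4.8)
The plan is to reduce the statement to the cyclic symmetry of the ordinary operators, Lemma~\ref{cl:cyclic}, by a relabeling argument modeled on the proof of Proposition~3.3 in~\cite{ST1}. First I would introduce, for each $\beta$, the boundary relabeling map
\[
c = c_\beta : \M_{k+1,l}(\beta) \lrarr \M_{k+1,l}(\beta)
\]
that leaves the domain, the map, and the interior marked points untouched but reassigns the label of the boundary point carrying label $j$ to be $j+1 \bmod (k+1)$. This $c$ is a diffeomorphism of orbifolds with corners satisfying $evb_j \circ c = evb_{j+1 \bmod (k+1)}$ and $evi_j \circ c = evi_j$. The crucial point is that $c$ changes only labels, not the underlying geometry, so $\chi' \circ c = \chi$ as maps to $\C$ (the cross ratio of the four relevant points is unaltered, only three of the labels are shifted). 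Hence, under the fiber product presentations $\M_{k+1,l;\chi}(\beta) = [0,1]\times_{\C,\chi}\M_{k+1,l}(\beta)$ and $\M_{k+1,l;\chi'}(\beta) = [0,1]\times_{\C,\chi'}\M_{k+1,l}(\beta)$, the map $c$ lifts to a diffeomorphism
\[
\tilde c : \M_{k+1,l;\chi}(\beta) \lrarr \M_{k+1,l;\chi'}(\beta)
\]
covering the identity on $[0,1]$, and its orientation sign is governed, via the fiber product orientation conventions of~\cite{ST4}, by the orientation sign of $c$, which is exactly the sign computed in the proof of Proposition~3.3 of~\cite{ST1}.

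Next I would run the computation. Using the definition~\eqref{eq:pairing} of the signed pairing on $L$ together with Proposition~\ref{prop:iof}\ref{normalization} and~\ref{prop:pushpull}, rewrite
\[
\langle\q_{k,l;\chi}^\beta(\alpha_1,\ldots,\alpha_k;\gamma),\alpha_{k+1}\rangle = \pm\int_{\M_{k+1,l;\chi}(\beta)} \bigwedge_{j=1}^l (evi_j)^*\gamma_j \wedge \bigwedge_{j=1}^k (evb_j)^*\alpha_j \wedge (evb_0)^*\alpha_{k+1},
\]
with the sign dictated by $\varepsilon_\chi(\alpha)$ and the pairing convention. Pulling this integral back along $\tilde c$, via Lemma~\ref{lm:pulltopush} and the relations $evb_j \circ c = evb_{j+1\bmod(k+1)}$, $evi_j\circ c = evi_j$, converts it into the orientation sign of $\tilde c$ times
\[
\int_{\M_{k+1,l;\chi'}(\beta)} \bigwedge_{j=1}^l (evi_j)^*\gamma_j \wedge \bigwedge_{j=1}^k (evb_{j-1})^*\alpha_j \wedge (evb_k)^*\alpha_{k+1}.
\]
After reordering the wedge factors so that $(evb_0)^*\alpha_k$ is placed last and the forms $(evb_j)^*\alpha_{j+1}$, $j=0,\ldots,k-1$, precede it in order, Proposition~\ref{prop:iof}\ref{normalization}, \ref{prop:pushpull} identify this, up to sign, with $\langle\q_{k,l;\chi'}^\beta(\alpha_{k+1},\alpha_1,\ldots,\alpha_{k-1};\gamma),\alpha_k\rangle$. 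Summing over $\beta$ with weights $T^\beta$ yields the asserted identity up to the overall sign.

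The main obstacle is the sign bookkeeping. One must combine the $\varepsilon_\chi$ signs from the definitions of the two geodesic operators, the pairing signs $(-1)^{|\alpha_{k+1}|}$ and $(-1)^{|\alpha_k|}$, the Koszul signs from the two wedge reorderings, and the orientation sign of $\tilde c$. Every ingredient except the $\varepsilon_\chi$-contributions is literally the same as in the proof of Proposition~3.3 of~\cite{ST1}, where they combine to $(-1)^{(|\alpha_{k+1}|+1)\sum_{j=1}^k(|\alpha_j|+1)}$; so the entire task is to check that the replacement $\varepsilon \rightsquigarrow \varepsilon_\chi$, together with the orientation sign of $\tilde c$ produced by the fiber product convention (which need not be identical to $\mathrm{sgn}(c)$), reproduces the stated sign. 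Here one uses that $\varepsilon_\chi(\alpha) - \varepsilon(\alpha) = |\alpha| + k$ depends only on the unordered list of $\q$-inputs, so its values on the two sides of the identity differ in a controlled way once the shift of the list by one entry is taken into account. Verifying this final cancellation, while elementary, is the delicate step. Note that the energy-zero degenerate cases that require special attention for the ordinary operators do not intervene, since the geodesic operators $\q_{k,l;\chi}$ are never defined by exceptional formulas.
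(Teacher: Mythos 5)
Your approach is the right one and matches the paper's, which simply points to Proposition~3.3 of~\cite{ST1}: introduce the boundary relabeling diffeomorphism, observe that it is compatible with the geodesic constraint because $\chi' \circ c = \chi$, lift it to the fiber products, and track signs. Two small things deserve correction. First, your stated relation $evb_j \circ c = evb_{j+1 \bmod (k+1)}$ is inconsistent with your verbal definition of $c$ (old label $j \mapsto$ new label $j+1$) and with the identity $\chi' \circ c = \chi$ you need; with that definition one has $evb_j \circ c = evb_{j-1 \bmod(k+1)}$, so that the old boundary point carrying the label in $\chi$ is exactly the new boundary point carrying the shifted label in $\chi'$. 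Second, your suspicion that the compensating sign lives in the orientation of $\tilde c$ is misplaced: since $\tilde c$ covers the identity on $[0,1]$ and lifts $c$, the fiber product conventions of~\cite{ST4} give $\mathrm{sgn}(\tilde c) = \mathrm{sgn}(c)$, just as for the ordinary operators. The actual compensation for the extra $\varepsilon_\chi - \varepsilon = |\alpha|+k$ contributions (which on the two sides differ by $|\alpha_k|+|\alpha_{k+1}|$) comes instead from the projection formula step $(evb_0)_*\xi\wedge\alpha = (-1)^{|\alpha|\,\mathrm{rel\,dim}\,evb_0}(evb_0)_*(\xi\wedge (evb_0)^*\alpha)$: the geodesic constraint drops $\mathrm{rel\,dim}\,evb_0$ by one, producing an extra $(-1)^{|\alpha_{k+1}|}$ on the left side and $(-1)^{|\alpha_k|}$ on the right, which exactly cancels the $\varepsilon_\chi$ discrepancy. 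With those two repairs your argument goes through and coincides with the intended proof.
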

The proof is similar to that of Proposition 3.3 in~\cite{ST1}.

For a list $\a=(\a_1,\ldots,\a_k)$ and a cyclic permutation $\sigma\in \Z/k\Z$, denote by $\a_\sigma$ the list $(\a_{\sigma(1)},\ldots,\a_{\sigma(k)})$.

\begin{cor}\label{cor:geodunit}
Set $\a=(\a_1,\ldots,\a_k)$ and $\a'=(\a_2,\ldots,\a_{k}).$
Whenever applicable,
\begin{align*}
\langle\q_{k-1,l}(\a';\gamma),\a_1\rangle
=&
\sum_{\sigma\in \Z/k\Z} (-1)^{n + |\alpha|+k +(|\a_1|+1)\cdot (| \alpha'|+k-1) +  s_{\sigma}^{[1]}(\a)}
\langle\q_{k,l;1_0,2}(\a_\sigma;\gamma),1\rangle\\
=&
\sum_{\sigma\in\Z/k\Z} (-1)^{n + |\alpha|+k +(|\a_1|+1)\cdot (| \alpha'|+k-1) +  s_{\sigma}^{[1]}(\a)}
\langle \q_{k,l;2_{0,\sigma^{-1}(1)},1} (\a_{\sigma};\gamma),1\rangle,
\end{align*}
with $s_{\sigma}^{[1]}(\a):=\sum_{\substack{j<m\\ \sigma(j)>\sigma(m)}}(|\a_{\sigma(j)}|+1) \cdot (|\a_{\sigma(m)}|+1)$.
\end{cor}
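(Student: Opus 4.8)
The plan is to combine the two preceding results: Lemma~\ref{lm:geodunit_pos}, which expresses $\langle\q_{k-1,l}(\a';\gamma),\a_1\rangle$ as a sum over $i$ of terms $\langle\q_{k,l;1_i,2}(\a_2,\ldots,\a_i,1,\a_{i+1},\ldots,\a_k;\gamma),\a_1\rangle$ (and likewise with $2_{i,0},1$), and the cyclic symmetry Lemma~\ref{cl:cyc}, which moves the distinguished boundary input $\a_1$ into the ``output slot'' against the unit $1$. First I would take the $i$-th summand from Lemma~\ref{lm:geodunit_pos}; in it the list of boundary inputs is $(\a_2,\ldots,\a_i,1,\a_{i+1},\ldots,\a_k)$ with geodesic starting at the boundary marked point labeled $i$ (in the $1_i,2$ notation), and the output is paired against $\a_1$. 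Applying Lemma~\ref{cl:cyc} repeatedly — equivalently, cyclically rotating so that the slot holding $1$ becomes the output slot and $\a_1$ re-enters as an ordinary input — turns this into a term of the form $\langle\q_{k,l;1_0,2}(\a_\sigma;\gamma),1\rangle$ for the cyclic permutation $\sigma$ that places $\a_1$ immediately after the position previously occupied by the unit. The index $i$ on the geodesic becomes $0$ after the rotation in the first line, and in the second line the geodesic now starts at the boundary point whose label is $\sigma^{-1}(1)$, explaining the subscript $2_{0,\sigma^{-1}(1)},1$.

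Next I would bookkeep the signs. There are three sources: (i) the sign $(-1)^{n+\sum_{j=2}^i(|\a_j|+1)}$ already present in Lemma~\ref{lm:geodunit_pos}; (ii) the Koszul sign from each application of Lemma~\ref{cl:cyc}, which for a single cyclic shift is $(-1)^{(|\a_{k+1}|+1)\sum(|\a_j|+1)}$ and which, composed over the rotation taking input-position $i+1$ (the slot after the unit) to the output, accumulates to the factor $(-1)^{(|\a_1|+1)(|\alpha'|+k-1)}$ together with the internal reordering sign $s_\sigma^{[1]}(\a)$ defined in the statement; (iii) the global $(-1)^{|\alpha|+k}$, which arises because $|1|=0$ so inserting the unit shifts the total reduced degree by one unit per boundary input in a way that must be reconciled with the reduced-degree conventions — this is exactly the kind of shift-grading constant that appears in Lemma~\ref{lm:geodqlinear}. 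I would verify that the three contributions sum, modulo $2$, to the exponent written in the corollary: $n+|\alpha|+k+(|\a_1|+1)(|\alpha'|+k-1)+s_\sigma^{[1]}(\a)$. The cleanest way to organize this is to first establish the identity for homogeneous inputs of arbitrary degree and then note both sides are multilinear.

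The main obstacle I anticipate is precisely the sign reconciliation in step (ii)–(iii): tracking how the Koszul signs from iterated cyclic shifts combine with the sign already carried by Lemma~\ref{lm:geodunit_pos} and with the reduced-degree shift caused by the unit having degree zero. In particular one must be careful that the permutation sign $s_\sigma^{[1]}(\a)$ is computed with the \emph{shifted} degrees $|\a_j|+1$ (reflecting that $\q_{k,l;\chi}$ is a degree-$0$ map on the shifted complexes, as noted before Lemma~\ref{lm:geodqlinear}), and that the ``whenever applicable'' caveat — meaning the geodesic conditions in question are among those for which the operators are defined, i.e. the second marked point on the geodesic remains interior — is respected throughout. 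Once the sign is pinned down for one value of $i$, summing over $i$ and re-indexing by $\sigma\in\Z/k\Z$ (noting the bijection $i\leftrightarrow\sigma$) yields both displayed formulas simultaneously, the second following from the first by the same argument applied to Lemma~\ref{lm:geodunit_pos}'s second line, or directly via Lemma~\ref{lm:geoddirection} relating $1_i,2$ and $2_{0,\ast},1$ type operators.
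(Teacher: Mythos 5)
Your route is the same as the paper's: apply Lemma~\ref{cl:cyc} repeatedly to each summand of Lemma~\ref{lm:geodunit_pos} so that the unit ends up paired on the right, re-indexing the sum over $i$ by the cyclic permutation $\sigma$. You also correctly track how the geodesic sub-indices transform ($1_i,2$ becoming $1_0,2$, and $2_{i,0},1$ becoming $2_{0,\sigma^{-1}(1)},1$ after the relabeling). The second displayed equation should indeed come from the second line of Lemma~\ref{lm:geodunit_pos} (your appeal to Lemma~\ref{lm:geoddirection} is a misattribution — that lemma reverses the geodesic's orientation, i.e.\ relates $2_{a,e}$ to $2_{e,a}$, and does not convert between $1_i,2$ and $2_{0,\ast},1$ type conditions; but you don't need it).

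However, your decomposition of the sign into sources (i)--(iii) does not add up, and the misstep matters because the discrepancy depends on $i$. Write $A=\sum_{j=2}^i(|\a_j|+1)$, $B=\sum_{j=i+1}^k(|\a_j|+1)$, $C=|\a_1|+1$. Lemma~\ref{lm:geodunit_pos} contributes $n+A$. The composed Koszul sign from the iterated Lemma~\ref{cl:cyc} shifts is the Koszul sign of the cyclic permutation carrying $(\a_2,\ldots,\a_i,1,\a_{i+1},\ldots,\a_k,\a_1)$ to $(\a_{i+1},\ldots,\a_k,\a_1,\a_2,\ldots,\a_i,1)$, i.e.\ moving the block $(\a_2,\ldots,\a_i,1)$ of shifted degree $A+1$ past the block $(\a_{i+1},\ldots,\a_k,\a_1)$ of shifted degree $B+C$, hence $(A+1)(B+C)=AB+AC+B+C$. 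The grand total $n+A+AB+AC+B+C$ equals the target exponent $n+(A+B+C)+C(A+B)+B(A+C)$, and there is no leftover ``global'' $(-1)^{|\a|+k}$. Your version assigns to (ii) only $C(A+B)+s_\sigma^{[1]}(\a)=CA+AB$ and posits a separate (iii)$=A+B+C$ coming from ``shift-grading reconciliation'' à la Lemma~\ref{lm:geodqlinear}; those three pieces sum to $n+CA+AB+B+C$, which misses the correct exponent by $A=\sum_{j=2}^i(|\a_j|+1)$. Since that error is $i$-dependent, it cannot be absorbed into a universal convention, and the verification step you anticipate would force you to discard the notion of a standalone $(-1)^{|\a|+k}$ factor and fold it back into the genuine two-source bookkeeping above.
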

\begin{proof}
Apply Lemma~\ref{cl:cyc} repeatedly to the expressions in Lemma~\ref{lm:geodunit_pos} to move the input $1$ to the right hand side of the pairing $\langle \cdot,\cdot \rangle.$ Moving $\alpha_{i+1},\ldots,\alpha_k,\alpha_1,$ past $1$ contributes $(|\alpha_1|+1) + \sum_{j = i+1}^k(|\a_j|+1)$ to the sign, which combines with $\sum_{j = 1}^i(|\a_j|+1)$ in the sign of Lemma~\ref{lm:geodunit_pos} to give $|\a|+k.$ Moving $\a_1$ past $\a'$ contributes $(|\a_1|+1) \cdot(|\a'|+k-1)$ to the sign. The remaining cyclic permutation of $\a$ contributes $s_\sigma^{[1]}(\a).$
\end{proof}

\subsection{Deformed \texorpdfstring{$\q$}{q} operators}
Let $\gamma \in \mI A^*(X;Q)$ such that $|\gamma|=2$ and $d\gamma=0$. Let $b\in \mJ A^*(L;R)$ such that $|b|=1$. Define
\[
\qbg_{-1,l}(\otimes_{j=1}^l\eta_j):=
\sum_{k,t}\frac{1}{t!(k+1)}
\langle\q_{k,l+t}(b^{\otimes k};\otimes_{j=1}^l\eta_j\otimes \gamma^{\otimes t}),b\rangle+\sum_t\frac{1}{t!}\q_{-1,t+l}(\otimes_{j=1}^l\eta_j\otimes\gamma^{\otimes t}).
\]
For $k\ge 0,$ define
\begin{multline*}
\qbg_{k,l}(\alpha_1,\ldots,\alpha_k;\delta_1,\ldots,\delta_l)=\\
\sum_{s,t}\frac{1}{(t-l)!}\sum_{\substack{1\le i_1<\cdots\;\\ \;\cdots<i_k\le s}}\!
\q_{s,t}(b^{\otimes i_1-1}\otimes\alpha_1\otimes b^{\otimes i_2-i_1-1}\otimes \cdots\otimes b^{\otimes i_k-i_{k-1}-1}\otimes\alpha_k\otimes b^{\otimes s-i_k};\otimes_{j=1}^l\delta_j\otimes\gamma^{\otimes t-l}).
\end{multline*}
Define also
\[
\qg_{\emptyset,l}(\otimes_{j=1}^l\eta_j)=
\sum_t\frac{1}{t!}\q_{\emptyset,l+t}(\otimes_{j=1}^l\eta_j\otimes\gamma^{\otimes t}).
\]
For $j>0$, the deformed geodesic $\q$ operators are given by
\begin{multline*}
\qbg_{k,l;1_j,2}(\alpha_1,\ldots,\alpha_k;\delta_1,\ldots,\delta_l)=\\
=\sum_{s,t}\frac{1}{(t-l)!}\!\sum_{1\le i_1<\cdots<i_k\le s}\!
\q_{s,t;1_{i_j},2}(b^{\otimes i_1-1}\otimes\alpha_1\otimes b^{\otimes i_2-i_1-1}\otimes \cdots\\
\cdots\otimes b^{\otimes i_k-i_{k-1}-1}\otimes\alpha_k\otimes b^{\otimes s-i_k};\otimes_{j=1}^l\delta_j\otimes\gamma^{\otimes t-l}),
\end{multline*}
\begin{multline*}
\qbg_{k,l;2_{0,j},1}(\alpha_1,\ldots,\alpha_k;\delta_1,\ldots,\delta_l)=\\
=\sum_{s,t}\frac{1}{(t-l)!}\sum_{1\le i_1<\cdots<i_k\le s}
\q_{s,t;2_{0,i_j},1}(b^{\otimes i_1-1}\otimes\alpha_1\otimes b^{\otimes i_2-i_1-1}\otimes \cdots\\
\cdots\otimes b^{\otimes i_k-i_{k-1}-1}\otimes\alpha_k\otimes b^{\otimes s-i_k};\otimes_{j=1}^l\delta_j\otimes\gamma^{\otimes t-l}),
\end{multline*}
and similarly for other geodesic operations.

Let $\gt \in \mI A^*(I\times X;Q)$ such that $|\gt|=2$ and $d\gt=0$. Let $\bt\in \mJ A^*(I\times L;R)$ such that $|\bt|=1$. Define deformed operations on the product,
\[
\qtbg_{k,l;a,e}:A^*(I\times L;R)^{\otimes k}\otimes A^*(I\times X;\Qh)^{\otimes l}\lrarr A^*(I\times L;R)
\]
for $k\ge 0$ and
\[
\qtbg_{-1,l;a,e}:A^*(I\times X;\Qh)^{\otimes l}\lrarr A^*(I;R),
\]
by the formulas above with $\qt$ instead of $\q$.

\begin{lm}\label{lm:deformprop}
The structure equations and all properties of Sections~\ref{ssec:prop} and~\ref{ssec:isot} hold for $\qbg_{k,l}$ and $\qtbg_{k,l}$ for all $k\ge -1, l\ge 0$.
The structure equations and all properties of Sections~\ref{ssec:geo_a_infty} and~\ref{ssec:prop_geod} hold for $\qbg_{k,l;a,e}$ and $\qtbg_{k,l;a,e}$ for all $k\ge -1, l\ge 0$.
\end{lm}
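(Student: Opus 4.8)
The plan is to reduce every assertion to the corresponding undeformed statement of Sections~\ref{ssec:prop}, \ref{ssec:isot}, \ref{ssec:geo_a_infty} and~\ref{ssec:prop_geod} by expanding each deformed operator into its defining sum over interleaved insertions of $b$ and $\gamma$ (resp.\ $\bt$ and $\gt$). Since $b \in \mJ A^*(L;R)$ and $\gamma \in \mI A^*(X;Q)$ have positive valuation, each such sum converges in the relevant adic topology, so the deformed operators are well defined; this is the only place the hypotheses $b\in\mJ A^*(L;R)$, $\gamma\in\mI A^*(X;Q)$ are used beyond $|b|=1$, $|\gamma|=2$, $d\gamma=0$. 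Throughout, the two facts that do the work on signs are that $b$ has shifted degree $|b|+1\equiv 0\pmod 2$ and that $\gamma$ has even degree and is closed, so $b$ and $\gamma$ are transparent to all the sign conventions appearing in the undeformed statements ($\varepsilon(\alpha)$, $\varepsilon_\chi(\alpha)$, $s_\sigma(\gamma)$, $\iota_1,\iota_2,\iota_3$, the signs of the pairing, etc.).

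First I would dispatch the ``property'' lemmas: degree (Lemmas~\ref{deg_str_map}, \ref{lm:deggeod}, \ref{lm:cdeg}), multilinearity (Lemmas~\ref{lm:qlinear}, \ref{lm:geodqlinear}), symmetry (Lemmas~\ref{cl:symmetry}, \ref{cl:gsymmetry}, \ref{lm:csymm}), cyclic structure (Lemmas~\ref{cl:cyclic}, \ref{cl:cyc}), zero energy (Lemmas~\ref{lm:qzero}, \ref{lm:czero}), the unit and no-top-degree statements (Lemmas~\ref{no_top_deg}, \ref{lm:unitgeod}, \ref{lm:cunit}, \ref{lm:mt0}), the chain-map properties (Lemmas~\ref{lm:qecm}, \ref{lm:qtchain}), and the closed divisor axiom (Lemma~\ref{lm:cdiv}). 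Each follows by applying the corresponding undeformed statement term by term to the insertion sum: the combinatorial weights $1/(t-l)!$ are carried along unchanged, a permutation of the $\delta$'s among the interleaved $\gamma$'s is absorbed into a relabeling of the summation index, and the signs match by the transparency of $b,\gamma$. The geodesic unit lemmas (Lemmas~\ref{lm:geodunit-1}, \ref{lm:geodunit_pos} and Corollary~\ref{cor:geodunit}) and the geodesic-reversal lemma (Lemma~\ref{lm:geoddirection}) are slightly less automatic: for the former one applies the undeformed identity term by term and then uses the cyclic-sum bookkeeping of Corollary~\ref{cor:geodunit} to recover the weights $1/(k+1)$ that appear in $\qbg_{-1,l}$; Lemma~\ref{lm:geoddirection} is immediate because reversal is applied to each term and the inserted $b,\gamma$ are not among the marked points on the geodesic.

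The real content is the deformed structure equations --- the analogs of Claims~\ref{q_rel}, \ref{q-1_rel}, \ref{qt_rel}, \ref{cl:qt_-1} and Propositions~\ref{prop:21}, \ref{prop:12}, \ref{prop:03}, \ref{prop:03k-1}, \ref{lm:clst}, together with their pseudoisotopy counterparts. For these I would take the undeformed structure equation applied to the interleaved list --- the actual inputs $\delta_1,\dots,\delta_l$ together with some copies of $\gamma$, and $\alpha_1,\dots,\alpha_k$ together with some copies of $b$ --- multiply by the product of $1/(\#\gamma)!$ with the interleaving multiplicity, and sum over the numbers of inserted $b$'s and $\gamma$'s and over all interleavings. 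Then: (i) the $d\gamma$ terms vanish since $d\gamma=0$, leaving only the $d\delta_i$ terms; (ii) in each composition term $\q(\cdots\q(\cdots)\cdots)$, split according to whether the inner operator receives an actual input or only $b$'s and $\gamma$'s --- the latter contributions assemble the curvature $\qbg_{0,0}=\m^{b,\gamma}$ (resp.\ the geodesic curvature, resp.\ $\qg_{\emptyset,\cdot}$ where an inner sphere operator appears) in a curvature slot of an outer deformed operator, while the former, after the multinomial identity $\tfrac{1}{s_1!\,s_2!}=\tfrac{1}{(s_1+s_2)!}\binom{s_1+s_2}{s_1}$ and its $\gamma$-counterpart redistribute the two nested insertion sums, recombine into $\qbg(\cdots\qbg(\cdots)\cdots)$; (iii) the $\q_{\emptyset,\cdot}$ and $\q_{\cdot,\cdot;a,e}$ terms deform to $\qg_{\emptyset,\cdot}$ and $\qbg_{\cdot,\cdot;a,e}$ in the same manner; (iv) all signs match by the transparency of $b,\gamma$. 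This yields the deformed structure equations verbatim. The pseudoisotopy (tilde) cases run identically, using Claims~\ref{qt_rel}, \ref{cl:qt_-1} and the geodesic tilde statements in place of the closed ones, together with $d\gt=0$.

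The main obstacle is precisely the combinatorial and sign bookkeeping in this last step: checking that the weighted sums over insertions of the undeformed structure equations collapse exactly to the stated deformed equations, tracking the multinomial identities that redistribute the $1/(t-l)!$ weights across the composition split, and confirming at each stage that inserted $b$'s (shifted degree $0$) and $\gamma$'s (even, closed) contribute no signs. This is the standard computation that deforming a (curved) $A_\infty$ structure by a bulk class and a degree-one element again yields a curved $A_\infty$ structure, but it must be carried out in the precise sign conventions of this paper and extended both to the geodesic operators and to the pseudoisotopy setting; it is routine but long, which is presumably why the lemma is stated without proof.
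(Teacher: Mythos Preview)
The paper does not prove this lemma; it is stated and then immediately followed by the sentence ``From now on, we may implicitly use Lemma~\ref{lm:deformprop}, referring to the usual properties when working with deformed operators.'' Your proposal is therefore not competing with any argument in the paper but rather supplying the routine verification the authors chose to suppress. The strategy you outline --- expand each deformed operator as its insertion sum, use $|b|+1\equiv 0$ and $|\gamma|$ even with $d\gamma=0$ to see that the inserted elements are sign-transparent, and redistribute the factorial weights via multinomial identities when matching composition terms --- is exactly the standard deformation argument and is correct.
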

From now on, we may implicitly use Lemma~\ref{lm:deformprop}, referring to the usual properties when working with deformed operators.

\section{Tensor potential and relative potential}

\subsection{Tensor potential}\label{ssec:mem}

Let $(\gamma,b)\in \mI_W A^*(X;\Qh_W)\oplus \mJ_W A^*(L;R_W)$ be a bounding pair as in Definition~\ref{dfn:bdpair}.
Recall that in equation~\eqref{eq:memhdfn1} we have defined the operator
\[
\memh: \cone\lrarr \cone
\]
by
\begin{equation}\label{eq:memhdfn}
\memh(\eta,\xi)=(\qg_{\emptyset,1}(\eta),
(-1)^{n+1}\qbg_{-1,1}(\eta) - c\cdot\xi).
\end{equation}

\begin{lm}\label{lm:ceq}
For any $\eta\in A^*(X;\Qh_W),$
we have
\[
(-1)^{n+1}\qbg_{-1,1}(d\eta) -c\cdot \i(\eta)=
\i(\qg_{\emptyset,1}(\eta)).
\]
\end{lm}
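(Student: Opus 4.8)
The identity to be proved is a ``deformed, degenerate'' version of the $k=-1$ structure equation (Proposition~\ref{q-1_rel}) specialized so that all but one interior input is the bulk class $\gamma$, and that one input is replaced by $d\eta$ (or $\eta$). The plan is to start from the deformed structure equation for $\qbg_{-1,l}$ with $l=1$, which is available by Lemma~\ref{lm:deformprop}, feeding in the single insertion $\eta$ and using the Maurer-Cartan equation~\eqref{eq:bdchdfn} to simplify. Concretely, I would write out the deformed analog of Proposition~\ref{q-1_rel} with one input $\eta$: the first term becomes $(-1)^{|\eta|+1}\qbg_{-1,1}(d\eta)$ (up to sign bookkeeping), the quadratic term $\tfrac12\langle \q_{0,\cdot}(\cdots),\q_{0,\cdot}(\cdots)\rangle$ splits into two pieces according to whether $\eta$ is fed into the first or second factor, and the last term contributes $\pm\int_L i^*\qg_{\emptyset,1}(\eta)$, which by definition of $\i$ (see the map $\i$ in Section~\ref{sssec:psi}) is exactly $\pm\,\i(\qg_{\emptyset,1}(\eta))$ up to the sign $(-1)^{n+|\eta'|}$ built into $\i$.

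The key simplification is the Maurer-Cartan equation: $\sum_{k,l}\tfrac{1}{l!}\qkl(b^{\otimes k};\gamma^{\otimes l}) = c\cdot 1$, i.e. $\q^{b,\gamma}_{0,0} = c\cdot 1$ once we absorb the $b$'s and $\gamma$'s into the deformed operator. In the quadratic term of the deformed $k=-1$ structure equation, the contributions where the factor \emph{not} receiving $\eta$ carries no further interior insertion collapse, via the MC equation, to a term of the form $\langle \q^{b,\gamma}_{0,1}(\eta), c\cdot 1\rangle$ or $\langle c\cdot 1, \q^{b,\gamma}_{0,1}(\eta)\rangle$. Using $R$-bilinearity of the pairing (Lemma~\ref{lm:qlinear}) and the fact that $\langle \zeta, 1\rangle = (-1)^{?}\int_L\zeta$, together with Lemma~\ref{lm:mt0}/the unit property, these reduce to $\pm c\cdot\int_L i^*(\text{something})$. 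The point is that $\langle\q^{b,\gamma}_{0,1}(\eta),1\rangle$ is, up to sign, $\i(\eta)$-type data: more precisely the relation $\int_L \q^{b,\gamma}_{0,1}(\eta)$ is controlled because $\q_{0,1}(\eta)$ has a top-degree part only from $\beta_0$ where it equals $-\eta|_L$ (Lemma~\ref{lm:qzero}), and the higher $\beta$ contributions integrate to zero on $L$ by Lemma~\ref{no_top_deg}. This is how the term $c\cdot\i(\eta)$ on the left-hand side is produced.

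I would then collect: (i) the $d\eta$-term gives $\qbg_{-1,1}(d\eta)$; (ii) the MC-collapsed quadratic terms give $-c\cdot\i(\eta)$ (the sign and the single surviving copy coming from the two halves of the $\tfrac12\sum$ combining after using cyclicity of the pairing, Lemma~\ref{cl:cyclic}); (iii) the sphere term gives $\i(\qg_{\emptyset,1}(\eta))$. Rearranging yields the claimed identity. The main obstacle I anticipate is purely the sign arithmetic: matching the Koszul signs $\varepsilon$, the signs $(-1)^{|\eta|}$, $(-1)^{n}$, $(-1)^{n+1}$ in the definitions of $\i$, $\memh$, and the pairing, and the signs $\iota(\gamma;I)$ in the structure equation, so that everything lines up to give precisely the stated formula with no stray sign. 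A secondary subtlety is justifying the degenerate specialization of Proposition~\ref{q-1_rel} to a single non-$\gamma$ insertion and confirming, via Lemmas~\ref{lm:qzero}, \ref{no_top_deg}, and the unit lemmas, that the only surviving ``boundary'' contributions are the three listed; all disk-bubbling terms that would otherwise appear are killed by the Maurer-Cartan equation exactly as in the standard argument that $\dcone\psi(\gamma,b)=0$.
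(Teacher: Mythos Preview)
Your proposal is correct and follows essentially the same route as the paper: apply the deformed $k=-1$ structure equation with a single insertion $\eta$, use the Maurer-Cartan equation $\qbg_{0,0}=c\cdot 1$ to collapse the quadratic term, and invoke Lemmas~\ref{lm:qzero} and~\ref{no_top_deg} to reduce $\int_L\qbg_{0,1}(\eta)$ to $-\int_L i^*\eta$. Two small remarks: the two halves of the quadratic term are combined via the sign symmetry~\eqref{eq:psgn} of the pairing rather than Lemma~\ref{cl:cyclic}, and Lemma~\ref{lm:mt0} (a pseudo-isotopy statement) is not needed here.
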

\begin{proof}
By Proposition~\ref{q-1_rel},
\begin{align*}
\qbg_{-1,1}(d\eta)
&=
\frac{1}{2}\big( (-1)^{|\eta|}\langle\qbg_{0,1}(\eta),\qbg_{0,0}\rangle+
\langle\qbg_{0,0},\qbg_{0,1}(\eta)\rangle\big) +(-1)^{|\eta|+1}\int_Li^*\qg_{\emptyset,1}(\eta)\\
&=
(-1)^{|\eta|}\langle\qbg_{0,1}(\eta),\qbg_{0,0}\rangle +(-1)^{|\eta|+1}\int_Li^*\qg_{\emptyset,1}(\eta)\\
&=
(-1)^{|\eta|}c\cdot\langle \qbg_{0,1}(\eta),1\rangle +(-1)^{|\eta|+1}\int_Li^*\qg_{\emptyset,1}(\eta)\\
&=
(-1)^{|\eta|}c\cdot\int_L\qbg_{0,1}(\eta) +(-1)^{|\eta|+1}\int_Li^*\qg_{\emptyset,1}(\eta).\\
\shortintertext{By Lemmas~\ref{no_top_deg}~ and~\ref{lm:qzero}, }
&=
(-1)^{|\eta|+1}c\cdot\int_L i^*\eta  +(-1)^{|\eta|+1}\int_Li^*\qg_{\emptyset,1}(\eta).
\end{align*}
This proves
\[
(-1)^{|\eta|+1}\qbg_{-1,1}(d\eta) -c\cdot \int_L i^*\eta=
\int_Li^*\qg_{\emptyset,1}(\eta).
\]
Equivalently,
\[
(-1)^{n+1}\qbg_{-1,1}(d\eta) -c\cdot \i(\eta)=
\i(\qg_{\emptyset,1}(\eta)).
\]
\end{proof}

\begin{cor}\label{cor:cinQ}
For closed $\eta,$ we have \:
$c\cdot\int_Li^*\eta = -\int_Li^*\qg_{\emptyset,1}(\eta).$
\end{cor}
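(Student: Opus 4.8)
The plan is to deduce this immediately from Lemma~\ref{lm:ceq}. Since both sides of the asserted identity are additive in $\eta$, I would first reduce to the case that $\eta$ is homogeneous. As $\eta$ is closed we have $d\eta=0$, and by multilinearity of the $\q$ operations and bilinearity of $\langle\cdot,\cdot\rangle$ it follows that $\qbg_{-1,1}(d\eta)=0$. Feeding such an $\eta$ into the identity of Lemma~\ref{lm:ceq} thus leaves
\[
-c\cdot\i(\eta)=\i(\qg_{\emptyset,1}(\eta)).
\]

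Next I would carry out the sign bookkeeping needed to convert this into the stated formula. Recall $\i(\zeta)=(-1)^{n+|\zeta|}\int_L i^*\zeta$. By Lemma~\ref{lm:cdeg} the map $\q_{\emptyset,l+1}$ has degree $4-2(l+1)$; combined with $|\gamma|=2$, each summand $\tfrac{1}{l!}\q_{\emptyset,l+1}(\eta\otimes\gamma^{\otimes l})$ of $\qg_{\emptyset,1}(\eta)$ has degree $|\eta|+2l+4-2(l+1)=|\eta|+2$. Hence $\qg_{\emptyset,1}(\eta)$ is homogeneous of degree $|\eta|+2$, so
\[
\i(\qg_{\emptyset,1}(\eta))=(-1)^{n+|\eta|+2}\int_L i^*\qg_{\emptyset,1}(\eta)=(-1)^{n+|\eta|}\int_L i^*\qg_{\emptyset,1}(\eta),
\]
while $\i(\eta)=(-1)^{n+|\eta|}\int_L i^*\eta$. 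Substituting into the previous display and cancelling the common scalar factor $(-1)^{n+|\eta|}$, which commutes with $c\in R_W$, yields $-c\cdot\int_L i^*\eta=\int_L i^*\qg_{\emptyset,1}(\eta)$, i.e. the claim. Extending by additivity over the homogeneous components of a general closed $\eta$ then finishes the argument.

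I do not expect a genuine obstacle here: the corollary is a formal specialization of Lemma~\ref{lm:ceq} to closed forms. The only point that needs care is the degree/sign check in the second paragraph — the fact that $\qg_{\emptyset,1}$ raises degree by exactly $2$, a consequence of Lemma~\ref{lm:cdeg} together with $|\gamma|=2$, is precisely what makes the sign prefactors of $\i$ on the two sides coincide, so that they cancel and leave no residual sign.
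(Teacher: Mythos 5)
Your proof is correct, and it is essentially the argument the paper intends: specialize Lemma~\ref{lm:ceq} to $d\eta=0$, then unpack the definition of $\i$ and observe that $\qg_{\emptyset,1}$ raises degree by exactly $2$ (via Lemma~\ref{lm:cdeg} and $|\gamma|=2$), so the sign prefactors $(-1)^{n+|\eta|}$ on both sides coincide and cancel. In fact the paper's proof of Lemma~\ref{lm:ceq} passes through the intermediate identity $(-1)^{|\eta|+1}\qbg_{-1,1}(d\eta)-c\int_L i^*\eta=\int_L i^*\qg_{\emptyset,1}(\eta)$, from which the corollary is immediate when $d\eta=0$; your sign bookkeeping just re-derives that equivalence from the final form of the lemma.
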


\begin{lm}
For all $\a\in A^*(L;R_W), \eta\in A^*(X;R_W)$,
we have
\[
\langle \eta,i_*\alpha\rangle_X
=(-1)^{n}\langle i^*\eta,\a\rangle.
\]
\end{lm}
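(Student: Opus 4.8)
The plan is to unwind both pairings, reduce to an unsigned identity, and then deduce that identity from the projection formula for the push-forward along the inclusion $i : L \to X.$ I would begin with the degree bookkeeping. Since $i$ has relative dimension $-n,$ the push-forward $i_*$ raises form degree by $n,$ so $|i_*\alpha| = |\alpha| + n.$ By the definition of $\langle\,\cdot\,,\cdot\,\rangle_X$ in Section~\ref{sssec:gwp} and of $\langle\,\cdot\,,\cdot\,\rangle$ in equation~\eqref{eq:pairing},
\[
\langle \eta, i_*\alpha\rangle_X = (-1)^{n+|\alpha|}\int_X \eta\wedge i_*\alpha,
\qquad
(-1)^n\langle i^*\eta,\alpha\rangle = (-1)^{n+|\alpha|}\int_L i^*\eta\wedge\alpha .
\]
Cancelling the common sign $(-1)^{n+|\alpha|},$ it suffices to prove the unsigned identity $\int_X \eta\wedge i_*\alpha = \int_L i^*\eta\wedge\alpha.$

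For this I would use the properties of push-forward recalled in Proposition~\ref{prop:iof}, applied to the proper map $i.$ The projection formula, part~\ref{prop:pushpull}, with $i^*\eta$ in the role of the pulled-back form and $\alpha$ the remaining factor, gives $\eta\wedge i_*\alpha = i_*(i^*\eta\wedge\alpha).$ Writing $pt_X : X \to pt$ and $pt_L : L \to pt$ for the constant maps, we then have, by the composition property~\ref{prop:pushcomp} and the normalization~\ref{normalization},
\[
\int_X \eta\wedge i_*\alpha
= (pt_X)_* \, i_*(i^*\eta\wedge\alpha)
= (pt_X\circ i)_* (i^*\eta\wedge\alpha)
= (pt_L)_* (i^*\eta\wedge\alpha)
= \int_L i^*\eta\wedge\alpha ,
\]
where the outer equalities use that $(pt_X)_*$ and $(pt_L)_*$ pick out the top-degree components on $X$ and on $L$ respectively; these agree because $i^*\eta\wedge\alpha$ has degree $n$ on $L$ exactly when $\eta\wedge i_*\alpha$ has degree $2n$ on $X,$ both sides vanishing otherwise.

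The computation is essentially bookkeeping, so the one point to watch is the sign: one must use the degree shift $|i_*\alpha| = |\alpha| + n$ together with the conventions in equation~\eqref{eq:pairing} and in the definition of $\langle\,\cdot\,,\cdot\,\rangle_X,$ and observe that no further Koszul signs arise, since no wedge factors are transposed and the identities of Proposition~\ref{prop:iof} are already formulated over the graded coefficient ring $R_W.$ I should also note that, although Proposition~\ref{prop:iof} is phrased for proper submersions, the push-forward along the proper embedding $i$ and properties~\ref{normalization}, \ref{prop:pushcomp}, and~\ref{prop:pushpull} hold in the same form by~\cite{ST4}; alternatively, the unsigned identity can be checked directly from a tubular neighborhood description of $i_*$ via a Thom form.
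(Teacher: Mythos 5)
Your proof is correct and takes essentially the same route as the paper: both hinge on the projection formula of Proposition~\ref{prop:iof}\ref{prop:pushpull} to rewrite $\eta\wedge i_*\alpha$ as $i_*(i^*\eta\wedge\alpha)$ and then push down to a point. The only cosmetic difference is that you unwind both pairings up front and cancel the common sign $(-1)^{n+|\alpha|}$, whereas the paper starts from $\langle i^*\eta,\alpha\rangle$ and chases it to $\langle\eta,i_*\alpha\rangle_X$; your closing remark that the proper embedding $i$ is not a submersion, so one is implicitly appealing to the extension of the push-forward formalism to $i_*$, is a fair and perceptive caveat that the paper passes over silently.
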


\begin{proof}
For any space, denote by $pt$ the map from it to a point.
By Proposition~\ref{prop:iof}~\ref{prop:pushpull}, we have
\begin{multline*}
\langle i^*\eta,\a\rangle=
(-1)^{|\a|}pt_*(i^*\eta\wedge\a)
=(-1)^{|\a|}pt_*i_*(i^*\eta\wedge\a)=\\
=(-1)^{|\a|}pt_*(\eta\wedge i_*\a)
=(-1)^{|\a|+|\a|-n}\langle \eta,i_*\a\rangle_X.
\end{multline*}
\end{proof}

\begin{proof}[Proof of Theorem~\ref{prop:formforc}]
By assumption, $\int_Li^*\eta=1$. Therefore, by Corollary~\ref{cor:cinQ}, we have
\begin{align*}
c=&-\int_L i^*\qg_{\emptyset,1}(\eta)
= -\langle i^*\qg_{\emptyset,1}(\eta),1\rangle
= (-1)^{n+1}\langle\qg_{\emptyset,1}(\eta), i_*1\rangle_X\\
=&
\sum_{\substack{l\ge 0\\ \beta\in H_2(X;\Z)}}
(-1)^{n+1+w_\s(\beta)}T^{\varpi(\beta)}
\frac{1}{l!}
GW_{\beta}(\eta, i_*1, \gamma^{\otimes l}).
\end{align*}
\end{proof}

\begin{lm}\label{lm:nunchain}
The map $\memh$ is a chain map.
\end{lm}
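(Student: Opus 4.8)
The plan is to show that $\memh$ commutes with the differentials $\dcone$ on source and target, i.e.\ that for every $(\eta,\xi)\in\cone$ one has $\dcone\memh(\eta,\xi)=\memh\dcone(\eta,\xi)$. Recalling $\dcone(\eta,\xi)=(d\eta,\i(\eta))$ and the formula $\memh(\eta,\xi)=(\qg_{\emptyset,1}(\eta),(-1)^{n+1}\qbg_{-1,1}(\eta)-c\cdot\xi)$, the identity to prove splits into two components. The first component asks that $d\,\qg_{\emptyset,1}(\eta)=\qg_{\emptyset,1}(d\eta)$; the second asks that $(-1)^{n+1}\qbg_{-1,1}(d\eta)-c\cdot\i(\eta)=\i(\qg_{\emptyset,1}(\eta))-(-1)^{n+1}\qbg_{-1,1}(\text{correction})$, which after unwinding $\dcone$ on both sides reduces exactly to the statement $(-1)^{n+1}\qbg_{-1,1}(d\eta)-c\cdot\i(\eta)=\i(\qg_{\emptyset,1}(\eta))$ — and this is precisely Lemma~\ref{lm:ceq}, already proved in the excerpt. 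So the second component is essentially free once the bookkeeping is arranged correctly.

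First I would dispatch the first component. The claim $d\circ\qg_{\emptyset,1}=\qg_{\emptyset,1}\circ d$ is the statement that the bulk-deformed closed operator $\qg_{\emptyset,1}$ is a chain map on $A^*(X;\Qh_W)$. This follows from Lemma~\ref{lm:qecm} (the operator $\q_\emptyset=\oplus_l\q_{\emptyset,l}$ is a chain map) combined with the Maurer-Cartan-type bookkeeping for the deformation by $\gamma$: since $d\gamma=0$, the structure equation for $\q_\emptyset$ specialized to inputs $\eta\otimes\gamma^{\otimes t}$ collapses to $d\qg_{\emptyset,1}(\eta)=\qg_{\emptyset,1}(d\eta)$ plus terms that cancel in pairs because the only surviving quadratic term would involve $\q_{\emptyset,\cdot}(\gamma^{\otimes \cdot})$ acting as an input, and degree/energy reasons (or Lemma~\ref{lm:deformprop}, which tells us all the structure equations hold verbatim for the $\gamma$-deformed operators) force these to vanish or telescope. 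Concretely I would invoke Lemma~\ref{lm:deformprop} to say that $\qg_\emptyset$ inherits the chain-map property of Lemma~\ref{lm:qecm}.

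Then I would assemble the two components. Writing $\dcone\memh(\eta,\xi)=(d\qg_{\emptyset,1}(\eta),\,\i(\qg_{\emptyset,1}(\eta)))$ and $\memh\dcone(\eta,\xi)=\memh(d\eta,\i(\eta))=(\qg_{\emptyset,1}(d\eta),\,(-1)^{n+1}\qbg_{-1,1}(d\eta)-c\cdot\i(\eta))$, equality of first components is the chain-map property of $\qg_{\emptyset,1}$ just established, and equality of second components is exactly Lemma~\ref{lm:ceq}. Hence $\memh$ is a chain map. The main obstacle — really the only nontrivial point — is making sure the sign conventions and the collapsing of the $\gamma$-deformed structure equation for $\qg_{\emptyset,1}$ are handled correctly; everything else is a direct citation of Lemma~\ref{lm:ceq} and Lemmas~\ref{lm:qecm}, \ref{lm:deformprop}. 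I would therefore spend the bulk of the written proof on carefully checking the first component, and then conclude in one line using Lemma~\ref{lm:ceq}.
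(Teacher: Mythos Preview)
Your proposal is correct and takes essentially the same approach as the paper: split $\dcone\memh=\memh\dcone$ into its two components, invoke Lemma~\ref{lm:qecm} (via Lemma~\ref{lm:deformprop}) for the first and Lemma~\ref{lm:ceq} for the second. The paper's proof is slightly terser on the first component, simply citing Lemma~\ref{lm:qecm} directly, but the substance is identical.
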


\begin{proof}
We need to show that $\memh$ commutes with $\dcone$. That is,
\[
\dcone(\memh(\eta,\xi))=\memh(d\eta,\i(\eta))
\]
or, in other words,
\[
(d\qg_{\emptyset,1}(\eta),\i(\qg_{\emptyset,1}(\eta)))
=
(\qg_{\emptyset,1}(d\eta),(-1)^{n+1}\qbg_{-1,1}(d\eta)-c\cdot \i(\eta)).
\]
Indeed, equality of the first component follows from Lemma~\ref{lm:qecm}.
Equality of the second component follows from Lemma~\ref{lm:ceq}.
\end{proof}

We denote by $\nund$ the induced map on cohomology,
\[
\nund:H^*(\cone)\lrarr H^*(\cone).
\]

\begin{lm}\label{lm:sfp}
Consider the fiber product
\begin{equation*}
\xymatrix{
{L\prescript{}{i}\times_{p_X} (I\times X)}\ar[r]\ar[d] &
{I\times X}\ar[d]^{p_X}\\
{L}\ar[r]^i& X.
}
\end{equation*}
The natural diffeomorphism
\[
L\prescript{}{i}\times_{p_X} (I\times X)\lrarr I\times L
\]
has sign $(-1)^n$.
\end{lm}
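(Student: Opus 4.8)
The plan is to extract the orientation of the fiber product from the fiber-product orientation convention of~\cite{ST4} and to compare it with the product orientation on $I\times L$; the lemma is a pure sign computation, since the underlying space of $L\prescript{}{i}\times_{p_X}(I\times X)=\{(\ell,(t,x))\mid i(\ell)=x\}$ is canonically $L\times I$, hence $I\times L$ after transposing the two factors.

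First I would record that $p_X:I\times X\to X$ is a proper submersion whose vertical tangent bundle is the pullback of $TI$, so that the fiber product is a smooth orbifold with corners and the convention of~\cite{ST4} applies to the square in the statement. That convention yields a relation of orientation lines of the form
\[
o\big(L\prescript{}{i}\times_{p_X}(I\times X)\big)\otimes o(TX)=o(TL)\otimes o\big(T(I\times X)\big),
\]
coming from the short exact sequence $0\to T(\text{fiber product})\to TL\oplus T(I\times X)\to TX\to 0$ whose third arrow is the difference of the differentials of $i$ and $p_X$. Since $I\times X$ carries the product orientation, $o\big(T(I\times X)\big)=o(TI)\otimes o(TX)$, and this factorization matches the kernel decomposition $0\to TI\to T(I\times X)\to TX\to 0$ of $p_X$, so $o(TX)$ cancels and $o\big(L\prescript{}{i}\times_{p_X}(I\times X)\big)=o(TL)\otimes o(TI)$.

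Finally, transposing the $n$-dimensional factor $TL$ past the one-dimensional factor $TI$ produces the Koszul sign $(-1)^{n\cdot 1}$, so $o(TL)\otimes o(TI)=(-1)^{n}o(TI)\otimes o(TL)=(-1)^{n}o(I\times L)$, which is exactly the assertion that the natural diffeomorphism reverses orientation by $(-1)^{n}$. I expect the only delicate point to be the sign bookkeeping in the second paragraph: one must check against the precise definitions in~\cite{ST4} that the product orientation on $I\times X$ splits as $o(TI)\otimes o(TX)$ in exactly the order needed for $o(TX)$ to cancel without a stray sign, and that the induced fiber-product orientation is the ``base-first'' one $o(TL)\otimes o(TI)$ rather than $o(TI)\otimes o(TL)$. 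Note that the symplectic orientation of $X$ enters symmetrically on both sides of the defining sequence, so it plays no role.
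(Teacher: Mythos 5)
Your route differs from the paper's: you compute directly with the defining short exact sequence for the fiber-product orientation, whereas the paper invokes FOOO's Lemma~8.2.3(3) to reorganize $L\prescript{}{i}\times_{p_X}(I\times X)$ as $L\prescript{}{i\times pt}\times_{\Id_X\times pt}(X\times I)=(L\prescript{}{i}\times_{\Id_X}X)\prescript{}{pt}\times_{pt}I=L\times I$, and only then performs the final transposition $L\times I=(-1)^n I\times L$. Both arrive at $(-1)^n$; the paper's proof is a one-liner that offloads the sign accounting to an established FOOO lemma, while yours is self-contained in terms of the orientation convention but therefore carries the burden you flagged of verifying the exact ordering in that convention, namely that it reads $o(\mathrm{fp})\otimes o(TN)=o(TM_1)\otimes o(TM_2)$ in the convention of~\cite{ST4} (it does, and in your case the evenness of $\dim X=2n$ makes the transpositions needed to cancel $o(TX)$ costless, as you can confirm by checking your convention against the special cases $L\prescript{}{i}\times_{\Id_X}X=L$ and $L\prescript{}{pt}\times_{pt}I=L\times I$). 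The two proofs are thus genuinely different, with yours buying transparency at the cost of one convention check and the paper's buying brevity at the cost of citing a black-box lemma.
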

\begin{proof}
By~\cite[Lemma 8.2.3(3)]{FOOO},
\begin{align*}
L\prescript{}{i}\times_{p_X} (I\times X)
=&
L\prescript{}{i\times pt}\times_{\Id_X\times pt} (X\times I)
=
(L\prescript{}{i}\times_{\Id_X} X) \prescript{}{pt}\times_{pt} I
=
L\times I
=
(-1)^n I\times L.
\end{align*}
\end{proof}

\begin{lm}\label{lm:fibprodsgn}
Consider the diagram
\begin{equation*}
\xymatrix{
I\times L \ar[r]^{ i}\ar[d]^{p_L}&
{I\times X}\ar[d]^{p_X}\\
{L}\ar[r]^i& X.
}
\end{equation*}
Let $\alpha \in A^*(I\times X).$ Then
\[
(p_L)_*i^* \alpha = (-1)^ni^*(p_X)_* \alpha.
\]
\end{lm}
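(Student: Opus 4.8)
The plan is to deduce this naturality-of-fiber-integration statement from the base-change property of push-forward along a proper submersion, namely Proposition~\ref{prop:iof}\ref{prop:pushfiberprod}, together with the sign computation of Lemma~\ref{lm:sfp}. The square in the statement is, up to the canonical diffeomorphism, a pull-back diagram: $I \times L$ maps to $L$ by $p_L$ and to $I \times X$ by $i$, while $L \to X$ by $i$ and $I \times X \to X$ by $p_X$. Concretely, $I \times L$ is identified with the fiber product $L \, {}_i\!\times_{p_X} (I \times X)$, and by Lemma~\ref{lm:sfp} this identification carries orientation sign $(-1)^n$.

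First I would set up the pull-back square
\[
\xymatrix{
L \, {}_i\!\times_{p_X}(I\times X) \ar[r]^-{q}\ar[d]_{p} & I\times X \ar[d]^{p_X}\\
L \ar[r]^{i} & X,
}
\]
in which $p_X$ is a proper submersion (projection off the compact factor $I$), so Proposition~\ref{prop:iof}\ref{prop:pushfiberprod} applies and gives, for $\alpha \in A^*(I\times X)$,
\[
p_* q^* \alpha = i^* (p_X)_* \alpha.
\]
Next I would transport this identity across the canonical diffeomorphism $\Theta : I\times L \overset{\sim}{\to} L \, {}_i\!\times_{p_X}(I\times X)$ of Lemma~\ref{lm:sfp}. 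Under $\Theta$ one has $q \circ \Theta = i$ (the map $I\times L \to I\times X$ in the statement) and $p \circ \Theta = p_L$; hence $q^* \alpha$ pulls back to $i^*\alpha$ and, since pull-back commutes with fiber integration only up to the sign of the diffeomorphism, $p_*$ becomes $(-1)^{sgn(\Theta)}(p_L)_* = (-1)^n (p_L)_*$ after precomposition with $\Theta^*$. Substituting yields
\[
(-1)^n (p_L)_* i^* \alpha = i^* (p_X)_* \alpha,
\]
which is the claimed formula after multiplying both sides by $(-1)^n$.

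The main obstacle, such as it is, is bookkeeping rather than conceptual: one must be careful that the relevant square genuinely is a pull-back of \emph{oriented} orbifolds with corners in the sense of~\cite{ST4}, so that both Proposition~\ref{prop:iof}\ref{prop:pushfiberprod} (which is stated for strongly smooth proper submersions) and the orientation convention of Lemma~\ref{lm:sfp} apply without extra signs, and that precomposing fiber integration with a diffeomorphism of the total space introduces exactly the factor $(-1)^{sgn}$ and nothing more (this is the content of the conventions in~\cite{ST4}, compare also Lemma~\ref{lm:pulltopush}). Once these are in place the argument is a two-line diagram chase.
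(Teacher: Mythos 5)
Your argument is correct and is exactly the one the paper intends: the paper's proof is a one-line appeal to Lemma~\ref{lm:sfp} and Proposition~\ref{prop:iof}\ref{prop:pushfiberprod}, and you have supplied the diagram chase and the sign bookkeeping (via Lemma~\ref{lm:pulltopush}) that make ``immediate consequence'' precise.
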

\begin{proof}
This is an immediate consequence of Lemma~\ref{lm:sfp} and Proposition~\ref{prop:iof}~\ref{prop:pushfiberprod}.
\end{proof}

\begin{lm}\label{lm:memhinvt}
If $(\gamma,b)$ is gauge equivalent to $(\gamma',b')$, then $\memh^{\gamma,b}$ and $\memh^{\gamma',b'}$ are chain homotopic.
\end{lm}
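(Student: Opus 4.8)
The plan is to construct an explicit chain homotopy $H : \cone \to \cone$ out of a pseudo-isotopy $(\gt,\bt)$ from $(\gamma,b)$ to $(\gamma',b')$, mimicking the structure of the tensor potential itself but built from the product moduli spaces over $I$. Concretely, recall that $\cone$ over $W$ for the endpoint $J$ is $A^*(X;\Qh_W)\oplus R_W[-n-1]$, and consider the chain map $j_t^* \circ (\text{pullback to } I\times X)$ in both components. Given the pseudo-isotopy, one has the deformed operators $\qtbg_{\emptyset,l}$, $\qtbg_{-1,l}$ of Section~\ref{ssec:isot} together with the element $\ct \in \mJ A^*(I)$. I would define
\[
H(\eta,\xi) = \Big((p_X)_*\qtbg_{\emptyset,1}(p_X^*\eta),\ (-1)^{?}\,(p_L)_*\qtbg_{-1,1}(p_X^*\eta) - (p_I)_*(\ct\cdot p_X^*\xi)\Big),
\]
where the signs are fixed so that the two ``Stokes boundary'' contributions produce exactly $\memh^{\gamma',b'} - \memh^{\gamma,b}$, with the interior bubbling terms cancelling via the structure equations for $\qt$ (Proposition~\ref{qt_rel}, Proposition~\ref{cl:qt_-1}) and the Maurer--Cartan equation $\sum \qt_{k,l}(\bt^{\otimes k};\gt^{\otimes l}) = \ct\cdot 1$ satisfied by the pseudo-isotopy.

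The key steps, in order, are: (1) record that pullback along $p_X : I\times X \to X$ followed by the $\qt$ operators and then pushforward along $p_I$ (or $p_L$, $p_X$) defines operators that intertwine $j_0^*$ and $j_1^*$ appropriately; here Lemma~\ref{lm:fibprodsgn} and Lemma~\ref{lm:sfp} supply the sign $(-1)^n$ for commuting $i^*$ past the fiber integration, which is exactly the sign appearing in the definition of $\i$. (2) Apply Proposition~\ref{stokes} to the proper submersion $p_{\M} : \Mt_{\bullet}(\beta) \to I$ (and to $\evbt_0$); the fiberwise boundary $\d \Mt$ decomposes into the two fibers over $t=0,1$ — which give $\memh^{\gamma',b'}(\eta,\xi)$ and $-\memh^{\gamma,b}(\eta,\xi)$ after applying $j_1^*,j_0^*$ — together with the nodal boundary strata. (3) Show the nodal strata organize into the deformed structure equations; using $\sum_{k,l}\qt_{k,l}(\bt^{\otimes k};\gt^{\otimes l}) = \ct\cdot 1$ and Lemma~\ref{lm:deformprop}, the terms in which a disk component carries no geodesic-type constraint but only the bounding chain insertions collapse against $\ct$, and the remaining terms assemble to $\dcone \circ H + H \circ \dcone$ acting on $(\eta,\xi)$. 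The closed-string part is handled by Lemma~\ref{lm:qtchain} (chain map property of $\qt_\emptyset$) exactly as Lemma~\ref{lm:nunchain} used Lemma~\ref{lm:qecm}.

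The main obstacle will be step (3): bookkeeping the signs so that the Stokes boundary terms, the $\ct$-cancellations from the Maurer--Cartan equation, and the $(-1)^n$ from Lemma~\ref{lm:fibprodsgn} all line up to give precisely $H\dcone + \dcone H = \memh^{\gamma',b'} - \memh^{\gamma',b'}$ — i.e. verifying the chain-homotopy identity with the correct signs rather than merely up to sign. A secondary subtlety is that $\ct \in \mJ A^*(I)$ is a $1$-form on $I$ rather than a constant, so the term $(p_I)_*(\ct \cdot p_X^*\xi)$ contributes a genuine boundary term under Stokes equal to $(c'\xi - c\xi)$ — this is in fact what is needed, since $\memh$ depends on $c$ through the $-c\cdot\xi$ summand, but it must be checked that $j_i^*\ct = $ the $c$ associated to the endpoint bounding pair. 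This is the analog for the cone complex of the invariance argument for $\q^{b,\gamma}$ under pseudo-isotopy, so once the framework is set up the computation is parallel to the proof of Lemma~\ref{lm:nunchain} with $\qt$ in place of $\q$, invoking Lemma~\ref{lm:deformprop} throughout.
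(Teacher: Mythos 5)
Your plan is the right one — define a homotopy from the deformed pseudo-isotopy operators $\qt^{\gt}_{\emptyset,1},\qt^{\gt,\bt}_{-1,1}$, push forward along $p_X$ and over $I$, and apply Stokes together with Proposition~\ref{cl:qt_-1}, Lemma~\ref{lm:mt0}, Lemma~\ref{lm:qtchain}, and Lemma~\ref{lm:fibprodsgn} — and in that respect it matches the paper's proof exactly, which sets
\[
H(\eta,\xi)=(-1)^{|\eta|+1}\bigl((p_X)_*\qt^{\gt}_{\emptyset,1}(p_X^*\eta),\;pt_*\qt^{\gt,\bt}_{-1,1}(p_X^*\eta)\bigr).
\]
The one substantive departure is your extra summand $-(p_I)_*(\ct\cdot p_X^*\xi)$, and this is a genuine gap. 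The paper's $H$ has no $\xi$-dependence at all; the $-c\cdot\xi$ pieces of $\memh^{\gamma,b}$ and $\memh^{\gamma',b'}$ simply cancel in the difference $\memh^{\gamma',b'}-\memh^{\gamma,b}$, because the Maurer--Cartan coefficient is a gauge invariant: from the structure equations and the unit property one gets $d\ct=0$, hence $j_0^*\ct=j_1^*\ct$, i.e. the two endpoint bounding pairs share the same $c$. Your added term is therefore not needed, and it actively disrupts the identity: since the differential on the $R_W[-n-1]$ summand is trivial, $\dcone\circ H$ receives no contribution from it, while $H\circ\dcone(\eta,\xi)=H(d\eta,\i(\eta))$ picks up a spurious $-\,pt_*(\ct)\cdot\i(\eta)$ in the second component which has no counterpart in $\memh^{\gamma',b'}-\memh^{\gamma,b}$. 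So the homotopy you wrote does not satisfy $\dcone\circ H + H\circ\dcone = \memh^{\gamma',b'}-\memh^{\gamma,b}$. The fix is to delete that term and instead record (or cite) that $c$ is gauge-invariant. Two smaller points: $\qt^{\gt,\bt}_{-1,1}$ takes values in $A^*(I;R_W)$, so the correct pushforward to $R_W$ is $pt_*$ (integration over $I$), not $(p_L)_*$; and $\ct\cdot p_X^*\xi$ does not typecheck as written, since $\xi\in R_W[-n-1]$ is a coefficient, not a form on $X$.
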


\begin{proof}
For each $\eta$, set $\etat:=p_X^*\eta$. Define
\[
H:\cone\lrarr \cone
\]
by
\[
H(\eta,\xi)
=(-1)^{|\eta|+1}\cdot((p_X)_*\qt^{\gt}_{\emptyset,1}(\etat), pt_*(\qt^{\gt,\bt}_{-1,1}(\etat))).
\]
We show
\begin{equation}\label{eq:ch}
\dcone\circ H+H \circ \dcone = \memh^{\gamma',b'}-\memh^{\gamma,b}.
\end{equation}
On the one hand,
\begin{align*}
(\dcone\circ H+&H \circ \dcone)(\eta,\xi)=\\
&=
\dcone((-1)^{|\eta|+1}(p_X)_*\qt^{\gt}_{\emptyset,1}(\etat), (-1)^{|\eta|+1} pt_*(\qt^{\gt,\bt}_{-1,1}(\etat)))
+H(d\eta,\i(\eta))\\
&=
((-1)^{|\eta|+1}d(p_X)_*\qt^{\gt}_{\emptyset,1}(\etat), (-1)^{|\eta|+1}\i((p_X)_*\qt^{\gt}_{\emptyset,1}(\etat)))+\\
&\qquad+((-1)^{|\eta|}(p_X)_*\qt^{\gt}_{\emptyset,1}(d\etat), (-1)^{|\eta|} pt_*(\qt^{\gt,\bt}_{-1,1}(d\etat)))\\
&=
((-1)^{|\eta|+1}(d(p_X)_*\qt^{\gt}_{\emptyset,1}(\etat) -(p_X)_*\qt^{\gt}_{\emptyset,1}(d\etat)),\\
&\hspace{10em}
(-1)^{|\eta|+1}\i((p_X)_*\qt^{\gt}_{\emptyset,1}(\etat)) + (-1)^{|\eta|} pt_*(\qt^{\gt,\bt}_{-1,1}(d\etat))).
\end{align*}
On the other hand,
\begin{align}
(\memh^{\gamma',b'}&-\memh^{\gamma,b})(\eta,\xi)
=
(\q^{\gamma'}_{\emptyset,1}(\eta), (-1)^{n+1}\q^{\gamma',b'}_{-1,1}(\eta)-c\cdot\xi)-
(\qg_{\emptyset,1}(\eta), (-1)^{n+1}\qbg_{-1,1}(\eta) -c\cdot\xi)\notag\\
&=
(\q^{\gamma'}_{\emptyset,1}(\eta)-\qg_{\emptyset,1}(\eta),
(-1)^{n+1} \big(\q^{\gamma',b'}_{-1,1}(\eta)-\qbg_{-1,1}(\eta)\big)).\label{eq:rhsch}
\end{align}
By Lemma~\ref{lm:qtchain} and Stokes theorem, Proposition~\ref{stokes}, applied to $p_X$, we have
\[
(-1)^{|\eta|+1}(d(p_X)_*\qt^{\gt}_{\emptyset,1}(\etat) -(p_X)_*\qt^{\gt}_{\emptyset,1}(d\etat)) =
(j_1^*-j_0^*) (\qt^{\gt}_{\emptyset,1}(\etat)) =\q^{\gamma'}_{\emptyset,1}(\eta)-\qg_{\emptyset,1}(\eta),
\]
which proves the first component of equation~\eqref{eq:ch}.

To prove the second component of equation~\eqref{eq:ch}, we proceed as follows. By Stokes' theorem and Proposition~\ref{cl:qt_-1},
\begin{align}\label{eq:memdiff}
(-1)^{n+1}\big(\q^{\gamma',b'}_{-1,1}(\eta)-\qbg_{-1,1}(\eta)\big) &= (-1)^{|\etat|}pt_*(d\qt_{-1,1}^{\gt}(\etat)) \\
 &= (-1)^{|\etat|} pt_*
\qt^{\gt}_{-1,1}(d\etat) + pt_* (p_I)_* i^* \qt_{\emptyset,1}(\etat) + \notag\\
& \qquad  +\frac{(-1)^{|\etat|+1}}{2}pt_* \big(
(-1)^{|\eta|}
\ll\qt^{\gt}_{0,1}(\etat),\qt^{\gt}_{0,0}\gg
+\ll\qt^{\gt}_{0,0},\qt^{\gt}_{0,1}(\etat)\gg\big) . \notag
\end{align}
By the symmetry of the cyclic structure,
\begin{align*}
\frac{1}{2}\cdot \Big((-1)^{|\eta|}\ll\qt^{\gt}_{0,1}(\etat)&, \qt^{\gt}_{0,0}\gg
+\ll\qt^{\gt}_{0,0},\qt^{\gt}_{0,1}(\etat)\gg\Big)=\\
&=
\frac{1}{2}\cdot\left((-1)^{|\eta|} \ll\qt^{\gt}_{0,1}(\etat),\qt^{\gt}_{0,0}\gg
+(-1)^{|\eta|}
\ll\qt^{\gt}_{0,1}(\etat),\qt^{\gt}_{0,0}\gg\right)\\
&=
(-1)^{|\eta|} \ll\qt^{\gt}_{0,1}(\etat),c\cdot 1\gg\\
&=
(-1)^{|\eta|}c\cdot \ll\qt^{\gt}_{0,1}(\etat), 1\gg\\
&=
(-1)^{|\eta|+1}c\cdot (p_I)_*i^*\etat,
\end{align*}
where the last equality is by Lemma~\ref{lm:mt0}.
Furthermore, by Proposition~\ref{prop:iof}~\ref{prop:pushpull},
\begin{align*}
pt_*(p_I)_*i^*\etat
=&
pt_*(p_I)_*i^*p_X^*\eta\\
=&
pt_*p_L^*i^*\eta\\
=&
pt_*(p_L)_*(p_L^*i^*\eta\wedge 1)\\
=&
pt_*(i^*\eta\wedge (p_L)_*1)
=
0.
\end{align*}
By Lemma~\ref{lm:fibprodsgn},
\begin{align*}
pt_*i^* \qt_{\emptyset,1}(\etat)
&=
pt_*(p_L)_*i^* \qt_{\emptyset,1}(\etat)\\
&=
(-1)^{n}pt_*i^*(p_X)_*(\qt_{\emptyset,1}(\etat))\\
&=
(-1)^{|\eta|+1}\i((p_X)_*(\qt_{\emptyset,1}(\etat))).
\end{align*}
Plugging the preceding calculations into~\eqref{eq:memdiff} above, we get
\[
(-1)^{n+1}\big(\q^{\gamma',b'}_{-1,1}(\eta)-\qbg_{-1,1}(\eta)\big) =
(-1)^{|\etat|} pt_*
\qt^{\gt}_{-1,1}(d\etat) +(-1)^{|\eta|+1}\i((p_X)_*(\qt_{\emptyset,1}(\etat))),
\]
which gives the second component of~\eqref{eq:ch}.
\end{proof}

\begin{proof}[Proof of Theorem~\ref{thm:mp}]
The first part is given by Lemma~\ref{lm:nunchain}. The second part is given by Lemma~\ref{lm:memhinvt}.
\end{proof}

\subsection{Flatness relation}\label{ssec:flatness}

The objective of this section is to prove Theorem~\ref{lm:assoc}.
We begin by explaining the notation in greater detail.

Let $W$ and $S$ be graded real vector spaces.
In this section, it will be useful to note that
elements of $W$ and $S$ define derivations on $\cone=A^*(X;\Qh_W)\oplus R_W[-n-1]$, as follows.
For $u\in W$, the derivation $\d_u:\R[[W]]\to \R[[W]]$ induces a derivation on $\Qh_W=\L_c\otimes \R[[W]]$, which in turn induces a derivation on $A^*(X;\Qh_W)=A^*(X;\R)\otimes \Qh_W$. In addition, the derivation on $\Qh_W$ extends trivially to $R_W$. In total, we get
\[
\d_u:\cone\lrarr \cone.
\]
For $s\in S$, the derivation $\d_s$ extends trivially from $\R[[S]]$ to $R_W$, and acts trivially on $A^*(X;\Qh_W)$. This defines
\[
\d_s:\cone\lrarr \cone.
\]
It is immediate from definition that $\d_u,\d_s,$ are chain maps, and so descend to cohomology.
Furthermore,
for $u\in \Qh_W\otimes W$ of the form $u=r\otimes w$, we define $\d_u= r\cdot\d_w$,
and similarly for $u\in \Qh_W\otimes S$. Moreover, since $\d_s$ for $s\in S$ acts as zero on the first component of $\cone$, we can in fact define $\d_u$ for $u\in R_W\otimes S$, and it is a chain map.
In other words, there are chain map derivations
\[
\d_u: \cone\lrarr \cone,
\quad
\forall u\in (\Qh_W\otimes W)\oplus (R_W\otimes S).
\]
Finally, for a chain map
\[
\Theta: \cone\lrarr \cone,
\]
the derivative operator $\d_u\Theta$ is defined by
\[
(\d_u\Theta)(\eta)=\d_u(\Theta(\eta))-\Theta(\d_u(\eta)).
\]

\begin{proof}[Proof of Theorem~\ref{lm:assoc}]

Let $u,v \in(\Qh_W\otimes W)\oplus (R_W\otimes S)$.
Define
\[
H_{uv}:\cone\lrarr \cone
\]
by
\begin{multline*}
H_{uv}(\eta,\xi)=
((-1)^{1+|\eta|+|u|+|v|+|\eta||v|} \qg_{\emptyset,3;\chi_0}(\d_u\gamma,\eta,\d_v\gamma), \\
(-1)^{|\eta|+|u|+|v|+|\eta||v|} \qbg_{-1,3;0,3}(\d_u\gamma,\eta,\d_v\gamma)
+(-1)^{(|\eta|+1)(|u|+|v|+1)} \langle\qbg_{0,2;1_0,2}(\eta,\d_u\gamma),\d_vb\rangle+\\
+(-1)^{|\eta|+(|\eta|+1)(|u|+|v|)+|u||v|} \langle\qbg_{0,2;1_0,2}(\eta,\d_v\gamma),\d_ub\rangle+\\
+(-1)^{|\eta|+(|\eta|+1)(|u|+|v|)+|u||v|} \langle\qbg_{1,1;2_{0,1},1}(\d_vb;\eta),\d_ub\rangle
).
\end{multline*}
The four summands of the second component of $H_{uv}$ correspond to the four pictures in Figure~\ref{pic:H}. We show that
\begin{equation}\label{eq:memhhomotop}
\d_u\memh\circ \d_v\memh-(-1)^{|u||v|}\d_v\memh\circ\d_u\memh
=\dcone\circ H_{uv}+(-1)^{|u|+|v|}H_{uv}\circ \dcone.
\end{equation}
The heart of the proof is based on an analysis of the boundaries of the moduli spaces of stable disks with geodesic constraints shown in Figure~\ref{pic:H} using the geodesic structure equations. In preparation for this analysis, we expand both sides of equation~\eqref{eq:memhhomotop} into their constituent parts.

\begin{figure}[ht]
\centering
\includegraphics[width=12cm]{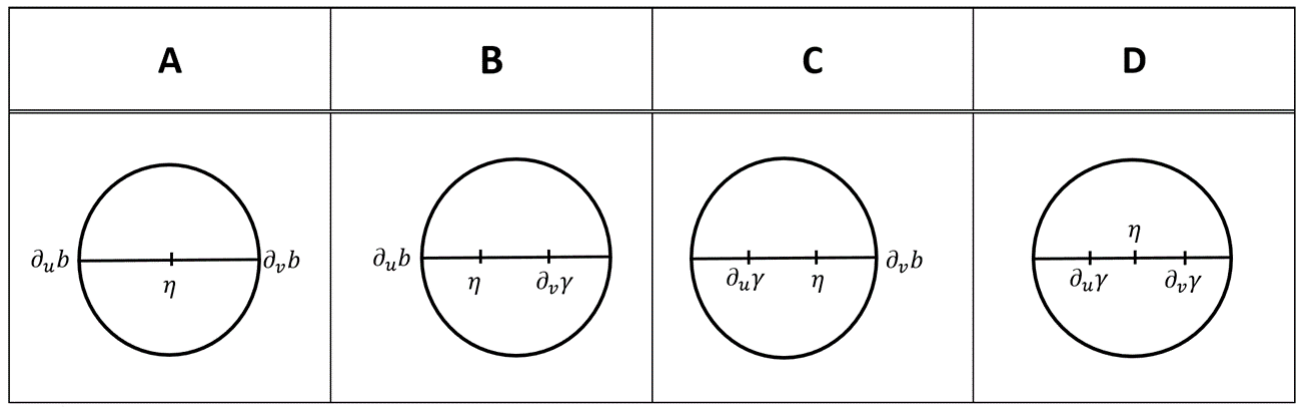}
\caption{The four summands in the second component of $H_{uv}$}\label{pic:H}
\end{figure}

Let $(\eta,\xi)\in \cone$.
By Lemma~\ref{lm:csymm} the derivative $\memh$ operator is given by
\begin{equation}\label{eq:dmem}
\d_v\memh(\eta,\xi)=
(\qg_{\emptyset,2}(\d_v\gamma, \eta),
(-1)^{n+1}\d_v\qbg_{-1,1}(\eta) - \d_vc\cdot\xi).
\end{equation}
To compute the left-hand side of~\eqref{eq:memhhomotop}, first calculate
\begin{align*}
\d_u&\memh\circ \d_v\memh(\eta,\xi)=\\
=&\d_u\memh((
\qg_{\emptyset,2}(\d_v\gamma,\eta),
(-1)^{n+1}\d_v\qbg_{-1,1}(\eta)
-\d_vc\cdot\xi))\\
=&
(\qg_{\emptyset,2}(\d_u\gamma, \qg_{\emptyset,2}(\d_v\gamma,\eta) ),
(-1)^{n+1}\d_u\qbg_{-1,1} (\qg_{\emptyset,2}(\d_v\gamma,\eta))
-\d_uc\cdot((-1)^{n+1}\d_v\qbg_{-1,1}(\eta) -\d_vc\cdot\xi))\\
=&
(\qg_{\emptyset,2}(\d_u\gamma, \qg_{\emptyset,2}(\d_v\gamma,\eta) ),
(-1)^{n+1}(\d_u\qbg_{-1,1} (\qg_{\emptyset,2}(\d_v\gamma,\eta)) -\d_uc\cdot\d_v\qbg_{-1,1}(\eta)) +\d_uc\cdot\d_vc\cdot\xi).
\end{align*}
Symmetrizing, we get
\begin{align*}
\d_u\memh\circ \d_v\memh &(\eta,\xi)
-(-1)^{|u||v|}\d_v\memh\circ \d_u\memh(\eta,\xi)=\\
=&
\Big(\qg_{\emptyset,2}(\d_u\gamma, \qg_{\emptyset,2}(\d_v\gamma,\eta))
-(-1)^{|u||v|}\qg_{\emptyset,2}(\d_v\gamma, \qg_{\emptyset,2}(\d_u\gamma,\eta)),\\
&\hspace{5em}
(-1)^{n+1}( \d_u\qbg_{-1,1}(\qg_{\emptyset,2}(\d_v\gamma,\eta)) -\d_uc\cdot\d_v\qbg_{-1,1}(\eta)-\\
&\hspace{10em}
- (-1)^{|v||u|}\d_v\qbg_{-1,1}(\qg_{\emptyset,2}(\d_u\gamma,\eta))
+(-1)^{|v||u|}\d_vc\cdot\d_u\qbg_{-1,1}(\eta))\Big).
\end{align*}
Compute the right hand side of equation~\eqref{eq:memhhomotop}:
\begin{align*}
\dcone (H_{uv}&(\eta,\xi))=
(-1)^{1+|\eta|+|u|+|v|+|\eta||v|} \big( d\qg_{\emptyset,3;\chi_0}(\d_u\gamma,\eta,\d_v\gamma), \i(\qg_{\emptyset,3;\chi_0}(\d_u\gamma,\eta,\d_v\gamma))\big),
\shortintertext{and}
(-1)^{|u|+|v|}H_{uv}(\dcone &(\eta,\xi))=
(-1)^{|u|+|v|}H_{uv}(d\eta,\i(\eta))=\\
=&
\big(
(-1)^{|v|+|\eta|+|\eta||v|} \qg_{\emptyset,3;\chi_0}(\d_u\gamma,d\eta,\d_v\gamma),\\
&\hspace{5em}+(-1)^{1+|v|+|\eta|+|\eta||v|} \qbg_{-1,3;0,3}(\d_u\gamma,d\eta,\d_v\gamma)+\\
&\hspace{5em}+(-1)^{|u|+|v|+|\eta|+|\eta|(|u|+|v|)} \langle\qbg_{0,2;1_0,2}(d\eta,\d_u\gamma),\d_vb\rangle+\\
&\hspace{5em}+(-1)^{1+|\eta|+|u|+|v|+|\eta|(|u|+|v|)+|u||v|} \langle\qbg_{0,2;1_0,2}(d\eta,\d_v\gamma),\d_ub\rangle+\\
&\hspace{5em}+(-1)^{1+|\eta|+|u|+|v|+|\eta|(|u|+|v|)+|u||v|} \langle\qbg_{1,1;2_{0,1},1}(\d_vb;d\eta),\d_ub\rangle \big).
\end{align*}
Equality in the first component of~\eqref{eq:memhhomotop} follows from Proposition~\ref{lm:clst}.
Equality in the second component reads
\begin{align}
(-&1)^{n+1}( \d_u\qbg_{-1,1}(\qg_{\emptyset,2}(\d_v\gamma,\eta)) -\d_uc\cdot\d_v\qbg_{-1,1}(\eta)-\nonumber\\
&\hspace{10em}
- (-1)^{|v||u|}\d_v\qbg_{-1,1}(\qg_{\emptyset,2}(\d_u\gamma,\eta))
+(-1)^{|v||u|}\d_vc\cdot\d_u\qbg_{-1,1}(\eta))=\nonumber\\
=&
(-1)^{1+|\eta|+|u|+|v|+|\eta||v|}\i(\qg_{\emptyset, 3;\chi_0}(\d_u\gamma,\eta,\d_v\gamma))+\nonumber\\
&+(-1)^{1+|v|+|\eta|+|\eta||v|} \qbg_{-1,3;0,3}(\d_u\gamma,d\eta,\d_v\gamma)
+(-1)^{|u|+|v|+|\eta|+|\eta|(|u|+|v|)} \langle\qbg_{0,2;1_0,2}(d\eta,\d_u\gamma),\d_vb\rangle+\nonumber\\
&+(-1)^{1+|\eta|+|u|+|v|+|\eta|(|u|+|v|)+|u||v|} \langle\qbg_{0,2;1_0,2}(d\eta,\d_v\gamma),\d_ub\rangle+\nonumber\\
&\hspace{5em}+(-1)^{1+|\eta|+|u|+|v|+|\eta|(|u|+|v|)+|u||v|} \langle\qbg_{1,1;2_{0,1},1}(\d_vb;d\eta),\d_ub\rangle. \label{eq:secc}
\end{align}
To show this, we use the deformed versions of the geodesic structure equations.
Figure~\ref{pic:table} depicts the boundary components of the moduli spaces A,B,C,D, in Figure~\ref{pic:H} that contribute to our calculation.

\begin{figure}[ht]
\centering
\includegraphics[width=16.5cm]{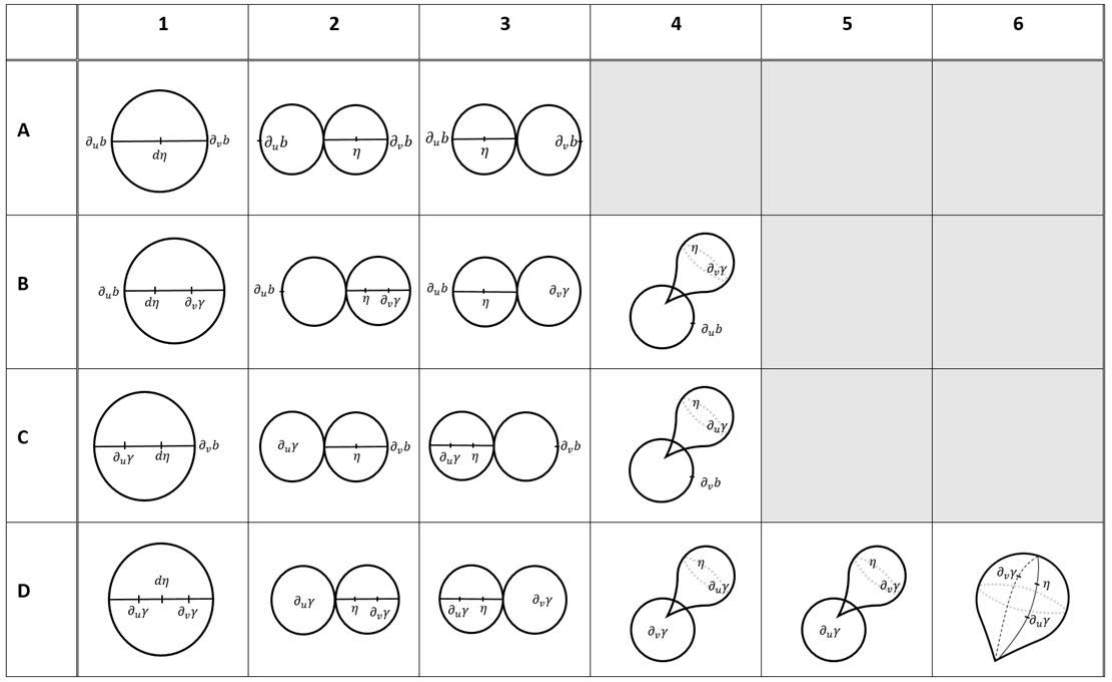}
\caption{Boundary components of the moduli spaces from Figure~\ref{pic:H}}\label{pic:table}
\end{figure}

First, keeping in mind line D of Figure~\ref{pic:table}, consider the contribution from Proposition~\ref{prop:03k-1}:
\begin{align*}
0=&(-1)^{|u|}\qbg_{-1,3;0,3}(\d_u\gamma, d\eta, \d_v\gamma)
+(-1)^{|\eta|+|u|+|v|} \langle\qbg_{0,3;0,3}(\d_u\gamma,\eta,\d_v\gamma), \qbg_{0,0}\rangle+\\
&-(-1)^{|u|}\langle\qbg_{0,1} (\d_u\gamma), \qbg_{0,2;1_0,2}(\eta,\d_v\gamma)\rangle
-(-1)^{|\eta|+|u|+|v|+|u||\eta|}\langle\qbg_{0,2;1_0,2} (\eta, \d_u\gamma), \qbg_{0,1}(\d_v\gamma)\rangle-\\
&-(-1)^{|u|+|v|+|\eta|+n+1}\qbg_{-1,2}( \qg_{\emptyset,2}(\d_u\gamma, \eta),\d_v\gamma) \\
&+(-1)^{|u|(|\eta|+|v|)+|u|+|v|+|\eta|+n+1}\qbg_{-1,2}( \qg_{\emptyset,2}(\eta,\d_v\gamma),\d_u\gamma)+\\
&+(-1)^{n + |u| + |\eta| + |v| + 1}\int_L\qg_{\emptyset,3;(0,1,2,3)\in(1,\infty)} (\d_u\gamma,\eta,\d_v\gamma).
\end{align*}
Lemma~\ref{lm:deggeod} gives $|\qg_{\emptyset,3;(0,1,2,3)\in(1,\infty)} (\d_u\gamma,\eta,\d_v\gamma)| \equiv |u| +|\eta| + |v| + 1 \pmod 2,$ so
\[
(-1)^{n + |u| + |\eta| + |v| + 1}\int_L\qg_{\emptyset,3;(0,1,2,3)\in(1,\infty)} (\d_u\gamma,\eta,\d_v\gamma)
= \i(\qg_{\emptyset,3;(0,1,2,3)\in(1,\infty)} (\d_u\gamma,\eta,\d_v\gamma)).
\]
Using the Maurer-Cartan equation, $\qbg_{0,0}=c\cdot 1$, we can apply the unit property, Lemma~\ref{lm:unitgeod}, to get
\begin{align*}
0=&(-1)^{|u|}\qbg_{-1,3;0,3}(\d_u\gamma, d\eta, \d_v\gamma)+\\
&-(-1)^{|u|}\langle\qbg_{0,1} (\d_u\gamma), \qbg_{0,2;1_0,2}(\eta,\d_v\gamma)\rangle
-(-1)^{|\eta|+|u|+|v|+|u||\eta|}\langle\qbg_{0,2;1_0,2} (\eta,\d_u\gamma), \qbg_{0,1}(\d_v\gamma)\rangle+\\
&+(-1)^{|u||\eta|+|u|+|v|+|\eta|+n+1}\big(-\qbg_{-1,2}( \qg_{\emptyset,2}(\eta,\d_u\gamma),\d_v\gamma)
+(-1)^{|u||v|}\qbg_{-1,2}( \qg_{\emptyset,2}(\eta, \d_v\gamma),\d_u\gamma)\big)+\\
&+\i(\qg_{\emptyset,3;(0,1,2,3)\in(1,\infty)} (\d_u\gamma,\eta,\d_v\gamma)).
\end{align*}
Multiply the equation by $(-1)^{1+|\eta|+|u|+|v|+|v||\eta|}$ to get
\begin{align}\label{eq:def36}
0=&(-1)^{1+|\eta|+|v|+|v||\eta|} \qbg_{-1,3;0,3}(\d_u\gamma, d\eta, \d_v\gamma)+\nonumber\\
&+(-1)^{|\eta|+|v|+|v||\eta|}\langle\qbg_{0,1} (\d_u\gamma), \qbg_{0,2;1_0,2}(\eta,\d_v\gamma)\rangle
+(-1)^{|u||\eta|+|v||\eta|}\langle\qbg_{0,2;1_0,2} (\eta,\d_u\gamma), \qbg_{0,1}(\d_v\gamma)\rangle+\nonumber\\
&+(-1)^{|u||\eta|+|v||\eta|+n} \big(-\qbg_{-1,2}( \qg_{\emptyset,2}( \eta,\d_u\gamma),\d_v\gamma)
+(-1)^{|u||v|}\qbg_{-1,2}( \qg_{\emptyset,2}(\eta, \d_v\gamma),\d_u\gamma)\big)+\\
&+(-1)^{1+|\eta|+|u|+|v|+|v||\eta|} \i(\qg_{\emptyset,3;(0,1,2,3)\in(1,\infty)} (\d_u\gamma,\eta,\d_v\gamma)).\nonumber
\end{align}
The summands of equation~\eqref{eq:def36} correspond to the pictures in line D of Figure~\ref{pic:table}.

Next, keeping in mind line $B$ of Figure~\ref{pic:table}, we compute the contribution of Proposition~\ref{prop:12}, using again the Maurer-Cartan equation, $\qbg_{0,0}=c\cdot 1,$ and the unit property, Lemma~\ref{lm:unitgeod}:
\begin{align*}
0=&
\qbg_{0,2;1_0,2}(d\eta,\d_v\gamma)
+(-1)^{|\eta|+|v|}\qbg_{1,2;1_0,2}( \qbg_{0,0};\eta,\d_v\gamma)
-\qbg_{1,0}(\qbg_{0,2;1_0,2}(\eta,\d_v\gamma))+\\
&+(-1)^{|\eta|+|v|}\qbg_{1,1;2_{0,1},1}(\qbg_{0,1}(\d_v\gamma);\eta)
+(-1)^{|\eta|+|v|+n+1}\qbg_{0,1}(\qg_{\emptyset,2}(\eta,\d_v\gamma))\\
=&
\qbg_{0,2;1_0,2}(d\eta,\d_v\gamma)
-\qbg_{1,0}(\qbg_{0,2;1_0,2}(\eta,\d_v\gamma))+\\
&+(-1)^{|\eta|+|v|}\qbg_{1,1;2_{0,1},1}(\qbg_{0,1}(\d_v\gamma);\eta)
+(-1)^{|\eta|+|v|+n+1} \qbg_{0,1}(\qg_{\emptyset,2}(\eta,\d_v\gamma)).
\end{align*}
Pairing with $\d_ub$ and using the cyclic properties of Lemmas~\ref{cl:cyclic} and~\ref{cl:cyc} together with the degree properties of Lemmas~\ref{deg_str_map} and~\ref{lm:deggeod},
\begin{align*}
0=&
\langle\qbg_{0,2;1_0,2}(d\eta,\d_v\gamma),\d_ub\rangle
-\langle\qbg_{1,0}(\qbg_{0,2;1_0,2}(\eta,\d_v\gamma)), \d_ub\rangle +\\
&+(-1)^{|\eta|+|v|} \langle\qbg_{1,1;2_{0,1},1}(\qbg_{0,1}(\d_v\gamma);\eta), \d_ub\rangle
+(-1)^{|\eta|+|v|+n+1} \langle\qbg_{0,1}(\qg_{\emptyset,2}(\eta,\d_v\gamma)), \d_ub\rangle\\
=&
\langle\qbg_{0,2;1_0,2}(d\eta,\d_v\gamma),\d_ub\rangle
-(-1)^{|u|(|\eta|+|v|)}
\langle\qbg_{1,0}(\d_ub), \qbg_{0,2;1_0,2}(\eta,\d_v\gamma)\rangle+\\
&+(-1)^{|\eta|+|v|+|u|(|v|+1)}
\langle\qbg_{1,1;2_{1,0},1}(\d_ub;\eta), \qbg_{0,1}(\d_v\gamma)\rangle
+(-1)^{|\eta|+|v|+n+1} \langle\qbg_{0,1}(\qg_{\emptyset,2}(\eta,\d_v\gamma)), \d_ub\rangle,\\
\shortintertext{and applying Lemma~\ref{lm:geoddirection} to the third summand,}
=&
\langle\qbg_{0,2;1_0,2}(d\eta,\d_v\gamma),\d_ub\rangle
-(-1)^{|u|(|\eta|+|v|)} \langle\qbg_{1,0}(\d_{u}b),\qbg_{0,2;1_0,2}(\eta,\d_v\gamma)\rangle+\\
&\hspace{-7pt}+(-1)^{|\eta|+|v|+|u|(|v|+1)+1} \langle\qbg_{1,1;2_{0,1},1}(\d_ub;\eta), \qbg_{0,1}(\d_v\gamma)\rangle
+(-1)^{|\eta|+|v|+n+1} \langle\qbg_{0,1}(\qg_{\emptyset,2}(\eta,\d_v\gamma)), \d_ub\rangle.
\end{align*}
To this equation with sign $(-1)^{(|\eta|+1)(|u|+|v|+1)+|u||v|}$, add the corresponding equation with $u,v,$ switched and sign $(-1)^{1+(|\eta|+1)(|u|+|v|+1)}$ to get
\begin{align}\label{eq:def35}
0=&(-1)^{1+|\eta|+|u|+|v|+ |\eta||u|+|\eta||v|+|u||v|} \langle\qbg_{0,2;1_0,2}(d\eta,\d_v\gamma),\d_ub\rangle
+\nonumber\\
&\hspace{10em}+
(-1)^{|\eta|+|u|+|v|+|\eta||u|+|\eta||v|} \langle\qbg_{0,2;1_0,2}(d\eta,\d_u\gamma),\d_vb\rangle+\nonumber\\
&+
(-1)^{|\eta|+|u|+|v|+ |\eta||v|} \langle\qbg_{1,0}(\d_ub),\qbg_{0,2;1_0,2}(\eta,\d_v\gamma)\rangle+\nonumber\\
&\hspace{10em}+
(-1)^{1+|\eta|+|u|+|v|+|\eta||u| + |u||v|} \langle\qbg_{1,0}(\d_vb),\qbg_{0,2;1_0,2}(\eta,\d_u\gamma)\rangle+\nonumber\\
&+(-1)^{|\eta||u|+|\eta||v|} \langle\qbg_{1,1;2_{0,1},1}(\d_ub;\eta), \qbg_{0,1}(\d_v\gamma)\rangle+\\
&\hspace{10em}
+(-1)^{1+|\eta||u|+|\eta||v|+|u||v|}
\langle\qbg_{1,1;2_{0,1},1}(\d_vb;\eta), \qbg_{0,1}(\d_u\gamma)\rangle+\nonumber\\
&+
(-1)^{|u|+|\eta||u|+|\eta||v| +|u||v|+n} \langle\qbg_{0,1}(\qg_{\emptyset,2}(\eta,\d_v\gamma)), \d_ub\rangle+\nonumber\\
&\hspace{10em}
+(-1)^{1+|v|+|\eta||u|+|\eta||v|+n} \langle\qbg_{0,1}(\qg_{\emptyset,2}(\eta,\d_u\gamma)), \d_vb\rangle .\nonumber
\end{align}
The summands of equation~\eqref{eq:def35} correspond to the pictures in lines B and C of Figure~\ref{pic:table}.

Lastly, keeping in mind line A of Figure~\ref{pic:table}, compute the contribution of Proposition~\ref{prop:21}, again using $\qbg_{0,0}=c\cdot 1$ and Lemma~\ref{lm:unitgeod}:
\begin{align*}
0=&
\qbg_{1,1;2_{0,1},1}(\d_vb; d\eta)
+(-1)^{|\eta|+ |v|}\qbg_{2,1;2_{0,1},1}(\d_vb, \qbg_{0,0};\eta)
+(-1)^{|\eta|}\qbg_{2,1;2_{0,2},1} (\qbg_{0,0},\d_vb;\eta)+\\
&-\qbg_{1,0}( \qbg_{1,1;2_{0,1},1}(\d_vb;\eta))
+(-1)^{|\eta|}\qbg_{1,1;2_{0,1},1}( \qbg_{1,0}(\d_vb);\eta)\\
=&
\qbg_{1,1;2_{0,1},1}(\d_vb; d\eta)
-\qbg_{1,0}( \qbg_{1,1;2_{0,1},1}(\d_vb;\eta))
+(-1)^{|\eta|}\qbg_{1,1;2_{0,1},1}( \qbg_{1,0}(\d_vb);\eta).
\end{align*}
Pairing with $\d_ub$ and using the cyclic properties of Lemmas~\ref{cl:cyclic} and~\ref{cl:cyc} together with the degree properties of Lemmas~\ref{deg_str_map} and~\ref{lm:deggeod}, we get
\begin{align*}
0=&
\langle \qbg_{1,1;2_{0,1},1}(\d_vb; d\eta), \d_ub\rangle
-\langle\qbg_{1,0}(\qbg_{1,1;2_{0,1},1}(\d_vb;\eta)), \d_ub\rangle
+(-1)^{|\eta|}\langle\qbg_{1,1;2_{0,1},1}( \qbg_{1,0}(\d_vb);\eta), \d_ub\rangle\\
=&
\langle \qbg_{1,1;2_{0,1},1}(\d_vb; d\eta), \d_ub\rangle-\\
&
-(-1)^{|u|(|v|+|\eta|)} \langle\qbg_{1,0}(\d_ub), \qbg_{1,1;2_{0,1},1}(\d_vb;\eta)\rangle
+(-1)^{|\eta|+|u|(|v|+1)}
\langle\qbg_{1,1;2_{1,0},1}(\d_ub ;\eta), \qbg_{1,0}(\d_vb)\rangle.
\end{align*}
Apply Lemma~\ref{lm:geoddirection} to the last summand of the right hand side of the preceding equation and multiply the entire equation by $(-1)^{(|\eta|+1)(|u|+|v|+1)+|u||v|}$ to get
\begin{align}\label{eq:def34}
0=&
(-1)^{(|\eta|+1)(|u|+|v|+1)+|u||v|}
\langle \qbg_{1,1;2_{0,1},1}(\d_vb; d\eta), \d_ub\rangle+\notag\\
&+(-1)^{|\eta|+|u|+|v|+|v||\eta|}
\langle\qbg_{1,0}(\d_ub), \qbg_{1,1;2_{0,1},1}(\d_vb;\eta)\rangle +\\
&+(-1)^{|v|+|\eta||u|+|\eta||v|} \langle\qbg_{1,1;2_{0,1},1}(\d_ub ;\eta), \qbg_{1,0}(\d_vb)\rangle.\notag
\end{align}
The summands of equation~\eqref{eq:def34} correspond to the pictures in line A of Figure~\ref{pic:table}.

In the following, we add up equations~\eqref{eq:def36},~\eqref{eq:def35}, and~\eqref{eq:def34}, in stages.
First, consider all the summands that involve $\qg_{\emptyset,l}$ as an interior input. By Lemmas~\ref{lm:qlinear}, ~\ref{cl:cyclic}, and~\ref{cl:symmetry},
we have
\begin{equation}\label{eq:dqbg}
\d_u \qbg_{-1,1}(\xi) = (-1)^{|\xi||u|}\qbg_{-1,2}(\xi,\d_u\gamma) +(-1)^{|\xi||u|+|u|}\langle \qbg_{0,1}(\xi),\d_ub\rangle,
\end{equation}
and thus
\begin{align*}
(-1)^{|u||\eta|+|v||\eta|+n}& \big(-\qbg_{-1,2}( \qg_{\emptyset,2}( \eta,\d_u\gamma),\d_v\gamma)
+(-1)^{|u||v|}\qbg_{-1,2}( \qg_{\emptyset,2}(\eta, \d_v\gamma),\d_u\gamma)\big)+\\
+
(-1)&^{|u|+|\eta||u|+|\eta||v| +|u||v|+n}\langle\qbg_{0,1}(\qg_{\emptyset,2}(\eta,\d_v\gamma)), \d_ub\rangle+\\
&+(-1)^{1+|v|+|\eta||u|+|\eta||v| +n} \langle\qbg_{0,1}(\qg_{\emptyset,2}(\eta,\d_u\gamma)), \d_vb\rangle = \\
=
(-1)&^{|\eta||u|+|\eta||v|+n}
\big(
-\qbg_{-1,2}( \qg_{\emptyset,2}( \eta,\d_u\gamma),\d_v\gamma)
+(-1)^{|u||v|}\qbg_{-1,2}( \qg_{\emptyset,2}(\eta, \d_v\gamma),\d_u\gamma)+\\
&\quad +
(-1)^{|u|+|u||v|} \langle\qbg_{0,1}(\qg_{\emptyset,2}(\eta,\d_v\gamma)), \d_ub\rangle
-(-1)^{|v|} \langle\qbg_{0,1}(\qg_{\emptyset,2}(\eta,\d_u\gamma)), \d_vb\rangle
\big)\\
=
(-1)&^{|\eta||v|+n}
\d_u\qbg_{-1,1}( \qg_{\emptyset,2}(\eta, \d_v\gamma))
-
(-1)^{|\eta||u|+|u||v|+n}
\d_v\qbg_{-1,1}(\qg_{\emptyset,2}(\eta,\d_u\gamma)).\\
\shortintertext{By Lemma~\ref{lm:csymm}, we continue}
=&
(-1)^{n}
\d_u\qbg_{-1,1}( \qg_{\emptyset,2}(\d_v\gamma,\eta))
-
(-1)^{|u||v|+n}
\d_v\qbg_{-1,1}(\qg_{\emptyset,2}(\d_u\gamma,\eta)).
\end{align*}
Second, consider the summands of equations~~\eqref{eq:def36},~\eqref{eq:def35}, and~\eqref{eq:def34}, that are products of $\qbg_{k,l}$ and $\qbg_{k,l;\chi}$:
\begin{align*}
(-1&)^{|\eta|+|v|+|v||\eta|}
\langle\qbg_{0,1}(\d_u\gamma), \qbg_{0,2;1_0,2}(\eta,\d_v\gamma)\rangle+\\
&\hspace{10em} +
(-1)^{|\eta|+|u|+|v|+ |\eta||v|} \langle\qbg_{1,0}(\d_ub), \qbg_{0,2;1_0,2}(\eta,\d_v\gamma)\rangle+\\
&+
(-1)^{1+|\eta||u|+|\eta||v|+|u||v|}
\langle\qbg_{1,1;2_{0,1},1}(\d_vb;\eta), \qbg_{0,1}(\d_u\gamma)\rangle+\\
&\hspace{10em}
+
(-1)^{|\eta|+|u|+|v|+|v||\eta|} \langle\qbg_{1,0}(\d_ub), \qbg_{1,1;2_{0,1},1}(\d_vb;\eta)\rangle+\\
&+
(-1)^{|u||\eta|+|v||\eta|}
\langle\qbg_{0,2;1_0,2}(\eta,\d_u\gamma), \qbg_{0,1}(\d_v\gamma)\rangle+\\
&\hspace{10em}
+
(-1)^{1+|\eta|+|u|+|v|+|\eta||u|+|u||v|}
\langle\qbg_{1,0}(\d_vb),\qbg_{0,2;1_0,2}(\eta,\d_u\gamma)\rangle+\\
&+
(-1)^{|\eta||u|+|\eta||v|}
\langle\qbg_{1,1;2_{0,1},1}(\d_ub;\eta), \qbg_{0,1}(\d_v\gamma)\rangle+\\
&\hspace{10em}+
(-1)^{|v|+|\eta||u|+|\eta||v|}
\langle\qbg_{1,1;2_{0,1},1}(\d_ub;\eta), \qbg_{1,0}(\d_vb)\rangle=
\shortintertext{by the symmetry property of the inner product, equation~\eqref{eq:psgn},}
=&
(-1)^{1+|\eta||u|+|\eta||v|+|u||v|}\big(
\langle \qbg_{0,2;1_0,2}(\eta,\d_v\gamma), \qbg_{0,1}(\d_u\gamma)\rangle +
(-1)^{|u|}
\langle \qbg_{0,2;1_0,2}(\eta,\d_v\gamma), \qbg_{1,0}(\d_ub)\rangle\big)+\\
&\hspace{-10.25pt}+
(-1)^{1+|\eta||u|+|\eta||v|+|u||v|}
\big(
\langle\qbg_{1,1;2_{0,1},1}(\d_vb;\eta), \qbg_{0,1}(\d_u\gamma)\rangle
+
(-1)^{|u|}
\langle\qbg_{1,1;2_{0,1},1}(\d_vb;\eta),
\qbg_{1,0}(\d_ub)\rangle\big)+\\
&+
(-1)^{|u||\eta|+|v||\eta|}
\big(
\langle\qbg_{0,2;1_0,2}(\eta,\d_u\gamma), \qbg_{0,1}(\d_v\gamma)\rangle
+
(-1)^{|v|}
\langle\qbg_{0,2;1_0,2}(\eta,\d_u\gamma), \qbg_{1,0}(\d_vb)\rangle\big)+\\
&+
(-1)^{|\eta||u|+|\eta||v|}
\big(
\langle\qbg_{1,1;2_{0,1},1}(\d_ub;\eta), \qbg_{0,1}(\d_v\gamma)\rangle
+
(-1)^{|v|}
\langle\qbg_{1,1;2_{0,1},1}(\d_ub;\eta), \qbg_{1,0}(\d_vb)\rangle\big).
\shortintertext{
We proceed with an algebraic manipulation represented graphically in Figure~\ref{pic:add}. By Lemmas~\ref{lm:qlinear},~\ref{cl:cyclic}, and~\ref{cl:symmetry}, taking the derivative of the Maurer-Cartan equation, we obtain
$\qbg_{0,1}(\d_u\gamma)+(-1)^{|u|}\qbg_{1,0}(\d_ub) = \d_u\qbg_{0,0} = \d_u c \cdot 1,$ and thus}
=&
(-1)^{1+|\eta||u|+|\eta||v|+|u||v|}
\langle \qbg_{0,2;1_0,2}(\eta,\d_v\gamma), \d_uc\cdot 1\rangle +\\
&+
(-1)^{1+|\eta||u|+|\eta||v|+|u||v|}
\langle\qbg_{1,1;2_{0,1},1}(\d_vb;\eta), \d_uc\cdot 1\rangle+\\
&+
(-1)^{|\eta||u|+|\eta||v|}
\langle\qbg_{0,2;1_0,2}(\eta,\d_u\gamma), \d_vc\cdot 1\rangle+\\
&+
(-1)^{|\eta||u|+|\eta||v|}
\langle\qbg_{1,1;2_{0,1},1}(\d_ub;\eta), \d_vc\cdot 1\rangle.\\
\shortintertext{Using the bilinearity of the pairing, Lemma~\ref{lm:qlinear}, we continue}
=&
(-1)^{1+|v||\eta|}
\d_uc\cdot
\langle \qbg_{0,2;1_0,2}(\eta,\d_v\gamma), 1\rangle +\\
&+
(-1)^{1+|v||\eta|}
\d_u c\cdot
\langle\qbg_{1,1;2_{0,1},1}(\d_vb;\eta), 1\rangle+\\
&+
(-1)^{|u||\eta|+|u||v|}
\d_vc\cdot
\langle\qbg_{0,2;1_0,2}(\eta,\d_u\gamma), 1\rangle+\\
&+
(-1)^{|\eta||u|+|u||v|}
 \d_vc\cdot
\langle\qbg_{1,1;2_{0,1},1}(\d_ub;\eta), 1\rangle, \\
\shortintertext{so by the geodesic unit properties, Lemma~\ref{lm:geodunit-1} and Corollary~\ref{cor:geodunit},}
=&
(-1)^{1+|\eta||v|+n}
\d_uc\cdot
\qbg_{-1,2}(\eta,\d_v\gamma) +\\
&+
(-1)^{1+|v|+|\eta||v|+n}
\d_u c\cdot
\langle\qbg_{0,1}(\eta), \d_vb\rangle+\\
&+
(-1)^{|u||\eta|+|u||v|+n}
\d_vc\cdot
\qbg_{-1,2}(\eta,\d_u\gamma)+\\
&+
(-1)^{|u|+|\eta||u|+|u||v|+n}
 \d_vc\cdot
\langle \qbg_{0,1}(\eta),\d_ub\rangle,\\
\shortintertext{and by equation~\eqref{eq:dqbg},}
=&
(-1)^n\big(
-
\d_u c\cdot \d_v
\qbg_{-1,1}(\eta)
+(-1)^{|u||v|}
\d_v c\cdot\d_u
\qbg_{-1,1}(\eta)
\big).
\end{align*}

\begin{figure}[ht]
\centering
\includegraphics[width=15cm]{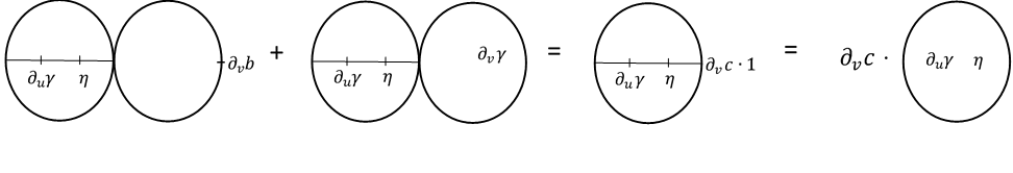}
\caption{Combining contributions from two boundary strata to form a product of derivatives.}\label{pic:add}
\end{figure}

In total, equations~\eqref{eq:def36},~\eqref{eq:def35}, and~\eqref{eq:def34}, add up to
\begin{align*}
0=&
(-1)^{1+|\eta|+|v| +|\eta||v|}
\qbg_{-1,3;0,3}(\d_u\gamma,d\eta,\d_v\gamma)+\\
&+
(-1)^{1+|\eta|+|u|+|v|+|\eta||u|+|\eta||v|+|u||v|}
\langle\qbg_{0,2;1_0,2}(d\eta,\d_v\gamma),\d_ub\rangle+\\
&+
(-1)^{|\eta|+|u|+|v|+|\eta||u|+|\eta||v|}
\langle\qbg_{0,2;1_0,2}(d\eta,\d_u\gamma),\d_vb\rangle+\\
&+
(-1)^{1+|\eta|+|u|+|v|+|\eta||u|+|\eta||v|+|u||v|}
\langle\qbg_{1,1;2_{0,1},1}(\d_vb; d\eta),\d_ub\rangle+\\
&+
(-1)^{1+|\eta|+|u|+|v|+|v||\eta|}
\i(\qg_{\emptyset,3;(0,1,2,3)\in(1,\infty)} (\d_u\gamma, \eta,\d_v\gamma))+\\
&+(-1)^{n}
\big(
\d_u\qbg_{-1,1}(\qg_{\emptyset,2}(\d_v\gamma,\eta))
-
(-1)^{|u||v|}
\d_v\qbg_{-1,1}(\qg_{\emptyset,2}(\d_u\gamma,\eta))\big)+\\
&+(-1)^{n}
\big(
-\d_uc\cdot \d_v\qbg_{-1,1}(\eta)
+(-1)^{|u||v|}\d_vc\cdot\d_u\qbg_{-1,1}(\eta)\big),
\end{align*}
which is equivalent to equation~\eqref{eq:secc}, as required.

\end{proof}

\subsection{Relative potential}\label{ssec:psi}

In equation~\eqref{eq:psidfn}, we defined
\[
\psi(\gamma,b):=(\qg_{\emptyset,0},(-1)^{n+1}\qbg_{-1,0}) \in \cone.
\]

\begin{lm}
	$\dcone\psi(\gamma,b)=0.$
\end{lm}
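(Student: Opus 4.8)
The plan is to unwind the cone differential and reduce the statement to two elementary facts about the first component $\qg_{\emptyset,0}$ of $\psi(\gamma,b)$. Since the differential on the summand $R_W[-n-1]$ of $\cone$ is trivial, one has $\dcone(\eta,\xi)=(d\eta,\i(\eta))$ for every $(\eta,\xi)\in\cone$, hence $\dcone\psi(\gamma,b)=(d\qg_{\emptyset,0},\,\i(\qg_{\emptyset,0}))$; in particular the second component $(-1)^{n+1}\qbg_{-1,0}$ of $\psi$ never enters the computation (it affects the cohomology class $\Psi$, not closedness). Thus it suffices to prove that $\qg_{\emptyset,0}$ is closed and that $\i(\qg_{\emptyset,0})=0$.

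Closedness of $\qg_{\emptyset,0}=\sum_{l\ge 0}\tfrac{1}{l!}\q_{\emptyset,l}(\gamma^{\otimes l})$ is immediate from the chain map property of $\q_{\emptyset}$ (Lemma~\ref{lm:qecm}) together with $d\gamma=0$: each $\q_{\emptyset,l}(\gamma^{\otimes l})$ is then a cocycle, and hence so is their sum.

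For $\i(\qg_{\emptyset,0})=0$, the idea is to invoke the $k=-1$ structure equation of Proposition~\ref{q-1_rel} in its deformed form (legitimate by Lemma~\ref{lm:deformprop}), with empty external interior input list. All terms carrying a factor $d\gamma$ drop out since $d\gamma=0$, the sum over partitions of the empty input list contributes only $\tfrac12\langle\qbg_{0,0},\qbg_{0,0}\rangle$, and what remains is an identity of the shape $0=\pm\tfrac12\langle\qbg_{0,0},\qbg_{0,0}\rangle\mp\int_L i^*\qg_{\emptyset,0}$. By the Maurer--Cartan equation $\qbg_{0,0}=c\cdot 1$, and $R_W$-bilinearity of the Poincar\'e pairing (Lemma~\ref{lm:qlinear}) gives $\langle c\cdot 1,\,c\cdot 1\rangle=c^2\langle 1,1\rangle=c^2\int_L 1=0$, since $\dim L=n\ge 1$. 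Hence $\int_L i^*\qg_{\emptyset,0}=0$, so $\i(\qg_{\emptyset,0})=0$ and $\dcone\psi(\gamma,b)=0$.

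The only mildly delicate point is the sign bookkeeping needed to check that the deformed $k=-1$ structure equation with empty input list indeed reduces to $0=\pm\tfrac12\langle\qbg_{0,0},\qbg_{0,0}\rangle\mp\int_L i^*\qg_{\emptyset,0}$ with matching signs on the two surviving terms; once this is in hand, the conclusion follows at once from $\langle 1,1\rangle=0$. Reconciling the $1/l!$ and $1/(t!(k+1))$ weights in the definitions of the deformed operators with the re-summation of the undeformed structure equation requires no new work, being precisely the content of Lemma~\ref{lm:deformprop}.
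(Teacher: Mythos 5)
Your proposal is correct and follows essentially the same route as the paper: reduce to $d\qg_{\emptyset,0}=0$ (immediate from Lemma~\ref{lm:qecm} and $d\gamma=0$) and $\i(\qg_{\emptyset,0})=0$, then obtain the latter from the deformed version of Proposition~\ref{q-1_rel} with empty input list, the Maurer--Cartan substitution $\qbg_{0,0}=c\cdot 1$, and $\langle 1,1\rangle=\int_L 1=0$. The only difference is cosmetic: you spell out the closedness of the first component (which the paper treats as well-known), and you defer the sign bookkeeping rather than write it out; both choices are fine, and the argument is sound.
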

\begin{proof}
By definition,
\[
\dcone\psi(\gamma,b)
=(d\qg_{\emptyset,0},\i(\qg_{\emptyset,0}))
=(0,\i(\qg_{\emptyset,0})).
\]
We claim that $\i(\qg_{\emptyset,0})=0$.
To see this, apply the deformed version of Proposition~\ref{q-1_rel}
to the case $l=0$:
\begin{multline*}
0=
\frac{1}{2}
\langle\qbg_{0,0},\qbg_{0,0}\rangle
+(-1)^{n+1}\i( \qg_{\emptyset,0})
=\frac{1}{2}c^2
\langle 1,1\rangle
+(-1)^{n+1}\i( \qg_{\emptyset,0})
=(-1)^{n+1}\i( \qg_{\emptyset,0}).
\end{multline*}
\end{proof}

As in Section~\ref{sssec:psi}, define the relative potential to be the cohomology class
\[
\Psi(\gamma,b)=[\psi(\gamma,b)]
\in H^*(\cone).
\]

\begin{proof}[Proof of Theorem~\ref{thm:psiinv}]
Let $(\gt,\bt)$ be a pseudoisotopy from $(\gamma,b)$ to $(\gamma',b')$.
We show that $(\q^{\gamma'}_{\emptyset,0},(-1)^{n+1}\q^{\gamma',b'}_{-1,0})$ is cohomologous to $(\qg_{\emptyset,0},(-1)^{n+1}\qbg_{-1,0})$, by considering their difference.

To compute the first component, apply Stokes' theorem, Proposition~\ref{stokes}, to $f=p_X\circ\evt_0.$ This gives
\[
\q^{\gamma'}_{\emptyset,0}-\qg_{\emptyset,0}=
-d((p_X)_*\qt^{\tilde\gamma}_{\emptyset,0}).
\]

We now compare the difference in the second component.
Use the analog of Proposition~\ref{cl:qt_-1} for the deformed $\qt_{-1,0}$ operator:
\begin{multline*}
0=d\qt_{-1,0}^{\gt,\bt}+
\frac{1}{2}
\ll\qt^{\gt,\bt}_{0,0},\qt^{\gt,\bt}_{0,0}\gg
- (p_I)_*i^*\qt^{\gt}_{\emptyset,0}=\\
=
d\qt_{-1,0}^{\gt,\bt}+
\frac{1}{2}
\ll \ct\cdot 1,\ct\cdot 1\gg
- (p_I)_*i^*\qt^{\gt}_{\emptyset,0}
=
d\qt_{-1,0}^{\gt,\bt}
- (p_I)_*i^*\qt^{\gt}_{\emptyset,0}.
\end{multline*}
Pushing the expression along $pt:I\to pt$ and using Lemma~\ref{lm:fibprodsgn}, we have
\[
\int_Id\qt_{-1,0}^{\gt,\bt}=
pt_*(p_I)_*i^*\qt^{\tilde\gamma}_{\emptyset,0}=
pt_*(p_L)_*i^*\qt^{\tilde\gamma}_{\emptyset,0}
=(-1)^n pt_*i^*(p_X)_*\qt^{\tilde\gamma}_{\emptyset,0}
=-\i((p_X)_*\qt^{\tilde\gamma}_{\emptyset,0}).
\]
On the other hand, by Stokes' theorem, Proposition~\ref{stokes}, we have
\[
\int_Id\qt_{-1,0}^{\gt,\bt}=
(-1)^{|\qt_{-1,0}^{\gt,\bt}|}(\q_{-1,0}^{\gamma',b'}-\qbg_{-1,0})
=
(-1)^{n+1}(\q_{-1,0}^{\gamma',b'}-\qbg_{-1,0}).
\]
So,
\[
(-1)^{n+1}\big(\q_{-1,0}^{\gamma',b'}-\qbg_{-1,0}\big)
=
-\i((p_X)_*\qt^{\tilde\gamma}_{\emptyset,0}).
\]

In total, we have
\[
\Psi(\gamma',b')-\Psi(\gamma,b)= [-\dcone((p_X)_*\qt^{\tilde\gamma}_{\emptyset,0},0)]=0.
\]
\end{proof}

\begin{lm}\label{lm:dpsi}
Assume $b$ is separated and write $\int_L b=f$.
Then the relative potential is related to $\memh$ via
\[
\d_u\psi=\memh(\d_u\gamma,0),\qquad \forall u\in W,
\]
and
\[
\d_s\psi=\dmf\cdot\memh(0,1), \qquad \forall s\in S.
\]
Consequently,
\[
\d_u\Psi=\nund([(\d_u\gamma,0)])\quad\mbox{and} \quad
\d_s\Psi=\dmf\cdot\nund([(0,1)]).
\]
\end{lm}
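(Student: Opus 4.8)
The goal is Lemma~\ref{lm:dpsi}: to compute $\d_u\psi$ and $\d_s\psi$ for a separated bounding chain, and then pass to cohomology. The plan is to differentiate the explicit formula $\psi(\gamma,b)=(\qg_{\emptyset,0},(-1)^{n+1}\qbg_{-1,0})$ term by term using the linearity and symmetry properties of the $\q$ operators (Lemmas~\ref{lm:qlinear}, \ref{cl:cyclic}, \ref{cl:symmetry}, \ref{lm:csymm}), together with the Maurer-Cartan equation $\qbg_{0,0}=c\cdot 1$, and to recognize the resulting expressions as the components of $\memh$ applied to the appropriate input. The key point separating the two cases is that differentiating with respect to $u\in W$ affects $\gamma$ (via $\d_u\gamma$) and possibly $b$, whereas differentiating with respect to $s\in S$ affects only $b$, and under the separated assumption $\d_s b$ contributes purely through $\int_L \d_s b = \d_s f = \dmf$.

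First I would treat $\d_u\psi$ for $u\in W$. Differentiating the first component $\qg_{\emptyset,0}=\sum_t\frac{1}{t!}\q_{\emptyset,t}(\gamma^{\otimes t})$ gives $\sum_t\frac{1}{t!}\q_{\emptyset,t+1}(\d_u\gamma,\gamma^{\otimes t})=\qg_{\emptyset,1}(\d_u\gamma)$ after using closed symmetry to move $\d_u\gamma$ to the front; this matches the first component of $\memh(\d_u\gamma,0)$. For the second component, differentiating $\qbg_{-1,0}$ produces two kinds of terms: those where the derivative hits a copy of $\gamma$, which reassemble into $\qbg_{-1,1}(\d_u\gamma)$ and $\langle \qbg_{0,1}(\d_u\gamma),b\rangle$-type contributions; and those where it hits a copy of $b$. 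The terms hitting $b$ must be shown to cancel or combine correctly — here one uses the structure equation for $\q_{-1,l}$ (Proposition~\ref{q-1_rel}) differentiated, or more directly the cyclic symmetry of the pairing together with the Maurer-Cartan equation, exactly as in the manipulations already carried out in the proof of Theorem~\ref{lm:assoc} (equation~\eqref{eq:dqbg} is the relevant identity: $\d_u\qbg_{-1,1}(\xi)=(-1)^{|\xi||u|}\qbg_{-1,2}(\xi,\d_u\gamma)+(-1)^{|\xi||u|+|u|}\langle\qbg_{0,1}(\xi),\d_ub\rangle$, and an analogous computation with no $\xi$ input governs $\d_u\qbg_{-1,0}$). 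The upshot is that $\d_u\psi$ equals $\memh(\d_u\gamma,0)=(\qg_{\emptyset,1}(\d_u\gamma),(-1)^{n+1}\qbg_{-1,1}(\d_u\gamma))$, since the second slot of $\memh(\d_u\gamma,0)$ is $(-1)^{n+1}\qbg_{-1,1}(\d_u\gamma)-c\cdot 0$.

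Next, $\d_s\psi$ for $s\in S$. Since $\d_s\gamma=0$ and $\d_s$ acts trivially on $A^*(X;\Qh_W)$, the first component of $\d_s\psi$ vanishes, matching $\dmf\cdot\memh(0,1)=\dmf\cdot(0,-c)=(0,-\dmf\cdot c)$ in the first slot. For the second component we differentiate $\qbg_{-1,0}=\sum_{k,t}\frac{1}{t!(k+1)}\langle\q_{k,t}(b^{\otimes k};\gamma^{\otimes t}),b\rangle+\sum_t\frac1{t!}\q_{-1,t}(\gamma^{\otimes t})$; only the first sum depends on $b$, and $\d_s$ hits one of the $k+1$ slots carrying $b$. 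Using cyclic symmetry (Lemma~\ref{cl:cyclic}) to make all $k+1$ slots equivalent, the combinatorial factor $\frac{1}{k+1}$ is absorbed and one gets $\sum_{k,t}\frac{1}{t!}\langle\q_{k,t}(b^{\otimes k};\gamma^{\otimes t}),\d_s b\rangle = \langle\qbg_{0,0},\d_s b\rangle = c\cdot\langle 1,\d_s b\rangle = (-1)^{|\d_s b|}c\int_L \d_s b = \pm\, c\,\dmf$, where the Maurer-Cartan equation collapses the $\q$ operators to $c\cdot 1$ and the separated hypothesis guarantees $\int_L \d_s b=\dmf\in\L\otimes\R[[S[1]]]$, so that $\langle 1,\d_s b\rangle=\int_L\d_s b$ makes sense and equals $\dmf$ up to the degree sign. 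Tracking the sign $(-1)^{n+1}$ from the definition of $\psi$, this yields $\d_s\psi=(0,-\dmf\cdot c)=\dmf\cdot(0,-c)=\dmf\cdot\memh(0,1)$.

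Finally, the ``consequently'' statements follow by applying the cohomology class map. Both $\d_u$ and $\d_s$ are chain maps on $\cone$ (as noted in Section~\ref{ssec:flatness}), $\psi(\gamma,b)$ is a cocycle, and $\memh$ is a chain map by Theorem~\ref{thm:mp}/Lemma~\ref{lm:nunchain}; hence taking classes gives $\d_u\Psi=[\d_u\psi]=[\memh(\d_u\gamma,0)]=\nund([(\d_u\gamma,0)])$ and $\d_s\Psi=[\d_s\psi]=\dmf\cdot[\memh(0,1)]=\dmf\cdot\nund([(0,1)])$ (noting $\dmf\in R_W$ commutes past the class map). The main obstacle I anticipate is the careful bookkeeping of Koszul signs in the $b$-derivative terms of $\qbg_{-1,0}$ and in the identification of the combinatorial coefficients — in particular justifying that the $b$-hitting terms in $\d_u\qbg_{-1,0}$ reorganize precisely into the pairing term of $(-1)^{n+1}\qbg_{-1,1}(\d_u\gamma)$ rather than producing leftover pieces; this is exactly the type of manipulation already validated in the proof of Theorem~\ref{lm:assoc}, so it is routine but sign-intensive.
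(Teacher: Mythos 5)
Your overall strategy matches the paper's: differentiate $\psi(\gamma,b)=(\qg_{\emptyset,0},(-1)^{n+1}\qbg_{-1,0})$ directly, use cyclic symmetry to absorb the $\tfrac{1}{k+1}$ factor when the derivative hits a $b$, invoke the Maurer--Cartan equation, and then pass to cohomology using that $\d_u$, $\d_s$, and $\memh$ are chain maps. The $\d_s$ computation and the concluding cohomology-class argument are both correct.

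There is, however, a genuine gap in the $\d_u$ case for $u\in W$. You anticipate that the $b$-hitting terms of $\d_u\qbg_{-1,0}$ ``reorganize precisely into the pairing term of $(-1)^{n+1}\qbg_{-1,1}(\d_u\gamma)$ rather than producing leftover pieces.'' That is not what happens. The pairing sum in $\qbg_{-1,1}(\d_u\gamma)$ is produced entirely by the $\gamma$-hitting terms (with their own $\tfrac{1}{k+1}$ factor intact), while the $b$-hitting terms assemble, by cyclic symmetry, into the separate quantity $\langle\qbg_{0,0},\d_u b\rangle$. This term does not get absorbed anywhere; it vanishes because, by the Maurer--Cartan equation, $\langle\qbg_{0,0},\d_u b\rangle=c\cdot\langle 1,\d_u b\rangle$, and $\langle 1,\d_u b\rangle$ is proportional to $\int_L\d_u b=\d_u\!\int_L b$, which is zero precisely because $b$ is separated: $\int_L b\in\Lambda\otimes\R[[S[1]]]$ has no dependence on the $W$-coordinates, so $\d_u\!\int_L b=0$ for $u\in W$. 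In other words, the separatedness hypothesis is used in the first identity too, and it is used to kill the $\d_u b$ term, not to reorganize it. If you tried to carry out the sign-intensive bookkeeping you describe, you would find a leftover $\langle\qbg_{0,0},\d_u b\rangle$ that you could not match against anything in $\memh(\d_u\gamma,0)$ without this observation. (A smaller imprecision: the pairing sum in $\qbg_{-1,1}(\d_u\gamma)$ is not literally ``$\langle\qbg_{0,1}(\d_u\gamma),b\rangle$'' --- the combinatorial weights $\tfrac{1}{k+1}$ differ --- though that does not affect the outcome here.)
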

\begin{proof}
To prove the first identity, note that the separateness assumption implies
$\d_u\int_L b=0$. Therefore, by Lemmas~\ref{lm:csymm} and~\ref{cl:cyclic} and the Maurer-Cartan equation,
\begin{multline*}
\d_u\psi(\gamma,b)=
(\qg_{\emptyset,1}(\d_u\gamma),(-1)^{n+1}\qbg_{-1,1}(\d_u\gamma)) +(0,(-1)^{n+1}\langle\qbg_{0,0},\d_ub\rangle)=\\
\hspace{7em}=(\qg_{\emptyset,1}(\d_u\gamma), (-1)^{n+1}\qbg_{-1,1}(\d_u\gamma)) +(0,(-1)^{n+1}c\cdot\langle 1,\d_ub\rangle)=\\
=(\qg_{\emptyset,1}(\d_u\gamma),(-1)^{n+1}\qbg_{-1,1}(\d_u\gamma))
=\memh(\d_u\gamma,0).
\end{multline*}
To prove the second identity,
compute
\begin{multline*}
\d_s\psi(\gamma,b)=
(0,(-1)^{n+1}\langle\qbg_{0,0},\d_sb\rangle)
=(-1)^{n+1}(0,c\cdot\langle 1,\d_sb\rangle)=\\
=(-1)^{n+1}(0,c\cdot(-1)^{n}\d_s\int_Lb)
=\dmf\cdot(0,-c)
=\dmf\cdot\memh(0,1).
\end{multline*}
\end{proof}

\subsection{Enhanced superpotential}\label{ssec:omegabar}
We begin by giving a full account of the conditions imposed on the map $P$ discussed in Section~\ref{sssec:esp}. There is a natural map of complexes $\Ah(X,L) \to \cone$ given by $\eta \mapsto (\eta,0).$ Denote by $a : \Hh^*(X,L;\R) \to H^*(\cone)$ the induced map on cohomology.
Consider the commutative diagram of long exact sequences,
\begin{equation}\label{eq:tris}
\xymatrix{
H^*(\cone) \ar[rr]^\pi && H^*(X;\Qh_W) \ar[dl]_{[1]}^{\i} \\
 & R_W[-n-1] \ar[ul]^x  \\
\Hh^*(X,L;\Qh_W) \ar'[r]^(.7){\rho_\Qh}[rr] \ar@{^{(}->}[uu]^a && H^*(X;\Qh_W) \ar[dl]^{\i_\Qh}\ar[uu]^\wr \\
 & \Qh_W[-n] \ar[ul]_{[1]}^{y_\Qh}\ar@{^{(}->}[uu]_(.3){[1]},
}
\end{equation}
where $a$ is injective by the five lemma. Observe that for this diagram to commute, we need the map $x$ to be given at chain level by $r \mapsto (0,-r)$. There is a canonical chain map $\cone \to R_W/Q_W$ given by $(\eta,\xi) \mapsto [-\xi].$ Let
\[
\overline P : H^*(\cone) \to R_W/Q_W
\]
denote the induced map on cohomology. Let $q : R_W \to R_W/Q_W$ denote the quotient map, and let $\bar q : \Coker \i \to R_W/Q_W$ be the induced map. We obtain the following diagram with exact rows and columns.
\begin{equation}\label{eq:ybar}
\xymatrix{
        & 0 & 0 \\
0 \ar[r] & R_W/Q_W \ar[u] \ar[r]^\sim & R_W/Q_W \ar[u] \ar[r] & 0 \\
0 \ar[r] & \Coker \i \ar[u]^{\bar q} \ar[r]^(.45){\bar x} & H^*(\cone) \ar[u]^{\overline P} \ar[r]^(.55){\pi} & \Ker \i \ar[u] \ar[r] & 0 \\
0 \ar[r] & \Coker \i_\Qh \ar[r]^(.4){\bar y_\Qh}\ar[u]^{\bar a}  & \Hh^*(X,L;\Qh_W) \ar[r]^(.6){\rho_\Qh}\ar[u]^a & \Ker \i_\Qh \ar[u]^\wr \ar[r] & 0 \\
& 0 \ar[u] & 0 \ar[u] & 0 \ar[u]
}
\end{equation}
We choose
\[
P :  H^*(\cone) \lrarr \Coker \i,
\]
a left inverse to the map $\bar x$ from the diagram~\eqref{eq:ybar} satisfying the following two conditions. The first condition is that
\begin{equation}\label{eq:bP}
\bar q \circ P = \overline P.
\end{equation}
Note that if $[L]\ne 0$, then $\bar q$ is an isomorphism, so this determines $P$ completely.
Condition~\eqref{eq:bP} and the exactness of diagram~\eqref{eq:ybar} imply that there exists a unique $P_\Qh : \Hh^*(X,L;\Qh_W)\to \Coker \i_\Qh$ such that the following diagram commutes.
\begin{equation}\label{eq:PQ}
\xymatrix{
\Coker \i  & H^*(\cone) \ar[l]_P \\
 \Coker \i_\Qh \ar[u]^{\bar a}  & \Hh^*(X,L;\Qh_W) \ar[u]^a \ar[l]_(.55){P_\Qh} \\
}
\end{equation}
The second condition is that there exists $P_\R : \Hh^*(X,L;\R) \to \Coker \i_\R,$ such that
\begin{equation}\label{eq:PR}
P_\Qh = P_\R \otimes \Id_\Qh.
\end{equation}
Recall the exact sequence~\eqref{eq:yrho}. Denote by $\bar y : \Coker \i_\R \to \Hh^*(X,L;\R)$ the induced map.
\begin{lm}\label{lm:PRr}
$P_Q \circ \bar y_Q = \Id$ and $P_\R\circ \bar y = \Id.$
\end{lm}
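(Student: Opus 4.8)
The plan is a short diagram chase through \eqref{eq:ybar} and \eqref{eq:PQ}, followed by a flat base change from $\Qh_W$ down to $\R$ coefficients. First I would collect the three identities supplied by the construction of $P$ and $P_\Qh$: since $P$ was chosen to be a left inverse of $\bar x$, we have $P\circ\bar x=\Id_{\Coker\i}$; by the defining property of $P_\Qh$, namely commutativity of \eqref{eq:PQ}, we have $\bar a\circ P_\Qh=P\circ a$; and commutativity of the lower-left square of \eqref{eq:ybar} gives $a\circ\bar y_\Qh=\bar x\circ\bar a$. Composing these, $\bar a\circ(P_\Qh\circ\bar y_\Qh)=(\bar a\circ P_\Qh)\circ\bar y_\Qh=(P\circ a)\circ\bar y_\Qh=P\circ(\bar x\circ\bar a)=(P\circ\bar x)\circ\bar a=\bar a$. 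Since the left-hand column of \eqref{eq:ybar} is exact, $\bar a$ is injective, so $P_\Qh\circ\bar y_\Qh=\Id_{\Coker\i_\Qh}$, which is the first assertion.

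For the second assertion I would descend to $\R$. Because $X$ is compact and $\R$ is a field, $\Qh_W$ is a free (hence faithfully flat) $\R$-module, and de Rham cohomology of the finite-dimensional spaces involved commutes with this base change; thus the bottom row of \eqref{eq:ybar} is obtained from the exact sequence \eqref{eq:yrho} by applying $-\otimes_\R\Qh_W$, and in particular $\Coker\i_\Qh\simeq\Coker\i_\R\otimes_\R\Qh_W$ and $\bar y_\Qh=\bar y\otimes\Id_{\Qh_W}$. Combining this with condition \eqref{eq:PR}, $P_\Qh=P_\R\otimes\Id_{\Qh_W}$, the first assertion reads $(P_\R\circ\bar y)\otimes\Id_{\Qh_W}=\Id_{\Coker\i_\R}\otimes\Id_{\Qh_W}$, and since $\Qh_W\neq0$, faithfulness of $-\otimes_\R\Qh_W$ forces $P_\R\circ\bar y=\Id_{\Coker\i_\R}$.

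There is essentially no obstacle here: everything is formal once the commutativity relations in \eqref{eq:ybar} and \eqref{eq:PQ} are taken as given. The one place warranting a line of care is the base-change identification $\bar y_\Qh=\bar y\otimes\Id_{\Qh_W}$, which one should check is compatible with the chain-level sign convention $x:r\mapsto(0,-r)$ fixed after \eqref{eq:tris}; alternatively, since the $\R$-coefficient analogue of \eqref{eq:ybar} is available, one can simply rerun the same chase with $\R$ in place of $\Qh_W$ and skip the descent step altogether. Note also that when $[L]\neq0$ both $\Coker\i_\Qh$ and $\Coker\i_\R$ vanish, so the content lies entirely in the case $[L]=0$, though the argument above needs no case division.
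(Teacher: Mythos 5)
Your proposal is correct and follows essentially the same route as the paper: both prove the first identity by precomposing with $\bar a$, using commutativity of diagrams~\eqref{eq:PQ} and~\eqref{eq:ybar} together with $P\circ\bar x=\Id$, and then invoking injectivity of $\bar a$; and both deduce the second identity from $\bar y_\Qh=\bar y\otimes\Id_\Qh$ and condition~\eqref{eq:PR}. The paper simply records the second step more tersely.
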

\begin{proof}
By the exactness of diagram~\eqref{eq:ybar}, the map $\bar a$ is injective. So, to prove $P_Q \circ \bar y_Q = \Id$, it suffices to prove that $\bar a \circ P_Q \circ \bar y_Q = \bar a.$ By commutativity of diagrams~\eqref{eq:PQ} and~\eqref{eq:ybar}, we obtain
\[
\bar a \circ P_Q \circ \bar y_Q = P \circ a \circ \bar y_Q = P \circ \bar x \circ \bar a = \bar a.
\]
To see $P_\R \circ \bar y = \Id,$ observe that $\bar y_Q = \bar y \otimes \Id_\Qh.$
\end{proof}

\begin{lm}\label{lm:PRP}
Let $l : \Hh^*(X,L;\R) \to \Coker \i_\R$ satisfy $l \circ \bar y = \Id.$
There exists a unique choice of $P:  H^*(\cone) \to \Coker \i$ satisfying conditions~\eqref{eq:bP} and~\eqref{eq:PR} such that $l = P_\R.$ Moreover, $\Ker P = a(\Ker P_Q).$
\end{lm}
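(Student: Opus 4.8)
The plan is to reduce the whole statement to elementary linear algebra of splittings for the exact rows and columns of diagram~\eqref{eq:ybar}, combined with the observation that the bottom part of that diagram is obtained from real coefficients by the flat base change $\otimes_\R \Qh_W$. First I would translate the two requirements on $P$ into statements about $\Ker P$. A left inverse $P$ of the injection $\bar x$ is the same datum as a choice of complement $\Ker P$ to $\im \bar x$ in $H^*(\cone)$, and since $\bar q\circ P\circ\bar x = \bar q = \overline{P}\circ\bar x$ by commutativity of~\eqref{eq:ybar}, condition~\eqref{eq:bP} holds exactly when $\overline{P}$ vanishes on $\Ker P$, i.e.\ $\Ker P\subseteq\Ker\overline{P}=\im a$ by exactness of the columns of~\eqref{eq:ybar}. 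Using the commuting square $a\circ\bar y_\Qh=\bar x\circ\bar a$ together with injectivity of $a$ and $\bar a$, I would then record that $\im a\cap\im\bar x=a(\im\bar y_\Qh)$ and $\pi\circ a=\rho_\Qh$, so that when $\Ker P\subseteq\im a$ the subspace $a^{-1}(\Ker P)$ is a complement to $\im\bar y_\Qh$ in $\Hh^*(X,L;\Qh_W)$ — it meets $\im\bar y_\Qh$ trivially because $\Ker P\cap\im\bar x=0$, and it surjects onto $\Ker \i$ under $\rho_\Qh$ because $\pi$ identifies $H^*(\cone)/\im\bar x$ with $\Ker\i$ and $\pi(\Ker P)=\Ker\i$. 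In other words, the data of an admissible $P$ (a left inverse of $\bar x$ satisfying~\eqref{eq:bP}) is equivalent to the data of a left inverse $P_\Qh$ of $\bar y_\Qh$ with $\Ker P_\Qh=a^{-1}(\Ker P)$, and this $P_\Qh$ is precisely the map appearing in~\eqref{eq:PQ}.

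Next I would bring in flat base change. Since $\Qh_W$ is an $\R$-vector space, $\Hh^*(X,L;\Qh_W)=\Hh^*(X,L;\R)\otimes_\R\Qh_W$, $H^*(X;\Qh_W)=H^*(X;\R)\otimes_\R\Qh_W$ and $\i_\Qh=\i_\R\otimes\Id$, hence $\Coker \i_\Qh=\Coker \i_\R\otimes_\R\Qh_W$ and the connecting map of the long exact sequence satisfies $\bar y_\Qh=\bar y\otimes\Id$. A left inverse of $\bar y_\Qh$ of the special form $P_\R\otimes\Id$ is therefore the same datum as a left inverse $P_\R$ of $\bar y$, and condition~\eqref{eq:PR} is exactly the requirement that the $P_\Qh$ attached to $P$ have this form. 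Given $l$ with $l\circ\bar y=\Id$, I would then set $P_\Qh:=l\otimes\Id$, put $\Ker P:=a(\Ker P_\Qh)$, and let $P$ be the projection of $H^*(\cone)=\im\bar x\oplus\Ker P$ onto $\Coker \i$ along $\Ker P$; the first paragraph shows $\Ker P$ really is a complement of $\im\bar x$, so $P$ is a genuine left inverse. It then remains to verify, all routine, that $P$ satisfies~\eqref{eq:bP} (since $\Ker P\subseteq\im a$), that $\bar a\circ P_\Qh=P\circ a$ so that this $P_\Qh$ is the one forced by~\eqref{eq:PQ}, and that $P_\R=l$. This yields existence.

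For uniqueness, and for the final assertion at the same time, I would first establish that for \emph{every} admissible $P$ one has $\Ker P=a(\Ker P_\Qh)$: the inclusion $\supseteq$ is immediate from $P\circ a=\bar a\circ P_\Qh$, and $\subseteq$ holds because $\Ker P\subseteq\im a$ by~\eqref{eq:bP}, so any $h\in\Ker P$ is $a(h_\Qh)$ with $\bar a(P_\Qh(h_\Qh))=P(a(h_\Qh))=0$, forcing $P_\Qh(h_\Qh)=0$ by injectivity of $\bar a$. Then, given a second admissible $P'$ with $P'_\R=l$, condition~\eqref{eq:PR} forces its associated $P'_\Qh$ to equal $l\otimes\Id=P_\Qh$, hence $\Ker P'=a(\Ker P'_\Qh)=a(\Ker P_\Qh)=\Ker P$; and two left inverses of the injection $\bar x$ with equal kernels coincide, so $P'=P$.

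I expect the only subtle point to be the base-change identification $\bar y_\Qh=\bar y\otimes\Id$ (together with the companion identifications of $\Hh^*$ and $\Coker \i$ over $\Qh_W$ with their real analogues tensored up), since this is what turns~\eqref{eq:PR} into an effective constraint singling out the left inverses that come from $\R$-coefficients; everything else is bookkeeping with the exact rows and columns of~\eqref{eq:ybar} and the standard fact that a left inverse of an injection is determined by its kernel.
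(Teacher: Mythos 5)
Your proof is correct and follows essentially the same route as the paper: both identify admissible $P$'s with complements of $\im\bar x$ contained in $\im a$, transport these via $a$ to left inverses of $\bar y_\Qh$, and use flat base change to reduce to left inverses of $\bar y$ over $\R$. Your write-up is actually more careful than the paper's in that you explicitly verify uniqueness and the identity $\Ker P = a(\Ker P_\Qh)$ for an arbitrary admissible $P$, points the paper leaves to the reader after its explicit construction.
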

\begin{proof}
Let $l_\Qh = l \otimes \Id_\Qh.$ Then $l_\Qh \circ \bar y_\Qh = \Id,$ so the splitting lemma implies that \[
\Hh^*(X,L;\Qh_W) \simeq \Im \bar y_\Qh \oplus \Ker l_\Qh, \qquad \rho_\Qh|_{\Ker l_\Qh}: \Ker l_\Qh \overset{\sim}{\lrarr} H^*(X;\Qh).
\]
Diagram~\eqref{eq:ybar} gives $\pi \circ a = \rho_\Qh,$ so it follows that $\pi \circ a|_{\Ker l_\Qh} : \Ker l_\Qh \overset{\sim}{\to} H^*(X;\Qh).$ Thus,
\[
\pi|_{a(\Ker l_\Qh)} : a(\Ker l_\Qh) \overset{\sim}{\lrarr} H^*(X;\Qh)
\]
and the splitting lemma implies that
\[
H^*(\cone) \simeq \im \bar x \oplus a(\Ker l_\Qh).
\]
Take $P$ to be the unique left inverse of $\bar x$ such that $\Ker P = a(\Ker l_\Qh).$ Diagram~\eqref{eq:ybar} and the above splittings imply condition~\eqref{eq:bP} and $l_Q = P_Q.$ Condition~\eqref{eq:PR} follows.
\end{proof}

\begin{rem}\label{rem:Pgeom}
As mentioned, if $[L]\ne 0$ then $P$ is uniquely determined. If $[L]=0$, Lemma~\ref{lm:PRP} shows that $P$ is uniquely determined by
$P_{\R}:\Hh^*(X,L;\R)\to \R[-n-1].$ Geometrically, we can interpret $P_\R$ as integration over a homology class $V\in H_{n+1}(X,L;\R)$. Considering the boundary map $\d: H_{n+1}(X,L;\R) \to H_{n}(L;\R),$ the condition $P_{\R}\circ\bar{y}=\Id$ corresponds to $\d V =[L]$.
\end{rem}

In Section~\ref{sssec:esp}, we defined the enhanced superpotential by
\[
\Ob:=P \Psi
\]
and we defined the superpotential $\Omega$ by
\begin{equation}\label{eq:omconds}
q(\Omega) = \bar q(\Ob),\qquad D(\Omega) =0.
\end{equation}
\begin{lm}\label{lm:OP}
The enhanced superpotential $\Ob$ satisfies $\bar q(\Ob)=\overline P(\Psi)$. In particular, $\Omega$ is independent of the choice of $P$.
\end{lm}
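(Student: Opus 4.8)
The statement to prove is Lemma~\ref{lm:OP}: the enhanced superpotential $\Ob = P\Psi$ satisfies $\bar q(\Ob) = \overline P(\Psi)$, and consequently $\Omega$ is independent of the choice of $P$.

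The plan is to read off the first assertion directly from condition~\eqref{eq:bP} imposed on $P$. By definition $\Ob = P\Psi \in \Coker \i$, so applying $\bar q : \Coker \i \to R_W/Q_W$ gives $\bar q(\Ob) = \bar q(P\Psi) = (\bar q \circ P)(\Psi) = \overline P(\Psi)$, where the last equality is precisely~\eqref{eq:bP}. So the first claim requires essentially no work beyond unwinding definitions and invoking the defining property of $P$.

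For the second assertion, recall from~\eqref{eq:omconds} that $\Omega \in R_W$ is characterized by the two conditions $q(\Omega) = \bar q(\Ob)$ and $D(\Omega) = 0$. First I would note that such an $\Omega$ exists and is unique: uniqueness is because if $\Omega_1, \Omega_2$ both satisfy these conditions then $\Omega_1 - \Omega_2 \in \ker q = Q_W$, so $D(\Omega_1 - \Omega_2) = \Omega_1 - \Omega_2$ since $D|_{Q_W} = \Id$, but also $D(\Omega_1 - \Omega_2) = 0$, forcing $\Omega_1 = \Omega_2$; existence follows by lifting $\bar q(\Ob)$ to some $\tilde\Omega \in R_W$ and correcting by an element of $Q_W$ to kill the $D$-image, which is possible since $D$ is the identity on $Q_W$. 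Now combine this with the first assertion: the defining equation $q(\Omega) = \bar q(\Ob)$ can be rewritten using $\bar q(\Ob) = \overline P(\Psi)$ as $q(\Omega) = \overline P(\Psi)$. Since $\overline P$ is the map $H^*(\cone) \to R_W/Q_W$ induced at chain level by $(\eta,\xi) \mapsto [-\xi]$, it makes no reference to $P$ whatsoever; and $\Psi = [\psi(\gamma,b)]$ likewise does not depend on $P$. Therefore $\Omega$ is the unique element of $R_W$ with $q(\Omega) = \overline P(\Psi)$ and $D(\Omega) = 0$, a characterization in which $P$ does not appear. Hence $\Omega$, and therefore the invariants $\ogw$, are independent of the choice of $P$.

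There is no real obstacle here; this lemma is a bookkeeping consequence of the way $P$, $\overline P$, and $\Omega$ were set up in Section~\ref{ssec:omegabar}. The only point deserving care is to make sure the existence-uniqueness argument for $\Omega$ is stated cleanly — that the two conditions in~\eqref{eq:omconds} genuinely pin down a unique element, using $\ker q = Q_W$ and $D|_{Q_W} = \Id$ — after which the independence of the choice of $P$ is immediate from $\bar q \circ P = \overline P$ and the $P$-independence of the right-hand data $\overline P$ and $\Psi$.
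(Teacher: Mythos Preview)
Your proof is correct and follows essentially the same approach as the paper: apply condition~\eqref{eq:bP} to get $\bar q(\Ob) = \overline P(\Psi)$, then observe that $\overline P$ is canonical so the defining conditions~\eqref{eq:omconds} for $\Omega$ are independent of $P$. Your additional care in verifying existence and uniqueness of $\Omega$ is more detailed than the paper (which asserted uniqueness earlier without proof), but the argument is the same.
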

\begin{proof}
By condition~\eqref{eq:bP} we have $\bar q(\Ob) = \bar q \circ P(\Psi) = \overline P (\Psi).$ Since the map $\overline P$ is canonical, the conditions~\eqref{eq:omconds} are independent of the choice of $P$.
\end{proof}

Following~\cite{ST2}, we write
\begin{equation}\label{eq:oh}
\Oh = (-1)^{n}\qbg_{-1,0}.
\end{equation}
\begin{lm}\label{lm:spc}
We have $\Omega = \Oh - D \Oh$.
\end{lm}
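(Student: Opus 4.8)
The plan is to unwind the definitions of $\Omega$, $\Oh$, and $\psi(\gamma,b)$ and verify that $\Oh - D\Oh$ satisfies the two defining conditions~\eqref{eq:omconds} of $\Omega$, namely $q(\Oh - D\Oh) = \bar q(\Ob)$ and $D(\Oh - D\Oh) = 0$; since $\Omega$ is the \emph{unique} such element, this gives the result. First I would record that $\psi(\gamma,b) = (\qg_{\emptyset,0}, (-1)^{n+1}\qbg_{-1,0}) = (\qg_{\emptyset,0}, -\Oh)$ by equation~\eqref{eq:oh}. Applying the canonical chain map $\cone \to R_W/Q_W$ given by $(\eta,\xi)\mapsto [-\xi]$ and passing to cohomology, we get $\overline P(\Psi) = [\Oh] \in R_W/Q_W$, i.e. $q(\Oh) = \overline P(\Psi)$ once we check that $\Oh$ is the correct representative (note $\qbg_{-1,0}$ is a genuine element of $R_W$, so this is just the definition of $\overline P$ applied to the explicit cocycle $\psi(\gamma,b)$). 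By Lemma~\ref{lm:OP}, $\bar q(\Ob) = \overline P(\Psi)$, so $q(\Oh) = \bar q(\Ob)$.

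Next I would address the $D$-normalization. Since $q(\Oh - D\Oh) = q(\Oh) - q(D\Oh)$ and $D\Oh \in Q_W$, we have $q(D\Oh) = 0$, hence $q(\Oh - D\Oh) = q(\Oh) = \bar q(\Ob)$; this verifies the first condition in~\eqref{eq:omconds}. For the second condition, I would use that $D : R_W \to Q_W$ is a $Q_W$-module homomorphism restricting to the identity on $Q_W$, so $D$ is idempotent: $D(D\Oh) = D\Oh$ because $D\Oh \in Q_W$. Therefore $D(\Oh - D\Oh) = D\Oh - D(D\Oh) = D\Oh - D\Oh = 0$. This checks the second condition in~\eqref{eq:omconds}. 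By the uniqueness statement following~\eqref{eq:omconds} (the element of $R_W$ with prescribed image under $q$ and killed by $D$ is unique — this uniqueness is exactly what makes $\Omega$ well-defined, since $D$ is a splitting of the inclusion $Q_W \hookrightarrow R_W$ and $\Ker D = \Ker q = Q_W$... more precisely $D$ restricted to $\Ker q$ is injective on a complement), we conclude $\Omega = \Oh - D\Oh$.

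The one point requiring a little care — and the place I expect the only real friction — is making sure the representative-level computation $q(\Oh) = \overline P(\Psi)$ is legitimate: $\overline P$ is defined on cohomology classes, so I must observe that $\psi(\gamma,b)$ is a specific cocycle representing $\Psi$ (proved in Section~\ref{ssec:psi}), that its second component is $(-1)^{n+1}\qbg_{-1,0} = -\Oh$, and that the canonical chain map $\cone \to R_W/Q_W$ sends this cocycle to $[-(-\Oh)] = [\Oh]$, which then descends to $\overline P(\Psi) = q(\Oh)$. Everything else is formal manipulation with the module homomorphism $D$ and the quotient map $q$, using only that $D|_{Q_W} = \Id$ and $\Oh \in R_W$. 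No deeper input is needed.
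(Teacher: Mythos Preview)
Your proposal is correct and follows essentially the same approach as the paper: verify the two defining conditions~\eqref{eq:omconds} for the candidate $\Oh - D\Oh$, using Lemma~\ref{lm:OP} to identify $\bar q(\Ob) = \overline P(\Psi)$, the explicit cocycle $\psi(\gamma,b) = (\qg_{\emptyset,0},-\Oh)$ to compute $\overline P(\Psi) = q(\Oh)$, and the idempotence of $D$ (from $D|_{Q_W} = \Id$) for the normalization. The paper's proof is identical in substance, only more terse.
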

\begin{proof}
We verify the two conditions defining $\Omega.$ First, Lemma~\ref{lm:OP} gives
\[
\bar q(\Ob) = \overline P(\Psi) = q(\Oh - D \Oh).
\]
Second,
\[
D(\Oh - D(\Oh)) = D \Oh - D \Oh = 0.
\]
\end{proof}

\begin{rem}\label{rem:qder}
In~\cite{ST2} we defined
\begin{equation}\label{eq:ogwid}
\ogw_{\beta,k}([\gamma_{i_1}],\ldots,[\gamma_{i_l}])
:=
\text{ the coefficient of $T^{\beta}$ in }\d_{t_{i_1}}\cdots\d_{t_{i_l}}\d_s^k\Omega|_{s=0,t_j=0}.
\end{equation}
By Lemma~\ref{lm:spc} and the condition $q(\Omega)=\bar q(\Ob)$, this definition coincides with the definition of equation~\eqref{eq:ogw} in the current paper.
However, the analog of equation~\eqref{eq:ogwid} for $\ogwb$, with $\Ob$ instead of $\Omega$, is true only if $[L]=0$.
\end{rem}

Denote by
\[
p:R_W\lrarr \Coker(\i)
\]
the quotient map, and let $f = \int_L b.$
\begin{lm}\label{lm:dsomega}
Let $s\in S$.
\begin{enumerate}
\item
$
\d_s\Ob=p(\dmf\cdot c).
$
\item
$
\d_s \Omega = \dmf \cdot c.
$
\end{enumerate}
\end{lm}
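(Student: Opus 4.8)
The plan is to derive both identities from a single chain-level formula for $\d_s\psi$. The key input is that $\memh(0,1)=(0,-c)$, which is immediate from the definition~\eqref{eq:memhdfn} of $\memh$, since $\memh(\eta,\xi)=(\qg_{\emptyset,1}(\eta),(-1)^{n+1}\qbg_{-1,1}(\eta)-c\cdot\xi)$. Combining this with Lemma~\ref{lm:dpsi} --- or, alternatively, computing $\d_s\psi$ directly from $\d_s\gamma=0$, the cyclic symmetry of the Poincar\'e pairing, and the Maurer--Cartan equation $\sum_{k,t}\frac{1}{t!}\q_{k,t}(b^{\otimes k};\gamma^{\otimes t})=c\cdot 1$ --- one obtains, at the level of chains in $\cone$,
\[
\d_s\psi=\dmf\cdot\memh(0,1)=(0,-\dmf\cdot c).
\]
Both parts of the lemma are then read off from this equation.

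For part (b), recall from~\eqref{eq:oh} and the definition $\psi=(\qg_{\emptyset,0},(-1)^{n+1}\qbg_{-1,0})$ in Section~\ref{ssec:psi} that the second component of $\psi$ equals $-\Oh$; comparing second components in the display above gives $\d_s\Oh=\dmf\cdot c$. Next I would invoke Lemma~\ref{lm:spc}, which gives $\Omega=\Oh-D\Oh$. Since $D$ takes values in $Q_W=\Lc\otimes\R[[V[2]]]$, which does not involve the formal variables $\{s_i\}_{i\in I_S}$, we have $\d_s(D\Oh)=0$, and therefore $\d_s\Omega=\d_s\Oh=\dmf\cdot c$, which is part (b).

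For part (a), since $\Ob=P\Psi$ by~\eqref{eq:obdef} and $\d_s$ is a chain map, $\d_s\Psi=[\d_s\psi]=[(0,-\dmf\cdot c)]$. As the chain $(0,-r)$ is $x(r)$ by construction, this cohomology class equals $\bar x(p(\dmf\cdot c))$, where $p:R_W\to\Coker\i$ is the quotient map and $\bar x$ is the map from diagram~\eqref{eq:ybar}. Applying the left inverse $P$ of $\bar x$ then gives $\d_s\Ob=P(\d_s\Psi)=p(\dmf\cdot c)$, which is part (a) --- provided $\d_s$ commutes with $P$. This last point is the only step I expect to require care. I would treat it by cases: the canonical map $\overline P$ is plainly $\d_s$-equivariant, so when $[L]\neq 0$ the map $\bar q$ is an isomorphism and $P=\bar q^{-1}\circ\overline P$ is $\d_s$-equivariant; when $[L]=0$, I would use the splitting $H^*(\cone)=\im\bar x\oplus\Ker P$ from the proof of Lemma~\ref{lm:PRP}, noting that $\bar x$ is $\d_s$-equivariant and that $\d_s$ acts trivially on $\Ah^*(X,L;\Qh_W)$, hence on $\Ker P=a(\Ker l_\Qh)$, so that $P$ --- the projection onto $\im\bar x$ along $\Ker P$ followed by $\bar x^{-1}$ --- commutes with $\d_s$.
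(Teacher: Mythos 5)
Your proposal is correct and follows essentially the same route as the paper: part (a) applies $P$ to $\d_s\Psi=\dmf\cdot\nund[(0,1)]$ from Lemma~\ref{lm:dpsi}, and part (b) combines Lemma~\ref{lm:spc} with $\d_s(D\Oh)=0$. Two small comments. First, for part (b) the paper re-derives $\d_s\Oh=\dmf\cdot c$ from scratch via the Maurer--Cartan equation and cyclic symmetry (essentially redoing a computation already carried out in the proof of Lemma~\ref{lm:dpsi}); your route of reading it off from the second component of $\d_s\psi=(0,-\dmf\cdot c)$ is a bit more economical. Second, the point you flag --- that $\d_s$ commutes with $P$ --- is indeed used tacitly in the paper's chain of equalities (and again in Proposition~\ref{prop:wallseries}), and your justification is sound: $\bar x$ is $\d_s$-equivariant since $x(r)=(0,-r)$ at chain level, while $\d_s$ annihilates $\Ker P=a(\Ker P_\Qh)$ because $a$ factors through the $s$-independent module $\Hh^*(X,L;\Qh_W)$. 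Note that this last splitting argument works uniformly, so the separate treatment of $[L]\ne 0$ is unnecessary.
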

\begin{proof}
By Lemma~\ref{lm:dpsi},
\[
\d_s\Ob= P(\dmf\cdot\nund[(0,1)])= P(0,-\dmf\cdot c)=P(\bar x(p(\dmf \cdot c)))= p(\dmf\cdot c).
\]
Lemma~\ref{lm:spc} and the Maurer-Cartan equation~\eqref{eq:bdchdfn} give
\[
\d_s \Omega = \d_s(\Oh - D \Oh) = \d_s \Oh
= (-1)^{n}\langle \qbg_{0,0},\d_s b \rangle
= (-1)^{n}\langle c\cdot 1, \d_s b \rangle
= c \cdot \d_s \left(\int_L b\right)
= \dmf \cdot c.
\]
\end{proof}

\begin{rem}\label{rem:plbc}
Recall from Definition~\ref{dfn:bdpair} that a bounding chain $b$ is called point-like if $S$ is $1$-dimensional and $f = \int_L b$ is a coordinate function on $S.$ In light of Lemma~\ref{lm:dsomega}, we will now argue that, generically, all open Gromov-Witten invariants can be obtained from point-like bounding chains. Indeed, consider a general bounding chain $b$ with $S$ of arbitrary dimension.
For generic $f,$ the point $0 \in S$ is regular. So, after a formal change of coordinates, we may assume that $f$ is one of the coordinate functions on $S.$ Then, Lemma~\ref{lm:dsomega} shows that the superpotentials $\Omega$ and $\Ob$ depend only on $f$ and not the other coordinate functions. Replacing $S$ with the one dimensional subspace on which the other coordinate functions vanish, and replacing $b$ with its restriction to this subspace, we obtain a point-like bounding chain. The superpotentials $\Omega$ and $\Ob$ associated with this point-like bounding chain contain the same information as the superpotentials associated to the original $b.$
\end{rem}

Let $U \subset H^*(X;\R)$ be a subspace, and let $(\gamma_W,b)$ be a bounding pair over $W := \rho^{-1}(U) \subset \Hh^*(X,L)$.
Recall that in Section~\ref{sssec:wallcross} we define $\g_\diamond:=y(1)$. By abuse of notation, denote by $\d_\diamond: R_W\to R_W$
the derivation corresponding to $\g_\diamond$.
\begin{prop}\label{prop:wallseries}
Suppose $[L]=0$ and $b$ is separated. Then
\[
\d_s\Ob=-\d_sf\cdot \d_\diamond\Ob,
\qquad
\forall s\in S.
\]
\end{prop}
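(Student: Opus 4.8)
The plan is to prove the sharper identity $\d_s\Psi=-\d_s f\cdot\d_\diamond\Psi$ in $H^*(\cone)$ and then push it down through the projection $P$. The input is Lemma~\ref{lm:dpsi}: since $b$ is separated, writing $f=\int_Lb$ we have
\[
\d_\diamond\Psi=\nund\big([(\d_\diamond\gamma,0)]\big)=\nund\big(a(\g_\diamond)\big),\qquad \d_s\Psi=\d_s f\cdot\nund\big([(0,1)]\big),
\]
where $a\colon\Hh^*(X,L;\Qh_W)\to H^*(\cone)$ is the map induced by $\eta\mapsto(\eta,0)$, and we used that the class of $\d_\diamond\gamma$ in $\Hh^*(X,L;\Qh_W)$ is the constant $\g_\diamond$ (as $[\gamma_W]=\Gamma_W$). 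So everything reduces to relating the two classes $[(0,1)]$ and $a(\g_\diamond)$.

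The key step is an elementary de Rham computation in $\cone$. Since $L$ is a nonempty closed submanifold there is $\nu\in A^n(X)$ with $\int_L i^*\nu=1$; regard $\nu\in A^n(X;\Qh_W)$ via $\otimes\,1$. Then $\i(\nu)=(-1)^{n+n}\int_L i^*\nu=1$, while $\i(d\nu)=(-1)^{2n+1}\int_L d(i^*\nu)=0$ because $\d L=\emptyset$. Hence $(0,1)$ and $(d\nu,0)$ are $\dcone$-closed and $(d\nu,1)=\dcone(\nu,0)$ is $\dcone$-exact, so
\[
[(0,1)]=-[(d\nu,0)]=-a(\g_\diamond)
\]
in $H^*(\cone)$; the last equality is the very definition of the connecting map $y$ of the exact sequence~\eqref{eq:yrho}, i.e.\ $y(1)=[d\nu]=\g_\diamond$, together with compatibility of~\eqref{eq:yrho} with extension of scalars from $\R$ to $\Qh_W$. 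Substituting into the two displays above gives
\[
\d_s\Psi=\d_s f\cdot\nund\big([(0,1)]\big)=-\d_s f\cdot\nund\big(a(\g_\diamond)\big)=-\d_s f\cdot\d_\diamond\Psi.
\]

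It remains to descend this identity through $P$. By Lemma~\ref{lm:PRP}, $P$ is the projection onto $\Im\bar x\cong\Coker\i$ for the splitting $H^*(\cone)=\Im\bar x\oplus a(\Ker P_\R\otimes\Qh_W)$. Both $\d_s$ and $\d_\diamond$ preserve this splitting: they preserve $\Im\bar x$ because $\bar x$ is induced by the chain map $r\mapsto(0,-r)$ and the derivations act through the $R_W$-coefficients there, and they preserve $a(\Ker P_\R\otimes\Qh_W)$ because $\d_s$ annihilates $\Im a$ outright (the second slot of $a(\eta)$ vanishes and $\eta$ has only $\Qh_W$-coefficients, on which $\d_s=0$) while $\d_\diamond$ acts on $\Hh^*(X,L;\Qh_W)=\Hh^*(X,L;\R)\otimes\Qh_W$ only through the coefficient ring. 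Hence $P$ commutes with $\d_s$ and $\d_\diamond$; moreover $\nund([(0,1)])=[\memh(0,1)]=[(0,-c)]$ lies in $\Im\bar x$, so multiplication by the scalar $\d_s f$ on $\d_\diamond\Psi$ is legitimate and is intertwined by $P$ with scalar multiplication on $\Coker\i$. Applying $P$ to the last display and using $P\bar x=\Id$ yields $\d_s\Ob=P\d_s\Psi=-\d_s f\cdot P\d_\diamond\Psi=-\d_s f\cdot\d_\diamond\Ob$, which one checks is consistent with Lemma~\ref{lm:dsomega} via $\d_\diamond\Ob=-p(c)$ and $\d_s\Ob=p(\d_s f\cdot c)$. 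The main obstacle I anticipate is precisely this last bookkeeping around $P$: $H^*(\cone)$ is only a $\Qh_W$-module whereas $\Coker\i$ and $R_W$ carry the full $R_W$-action, so scaling by $\d_s f$ has to be done on the $\Im\bar x\cong\Coker\i$ part, which is exactly where $\nund$ deposits the relevant classes; everything else is a direct appeal to Lemmas~\ref{lm:dpsi} and~\ref{lm:PRP} plus the one-line computation $(d\nu,1)=\dcone(\nu,0)$.
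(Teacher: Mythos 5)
Your proof is correct and follows essentially the same route as the paper's: you prove $[(0,1)]=-a(\Gamma_\diamond)$ in $H^*(\cone)$ by a short de Rham computation (the paper states this as $[(\gamma_\diamond,0)]=[(0,-1)]$), combine it with Lemma~\ref{lm:dpsi} to get $\d_s\Psi=-\d_sf\cdot\d_\diamond\Psi$, and then push through $P$. You spell out some details the paper leaves implicit — notably that $y(1)=[d\nu]$ and that $P$ commutes with $\d_s$, $\d_\diamond$, and multiplication by $\d_sf$ on the relevant classes — but the structure of the argument is the same.
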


\begin{proof}
Let $\gamma_\diamond\in \Ah^*(X,L;\R)$ be a representative of $\g_\diamond$.
It follows from the definition of $\g_\diamond$ that in $H^*(\cone),$ we have $[(\gamma_\diamond,0)]=[(0,-1)].$ By Lemma~\ref{lm:dpsi} we then have
\[
\d_s\Ob
=P(\d_s\Psi)
=P(\d_sf\cdot\nund[(0,1)])
=-P(\d_sf\cdot\nund[(\gamma_\diamond,0)])
=-P(\d_sf\cdot \d_\diamond\Psi)
=-\d_sf\cdot \d_\diamond\Ob.
\]
\end{proof}

\begin{proof}[Proof of Theorem~\ref{prop:wallcross}]
By assumption, $\d_sf=1.$ Therefore Proposition~\ref{prop:wallseries} gives $\d_s\Ob=- \d_\diamond\Ob$. Since $[L]=0$ and thus $\Coker \i = R_W$, this immediately implies the required equality of invariants.
\end{proof}

\subsection{Enhanced axioms}
The following axioms for the invariants $\ogwb$ are useful in combination with the open WDVV equations for carrying out recursive computations. See Section~\ref{comp}. Similar axioms for the invariants $\ogw$ were proved in~\cite{ST2}.
In the following, we assume that the subspace $W \subset \Hh^*(X,L;\R)$ and the bounding pair~$(\gamma_W,b)$ are as in Theorem~\ref{thm:esphere} or Theorem~\ref{thm:cases}. In particular, writing $W_i \subset W$ for the degree $i$ homogeneous part, by Lemma~5.11 of~\cite{ST2}, there is a natural inclusion
\[
W_2 \hookrightarrow H^2(X,L;\R).
\]
So, for $A \in W_2,$ and $\beta \in H_2(X,L;\Z),$ the pairing $\int_\beta A$ is well-defined.
\begin{prop}
\label{axioms}
The invariants $\ogwb$ of $(X,L)$ satisfy the following axioms.
Let $A_j \in W$ for $j = 1,\ldots,l$.
\begin{enumerate}
	\item (Degree)
		$\ogwb_{\beta,k}(A_1,\ldots,A_l)=0$ unless
		\begin{equation}\label{ax_deg}
		n-3+\mu(\beta)+k+2l = kn+ \sum_{j=1}^l |A_j|.
		\end{equation}
	\item (Unit / Fundamental class)
		\begin{equation}\label{ax_unit}
		\ogwb_{\beta,k}(1,A_{1},\ldots,A_{l-1})=
		\begin{cases}
			-1, & (\beta,k,l)=(\beta_0,1,1),\\
            P_\R(A_1), & (\beta,k,l) = (\beta_0,0,2),\\
			0, & \text{otherwise}.
		\end{cases}
		\end{equation}
	\item (Zero)
		\begin{equation}\label{ax_zero}
		\ogwb_{\beta_0,k}(A_1,\ldots,A_l)=
		\begin{cases}
		-1, & (k,l)=(1,1)\text{ and } A_1=1,\\
        P_\R(A_1 \smallsmile A_2), & (k,l) = (0,2),\\
		0, & \text{otherwise}.
		\end{cases}
		\end{equation}
	\item (Divisor)
		If $|A_l| = 2,$ then
		\begin{equation}\label{ax_divisor}
		\ogwb_{\beta,k}(A_1,\ldots,A_{l})=\int_\beta A_l \cdot\ogwb_{\beta,k}(A_1,\ldots,A_{l-1}).
		\end{equation}
\end{enumerate}
\end{prop}
\begin{proof}
When $[L] \neq 0,$ all values of $\ogwb$ that are defined coincide with the values of $\ogw$ and $P_\R = 0.$ So, the axioms proved for $\ogw$ in~\cite{ST2} imply the axioms for $\ogwb.$ Thus, we may assume that $[L] = 0$ and, as mentioned in Remark~\ref{rem:qder}, we can compute the invariants $\ogwb$ by taking derivatives of $\Ob.$ We first prove the unit axiom~\eqref{ax_unit} in detail. By assumption, the unit element of the cohomology $1 \in \Hh^*(X,L;\R)$ belongs to $W$ and we denote by $\d_1$ the corresponding directional derivative. It is shown in~\cite{ST2} that $\d_1 \Oh = - T^{\beta_0}s.$ It follows from Lemma~\ref{lm:cunit} that $\d_1\q^{\gamma_W}_{\emptyset,0} = T^{\beta_0}\gamma_W.$ Thus
\[
\d_1 \Psi(\gamma_W,b) = \d_1[(\q^{\gamma_W}_{\emptyset,0},-\Oh)] = T^{\beta_0}[(\gamma_W,s)] = T^{\beta_0}(a(\Gamma_W) - x(s)).
\]
So, diagram~\eqref{eq:PQ} implies
\[
\d_1 \Ob = \d_1 P \Psi = P \d_1\Psi = T^{\beta_0}(P(a(\Gamma_W)) - P(x(s))) = T^{\beta_0}(P_\Qh(\Gamma_W) - s).
\]
The unit axiom follows by taking derivatives and using equation~\eqref{eq:PR}. The remaining axioms follow by a similar combination of the arguments in~\cite{ST2} with Lemmas~\ref{lm:cdeg},~\ref{lm:czero} and~\ref{lm:cdiv}.
\end{proof}

\section{Relative quantum cohomology and open WDVV}\label{sec:rqcowdvv}
\subsection{Preliminaries}\label{ssec:prelim}
Throughout Section~\ref{sec:rqcowdvv}, we operate under assumptions~\ref{assump:sfa} and~\ref{assump:b}. Recalling the definition of $P_\R$ from equation~\eqref{eq:PR}, we take
\begin{equation}\label{eq:W'}
W' = \Ker(P_\R|_W) \subseteq W.
\end{equation}
Lemma~\ref{lm:PRr} implies that $W = W' \oplus \Im y$ and $\rho|_{W'} : W' \to U$ is injective.
Denote by $\kappa : W \to W$
the projection to $W'$ along $\im y$ and denote by $\kappa^* : \Qh_W \to \Qh_W$ the induced homomorphism. Define $\gamma_{W'} \in \mI_W\Ah^*(X,L;\Qh_W )$ by $\gamma_{W'} = \kappa^* \gamma_W.$ Since $\rho|_{W'}$ is injective, there exists a closed $\gamma_U \in \mI_U A^*(X;\Qh_U)$ such that $[\gamma_U]=\g_U \in U\otimes \Qh_U$ and
\begin{equation}\label{eq:gUW'}
\rho^*\gamma_U=\gamma_{W'}.
\end{equation}

\begin{lm}\label{lm:rhostar}
For $\eta_1,\ldots,\eta_l \in A^*(X;\Qh_W),$ we have
\[
[\rho^*(\q_{\emptyset,l}^{\gamma_U}(\eta_1,\ldots,\eta_l))] =
[\q_{\emptyset,l}^{\gamma_W}(\eta_1,\ldots,\eta_l)] \in H^*(X;\Qh_W).
\]
\end{lm}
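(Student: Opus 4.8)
\textbf{Proof plan for Lemma~\ref{lm:rhostar}.}
The plan is to show that the two closed forms $\rho^*(\q_{\emptyset,l}^{\gamma_U}(\eta_1,\ldots,\eta_l))$ and $\q_{\emptyset,l}^{\gamma_W}(\eta_1,\ldots,\eta_l)$ differ by an exact form, by interpolating between the bulk deformation parameters $\gamma_U$ (pushed forward via $\rho^*$) and $\gamma_W$. The starting observation is equation~\eqref{eq:gUW'}: $\rho^*\gamma_U = \gamma_{W'} = \kappa^*\gamma_W$, so the two bulk deformations $\rho^*\gamma_U$ and $\gamma_W$ represent cohomology classes whose difference lies in $\Im y \subset W$; in other words $\gamma_W - \rho^*\gamma_U$ is a closed form whose restriction to $L$ has trivial cohomology class in the appropriate sense, and more precisely its class in $H^*(X;\Qh_W)$ already agrees with that of $\rho^*\gamma_U$ because $y$ lands in the kernel of $\rho$. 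Hence $[\gamma_W] = [\rho^*\gamma_U] = \g_U$ in $H^*(X;\Qh_W)$, so $\gamma_W - \rho^*\gamma_U = d\zeta$ for some $\zeta \in \mI_W A^*(X;\Qh_W)$.

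The first main step is to recall that $\q_{\emptyset,l}^{\gamma}(\eta_1,\ldots,\eta_l) = \sum_t \frac{1}{t!}\q_{\emptyset,l+t}(\eta_1,\ldots,\eta_l,\gamma^{\otimes t})$ depends, up to exact forms, only on the cohomology class of $\gamma$. This is precisely the statement that $\q_\emptyset$ is a chain map (Lemma~\ref{lm:qecm}), together with the standard deformation argument: if $\gamma_0,\gamma_1$ are closed and cohomologous, then feeding a primitive $\zeta$ of $\gamma_1 - \gamma_0$ into the structure equation for $\q_\emptyset$ (Lemma~\ref{lm:qecm}, i.e. the $A_\infty$/chain-map relation for $\q_{\emptyset,\bullet}$) shows $\q_{\emptyset,l}^{\gamma_1}(\eta) - \q_{\emptyset,l}^{\gamma_0}(\eta)$ is $d$ of an explicit expression built from $\q_{\emptyset}$ with one $\zeta$ inserted. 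Applying this with $\gamma_0 = \rho^*\gamma_U$ and $\gamma_1 = \gamma_W$ gives
\[
[\q_{\emptyset,l}^{\gamma_W}(\eta_1,\ldots,\eta_l)] = [\q_{\emptyset,l}^{\rho^*\gamma_U}(\eta_1,\ldots,\eta_l)] \in H^*(X;\Qh_W).
\]
The second main step is to compare $\q_{\emptyset,l}^{\rho^*\gamma_U}(\eta_1,\ldots,\eta_l)$ with $\rho^*(\q_{\emptyset,l}^{\gamma_U}(\eta_1,\ldots,\eta_l))$. Here one uses that $\rho^* : Q_U \to Q_W$ is a ring homomorphism which is the identity on the form-theoretic level (it only changes the Novikov/formal-variable coefficients), so it commutes with the operations $\q_{\emptyset,l+t}$ and with wedge products: $\rho^*(\q_{\emptyset,l+t}(\eta_1,\ldots,\eta_l,\gamma_U^{\otimes t})) = \q_{\emptyset,l+t}(\eta_1,\ldots,\eta_l,(\rho^*\gamma_U)^{\otimes t})$, where on the left the $\eta_i$ are viewed over $Q_U$ and on the right over $Q_W$. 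Summing over $t$ with the $\frac{1}{t!}$ weights gives $\rho^*(\q_{\emptyset,l}^{\gamma_U}(\eta)) = \q_{\emptyset,l}^{\rho^*\gamma_U}(\eta)$ on the nose, not merely up to exact forms. Combining with the previous step yields the claim.

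The main obstacle I anticipate is bookkeeping rather than conceptual: one must be careful that the deformation argument in the first step stays within the correct ideal-filtered completion (so that the infinite sums over $t$ and over $\beta$ converge and the primitive $\zeta$ can be chosen in $\mI_W A^*(X;\Qh_W)$), and that the signs in the chain-homotopy formula coming from Lemma~\ref{lm:qecm} work out — this is the usual ``potential depends only on the cohomology class of $\gamma$'' computation already invoked in Section~\ref{sssec:gwp} for $\Phi_U$. Since the excerpt explicitly states ``$\q_{\emptyset,l}(\gamma_U^{\otimes l})$ is closed and $\Phi_U$ only depends on the cohomology class of $\gamma_U$,'' I would cite that fact and Lemma~\ref{lm:qecm} rather than redo the homotopy in detail. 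The interchange with $\rho^*$ in the second step is purely formal and should require only a remark that $\rho^*$ acts only on coefficients.
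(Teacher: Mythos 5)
Your proof is correct and takes essentially the same approach as the paper's: both reduce via $\rho^*\gamma_U=\gamma_{W'}$ (using that $\rho^*$ acts only on coefficients, so it commutes with the $\q_\emptyset$ operations) and then compare $\q^{\gamma_W}_{\emptyset,l}$ with $\q^{\gamma_{W'}}_{\emptyset,l}$ in cohomology using the chain map property (Lemma~\ref{lm:qecm}) and the fact that $\Im y=\Ker\rho$ forces $\gamma_W-\gamma_{W'}$ to be exact as a form on $X$. The only cosmetic difference is that the paper phrases the comparison as a formal $t_\diamond$-derivative computation while you phrase it as a direct chain homotopy interpolating between the two bulk parameters.
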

\begin{proof}
By equation~\eqref{eq:gUW'}, we have
\[
\rho^*(\q_{\emptyset,l}^{\gamma_U}(\eta_1,\ldots,\eta_l)) = \q_{\emptyset,l}^{\rho^*\gamma_U}(\eta_1,\ldots,\eta_l) = \q_{\emptyset,l}^{\gamma_{W'}}(\eta_1,\ldots,\eta_l).
\]
By definition, $\gamma_W$ and $\gamma_{W'},$ and consequently $\q^{\gamma_{W}}_{\emptyset,l}(\eta_1,\ldots,\eta_l)$ and $\q_{\emptyset,l}^{\gamma_{W'}}(\eta_1,\ldots,\eta_l),$ are differential form valued formal functions on $W$ that agree when restricted to $W' \subset W.$
Since $W = W' \oplus \Im y$, it is enough to
consider dependence of $[\q^{\gamma_{W}}_{\emptyset,l}(\eta_1,\ldots,\eta_l)]$ on $\g_\diamond=y(1)\in \Hh^*(X,L).$
Since $[\d_\diamond\gamma_W] = \d_\diamond \Gamma_W = \g_\diamond \in \Ker(\rho)$, there  is $\eta\in A^*(X;\R)$ such that $\d_\diamond\gamma_W=d\eta$.
By Lemma~\ref{lm:qecm}, we have
\begin{equation}\label{eq:ddiamW}
\d_\diamond\q^{\gamma_{W}}_{\emptyset,l}(\eta_1,\ldots,\eta_l)
=
\q^{\gamma_{W}}_{\emptyset,l+1}(\d_\diamond\gamma_W,\eta_1,\ldots,\eta_l)
=
d\q^{\gamma_{W}}_{\emptyset,l+1}(\eta,\eta_1,\ldots,\eta_l).
\end{equation}
Moreover,
\begin{equation}\label{eq:ddiamW'}
\d_\diamond\q^{\gamma_{W'}}_{\emptyset,l}(\eta_1,\ldots,\eta_l)
=
\q^{\gamma_{W'}}_{\emptyset,l+1}(\d_\diamond\gamma_{W'},\eta_1,\ldots,\eta_l)
=0.
\end{equation}
Thus, we have $[\d_\diamond\q^{\gamma_{W}}_{\emptyset,l}(\eta_1,\ldots,\eta_l)]
=
[\d_\diamond\q^{\gamma_{W'}}_{\emptyset,l}(\eta_1,\ldots,\eta_l)]$, and therefore, $[\q^{\gamma_{W}}_{\emptyset,l}(\eta_1,\ldots,\eta_l)]
=
[\q^{\gamma_{W'}}_{\emptyset,l}(\eta_1,\ldots,\eta_l)]$.
\end{proof}

Throughout Section~\ref{sec:rqcowdvv}, we write
\begin{equation}\label{eq:V}
V:=\pi^{-1}(U\otimes \Qh_W)\subset \cone.
\end{equation}
\begin{lm}\label{lm:nunendo}
For all $u \in W$ we have
\[
\d_u\nund(V) \subset V.
\]
\end{lm}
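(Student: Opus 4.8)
The plan is to descend everything to cohomology and exploit the projection $\pi : H^*(\cone) \to H^*(X;\Qh_W)$ from the long exact sequence of the cone. Since $V = \pi^{-1}(U\otimes\Qh_W)$, it suffices to show that $\pi(\d_u\nund\Upsilon) \in U\otimes\Qh_W$ whenever $\pi(\Upsilon)\in U\otimes\Qh_W$. First I would record two commutation relations. On the one hand $\pi$ is induced by the chain map $(\eta,\xi)\mapsto\eta$ and $\d_u$ acts on $\cone$ coefficientwise, so $\pi\circ\d_u = \d_u\circ\pi$ on cohomology. On the other hand, by~\eqref{eq:memhdfn} the first component of $\memh(\eta,\xi)$ is $\qg_{\emptyset,1}(\eta)$ (here $\gamma = \gamma_W$ by assumption~\ref{assump:b}), and $\qg_{\emptyset,1}$ is a chain endomorphism of $A^*(X;\Qh_W)$ — this is exactly the first-component identity established in the proof of Lemma~\ref{lm:nunchain}, which follows from Lemma~\ref{lm:qecm} together with $d\gamma_W = 0$. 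Writing $[\qg_{\emptyset,1}]$ for the induced endomorphism of $H^*(X;\Qh_W)$, this gives $\pi\circ\nund = [\qg_{\emptyset,1}]\circ\pi$. Combining the two relations yields $\pi(\d_u\nund\Upsilon) = (\d_u[\qg_{\emptyset,1}])(\pi\Upsilon)$, so the lemma reduces to showing that the derivative operator $\d_u[\qg_{\emptyset,1}]$ maps $U\otimes\Qh_W$ into itself.

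For that reduction I would use the $\Qh_W$-linearity of $[\qg_{\emptyset,1}]$ (Lemma~\ref{lm:qlinear}): since $U\otimes\Qh_W$ is the free $\Qh_W$-module on a basis of $U$, the terms involving $W$-derivatives of the $\Qh_W$-coefficients cancel between $\d_u([\qg_{\emptyset,1}]\pi\Upsilon)$ and $[\qg_{\emptyset,1}](\d_u\pi\Upsilon)$, leaving us to check that $(\d_u[\qg_{\emptyset,1}])(\zeta)\in U\otimes\Qh_W$ for a fixed class $\zeta\in U\subset H^*(X;\R)$. For such $\zeta$ one has $\d_u\zeta = 0$, hence $(\d_u[\qg_{\emptyset,1}])(\zeta) = \d_u[\qg_{\emptyset,1}(\zeta)]$. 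Differentiating the $\gamma_W$-insertions in $\qg_{\emptyset,1}(\zeta) = \sum_{t\ge 0}\tfrac1{t!}\q_{\emptyset,1+t}(\zeta\otimes\gamma_W^{\otimes t})$ exactly as in~\eqref{eq:ddiamW}, and reorganizing the resulting sum by the closed symmetry of Lemma~\ref{lm:csymm}, gives $\d_u[\qg_{\emptyset,1}(\zeta)] = \pm[\q^{\gamma_W}_{\emptyset,2}(\zeta,\d_u\gamma_W)]$, with a sign that plays no role.

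The key geometric input enters now. The form $\d_u\gamma_W$ is closed and lies in $\Ah^*(X,L;\Qh_W)$, and its class in $\Hh^*(X,L;\Qh_W)$ is $\d_u\Gamma_W = u$; hence its class in $H^*(X;\Qh_W)$ equals $\rho(u)$, which lies in $U$ because $W = \rho^{-1}(U)$. So by Lemma~\ref{lm:rhostar} together with the definition of the big quantum product,
\[
[\q^{\gamma_W}_{\emptyset,2}(\zeta,\d_u\gamma_W)] = \rho^*\bigl([\q^{\gamma_U}_{\emptyset,2}(\zeta,\rho(u))]\bigr) = \rho^*\bigl(\zeta\star_U\rho(u)\bigr).
\]
By assumption~\ref{assump:sfa}, $U\otimes\Qh_U$ is a subalgebra of $QH_U(X)$, so $\zeta\star_U\rho(u)\in U\otimes\Qh_U$, and applying $\rho^*$ lands in $U\otimes\Qh_W$. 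This gives $(\d_u[\qg_{\emptyset,1}])(\zeta)\in U\otimes\Qh_W$ and completes the argument. I expect the only real friction to be bookkeeping: checking that $\qg_{\emptyset,1}$ genuinely descends to cohomology, that $\pi$ commutes with $\d_u$ and with that descent, and that the coefficient-derivative terms cancel in the reduction to $\zeta\in U$; once that is in place, Lemma~\ref{lm:rhostar} and the subalgebra hypothesis do all the substantive work.
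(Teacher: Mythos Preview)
Your proof is correct and follows essentially the same route as the paper: reduce to checking $\pi(\d_u\nund v)\in U\otimes\Qh_W$, compute this as $[\q^{\gamma_W}_{\emptyset,2}(\eta,\d_u\gamma_W)]$, apply Lemma~\ref{lm:rhostar} to identify it with $\rho^*(\pi(v)\star_U\rho(u))$, and invoke assumption~\ref{assump:sfa}. The paper carries out this computation directly for an arbitrary $v\in V$ rather than first reducing to $\zeta\in U$ via $\Qh_W$-linearity, but the substantive content is identical.
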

\begin{proof}
It suffices to show that $\pi(\d_u\nund(V)) \subset U\otimes \Qh_W.$ Indeed, let $v\in V$ and let $(\eta,\xi) \in \cone$ with $[(\eta,\xi)] = v.$  By Lemma~\ref{lm:rhostar}, we calculate
\begin{equation}\label{eq:dupi}
\pi(\d_u \nund(v)) = [\d_u \q^{\gamma_W}_{\emptyset,1}(\eta)] = [\q^{\gamma_W}_{\emptyset,2}(\eta,\d_u\gamma_W)]
=[\rho^* \q^{\gamma_U}_{\emptyset,2}(\eta,\d_u\gamma_W)] = \rho^*(\pi(v) \star_U \rho(u)).
\end{equation}
Assumption~\ref{assump:sfa} implies that
\begin{equation}\label{eq:inU}
\rho^*(\pi(v) \star_U \rho(u)) \in U\otimes \Qh_W.
\end{equation}
The lemma follows by combining~\eqref{eq:dupi} and~\eqref{eq:inU}.
\end{proof}

\subsection{Relative quantum product}

Recall that
in Section~\ref{sssec:memintro} we defined
$\Qhh_W=\Qh_W\otimes_{\L_c}\L$ and
\[
\coneq:=(\cone/x(sR_W))\otimes_{\Qh_W}\Qhh_W,
\qquad
QH_U(X,L) := W \otimes \Qhh_W.
\]
As noted, there is a natural isomorphism $\Hh^*(X,L;\Qhh_W)\simeq H^*(\coneq)$. In particular, we can think of $QH_U(X,L)$ as a subspace of $H^*(\coneq)$.

For $\eta\in \cone$ and $r\in R_W$, we have
$\memh(\eta+x(s\cdot r))=\memh(\eta)-c\cdot x(s\cdot r)$.
Thus, $\memh$ induces a map
$
\memhh: \coneq\lrarr \coneq
$.

\begin{lm}\label{cor:assocnunh}
The map $\memhh$ inherits the properties of $\memh$. Specifically, \begin{enumerate}
\item\label{it:nunha}
$\memhh$ is a chain map and is invariant under gauge equivalence,
\item\label{it:nunhb}
$\d_u\memhh\circ \d_v\memhh$ is chain homotopic to $(-1)^{|u||v|}\d_v\memhh\circ\d_u\memhh$ for all vector fields $u,v\in QH_U(X,L)$.
\end{enumerate}
\end{lm}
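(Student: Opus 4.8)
The plan is to derive Lemma~\ref{cor:assocnunh} directly from the corresponding statements already proved for $\memh$, namely Theorem~\ref{thm:mp} and Theorem~\ref{lm:assoc}, by checking that everything descends along the quotient map $\cone \to \coneq$. The first observation, which the paragraph preceding the lemma already records, is that for $\eta \in \cone$ and $r \in R_W$ one has $\memh(\eta + x(s\cdot r)) = \memh(\eta) - c \cdot x(s\cdot r)$; since $c \cdot x(s \cdot r) \in x(sR_W)$ (recall $s \in m_S$ and $x$ is $R_W$-linear), the subcomplex $x(sR_W) \otimes_{\Qh_W} \Qhh_W$ is preserved, so $\memhh$ is well-defined. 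I would begin the proof by noting this and fixing notation: write $\varpi_\diamond : \cone \otimes_{\Qh_W}\Qhh_W \to \coneq$ for the quotient chain map, so that $\varpi_\diamond \circ (\memh \otimes \Id) = \memhh \circ \varpi_\diamond$.

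For part~\ref{it:nunha}, that $\memhh$ is a chain map is immediate: $\memh$ is a chain map by Lemma~\ref{lm:nunchain}, tensoring with $\Qhh_W$ over $\Qh_W$ preserves this (it is a flat, or simply free, base change), and passing to the quotient by a subcomplex preserves it as well, so $\memhh \circ \dcone = \dcone \circ \memhh$ on $\coneq$. For gauge invariance, suppose $(\gamma,b)$ and $(\gamma',b')$ are gauge equivalent. By Lemma~\ref{lm:memhinvt} there is a chain homotopy $H : \cone \to \cone$ with $\dcone \circ H + H \circ \dcone = \memh^{\gamma',b'} - \memh^{\gamma,b}$. I would check that $H$ also preserves $x(sR_W)$ — this follows because $H(\eta,\xi)$ has second component $(-1)^{|\eta|+1}pt_*(\qtbg_{-1,1}(\etat))$, which by the linearity property (Lemma~\ref{lm:qlinear} in its $\qt$ form via Lemma~\ref{lm:deformprop}) scales by any $s \in m_S$ pulled out of $b$, and the first component vanishes on the image of $x$ since $\etat = p_X^*\eta$ is pulled back from $A^*(X)$ and $x$ contributes nothing to the $A^*(X)$-component — hence $H$ descends to $\hat H : \coneq \to \coneq$ giving $\dcone \circ \hat H + \hat H \circ \dcone = \memhh^{\gamma',b'} - \memhh^{\gamma,b}$. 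If checking that $H$ preserves $x(sR_W)$ turns out to be delicate, an alternative is to tensor everything with $\Qhh_W$ first, where invariance is automatic, and then quotient.

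For part~\ref{it:nunhb}, I would invoke Theorem~\ref{lm:assoc}: for all formal vector fields $u,v \in Q_W \otimes W \oplus R_W \otimes S$, the operator $\d_u \memh \circ \d_v\memh - (-1)^{|u||v|}\d_v\memh\circ\d_u\memh$ equals $\dcone \circ H_{uv} + (-1)^{|u|+|v|} H_{uv}\circ \dcone$ for the explicit homotopy $H_{uv}$ of equation~\eqref{eq:memhhomotop}. The key point is that the derivation $\d_u$ for $u$ a vector field on $QH_U(X,L) = W \otimes \Qhh_W$ is induced by (the $\Qhh_W$-linear extension of) a derivation associated to an element of $W$, so it commutes with $\varpi_\diamond$ and descends to $\coneq$; likewise $H_{uv}$ is built from $\qbg$-type operators applied to $\d_u\gamma, \d_v\gamma, \d_ub, \d_vb$ and $\eta$, all of which respect the quotient — the inputs $\d_u\gamma, \d_v\gamma \in \Ah^*(X,L)$-valued objects do not touch the $x(sR_W)$ direction, and one checks as above that $H_{uv}$ sends $x(sR_W)$ into $x(sR_W)$. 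Thus applying $\varpi_\diamond$ to equation~\eqref{eq:memhhomotop} yields the chain homotopy $\d_u\memhh\circ\d_v\memhh - (-1)^{|u||v|}\d_v\memhh\circ\d_u\memhh = \dcone \circ \hat H_{uv} + (-1)^{|u|+|v|}\hat H_{uv}\circ\dcone$, which is the claim.

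The main obstacle, and the only place requiring genuine care rather than bookkeeping, is verifying that the homotopies $H$ (from Lemma~\ref{lm:memhinvt}) and $H_{uv}$ (from the proof of Theorem~\ref{lm:assoc}) descend to $\coneq$, i.e.\ that they map the subcomplex $x(sR_W) \otimes_{\Qh_W}\Qhh_W$ into itself. This is where one must use the explicit formulas for these homotopies together with the linearity of the $\q$ and $\qt$ operators in the bounding chain variable (Lemmas~\ref{lm:qlinear} and~\ref{lm:deformprop}) to track the factors of $s$. I expect this to be a short but not entirely formal check. Everything else — the chain map property, the behavior of the derivations $\d_u$, and the passage from $\cone$ to $\coneq$ — is routine functoriality of quotients and base change.
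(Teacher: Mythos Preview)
Your approach is the same as the paper's: both parts follow directly from the corresponding statements for $\memh$ (Lemmas~\ref{lm:nunchain}, \ref{lm:memhinvt}, and Theorem~\ref{lm:assoc}), and the paper's proof is simply a citation to these. The extra detail you supply about descent to the quotient is reasonable, but the ``main obstacle'' you flag is much simpler than you suggest: inspecting the explicit formulas, both $H(\eta,\xi)$ and $H_{uv}(\eta,\xi)$ depend only on $\eta$ and not on $\xi$, so they vanish identically on $x(m_S R_W)$ (whose elements have the form $(0,\xi)$) and hence descend trivially --- no linearity-in-$b$ argument is needed.
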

\begin{proof}
Statement~\ref{it:nunha} follows from Lemmas~\ref{lm:nunchain} and~\ref{lm:memhinvt}.
Statement~\ref{it:nunhb} follows from Theorem~\ref{lm:assoc}.
\end{proof}

Denote by $\nundh$ the map induced by $\memhh$ on cohomology.
\begin{lm}\label{lm:nunhendo}
$\d_u\nundh(QH_U(X,L))\subset QH_U(X,L)$ for all $u\in QH_U(X,L)$.
\end{lm}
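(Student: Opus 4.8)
The plan is to deduce Lemma~\ref{lm:nunhendo} from Lemma~\ref{lm:nunendo} by first giving an intrinsic description of the subspace $QH_U(X,L)\subset H^*(\coneq)$, and then running the computation from the proof of Lemma~\ref{lm:nunendo} with coefficients extended from $\Qh_W$ to $\Qhh_W$.

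First I would record the relevant description of $QH_U(X,L)$. Let $\pi_\Qhh\colon H^*(\coneq)\to H^*(X;\Qhh_W)$ denote the map induced by projection of $\coneq$ onto its first component $A^*(X;\Qhh_W)$. Under the canonical isomorphism $H^*(\coneq)\simeq\Hh^*(X,L;\Qhh_W)$ of Section~\ref{sssec:memintro}, this map is identified with the $\Qhh_W$-linear extension $\rho_\Qhh$ of the natural map $\rho$ of~\eqref{eq:rho}. Since $\R$ is a field, extension of scalars along $\R\to\Qhh_W$ is exact, so
\[
\rho_\Qhh^{-1}(U\otimes\Qhh_W)=\rho^{-1}(U)\otimes\Qhh_W=W\otimes\Qhh_W=QH_U(X,L).
\]
Consequently it is enough to show that $\pi_\Qhh(\d_u\nundh(v))\in U\otimes\Qhh_W$ for all $u,v\in QH_U(X,L)$.

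Next I would compute $\pi_\Qhh(\d_u\nundh(v))$ exactly as in the proof of Lemma~\ref{lm:nunendo}. Choosing a representative $(\eta,\xi)$ of $v$ in $\coneq$, lifting $\eta$ to $A^*(X;\Qhh_W)$, and using that $\memhh$ acts on the first component by the same formula as $\memh$, namely $(\eta,\xi)\mapsto\q_{\emptyset,1}^{\gamma_W}(\eta)$, together with $[\d_u\gamma_W]=u$ and Lemma~\ref{lm:csymm}, one obtains $\pi_\Qhh(\d_u\nundh(v))=[\q_{\emptyset,2}^{\gamma_W}(\eta,\d_u\gamma_W)]$. The analog of Lemma~\ref{lm:rhostar} with coefficients in $\Qhh_W$ then identifies this class with $\rho^*(\pi_\Qhh(v)\star_U\rho(u))$, precisely as in~\eqref{eq:dupi}, and by assumption~\ref{assump:sfa} the class $\pi_\Qhh(v)\star_U\rho(u)$ lies in $U\otimes\Qhh_W$. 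This yields the claim.

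The computation is routine once Lemmas~\ref{lm:nunendo} and~\ref{lm:rhostar} are available; the main thing to check is the first step, i.e.\ that passing to $\coneq$ (quotienting by $x(m_S)$ and extending scalars to $\Qhh_W$) preserves the cone description, the formula for the first component of $\memhh$, and the conclusion of Lemma~\ref{lm:rhostar}, all of which parallels Section~\ref{sssec:memintro}. Alternatively, one may reduce to the case $u\in W$ at the outset: since $\memhh$, and hence $\nundh$, is $\Qhh_W$-linear by Lemma~\ref{lm:qlinear}, for $u=r\otimes w$ with $r\in\Qhh_W$ one has $(\d_u\nundh)(v)=r\cdot(\d_w\nundh)(v)$, so it suffices to treat $u\in W$ and then repeat the proof of Lemma~\ref{lm:nunendo} verbatim over $\Qhh_W$.
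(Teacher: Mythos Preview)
Your proposal is correct and follows essentially the same strategy as the paper. The paper's argument is packaged a bit more functorially: rather than re-running the computation of Lemma~\ref{lm:nunendo} over $\Qhh_W$, it introduces the quotient map $h\colon H^*(\cone)\to H^*(\cone/x(sR_W))$, notes that $\tilde\pi\circ h=\pi$ and $\d_u\nundh\circ h=h\circ\d_u\nund$, and therefore transfers the inclusion $\d_u\nund(V)\subset V$ of Lemma~\ref{lm:nunendo} directly to $\d_u\nundh(\tilde\pi^{-1}(U\otimes Q_W))\subset\tilde\pi^{-1}(U\otimes Q_W)$, finishing by tensoring with $\Qhh_W$. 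Your route---identifying $QH_U(X,L)=\pi_\Qhh^{-1}(U\otimes\Qhh_W)$ and repeating the computation~\eqref{eq:dupi} over $\Qhh_W$---amounts to the same thing, with the minor cost that you must observe Lemma~\ref{lm:rhostar} remains valid after extending coefficients; the paper's transfer via $h$ sidesteps that check.
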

\begin{proof}
Recall that $\Im(x)=\Ker(\pi)$.
Thus, we have the following commutative diagram.
\[
\xymatrix{
H^*(\cone)\ar[rr]^{h}\ar[dr]^{\pi}& & H^*(\cone/x(sR_W))\ar[dl]_{\tilde\pi}\\
& H^*(X;Q_W)&
}
\]
By Lemma~\ref{lm:nunendo},
we have
\[
(\d_u\nundh)(\tilde{\pi}^{-1}(U\otimes Q_W)) =(\d_u\nundh)(h(V))=h((\d_u\nund)(V))\subset h(V)=\tilde{\pi}^{-1}(U\otimes Q_W).
\]
Tensoring with $\Qhh_W$, we get the required result.
\end{proof}

Keeping in mind Lemma~\ref{lm:nunhendo}, define
\[
\mem:QH_U(X,L)\otimes QH_U(X,L) \lrarr QH_U(X,L)
\]
by
\[
\mem(u,v)=\d_u\nundh(v).
\]

\begin{lm}\label{lm:memform}
\[
\mem(u,v)=
[\qg_{\emptyset,2}(u,v)]+ (-1)^{n}(\d_{u}\d_{v}\qbg_{-1,0} - \d_{\d_u v} \qbg_{-1,0})\cdot \g_\diamond.
\]
\end{lm}

\begin{proof}
By assumption~\ref{assump:b}, we have $\d_v\big(\int_Lb\big)=0$.
Again by assumption~\ref{assump:b}, we have $[\partial_u \gamma_W] = u, \; [\partial_v \gamma_W] = v.$ So, by Lemmas~\ref{cl:symmetry} and~\ref{cl:cyclic} and the Maurer-Cartan equation,
\[
\d_v\q^{\gamma_W,b}_{-1,0} =\q^{\gamma_W,b}_{-1,1}(\d_v\gamma_W)
+\langle\q_{0,0}^{\gamma_W,b},\d_vb\rangle
=\q^{\gamma_W,b}_{-1,1}(\d_v\gamma_W)+c\cdot\langle 1,\d_vb\rangle
=\q^{\gamma_W,b}_{-1,1}(\d_v\gamma_W) = \q^{\gamma_W,b}_{-1,1}(v).
\]
Consequently,
\[
(\d_u\q^{\gamma_W,b}_{-1,1})(v)
= \d_u(\q^{\gamma_W,b}_{-1,1}(v)) - \q^{\gamma_W,b}_{-1,1}(\d_uv)
=\d_{u}\d_{v}\qbg_{-1,0} - \d_{\d_u v} \qbg_{-1,0}.
\]
Therefore, $\mem(u,v)$ is given at chain level by
\begin{multline*}
\d_u\memhh(v)
=(\q^{\gamma_W}_{\emptyset,2}(\d_u\gamma_W,v),
(-1)^{n+1}\d_u\q^{\gamma_W,b}_{-1,1}(v))=\\
=(\q^{\gamma_W}_{\emptyset,2}(u,v), (-1)^{n+1}(\d_u\d_v\q^{\gamma_W,b}_{-1,0}-\d_{\d_u v}\q^{\gamma_W,b}_{-1,0})).
\end{multline*}
It follows from the definition of $\g_\diamond$ that in $H^*(\cone)$, and therefore in $H^*(\coneq)$, we have $[(\gamma_\diamond,0)]=[(0,-1)].$
The desired formula follows.
\end{proof}

\begin{lm}\label{lm:memcommut}
\leavevmode
\begin{enumerate}
\item\label{it:gc}
For vector fields $u,v\in QH_U(X,L)$, we have $\mem(u,v) = (-1)^{|u||v|}\mem(u,v).$
\item\label{it:dp}
The map $\mem$ has degree zero. That is, $|\mem(u,v)| = |u| + |v|.$
\end{enumerate}
\end{lm}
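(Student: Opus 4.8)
The plan is to deduce both statements from the structural properties of $\memhh$ already established, together with the degree information collected in Sections~\ref{ssec:prop}, \ref{ssec:geo_a_infty}, and \ref{ssec:prop_geod}. For the graded commutativity statement~\ref{it:gc}, I would first unwind the definition: $\mem(w_1,w_2) = \d_{w_1}\nundh(w_2)$. Since $\nundh$ is induced by the chain map $\memhh$ on cohomology, and $\d_{w_1},\d_{w_2}$ are commuting (up to sign) chain-map derivations, the key point is that $\d_{w_1}\nundh$ applied to $w_2 = \d_{w_2}\Gamma_W$-type sections should be symmetric. More precisely, I would express $\nundh$ evaluated on a section $v = \d_{w_2} \Ups$ for a suitable potential $\Ups$ (using that $QH_U(X,L)$ consists of formal sections and that, by the definition of $\memhh$ via $\q_{\emptyset,1}^{\gamma}$, one has $\nundh(\d_{w_2}\Gamma_W) = \d_{w_2}(\text{relative potential-type object})$, paralleling Lemma~\ref{lm:dpsi}). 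Then $\d_{w_1}\nundh(\d_{w_2}(\cdot)) = \d_{w_1}\d_{w_2}(\cdot)$, which is graded symmetric in $w_1,w_2$ by the commutation of directional derivatives on $\R[[W]]$. This reduces~\ref{it:gc} to the identification of $\nundh$ on basis sections with a second-order derivative of a single potential, which follows from Lemma~\ref{lm:csymm} (closed symmetry of $\q_{\emptyset}$) exactly as in the proof of Lemma~\ref{lm:dpsi}.

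For the degree statement~\ref{it:dp}, I would argue directly from the degree of $\memhh$. The map $\memh$ sends $(\eta,\xi) \in \cone$ to $(\q_{\emptyset,1}^{\gamma}(\eta),\ldots)$; by Lemma~\ref{lm:cdeg}, $\q_{\emptyset,1}$ has degree $2$, and $|\Gamma_W| = 2$ means each insertion of $\gamma$ in $\q_{\emptyset,l+1}$ is degree-neutral after dividing by $l!$ and accounting for the grading of $t_i$. Hence $\memh$, and therefore $\memhh$ and $\nundh$, raises degree by $2$. Taking the derivative $\d_u$ lowers degree by $|u|$ in the sense of formal vector fields (since $t_i$ has degree $-|v_i|$, the derivation $\d_u$ for $u$ of degree $|u|$ shifts degree by $|u| - 2$; one must track the shift $[2]$ in the definition of $Q_W = \Lc \otimes \R[[V[2]]]$). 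Carefully composing: $\nundh$ has degree $2$, $\d_u$ has the appropriate degree so that $\d_u\nundh$ applied to $v$ yields an element of degree $|u| + |v|$. I would present this as a short bookkeeping computation, citing Lemmas~\ref{lm:cdeg} and~\ref{deg_str_map}, and the shift conventions from Section~1.2.2.

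The main obstacle I anticipate is the degree bookkeeping in~\ref{it:dp}: the interplay between the cohomological grading on $A^*(X)$, the grading on the Novikov ring via the Maslov index, the shifts $[2]$ and $[1]$ built into $R_W$ and $Q_W$, and the negative degrees $-|v_i|$ of the formal variables must all be reconciled so that $\mem$ comes out degree $0$ (rather than degree $2$ or some shift thereof). I expect this is purely a sign-and-shift calculation with no conceptual content, but it requires care to state the conventions unambiguously; I would handle it by reducing to the statement that $\Gamma_W$ has degree $2$ and $\nundh$ commutes with the $Q_W$-module structure, so that on the generators $\Gamma_i$ one checks $|\mem(\Gamma_i,\Gamma_j)| = |\Gamma_i| + |\Gamma_j|$ by the explicit formula $\mem(\Gamma_i,\Gamma_j) = [\q_{\emptyset,2}^{\gamma_W}(\Gamma_i,\Gamma_j) \text{-component}]$ and Lemma~\ref{lm:cdeg}, then extending by $\Qhh_W$-bilinearity and the linearity Lemma~\ref{lm:qlinear}.
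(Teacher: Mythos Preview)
Your proposal is correct and follows essentially the same route as the paper's own proof. The paper reduces to the generators $w_1=\d_u\gamma_W$, $w_2=\d_v\gamma_W$, then uses the separated assumption $\d_v\big(\int_L b\big)=0$ together with the Maurer-Cartan equation to identify $\q^{b,\gamma_W}_{-1,1}(\d_v\gamma_W)$ with $\d_v\q^{b,\gamma_W}_{-1,0}$, arriving at $\mem(\d_u\gamma_W,\d_v\gamma_W) = (\q^{\gamma_W}_{\emptyset,2}(\d_u\gamma_W,\d_v\gamma_W),\,(-1)^{n+1}\d_u\d_v\q^{b,\gamma_W}_{-1,0})$, which is manifestly symmetric by Lemma~\ref{lm:csymm} and the commutation of derivatives; your reference to Lemma~\ref{lm:dpsi} encapsulates exactly this cancellation, and for part~\ref{it:dp} the paper likewise cites Lemmas~\ref{lm:cdeg} and~\ref{deg_str_map} directly, rather than tracking the grading shifts in detail.
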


\begin{proof}
For any $f \in \Qhh_W,$ by definition of the graded Lie bracket,
\[
\d_u \d_v f - (-1)^{|u||v|} \d_v \d_u f = \d_{[u,v]}f.
\]
On the other hand, since the canonical connection on affine space is symmetric, we have
\[
\d_{\d_u v}f - (-1)^{|u||v|} \d_{\d_v u}f = \d_{[u,v]}f.
\]
Thus $\d_u \d_v f - \d_{\d_u v} f$ is graded symmetric in $u,v.$ Taking $f = \q^{\gamma_W,b}_{-1,0},$ and using also
Lemma~\ref{lm:csymm}, we see that the right-hand side in Lemma~\ref{lm:memform} is graded symmetric in $u,v,$  which implies part~\ref{it:gc}. Keeping in mind the shifted grading of $R[-n-1]$ in the definition of $\cone$, part~\ref{it:dp} follows from Lemmas~\ref{lm:cdeg} and~\ref{deg_str_map}.
\end{proof}

\begin{lm}\label{lm:memassoc}
For vector fields $u,v,w,$ we have $\mem(\mem(u,v),w) = \mem(u,\mem(v,w)).$
\end{lm}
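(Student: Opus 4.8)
The plan is to reduce the associativity identity to the chain-level commutation relation of Theorem~\ref{lm:assoc}, exactly as in the closed case where associativity of $\star_U$ follows from flatness of the Dubrovin connection. First I would reduce to the case $u = \d_x\gamma_W$, $v = \d_y\gamma_W$, $w = \d_z\gamma_W$ for vector fields $x,y,z \in Q_W\otimes W$, which is permissible by assumption~\ref{assump:b} since $QH_U(X,L)$ is spanned by derivatives of $[\gamma_W]$; combined with Lemma~\ref{lm:nunhendo}, all the quantities below stay inside $QH_U(X,L)$. The key computational input is that, by Lemma~\ref{lm:dpsi} (or rather its proof), $\d_u\memhh(\cdot) = \d_x\nundh(\cdot)$ is represented by the class $\memhh(\d_x\gamma_W, 0) = \d_x\psi$ up to the relevant signs, so that $\mem(u, v) = \d_x\nundh(\d_y\gamma_W)$ can be rewritten as an iterated application of $\d_x\memhh$ to $\d_y\gamma_W$.

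The main step is then the following chain of equalities, to be justified term by term. Writing $\Upsilon_y = [(\d_y\gamma_W, 0)] \in H^*(\coneq)$ and similarly for $z$, one has $\mem(\mem(u,v),w) = \d_{\mem(u,v)}\nundh(\Upsilon_z)$. Using the fact that $\nundh$ is induced by the chain map $\memhh$ together with the derivation property $\d_x(\memhh\circ\Theta) = (\d_x\memhh)\circ\Theta + \memhh\circ(\d_x\Theta)$, I would expand $\d_{\mem(u,v)}$ and $\d_x\d_y$ applied to $\memhh$, and repeatedly invoke $\d_x\memhh\circ\d_y\memhh \simeq (-1)^{|x||y|}\d_y\memhh\circ\d_x\memhh$ from Lemma~\ref{lm:assoc} to move the operators past each other. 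Since Theorem~\ref{lm:assoc} gives a chain homotopy, all these identities hold on the level of cohomology $H^*(\coneq)$, which is exactly where $\mem$ is defined, so the homotopies cause no trouble. Symmetrizing the resulting expression in the three slots and matching with the analogous expansion of $\mem(u,\mem(v,w))$ yields the claim. Concretely, both sides equal the class of the chain-level expression obtained by applying $\d_x\memhh$, $\d_y\memhh$, $\d_z\memhh$ in succession to the relative potential $\psi$ (modulo $x(sR_W)$ and extending scalars to $\Qhh_W$), which is manifestly symmetric once one uses Lemma~\ref{lm:assoc} to reorder the factors — here one also uses graded commutativity from Lemma~\ref{lm:memcommut}\ref{it:gc} and the degree-zero property Lemma~\ref{lm:memcommut}\ref{it:dp} to keep signs consistent.

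The main obstacle I anticipate is purely bookkeeping: carefully tracking the Koszul signs that arise from commuting the derivations $\d_x,\d_y,\d_z$ — which may have nonzero degree — past each other and past $\memhh$, and confirming that the signs produced by Lemma~\ref{lm:assoc} and by Lemma~\ref{lm:memcommut} combine correctly so that the two iterated expressions genuinely agree rather than differing by a sign. A secondary point requiring care is the passage from the statement of Theorem~\ref{lm:assoc} (valid for $u,v \in Q_W\otimes W \oplus R_W\otimes S$ on $\cone$) to the needed statement on $\coneq$; this is supplied by Lemma~\ref{cor:assocnunh}\ref{it:nunhb}, so it suffices to cite that lemma. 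Once the signs are pinned down, the argument is a formal consequence of $\memhh$ being a chain map whose successive derivatives commute up to homotopy, mirroring the standard deduction of associativity of big quantum cohomology from WDVV.
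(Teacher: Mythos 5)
You have located the right two lemmas — graded commutativity from Lemma~\ref{lm:memcommut} and the commutation-up-to-homotopy of $\d_u\memhh\circ\d_v\memhh$ from Lemma~\ref{cor:assocnunh}\ref{it:nunhb} — but your plan misplaces the one organizational move that makes the argument a three-line computation. The trouble is that $\mem(\mem(u,v),w)=\d_{\mem(u,v)}\nundh(w)$ has the complicated vector field $\mem(u,v)$ in the \emph{derivative direction}, and your proposal to ``expand $\d_{\mem(u,v)}$'' via a derivation rule for $\d_x(\memhh\circ\Theta)$ does not cleanly convert this into an iterated composition $\d_u\nundh\circ\d_v\nundh$: the coefficients of $\mem(u,v)$ are non-constant, and differentiating along such a field does not factor as a composition of coordinate derivatives of $\nundh$. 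Likewise, the claim that both sides equal the class obtained by applying $\d_x\memhh,\d_y\memhh,\d_z\memhh$ in succession to $\psi$ is off target; the iterated operators act on the class $v$ (equivalently $\Upsilon_v$), not on the relative potential, and no symmetrization over all three slots is needed.

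The paper's proof sidesteps the problematic $\d_{\mem(u,v)}$ at the outset: apply Lemma~\ref{lm:memcommut}\ref{it:gc} (and the degree statement~\ref{it:dp}) \emph{first} to write
\[
\mem(\mem(u,v),w)=(-1)^{(|u|+|v|)|w|}\mem(w,\mem(u,v))=(-1)^{(|u|+|v|)|w|}\d_w\nundh(\d_u\nundh(v)),
\]
so that $\mem(u,v)$ now sits in the \emph{argument}, not the derivative direction, and one has a genuine iterated composition $\d_w\nundh\circ\d_u\nundh$. Lemma~\ref{cor:assocnunh}\ref{it:nunhb} swaps this to $(-1)^{|u||w|}\d_u\nundh\circ\d_w\nundh$, yielding $(-1)^{|v||w|}\mem(u,\mem(w,v))$, and a second application of graded commutativity finishes. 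So graded commutativity is not mere sign bookkeeping here; it is the structural step that reduces associativity to the flatness statement. Once you reorganize around this flip, your plan collapses to the paper's proof.
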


\begin{proof}
By Lemma~\ref{lm:memcommut} and Lemma~\ref{cor:assocnunh},
\begin{multline*}
\mem(\mem(u,v),w)
=(-1)^{(|u|+|v|)|w|}\mem(w,\mem(u,v))=\\
=(-1)^{(|u|+|v|)|w|}\d_w\nundh(\mem(u,v))
=(-1)^{(|u|+|v|)|w|}\d_w\nundh(\d_u\nundh(v))
=(-1)^{|v||w|}\d_u\nundh(\d_w\nundh(v))= \\
=(-1)^{|v||w|}\mem(u,\mem(w,v))
=\mem(u,\mem(v,w)).
\end{multline*}
\end{proof}

\begin{proof}[Proof of Theorem~\ref{thm:mem}]
Graded commutativity and associativity are given by Lemmas~\ref{lm:memcommut}\ref{it:gc} and~\ref{lm:memassoc}, respectively.
Invariance under gauge equivalence follows from Lemma~\ref{cor:assocnunh}\ref{it:nunha}.
\end{proof}

\subsection{Open WDVV}\label{ssec:OWDVV}
In the following, we make some calculations that will be useful in the proof of Theorem~\ref{thm:OWDVV}. As in Section~\ref{sssec:owdvv}, abbreviate
\[
\Phi = \Phi_U \in Q_U, \qquad \Ob = \Ob(\gamma_W,b) \in R_W.
\]

\begin{lm}\label{lm:memofx}
Let $r\in \Coker(\i)$.
\begin{enumerate}
\item
$\nund(\xb(r))=-c\cdot \xb(r),$
\item
$\d_u\nund(\xb(r))=-\d_u c\cdot \xb(r),$ \: for $u\in W\oplus S.$
\end{enumerate}
\end{lm}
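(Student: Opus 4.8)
The plan is to unwind the definitions of $\xb$, $\memh$, and $\nund$ and reduce everything to the Maurer-Cartan equation together with the structure equation relating $\qg_{\emptyset,1}$, $\qbg_{-1,1}$, and $c$. Recall from diagram~\eqref{eq:ybar} that $\xb : \Coker\i \to H^*(\cone)$ is induced at chain level by $r \mapsto (0,-r)$; more precisely, a class $r \in \Coker\i$ is represented by some $\tilde r \in R_W$, and $\xb(r) = [(0,-\tilde r)] = [x(\tilde r)]$. So for part (a) I would compute $\memh(0,-\tilde r)$ directly from equation~\eqref{eq:memhdfn}: since $\qg_{\emptyset,1}(0) = 0$ and $\qbg_{-1,1}(0) = 0$, we get $\memh(0,-\tilde r) = (0, -c\cdot(-\tilde r)) = (0, c\cdot\tilde r) = -x(c\cdot\tilde r)$, i.e. $-c\cdot x(\tilde r)$ at chain level. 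Passing to cohomology gives $\nund(\xb(r)) = -c\cdot\xb(r)$, using that multiplication by $c \in R_W$ (and indeed $c \in \Qh_W$ by Theorem~\ref{prop:formforc}) is well-defined on $\Coker\i$ by the Corollary following Theorem~\ref{prop:formforc}. I should double-check that the map $\xb$ is $R_W$-linear (or at least $\Qh_W$-linear), which it is since it comes from the $Q_W$-module chain map $x$; this is what lets me pull the scalar $c$ through.

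For part (b), I would apply the derivation $\d_u$ for $u \in W \oplus S$ to the identity in part (a). Since $\d_u$ acts as a chain-map derivation on $\cone$ (as recalled at the start of Section~\ref{ssec:flatness}) and $\nund$ is induced by the chain map $\memh$, the operator $\d_u\nund$ on cohomology is induced by $\d_u\memh$. From equation~\eqref{eq:dmem} (or directly from~\eqref{eq:memhdfn}), $\d_u\memh(0,\xi) = (\,0,\ -\d_u c\cdot\xi\,)$ because the $\qg_{\emptyset,l}$ and $\qbg_{-1,l}$ terms with a single interior slot filled by $0$ vanish, and only the $-c\cdot\xi$ term survives differentiation — this is where one must be a little careful: $\d_u\memh(\eta,\xi)$ in general has contributions $\qg_{\emptyset,2}(\d_u\gamma,\eta)$ etc., but these all carry the factor $\eta$, which is $0$ here. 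Hence $\d_u\nund(\xb(r))$ is represented by $(0, -\d_u c\cdot(-\tilde r)) = x(\d_u c\cdot \tilde r)$, giving $\d_u\nund(\xb(r)) = -\d_u c\cdot\xb(r)$ in cohomology. Alternatively, one can simply differentiate the chain-level identity $\memh\circ x = -c\cdot x$ (as maps $R_W \to \cone$), obtaining $\d_u\memh\circ x + \memh\circ\d_u x = -\d_u c\cdot x - c\cdot\d_u x$; but $\d_u$ acts trivially on the image of $x$ when $x$ is $\R$-linear and $\d_u$ is computed on the ambient module, so the $\d_u x$ terms match and cancel, leaving $\d_u\memh\circ x = -\d_u c\cdot x$. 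I would present whichever version is cleanest given the conventions already fixed.

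The only genuine subtlety — and the thing I would be most careful about — is the bookkeeping of what "$c\cdot r$" means for $r \in \Coker\i$ and why it is well-defined: when $[L] \neq 0$ one has $\Coker\i \simeq (R_W/Q_W)[-n-1]$ and $c \in \Qh_W \subset R_W$ by Theorem~\ref{prop:formforc}, so multiplication by $c$ descends to the quotient; when $[L] = 0$ one has $\Coker\i \simeq R_W[-n-1]$ and multiplication by $c \in R_W$ is unproblematic. In both cases the Corollary to Theorem~\ref{prop:formforc} already records that $c\cdot\Ob$, hence $c\cdot(-)$, makes sense on $\Coker\i$. Everything else is a direct substitution into the formula for $\memh$; there is no hard estimate or limiting argument. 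I expect the proof to be four or five lines once the definitions are in place.
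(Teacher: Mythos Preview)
Your approach is correct and essentially identical to the paper's proof: pick a representative $r'\in R_W$, compute $\memh(0,-r') = (0,c\cdot r')$ directly from~\eqref{eq:memhdfn}, rewrite as $-c\cdot x(r')$, and invoke Theorem~\ref{prop:formforc} for well-definedness of $c\cdot r$ on $\Coker\i$; part~(b) then follows by the Leibniz rule exactly as you describe. There is a harmless sign slip in your intermediate expression for part~(b) --- $(0,\d_u c\cdot\tilde r)$ equals $-x(\d_u c\cdot\tilde r)$, not $x(\d_u c\cdot\tilde r)$ --- but your conclusion is correct.
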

\begin{proof}
Let $r'\in R_W$ be a representative of the class of $r$ in $\Coker(\i).$ Then
\[
\nund(\xb(r))=\nund(x(r')) =[(0,c\cdot r')]
=x(-cr')=\xb(-p(cr'))
=
-c\cdot\xb(r),
\]
where the last equality uses the fact that $c\in \Qh_W$ in the case $[L]\ne 0$ (see Theorem~\ref{prop:formforc}),
and holds trivially when $[L]=0$.
Consequently,
\[
\d_u\nund(\xb(r))=-\d_uc\cdot \xb(r).
\]
\end{proof}

\begin{lm}\label{lm:phipsi}
$\pi(\Psi(\gamma_W,b)) = \rho^*(\nabla\Phi_U).$
\end{lm}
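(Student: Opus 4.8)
The plan is to compute $\pi(\Psi(\gamma_W,b))$ directly from the chain-level formula for $\psi(\gamma_W,b)$ and identify the result with $\rho^*(\nabla\Phi_U)$. Recall from equation~\eqref{eq:psidfn} and the abbreviated form in Section~\ref{ssec:psi} that $\psi(\gamma_W,b) = (\q^{\gamma_W}_{\emptyset,0}, (-1)^{n+1}\q^{b,\gamma_W}_{-1,0})$, where by definition $\q^{\gamma_W}_{\emptyset,0} = \sum_{l\ge 0}\frac{1}{l!}\q_{\emptyset,l}(\gamma_W^{\otimes l})$. The map $\pi : H^*(\cone) \to H^*(X;\Qh_W)$ is induced by the projection $(\eta,\xi)\mapsto \eta$, so $\pi(\Psi(\gamma_W,b)) = [\q^{\gamma_W}_{\emptyset,0}]$. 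On the other hand, by equation~\eqref{eq:gradphi}, $\nabla\Phi_U = \sum_{l\ge 0}\frac{1}{l!}[\q_{\emptyset,l}(\gamma_U^{\otimes l})] = [\q^{\gamma_U}_{\emptyset,0}] \in H^*(X;\Qh_U)$. So the identity to prove reduces to $[\q^{\gamma_W}_{\emptyset,0}] = \rho^*[\q^{\gamma_U}_{\emptyset,0}]$ in $H^*(X;\Qh_W)$.

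This is precisely the content of Lemma~\ref{lm:rhostar} in the case $l = 0$: that lemma asserts $[\rho^*(\q^{\gamma_U}_{\emptyset,l}(\eta_1,\ldots,\eta_l))] = [\q^{\gamma_W}_{\emptyset,l}(\eta_1,\ldots,\eta_l)]$, and taking $l = 0$ (no interior inputs $\eta_j$) gives exactly $[\rho^*\q^{\gamma_U}_{\emptyset,0}] = [\q^{\gamma_W}_{\emptyset,0}]$. Here I use that $\rho^*$ commutes with passing to cohomology, since it is a ring homomorphism applied coefficient-wise, and that it commutes with the formal sum defining $\q^{\gamma}_{\emptyset,0}$. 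The only subtlety is bookkeeping: one must confirm that the definition of $\q^{\gamma_U}_{\emptyset,0}$ uses $\gamma_U$ with $\rho^*\gamma_U = \gamma_{W'} = \kappa^*\gamma_W$ as fixed in equation~\eqref{eq:gUW'}, which is exactly the setup under which Lemma~\ref{lm:rhostar} is proved.

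So the proof is essentially one line invoking Lemma~\ref{lm:rhostar}, and there is no real obstacle: the main step is simply unwinding the definition of $\pi$ applied to the relative potential and matching it against the chain-level formula~\eqref{eq:gradphi} for $\nabla\Phi_U$. Concretely: $\pi(\Psi(\gamma_W,b)) = [\q^{\gamma_W}_{\emptyset,0}] = \rho^*[\q^{\gamma_U}_{\emptyset,0}] = \rho^*(\nabla\Phi_U)$, where the first equality is the definition of $\pi$ together with~\eqref{eq:psidfn}, the second is Lemma~\ref{lm:rhostar} with $l = 0$, and the third is~\eqref{eq:gradphi}.

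\begin{proof}
By equation~\eqref{eq:psidfn}, the chain-level representative of the relative potential is
\[
\psi(\gamma_W,b) = \Big(\sum_{l\ge 0}\frac{1}{l!}\q_{\emptyset,l}(\gamma_W^{\otimes l}),\; (-1)^{n+1}\q^{b,\gamma_W}_{-1,0}\Big) = (\q^{\gamma_W}_{\emptyset,0},\, (-1)^{n+1}\q^{b,\gamma_W}_{-1,0}).
\]
Since $\pi : H^*(\cone) \to H^*(X;\Qh_W)$ is induced by the projection $(\eta,\xi) \mapsto \eta$, we have $\pi(\Psi(\gamma_W,b)) = [\q^{\gamma_W}_{\emptyset,0}]$. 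By Lemma~\ref{lm:rhostar} with $l = 0$ and no interior inputs, $[\q^{\gamma_W}_{\emptyset,0}] = [\rho^*(\q^{\gamma_U}_{\emptyset,0})] = \rho^*[\q^{\gamma_U}_{\emptyset,0}]$, using that $\rho^* : Q_U \to Q_W$ is a ring homomorphism and hence commutes with passage to cohomology. Finally, by equation~\eqref{eq:gradphi}, $\nabla\Phi_U = [\q^{\gamma_U}_{\emptyset,0}]$. Combining these, $\pi(\Psi(\gamma_W,b)) = \rho^*(\nabla\Phi_U)$.
\end{proof}
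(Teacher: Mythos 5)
Your proof is correct and matches the paper's argument exactly: both identify $\nabla\Phi_U = [\q^{\gamma_U}_{\emptyset,0}]$, use that $\pi$ is projection to the first component of the cone, and invoke Lemma~\ref{lm:rhostar} with $l=0$ to conclude.
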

\begin{proof}
Recall from Section~\ref{sssec:gwp} that
\[
\nabla \Phi_U = [\q^{\gamma_U}_{\emptyset,0}] \in H^*(X;Q_U).
\]
So, it follows from Lemma~\ref{lm:rhostar} that
\[
\rho^* \nabla\Phi_U = [\rho^*\q_{\emptyset,0}^{\gamma_U}] = [\q_{\emptyset,0}^{\gamma_{W}}] = \pi(\Psi(\gamma_W,b)).
\]
\end{proof}
\begin{lm}\label{lm:rhodphi}
For
$l\in I_U$, we have
$
\rho^*(\d_l\Phi)
=
\int_X\q^{\gamma_W}_{\emptyset,0}\wedge\d_l\gamma_U
$.
\end{lm}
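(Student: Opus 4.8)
The plan is to first establish the identity $\d_l\Phi_U=\int_X\q^{\gamma_U}_{\emptyset,0}\wedge\d_l\gamma_U$ in $\Qh_U$ --- i.e., that the formula \eqref{eq:gradphi} really computes the gradient of $\Phi_U$ --- and then to transport it along $\rho^*$ using Lemma~\ref{lm:rhostar}.

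For the first step I would differentiate the defining series \eqref{eq:Phi} term by term. Applying the derivation $\d_l$ and using that $\q_{\emptyset,l'}$ and $\langle\cdot,\cdot\rangle_X$ are multilinear, the Leibniz rule produces, for each $l'\ge 0$, a sum of $l'$ terms in which one input of $\q_{\emptyset,l'}$ is replaced by $\d_l\gamma_U$, plus one term in which the pairing slot is replaced by $\d_l\gamma_U$. Since $|\gamma_U|=2$, moving $\d_l\gamma_U$ past copies of $\gamma_U$ costs no sign, so by Lemma~\ref{lm:csymm} the $l'$ ``inner'' terms are all equal, and by the full permutation symmetry of the closed correlator $\langle\q_{\emptyset,l'}(\gamma_1,\ldots,\gamma_{l'}),\gamma_{l'+1}\rangle_X$ --- which for the swap $\gamma_{l'}\leftrightarrow\gamma_{l'+1}$ follows from the projection formula and normalization in Proposition~\ref{prop:iof} together with the invariance of $\M_{l'+1}(\beta)$ under relabeling of its marked points --- each inner term equals the pairing term. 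Hence $\d_l\Phi_U=\sum_{l'\ge 0}\tfrac{l'+1}{(l'+1)!}\langle\q_{\emptyset,l'}(\gamma_U^{\otimes l'}),\d_l\gamma_U\rangle_X=\langle\q^{\gamma_U}_{\emptyset,0},\d_l\gamma_U\rangle_X=\int_X\q^{\gamma_U}_{\emptyset,0}\wedge\d_l\gamma_U$, the last two equalities by the definitions of $\q^{\gamma_U}_{\emptyset,0}$ and of $\langle\cdot,\cdot\rangle_X$ in Section~\ref{sssec:gwp}.

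For the second step I would apply $\rho^*$. Since $\rho^*$ is a ring homomorphism acting on the coefficients, $\int_X$ is $\Qh$-linear, and $\d_l\gamma_U$ has coefficients in $\R$, we get $\rho^*(\d_l\Phi_U)=\int_X\rho^*(\q^{\gamma_U}_{\emptyset,0})\wedge\d_l\gamma_U$. By \eqref{eq:gUW'} we have $\rho^*(\q^{\gamma_U}_{\emptyset,0})=\q^{\gamma_{W'}}_{\emptyset,0}$, and by the $l=0$ case of Lemma~\ref{lm:rhostar}, $[\q^{\gamma_{W'}}_{\emptyset,0}]=[\q^{\gamma_W}_{\emptyset,0}]$ in $H^*(X;\Qh_W)$. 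As $\int_X(\cdot)\wedge\d_l\gamma_U$ depends only on the cohomology class of the first factor, this yields $\rho^*(\d_l\Phi_U)=\int_X\q^{\gamma_W}_{\emptyset,0}\wedge\d_l\gamma_U$, as claimed.

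The main obstacle is the bookkeeping in the first step: making the permutation-symmetry argument for the closed correlator precise, with the attendant Koszul signs, so as to collapse the Leibniz expansion to a single term. Everything afterwards is formal manipulation plus the appeal to Lemma~\ref{lm:rhostar}.
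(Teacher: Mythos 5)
Your proof is correct and takes essentially the same route as the paper: both arguments reduce the claim to the gradient formula \eqref{eq:gradphi} combined with the $l=0$ case of Lemma~\ref{lm:rhostar}. The paper simply invokes \eqref{eq:gradphi} as already stated and packages the transport along $\rho^*$ through Lemma~\ref{lm:phipsi}, whereas you re-derive the gradient formula by term-wise differentiation of \eqref{eq:Phi} and then apply Lemma~\ref{lm:rhostar} directly, which is just an unpacking of the same two ingredients.
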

\begin{proof}
By definition of the gradient, we have
\[
\rho^*(\d_l\Phi) = \rho^*\langle \nabla\Phi,\g_l \rangle_X =\langle \rho^*\nabla\Phi,\g_l \rangle_X.
\]
Moreover,
\[
\int_X\q^{\gamma_W}_{\emptyset,0}\wedge\d_l\gamma_U = \langle \pi(\Psi(\gamma_W,b)),\g_l \rangle_X.
\]
The claimed equality follows from Lemma~\ref{lm:phipsi}.
\end{proof}

Recall the definitions of $W'$ and $V$ from equations~\eqref{eq:W'} and~\eqref{eq:V} respectively.

\begin{lm}\label{lm:inj}\label{it:ds}
$\Im\bar x \oplus a(W'\otimes Q_W) = V.$
\end{lm}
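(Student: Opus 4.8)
The plan is to prove the asserted equality by establishing three facts, all by diagram chasing in~\eqref{eq:ybar}: that $\Im\bar x$ and $a(W'\otimes Q_W)$ are both contained in $V$, that their sum is direct, and that $V$ is contained in $\Im\bar x+a(W'\otimes Q_W)$. The ingredients I would use are: the exactness $\Im\bar x=\Ker\pi$ of the middle row of~\eqref{eq:ybar}; injectivity of $\bar x$ and of $a$; the relation $\pi\circ a=\rho_\Qh$, where we identify $\Im\pi=\Ker\i$ with $\Ker\i_\Qh$ via the isomorphism in~\eqref{eq:ybar}, so that, by exactness of the bottom row, $\Im\pi=\Im\rho_\Qh$; the inclusions $\rho(W)\subseteq U$ (from $W=\rho^{-1}(U)$) and $\Im y\subseteq\Ker\rho$ (exactness of~\eqref{eq:yrho}); and the injectivity of $\rho|_{W'}$ together with the splitting $W=W'\oplus\Im y$ from Lemma~\ref{lm:PRr}.

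First I would check $\Im\bar x+a(W'\otimes Q_W)\subseteq V$. Since $\pi\circ\bar x=0$, one has $\Im\bar x\subseteq\Ker\pi\subseteq\pi^{-1}(U\otimes Q_W)=V$. For the second summand, $\pi(a(w))=\rho_\Qh(w)$; as $W'\subseteq W=\rho^{-1}(U)$ and $Q_W$ is flat over $\R$, the map $\rho_\Qh$ carries $W'\otimes Q_W$ into $\rho(W')\otimes Q_W\subseteq U\otimes Q_W$, so $a(W'\otimes Q_W)\subseteq V$. Next, directness: if $\bar x(r)=a(w)$ with $w\in W'\otimes Q_W$, then applying $\pi$ gives $\rho_\Qh(w)=0$; since $\rho|_{W'}$ is injective and $Q_W$ is flat over $\R$, the restriction $\rho_\Qh|_{W'\otimes Q_W}=(\rho|_{W'})\otimes\Id$ is injective, so $w=0$, hence $\bar x(r)=0$ and $r=0$ by injectivity of $\bar x$.

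The substantive step is $V\subseteq\Im\bar x+a(W'\otimes Q_W)$. Given $v\in V$, we have $\pi(v)\in U\otimes Q_W$, and also $\pi(v)\in\Im\pi=\Im\rho_\Qh$, so there is $w_0\in\Hh^*(X,L;Q_W)$ with $\rho_\Qh(w_0)=\pi(v)$. I would then observe that $w_0\in W\otimes Q_W$: this uses the identity $\rho_\Qh^{-1}(U\otimes Q_W)=\rho^{-1}(U)\otimes Q_W=W\otimes Q_W$, which holds because tensoring over the field $\R$ is exact (so preimages of subspaces-after-base-change are base changes of preimages). Using $W=W'\oplus\Im y$ (Lemma~\ref{lm:PRr}), write $w_0=w'+w''$ with $w'\in W'\otimes Q_W$ and $w''\in(\Im y)\otimes Q_W$. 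Since $\Im y\subseteq\Ker\rho$, we get $\rho_\Qh(w'')=0$, so $\rho_\Qh(w')=\rho_\Qh(w_0)=\pi(v)=\pi(a(w'))$, whence $v-a(w')\in\Ker\pi=\Im\bar x$; thus $v\in a(w')+\Im\bar x\subseteq a(W'\otimes Q_W)+\Im\bar x$. Together with the first two steps this proves the equality.

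I do not expect a genuine obstacle here; the argument is pure diagram chasing. The one point deserving a line of care is the identity $\rho_\Qh^{-1}(U\otimes Q_W)=W\otimes Q_W$ invoked above, which is the elementary fact that for an $\R$-linear map $f\colon V_1\to V_2$ of vector spaces and any $\R$-module $K$ one has $(f\otimes\Id_K)^{-1}(U\otimes K)=f^{-1}(U)\otimes K$, obtained by tensoring the split exact sequences $0\to f^{-1}(U)\to V_1\to\Im(p\circ f)\to 0$ and $0\to U\otimes K\to V_2\otimes K\to(V_2/U)\otimes K\to 0$ (with $p\colon V_2\to V_2/U$) with $K$ and comparing kernels.
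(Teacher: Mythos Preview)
Your proof is correct. The paper's argument differs in organization: rather than establishing the three facts separately by diagram chasing, it first uses the splitting $P$ of the middle row of~\eqref{eq:ybar} to get the global decomposition $H^*(\cone)=\Ker P\oplus\Im\bar x$, and then reduces the lemma to showing $V\cap\Ker P=a(W'\otimes Q_W)$, which follows from Lemma~\ref{lm:PRP} (namely $\Ker P=a(\Ker P_\Qh)$) together with $\pi\circ a=\rho_\Qh$. Your approach is more elementary in that it avoids invoking $P$ and Lemma~\ref{lm:PRP} directly, relying only on the consequence $W=W'\oplus\Im y$ already recorded in Section~\ref{ssec:prelim}; the paper's approach has the advantage that the single splitting $\Ker P\oplus\Im\bar x$ takes care of both directness and surjectivity at once.
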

\begin{proof}
Since $P : H^*(\cone) \to \Coker \i$ splits the short exact sequence
\[
0 \lrarr \Coker \i \overset{\bar x}{\lrarr} H^*(\cone) \overset{\pi}{\lrarr} \Ker \i \to 0,
\]
we obtain
\begin{equation}\label{eq:dsHC}
\Ker P \oplus \Im \bar x = H^*(\cone).
\end{equation}
Thus, since $\Im \bar x \subset V,$ it suffices to show that $V \cap \Ker P = a(W' \otimes \Qh_W).$

By Lemma~\ref{lm:PRP}, we have $\Ker P = a(\Ker P_\Qh).$ Thus,
\[
V \cap \Ker P = \pi^{-1}(U\otimes\Qh_W) \cap a(\Ker P_\Qh).
\]
On the other hand,
\[
\qquad W' \otimes \Qh_W = \rho_\Qh^{-1}(U\otimes \Qh_W) \cap \Ker P_\Qh.
\]
Diagram~\eqref{eq:tris} gives $\pi \circ a = \rho_\Qh.$ So, for $v = a(w),$ we have
\[
w \in \rho_\Qh^{-1}(U\otimes \Qh_W) \Leftrightarrow \rho_\Qh(w) \in U\otimes \Qh_W \Leftrightarrow \pi\circ a(w) \in U\otimes \Qh_W \Leftrightarrow v \in \pi^{-1}(U\otimes \Qh_W).
\]
Therefore, $V \cap \Ker P = a(W' \otimes \Qh_W)$ and the lemma follows.
\end{proof}

Recall from Section~\ref{sssec:owdvv} that $\Delta_j\in U$, $j\in I_U$, and $\g_j\in W'$, $j\in I_{W'}$ are bases such that $\rho(\g_j)=\Delta_j$.
Write $\Upsilon_j:=a(\g_j)\in H^*(\cone)$, for $j\in I_{W'}$, and $\Upsilon:=a(\g_W)$.

\begin{lm}\label{lm:memofg}
Let $u\in W$ and $v\in W\oplus S$, let $v_W$ be the projection of $v$ to $W,$ and let $\bar{u} = \rho(u),\bar{v} = \rho(v_W).$ We have
\[
\d_v\nund(\d_u\Upsilon)=
\xb(\d_v\d_u\Ob)+\sum_{\substack{m\in I_{W'}\\ l\in I_U}}\rho^*\d_{\bar v}\d_{\bar u}\d_l\Phi \cdot g^{lm}\cdot\Upsilon_m.
\]
\end{lm}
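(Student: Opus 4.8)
The plan is to compute $\d_v\nund(\d_u\Upsilon)$ directly at the level of the cone complex and then identify the two pieces it decomposes into: a part killed by $\pi$ (hence in $\Im\bar x$, and equal to $\xb(\d_v\d_u\Ob)$ by the definition of $\Ob = P\Psi$), and a part in $a(W'\otimes Q_W)$ whose coefficients are the third derivatives of the closed potential $\Phi$. First I would represent $\Upsilon = a(\g_W)$ by the chain $(\gamma_W,0)$ up to $\dcone$-exact terms, so that $\d_u\Upsilon = [(\d_u\gamma_W,0)] = \nund([(\d_u\gamma_W,0)])$-type data; more precisely, by Lemma~\ref{lm:dpsi} (using that $b$ is separated, which holds under assumption~\ref{assump:b}), $\d_u\psi = \memh(\d_u\gamma_W,0)$, so $\d_u\Psi = \nund(\d_u\Upsilon)$ and hence $\d_v\nund(\d_u\Upsilon) = \d_v\d_u\Psi$. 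Thus the left-hand side is simply $\d_v\d_u\Psi \in H^*(\cone)$, and the task is to decompose this class.

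Next I would apply the splitting $H^*(\cone) = \Ker P \oplus \Im\bar x$ from equation~\eqref{eq:dsHC}, together with Lemma~\ref{lm:inj}, which gives $\Ker P \cap V = a(W'\otimes \Qh_W)$ and $V = \Im\bar x \oplus a(W'\otimes\Qh_W)$. Since $\d_v\d_u\Psi \in V$ — this follows because $\pi(\d_v\d_u\Psi) = \d_v\d_u\pi(\Psi) = \d_v\d_u\rho^*(\nabla\Phi_U)$ by Lemma~\ref{lm:phipsi}, which lies in $U\otimes\Qh_W$ by assumption~\ref{assump:sfa} (closure of $U$ under the quantum product, exactly as in Lemma~\ref{lm:nunendo}, equation~\eqref{eq:inU}) — we may write
\[
\d_v\d_u\Psi = \bar x(\text{something}) + \sum_{m\in I_{W'}} c_m \cdot \Upsilon_m
\]
with $c_m \in \Qh_W$. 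Applying $P$ and using $\Ob = P\Psi$, $P\circ\bar x = \Id$, and $P(\Upsilon_m) = P(a(\g_m)) = P_\Qh(\g_m) = 0$ since $\g_m \in W' = \Ker(P_\R|_W)$ and condition~\eqref{eq:PR}, we get that the $\bar x$-component is $\bar x(\d_v\d_u\Ob)$, i.e. the first term on the right-hand side. To extract the coefficients $c_m$, I would pair with the quantum metric: apply $\pi$ to get $\pi(\d_v\d_u\Psi) = \sum_m c_m \rho(\g_m) = \sum_m c_m \Delta_m$ in $H^*(X;\Qh_W)$ (using $\pi\circ a = \rho_\Qh$ from diagram~\eqref{eq:tris} and $\pi\circ\bar x = 0$), then take the Poincaré pairing against $\Delta_l$. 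By Lemma~\ref{lm:phipsi} the left side becomes $\langle\d_v\d_u\rho^*\nabla\Phi,\Delta_l\rangle_X = \rho^*\d_{\bar v}\d_{\bar u}\d_l\Phi$ (interchanging derivatives with $\rho^*$ and using that the gradient of $\Phi$ pairs to its derivatives), while the right side gives $\sum_m c_m g_{ml}$. Inverting the matrix $(g_{ij})$, which is nondegenerate by assumption~\ref{assump:sfa}, yields $c_m = \sum_{l\in I_U} \rho^*\d_{\bar v}\d_{\bar u}\d_l\Phi\cdot g^{lm}$, as claimed.

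I expect the main obstacle to be the bookkeeping around the fact that $v$ may have an $S$-component: $\d_v$ acts trivially on the first component of $\cone$, so $\pi(\d_v\d_u\Psi)$ only sees $\d_{v_W}$, and one must check carefully that the $S$-part of $\d_v$ contributes only to the $\bar x$-component (via Lemma~\ref{lm:dsomega} and Lemma~\ref{lm:memofx}, since $\d_s\nund(\xb(r)) = -\d_s c\cdot\xb(r)$) and not to the $\Upsilon_m$-coefficients — which is consistent with the statement, where the $\Phi$-term involves only $\bar v = \rho(v_W)$. The other point requiring care is justifying that $\d_v$ commutes with $\pi$, $a$, $\bar x$, $P$, and $\rho^*$ at the level of cohomology; this is routine since all these maps are $Q_W$-linear and defined over the coefficient ring, so derivations in directions of $W$ and $S$ commute with them, but it should be stated explicitly. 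Everything else is a direct computation using the identities already assembled in Sections~\ref{ssec:psi} and~\ref{ssec:omegabar}.
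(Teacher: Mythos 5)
Your proposal is correct and follows essentially the same route as the paper's proof: establish $\d_v\nund(\d_u\Upsilon)=\d_v\d_u\Psi$ via Lemma~\ref{lm:dpsi}, invoke the splitting $V=\Im\bar x\oplus a(W'\otimes Q_W)$ from Lemma~\ref{lm:inj}, extract the $\bar x$-component by applying $P$, and extract the $\Upsilon_m$-coefficients by applying $\pi$, pairing against $\Delta_l$, and inverting $(g_{ij})$. The only cosmetic differences are that the paper invokes Lemma~\ref{lm:nunendo} to get membership in $V$ and Lemma~\ref{lm:rhodphi} for the identity $\langle\pi(\d_v\d_u\Psi),\Delta_l\rangle_X=\rho^*\d_{\bar v}\d_{\bar u}\d_l\Phi$, both of which you effectively re-derive inline.
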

\begin{proof}

By Lemmas~\ref{lm:nunendo} and~\ref{it:ds}, we obtain a unique decomposition
\begin{equation}\label{eq:dmemdecomp}
\d_v\nund(\d_u\Upsilon)
=
\xb(r)+\sum_{\;\;\; m \in I_{W'}}r^m\Upsilon_m, \qquad r \in R_W, \quad r^j \in \Qh_W.
\end{equation}
To compute $r,$ recall that $P\circ \xb=\Id$ and $\Upsilon_m\in\Ker P$. By Lemma~\ref{lm:dpsi}, we have
\begin{align*}
r&= P(\xb(r))
=P(\d_v\nund(\d_u\Upsilon))=P(\d_v\d_u\Psi)
=\d_v\d_uP\Psi=\d_v\d_u\Ob.
\end{align*}
To find $r^m$, take $l\in I_U$ and compute
\begin{align*}
\langle\pi(\d_v\nund(\d_u\Upsilon)),\Delta_l\rangle_X &=
\langle \sum_{m\in I_{W'}} r^m\pi(\Upsilon_m),\Delta_l\rangle_X\\
=&
\sum_{m\in I_{W'}} r^m\langle\Delta_m,\Delta_l\rangle_X\\
=&
\sum_{m\in I_{W'}}r^m g_{ml}.
\end{align*}
On the other hand,
\begin{align*}
\langle\pi(\d_v\nund(\d_u\Upsilon)),\Delta_l\rangle_X &=
\int_X \d_v\q^{\gamma_W}_{\emptyset,1}(\pi(\d_u\Upsilon))\wedge\d_l\gamma_U\\
=&
\int_X\d_v\d_u\q^{\gamma_W}_{\emptyset,0}\wedge\d_l\gamma_U\\
=&
\d_v\d_u\int_X\q^{\gamma_W}_{\emptyset,0}\wedge\d_l\gamma_U,\\
\shortintertext{and by Lemma~\ref{lm:rhodphi},}
=&\d_v\d_u\rho^*(\d_l\Phi) \\
=&\rho^*(\d_{\bar v}\d_{\bar u}\d_l\Phi).
\end{align*}
So,
\[
\sum_{m\in I_{W'}}r^m g_{ml}=
\rho^*(\d_{\bar v}\d_{\bar u}\d_l\Phi).
\]
Multiplying by the inverse matrix $(g^{ml})$, we get
\[
r^m=\sum_{l\in I_U}\rho^*(\d_{\bar v}\d_{\bar u}\d_l\Phi) \cdot g^{lm}.
\]
\end{proof}

\begin{proof}[Proof of Theorem~\ref{thm:OWDVV}]
We deduce the result from
Theorem~\ref{lm:assoc}, as follows.
By Lemmas~\ref{lm:memofg} and~\ref{lm:memofx}, compute
\begin{align}
\d_u\nund(\d_v&\nund(\d_w\Upsilon))=
\d_u\nund\big(\xb(\d_v\d_w\Ob) +\sum_{m\in I_{W'}, l\in I_U}\rho^*\d_{\bar v}\d_{\bar w}\d_l\Phi \cdot g^{lm}\cdot\Upsilon_m\big)\notag
\\
=&\d_u\nund(\xb(\d_v\d_w\Ob)) +\sum_{m\in I_{W'}, l\in I_U}(-1)^{(|v|+|w|+|\Delta_l|)|u|}\rho^*\d_{\bar v}\d_{\bar w}\d_l\Phi \cdot g^{lm}\cdot\d_u\nund(\Upsilon_m)\notag\\
=&\xb(-\d_u c\cdot\d_v\d_w\Ob)
+ \notag\\
& + \sum_{m\in I_{W'}, l\in I_U}(-1)^{(|v|+|w|+|\Delta_l|)|u|}\rho^*\d_{\bar v}\d_{\bar w}\d_l\Phi \cdot g^{lm}
\big(\xb(\d_u\d_m\Ob)
+\sum_{\substack{i\in I_{W'}\\ j\in I_U}}\rho^*\d_{\bar u}\d_m\d_j\Phi\cdot g^{ji}\cdot\Upsilon_i\big)\notag\\
=&
\xb\big(-\d_uc\cdot\d_v\d_w\Ob
+
\sum_{m\in I_{W'}, l\in I_U}(-1)^{(|v|+|w|+|\Delta_l|)|u|}\rho^*\d_{\bar v}\d_{\bar w}\d_l\Phi  \cdot g^{lm}\cdot
\d_u\d_m\Ob\big)+ \notag \\
&\hspace{5em}+
\sum_{m,i\in I_{W'}, l,j\in I_U}(-1)^{(|v|+|w|+|\Delta_l|)|u|}\rho^*\d_{\bar v}\d_{\bar w}\d_l\Phi\cdot g^{lm}\cdot
\rho^*\d_{\bar u}\d_m\d_j\Phi\cdot g^{ji}\cdot\Upsilon_i \notag\\
=&
\xb\big(-\d_uc\cdot\d_v\d_w\Ob
+
\sum_{m\in I_{W'}, l\in I_U}
(-1)^{(|v|+|w|+|\Delta_l|)|\Delta_m|}  \d_u\d_m\Ob
\cdot g^{lm}\cdot
\rho^*\d_{\bar v}\d_{\bar w}\d_l\Phi
\big)+ \notag \\
&\hspace{5em}+
\sum_{m,i\in I_{W'}, l,j\in I_U}(-1)^{(|v|+|w|+|\Delta_l|)|u|}\rho^*\d_{\bar v}\d_{\bar w}\d_l\Phi\cdot g^{lm}\cdot
\rho^*\d_{\bar u}\d_m\d_j\Phi\cdot g^{ji}\cdot\Upsilon_i.
\notag\\
\shortintertext{Since $g^{lm} = (-1)^{|l||m|}g^{ml}$ and $g^{lm} = 0$ unless $|m| \equiv |l| \pmod 2$, we continue}
=&
(-1)^{|v||w|}\xb\big(-\d_u c \cdot\d_w\d_ v\Ob
+
\sum_{l \in I_{W'}, m \in I_U}\d_u\d_l\Ob\cdot g^{lm}\cdot
\rho^*\d_m\d_{\bar w}\d_{\bar v}\Phi\big)+ \notag \\
&\hspace{5em}+
(-1)^{(|v|+|w|)|u|}\sum_{m,i\in I_{W'}, l,j\in I_U}\rho^*\d_{\bar v}\d_{\bar w}\d_l\Phi\cdot g^{lm}\cdot
\rho^*\d_m\d_{\bar u}\d_j\Phi\cdot g^{ji}\cdot\Upsilon_i.
\label{eq:pe}
\end{align}
Similarly,
\begin{align}
\d_v\nund(\d_u&\nund(\d_w\Upsilon))= \notag \\
& =  \xb\big(-\d_vc\cdot\d_u\d_w\Ob
+
\sum_{m\in I_{W'}, l\in I_U}(-1)^{(|u|+|w|+|\Delta_l|)|v|}\rho^*\d_{\bar u}\d_{\bar w}\d_l\Phi \cdot g^{lm}\cdot
\d_v\d_m\Ob\big)+ \notag \\
&\hspace{5em}+
\sum_{m,i\in I_{W'}, l,j\in I_U}(-1)^{(|u|+|w|+|\Delta_l|)|v|}\rho^*\d_{\bar u}\d_{\bar w}\d_l\Phi \cdot g^{lm}\cdot
\rho^*\d_{\bar v}\d_m\d_j\Phi \cdot g^{ji}\cdot\Upsilon_i \notag\\
& =  (-1)^{|v|(|u| + |w|)}\xb\big(-\d_u\d_w\Ob\cdot\d_vc
+
\sum_{m\in I_{W'}, l\in I_U}\rho^*\d_{\bar u}\d_{\bar w}\d_l\Phi \cdot g^{lm}\cdot
\d_m\d_v\Ob\big)+ \notag \\
&\hspace{5em}+
(-1)^{|v|(|u| + |w|)}\sum_{m,i\in I_{W'}, l,j\in I_U}\rho^*\d_{\bar u}\d_{\bar w}\d_l\Phi \cdot g^{lm}\cdot
\rho^*\d_m\d_{\bar v}\d_j\Phi \cdot g^{ji}\cdot\Upsilon_i. \label{eq:pe'}
\end{align}
By Theorem~\ref{lm:assoc}, quantities~\eqref{eq:pe} and~\eqref{eq:pe'} are equal after multiplying one by $(-1)^{|u||v|}.$
To get the open WDVV equation~\eqref{1st}, apply $P$.
\end{proof}

\begin{rem}
From equation~\eqref{eq:pe} we can obtain the standard WDVV equation for closed genus zero Gromov-Witten invariants by applying $\pi$ and pairing via $\langle \;,\;\rangle_X$ with $\g_e$.
\end{rem}

\begin{proof}[Proof of Corollary~\ref{cor:owdvv}]

Use Lemma~\ref{lm:dsomega} as follows.
If $[L]=0,$ then $c=\d_s\Ob$ and the equations follow.
If $[L]\ne 0,$ then $c=\d_s\Omega$. Let $z:R_W/\Qh_W\to R_W$ be the unique right-inverse of $q$ such that $D\circ z=0$. In particular, $z(\Ob)=\Omega$. Observe that if $u \in W$, then for all $g \in R_W/\Qh_W$ we have $\d_u (z(g)) = z( \d_u g).$ The desired equation then follows from applying $z$ to equation~\eqref{1st}.
\end{proof}

\section{Computations for projective space}\label{comp}
The objective of this section is to describe a recursive process for the computation of our invariants for $(X,L) = (\P^n,\RP^n)$ $n$ odd.
As in Section~\ref{sssec:cpn},
take $\omega=\oFS$ the Fubini-Study form, $J=J_0$ the standard complex structure,
and $\sly=H_2(X,L;\Z)$. Equip $(X,L)$ with a relative spin structure.
Thus, Theorem~\ref{thm:esphere} holds, and we have a bounding pair $(\gamma,b)$ over $W=\Hh^*(X,L;\R)$ with $b$ point-like.
Abbreviate $\g_j = [\omega^j] \in \Hh^*(X,L;\R)$ and $\g_j = [\omega^j] \in H^*(X;\R).$ Observe that $[L]=0\in H_n(X;Z).$ So, together with $\g_\diamond=y(1)$, the classes $\g_j$ form a basis of $\Hh^*(X,L;\R)$. Denote by $t_j\in \R[[W]]$ the coordinates on $W$ corresponding to $\g_j$ for $j=0,\ldots,n,\diamond$. Let $s\in S$ be the coordinate on $S$ with $\int_L b=s$.
Identify $H_2(X,L;\Z)$ with $\Z$ so that the non-negative integers correspond to classes $\beta \in H_2(X,L;\Z)$ with $\omega(\beta) \geq 0.$ By Lemma~\ref{lm:PRP}, choose $P:  H^*(\cone) \to \Coker \i$ to be the unique left inverse to $\bar x$ satisfying conditions~\eqref{eq:bP} and~\eqref{eq:PR} such that $P_\R(\Gamma_j) = 0$ for $j = 0,\ldots,n.$ Thus, by definition~\eqref{eq:W'} we have $W' = \spa\{\g_j\}_{j = 0}^n.$

\begin{lm}\label{lm:init}
The relative spin structure on $(X,L)$ can be chosen so that $\ogwb_{1,2}=2.$
\end{lm}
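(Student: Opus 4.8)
The plan is to reduce $\ogwb_{1,2}$ to a signed count of holomorphic disks of degree one and to evaluate that count by elementary projective geometry.

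First I would identify the relevant moduli space. Since $[L]=0\in H_n(\P^n)$ and $\dmf=\d_s\big(\int_Lb\big)=\d_s s=1$, part~(a) of Lemma~\ref{lm:dsomega} gives $\d_s\Ob=c$ under the identification $\Coker\i\simeq R_W$ valid when $[L]=0$; hence by~\eqref{eq:Ob} the invariant $\ogwb_{1,2}$ is the coefficient of $T^1$ in $\d_s c|_{s=0,\,t=0}$, where $c$ is the coefficient of the Maurer--Cartan equation~\eqref{eq:bdchdfn}. At $s=t=0$ both $b$ and $\gamma_W$ vanish, while $\theta:=\d_s b|_{s=t=0}$ is a fixed $n$-form on $\RP^n$ with $\int_{\RP^n}\theta=1$. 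Differentiating~\eqref{eq:bdchdfn} in $s$ and evaluating at $s=t=0$, multilinearity of the $\q$ operators kills every term still containing a factor of $b$ or $\gamma_W$, so only the $(k,l)=(1,0)$ term survives and $\d_s c|_0$ is the degree-zero part of $\q_{1,0}(\theta)$. By Lemma~\ref{lm:qzero} the $\beta_0$ contribution is $d\theta=0$, and for $\beta\neq\beta_0$ Lemma~\ref{deg_str_map} shows $\q^\beta_{1,0}(\theta)$ has form-degree $n+1-\mu(\beta)$, which is negative unless $\mu(\beta)=n+1$, i.e. $\beta=1$. Hence $\ogwb_{1,2}=(-1)^{\varepsilon(\theta)}(evb_0^1)_*(evb_1^1)^*\theta$, a constant on $\RP^n$ with $\varepsilon(\theta)\equiv n\pmod 2$; by the projection formula and $\int_{\RP^n}\theta=1$ this constant equals $\int_{\M_{2,0}(1)}(evb_0^1)^*\theta\wedge(evb_1^1)^*\theta$.

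Second I would evaluate this integral. It is the signed number of degree-one $J_0$-holomorphic disks $u:(D^2,\d D^2)\to(\P^n,\RP^n)$ with boundary marked points $z_0,z_1$ satisfying $u(z_0)=p_0$, $u(z_1)=p_1$ for two generic points $p_0,p_1\in\RP^n$. By Schwarz reflection such a $u$ extends to a degree-one holomorphic map $\P^1\to\P^n$, i.e. a line; it must be the unique line $\ell$ through $p_0$ and $p_1$, which is conjugation invariant because $p_0,p_1$ are real. Thus $\ell\cap\RP^n$ is a circle $\RP^1$ through $p_0$ and $p_1$, and it divides $\ell\cong S^2$ into exactly two disks of degree one, on each of which we set $z_0\mapsto p_0$, $z_1\mapsto p_1$. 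So the unsigned count is $2$ and $\ogwb_{1,2}=\pm2$.

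Third I would fix the sign. Complex conjugation induces a free involution of $\M_{2,0}(1)$ interchanging the two half-disks and commuting with $evb_0^1$ and $evb_1^1$; using the orientation conventions of~\cite{ST4} together with the computation of the action of conjugation on these orientations as in~\cite{ST2}, for $n$ odd this involution is orientation-preserving, so the two half-disks contribute with the same sign and no cancellation occurs, confirming $\ogwb_{1,2}=\pm2$. Since changing the relative spin structure by the nontrivial class of $H^2(\P^n;\Z/2)$ reverses the orientation of $\M_{2,0}(1)$ and hence flips this sign, the relative spin structure can be chosen so that $\ogwb_{1,2}=2$. The main obstacle is this last paragraph: whereas the preceding steps are dimension counting and the elementary geometry of lines in $\P^n$, pinning the sign down honestly requires care with the relative spin structure and with the conjugation-equivariance of the orientations on moduli spaces of disks.
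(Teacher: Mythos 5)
Your proposal is correct and follows essentially the same route as the paper: reduce the invariant to the integral $\int_{\M_{2,0}(1)}(evb_0)^*\theta\wedge(evb_1)^*\theta$ (with $\theta=\d_s b|_{s=t=0}$), identify it with the count of the two half-disks of the real line through two generic real points, and fix the overall sign by the choice of relative spin structure. The only cosmetic differences are that you pass through $\d_s c$ via Lemma~\ref{lm:dsomega} and the differentiated Maurer--Cartan equation, whereas the paper differentiates $\qbg_{-1,0}$ directly starting from~\eqref{eq:oh}, and the paper sources the orientation facts (that the covering involution preserves orientation and that a relative spin change flips the count) from~\cite{SolomonThesis} rather than from~\cite{ST2} and~\cite{ST4}.
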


\begin{proof}
For any space, denote by $pt$ the map from it to a point.
Recall Remark~\ref{rem:qder}. Since $\gamma \in \mI_W\Ah^*(X,L;\Qh_W )$ and $b \in \mJ_{W}A^*(L;R_W)$,
Lemmas~\ref{cl:cyclic},~\ref{lm:qzero}, and Proposition~\ref{prop:iof}~\ref{prop:pushpull},
imply that the coefficient of $T^1$ in $-\d_s^2\Ob|_{s=t_j=0}$ is
\begin{align}
(\d_s^2\q^{\beta_1;\gamma,b}_{-1,0})|_{s=t_j=0}
= &
(\d_s^2\q^{\beta_1;0,b}_{-1,0})|_{s=t_j=0} \notag\\
=&
\langle\q^{\beta_1}_{1,0}(\d_s b),\d_s b\rangle \notag\\
=&
(-1)^{n+1} pt_*((evb_0)_*(evb_1^*\d_s b)\wedge \d_s b) \notag \\
=&
(-1)^{1+n\cdot\rdim evb_0} pt_*(\d_s b \wedge (evb_0)_*(evb_1^*\d_s b)) \notag \\
=&
pt_*((evb_0)^*\d_s b\wedge (evb_1^*\d_s b)). \label{eq:ds2}
\end{align}

Let $Y = L \times L \setminus \triangle$ and let $Z = (evb_0 \times evb_1)^{-1}(Y) \subset \M_{2,0}(1).$ It is easy to see that
\[
g = (evb_0 \times evb_1)|_Z : Z \to Y
\]
is a covering map. Indeed, the fiber of $g$ over a point $(x_1,x_2) \in Y$ can be identified with the pair of oriented lines in $\R P^n$ passing through $x_1$ and $x_2.$  We claim that for an appropriate choice of relative $Pin$ structure, the degree of $g$ is $-2.$ Indeed, the two points in the preimage of a point in $Y$ are conjugate disks of degree $1$ with two marked points. Proposition~5.1 of~\cite{SolomonThesis} shows that the covering transformation that interchanges these two disks is orientation preserving, so the degree is $\pm 2.$ Finally, Lemma 2.10 of~\cite{SolomonThesis} shows that one can choose the relative spin structure on $L$ so as to make the degree $-2.$

For $i = 1,2,$ let $p_i: L \times L \to L$ be the projections. Since $Y \subset L \times L$ and $Z \subset \M_{2,0}(\beta)$ are open dense subsets, using that $b$ is point-like, we obtain
\begin{multline*}
pt_*((evb_0)^*\d_s b\wedge (evb_1)^*\d_s b) = \int_Zg^*((p_1^* \d_s b \wedge p_2^* \d_s b)|_Y) = \\
= -2 \int_Y p_1^* \d_sb \wedge p_2^* \d_s b
= -2 \int_{L\times L} p_1^* \d_sb \wedge p_2^* \d_s b
=-2\left (\int_L \d_s b \right)^2 = -2.
\end{multline*}
Combining this calculation with equation~\eqref{eq:ds2} we obtain the desired result.
\end{proof}

\begin{proof}[Proof of Theorem~\ref{recursion}]
Use the axioms of $\GW$ given in~\cite[Section 2]{KontsevichManin} and~\cite[Chapter 7]{MS}, and the axioms of $\ogwb$ given in Proposition~\ref{axioms}.
As in Remark~\ref{rem:qder}, the invariants $\ogwb$ are given by derivatives of $\Ob$.
Note that, with the identification $H_2(\P^n;\Z)\simeq \Z$, we have
\[
w_\s(\hat\beta)\equiv \frac{n+1}{2}\cdot \hat\beta \pmod 2.
\]
As shown in Lemma 5.11 of~\cite{ST2}, the natural map $H^2(X,L;\R) \to \Hh^*(X,L;\R)$ is an isomorphism. So, the pairing of $\Gamma_1 = [\omega] \in \Hh^*(X,L;\R)$ with $\beta \in \sly = H_2(X,L;\Z) \simeq \Z$ is well-defined and the result is
\[
\int_\beta \Gamma_1 = \int_\beta \omega = \beta/2.
\]
We use this implicitly each time we invoke the divisor axiom~\eqref{ax_divisor} in the following argument.

The value $\ogwb_{1,2}=2$ is computed in Lemma~\ref{lm:init}.
Equating the coefficients of $T^1$
in equation~\eqref{eq:cor2} with $v=\g_{\frac{n-1}{2}},w=\g_1,$ and using the zero axiom~\eqref{ax_zero}
yields
\[
(-1)^{w_\s(0)}\GW_{0}(\Delta_\frac{n-1}{2},\Delta_1,\Delta_\frac{n-1}{2})\cdot \ogwb_{1,1}(\g_{\frac{n+1}{2}})=0,
\]
so
\[
\ogwb_{1,1}(\g_{\frac{n+1}{2}})=0.
\]
Equating the coefficients of $T^{2}$ in equation~\eqref{eq:cor2} with $v=\g_1,w=\g_n,$ evaluated at $s=t_j=0$ yields
\[
(-1)^{w_\s(1)}\GW_1(\Delta_1,\Delta_n,\Delta_n)\cdot \ogwb_{0,1}(\g_0)-\ogwb_{1,0}(\g_1,\g_n)\cdot \ogwb_{1,2}=0,
\]
so by the zero axiom~\eqref{ax_zero}, the divisor axiom~\eqref{ax_divisor}, and Lemma~\ref{lm:init},
\begin{equation}\label{eq:ogw10n}
\ogwb_{1,0}(\g_n)=(-1)^{\frac{n+3}{2}}.
\end{equation}

For convenience, we use $[X](F)$ to denote the  coefficient of $X$ in the power series $F$.
To prove recursion~\ref{reduce_smallest},
apply $\d_s^k\d_I=\d_s^k\d_{j_3}\cdots \d_{j_l}$ to equation~\eqref{eq:cor1} with $v=\g_{j_1-1},w=\g_1,u=\g_{j_2}$, evaluate at $s=t_j=0$, and consider the coefficients of $T^\beta$.
Using the zero axiom for $\GW$, we can single out instances of $\ogwb_\beta$ and compute
\begin{multline*}
[T^{\beta}](\d_s^k\d_I(\d_{v}\d_{w}\d_j\Phi\cdot\d_{n-j}\d_{u}\Ob)|_{s=t_j=0})
=
\ogwb_{\beta,k}(\g_{j_1},\ldots,\g_{j_l})
+\\+
\sum_{\substack{\varpi(\hat\beta)+\beta_1=\beta\\ I_1\sqcup I_2= I}}\sum_{i=0}^n
(-1)^{\frac{(n+1)\hat\beta}{2}}\GW_{\hat\beta}(\Delta_1,\Delta_{j_1-1},\Delta_{I_1},\Delta_i) \ogwb_{\beta_1,k}(\g_{n-i},\g_{j_2},\g_{I_2}),
\end{multline*}
\begin{multline*}
[T^{\beta}](\d_s^k\d_I(\d_{u}\d_s\Ob\cdot\d_{v}\d_{w}\Ob)|_{s=t_j=0})
=\\=
\sum_{\substack{\beta_1+\beta_2=\beta\\k_1+k_2=k\\ I_1\sqcup I_2=I}} \binom{k}{k_1}\ogwb_{\beta_1,{k_1}}(\g_1,\g_{j_1-1},\g_{I_1}) \ogwb_{\beta_2,{k_2+1}}(\g_{j_2},\g_{I_2}),
\end{multline*}
\begin{multline*}
[T^{\beta}](\d_s^k\d_I (\d_{u}\d_{w}\d_j\Phi\cdot\d_{n-j}\d_{v}\Ob)|_{s=t_j=0})
=
\ogwb_{\beta,k}(\g_{j_1-1},\g_{j_2+1},\g_{j_3},\ldots,\g_{j_l})
+\\
+\sum_{\substack{\varpi(\hat\beta)+\beta_1=\beta\\ I_1\sqcup I_2= I}}\sum_{i=0}^n (-1)^{\frac{(n+1)\hat\beta}{2}} \GW_{\hat\beta}(\Delta_1,\Delta_{j_2},\Delta_{I_1},\Delta_i) \ogwb_{\beta_1,k}(\g_{n-i},\g_{j_1-1},\g_{I_2}),
\end{multline*}
\begin{multline*}
[T^{\beta}](\d_s^k\d_I (\d_{v}\d_s\Ob\cdot\d_{u}\d_{w}\Ob)|_{s=t_j=0})
=\\=
\sum_{\substack{\beta_1+\beta_2=\beta\\k_1+k_2=k\\I_1\sqcup I_2=I}}\binom{k}{k_1}\ogwb_{\beta_1,{k_1}}(\g_1,\g_{j_2},\g_{I_1}) \ogwb_{\beta_2,{k_2+1}}(\g_{j_1-1},\g_{I_2}).
\end{multline*}
Substituting the expressions in~\eqref{eq:cor1} gives the required recursion.

Recursion~\ref{reduce_bd} follows from
applying $\d_s\d_I=\d_s^{k-2}\d_{j_1}\cdots \d_{j_l}$ to equation~\eqref{eq:cor2} with $v=\g_1,w=\g_n$, evaluating at $s=t_j=0$, and considering the coefficients of $T^{\beta+1}$.
We find that
\begin{multline*}
[T^{\beta+1}](\d_s^{k-2}\d_I (\d_{v}\d_{w}\d_j\Phi\cdot\d_{n-j}\d_s\Ob)|_{s=t_j=0})=\\
=\sum_{\substack{\varpi(\hat\beta)+\beta_1=\beta+1\\I_1\sqcup I_2=I}}\sum_{i=0}^n (-1)^{\frac{(n+1)\hat\beta}{2}} \GW_{\hat\beta}(\Delta_1,\Delta_{n},\Delta_{I_1},\Delta_i) \ogwb_{\beta_1,{k-1}}(\g_{n-i},\g_{I_2}),
\end{multline*}
\begin{multline*}
[T^{\beta+1}](\d_s^{k-2}\d_I (\d_s^2\Ob\cdot\d_{v}\d_{w}\Ob)|_{s=t_j=0})=
\ogwb_{\beta,k}(\g_{j_1},\ldots,\g_{j_l})\ogwb_{1,0}(\g_1,\g_n)+\\
+\sum_{\substack{\beta_1+\beta_2=\beta+1\\ 2\le\beta_1\le \beta\\ k_1+k_2=k-2\\I_1\sqcup I_2=I}}\binom{k-2}{k_1}\ogwb_{\beta_2,{k_2+2}}(\g_{I_2})\ogwb_{\beta_1,{k_1}}(\g_1,\g_{n},\g_{I_1}),
\end{multline*}
\begin{multline*}
[T^{\beta+1}](\d_s^{k-2}\d_I (\d_{v}\d_s\Ob\cdot \d_s\d_{w}\Ob)|_{s=t_j=0})=\\
=\sum_{\substack{\beta_1+\beta_2=\beta+1\\ 1\le\beta_1\le \beta\\ k_1+k_2=k-2\\I_1\sqcup I_2=I}}\binom{k-2}{k_1}\ogwb_{\beta_1,{k_1+1}}(\g_1,\g_{I_1}) \ogwb_{\beta_2,{k_2+1}}(\g_{n},\g_{I_2}).
\end{multline*}
By the computation~\eqref{eq:ogw10n} and the divisor axiom~\eqref{ax_divisor},
\[
\ogwb_{\beta,k}(\g_{j_1},\ldots,\g_{j_l})\ogwb_{1,0}(\g_1,\g_n)
=(-1)^{\frac{n+3}{2}} \frac{1}{2}\ogwb_{\beta,k}(\g_{j_1},\ldots,\g_{j_l}).
\]
This recovers the second recursion.
\end{proof}

\begin{proof}[Proof of Corollary~\ref{cor:computable}]

By Theorem~\ref{prop:wallcross} invariants with interior constraints in $\g_\diamond$ are computable in terms of invariants with interior constraints of the form $\g_j=[\omega^j]$.
Further, by the unit~\eqref{ax_unit} and divisor~\eqref{ax_divisor} axioms, we may assume that $|\g_j|>2$.
It follows from the degree axiom~\eqref{ax_deg} that for any $\beta$ there are only finitely many values of $k,l,$ for which there may be nonzero invariants with constraints of the above type.
Thus, we give a process for computing $\ogwb_{\beta,k}(\g_{i_1},\ldots,\g_{i_l})$ which is inductive on $(\beta,k,l)$ with respect to the lexicographical order on $\Z_{\ge 0}^{\oplus 3}$.

For $\beta=0$, all values are given by the zero axiom~\eqref{ax_zero}. For $(\beta,k,l)$ with $\beta=1$ and $l\le 1$, all possible values have been computed explicitly in Theorem~\ref{recursion}. Indeed, assume for convenience that interior constraints are written in ascending degree order. By the degree axiom~\eqref{ax_deg},
\begin{align*}
\beta=1\Lrarr&\; n-3+n+1+k+2l=kn+\sum_{j=1}^l|\g_{i_l}|\\
\Lrarr&\; 0=(k-2)(n-1)+\sum_{j=1}^l(|\g_{i_l}|-2).
\end{align*}
Since $|\g_{i_j}|>2$, equality cannot occur when $k>2$. For $k=2$, equality holds if and only if $l=0$, for $k=1$ if and only if $l=1$
and $|\g_{i_1}|=n+1$, and for $k=0$ if and only if $l=1$ and $|\g_{i_1}|=2n$.

In the following, we often use the zero axiom~\eqref{ax_zero} without mention to deduce the vanishing of open Gromov-Witten invariants with $\beta = 0.$ For this purpose it is important that $W'$ is closed under the cup product so that for $A_1,A_2 \in W',$ we have $P_\R(A_1 \smallsmile A_2) = 0.$

Consider a triple $(\beta,k,l)$ with $l\ge 2$. By Theorem~\ref{thm:recursion}\ref{reduce_smallest} we can express the invariant as a combination of invariants that either have degree smaller than $\beta$, or have at most the same amount of interior constraints as the original invariant but with a smaller minimal degree. Proceed to reduce the degree of the smallest constraint until you arrive at a divisor, then eliminate this constraint by the divisor axiom~\eqref{ax_divisor}.
In the process, summands of degree $\beta$ do not increase the value of $k$.
Thus, the invariant is reduced to invariants with data of smaller lexicographical order, known by induction.

Consider a triple $(\beta,k,l)$ with $l\le 1$.
For $\beta=0,1,$ the values have been computed above. For $\beta>1$, the degree axiom~\eqref{ax_deg} implies that $k\ge 2.$
Using Theorem~\ref{thm:recursion}\ref{reduce_bd}, express the required invariant as sums of invariants that are either of smaller degree or have equal degree and less boundary marked points. Either way, we get invariants with data of smaller lexicographical order, known by induction.
\end{proof}

\begin{cor}\label{cor:integer}
All the invariants of $(\P^n,\RP^n)$ are of the form $\frac{m}{2^r}$ with $m,r\in\Z$.
\end{cor}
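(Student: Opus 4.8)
The plan is to show that the recursive process described in the proof of Corollary~\ref{cor:computable} only ever introduces denominators that are powers of~$2$, by tracking where fractions can appear. The base cases are $\beta = 0$, handled by the zero axiom~\eqref{ax_zero} (values $0$, $-1$, or $P_\R$ of a cup product, which lies in $\spa\{\Gamma_j\}$ and hence is an integer, in fact zero on $W'$), together with the explicitly computed values $\ogwb_{1,2} = 2$, $\ogwb_{1,1}(\Gamma_{\frac{n+1}{2}}) = 0$, and $\ogwb_{1,0}(\Gamma_n) = (-1)^{\frac{n+3}{2}}$ from Theorem~\ref{thm:recursion}. All of these are integers, so in particular of the form $\tfrac{m}{2^r}$.

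Next I would examine the inductive steps. There are three sources of new invariants in the recursion: (i) the wall-crossing formula of Theorem~\ref{prop:wallcross}, which is $\ogwb_{\beta,k+1}(\cdots) = -\ogwb_{\beta,k}(\Gamma_\diamond,\cdots)$ and introduces no denominators at all; (ii) the divisor axiom~\eqref{ax_divisor}, which multiplies an invariant by $\int_\beta \Gamma_1 = \beta/2$, a half-integer --- this is the unique place a factor of $2$ can enter the denominator; and (iii) the recursions~\ref{reduce_smallest} and~\ref{reduce_bd} of Theorem~\ref{thm:recursion}, whose coefficients are closed Gromov-Witten invariants $\GW_{\hat\beta}$ of $\C P^n$ (which are integers), signs $(-1)^{\cdots}$, binomial coefficients $\binom{k}{k_1}$ (integers), products of previously computed $\ogwb$'s, and in recursion~\ref{reduce_bd} an overall factor of $2$ and a division by $(-1)^{\frac{n+3}{2}}$. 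Division by a sign is harmless. So the only operations that can leave the ring $\Z[\tfrac12]$ invariant as a ring are exactly those listed, and since $\Z[\tfrac12]$ is closed under addition, multiplication, and multiplication by $\beta/2$, an easy induction on the lexicographic order of $(\beta, k, l)$ shows every invariant lies in $\Z[\tfrac12]$, which is precisely the assertion.

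Concretely, I would phrase the argument as: by the proof of Corollary~\ref{cor:computable}, every invariant $\ogwb_{\beta,k}(\Gamma_{i_1},\ldots,\Gamma_{i_l})$ is obtained from the base cases by finitely many applications of (i)--(iii); since $\Z[\tfrac12] \subset \Q$ is a subring closed under multiplication by $\beta/2$ for any $\beta \in \Z$, and the base-case values all lie in $\Z \subset \Z[\tfrac12]$, induction on the lexicographic order gives $\ogwb_{\beta,k}(\Gamma_{i_1},\ldots,\Gamma_{i_l}) \in \Z[\tfrac12]$; finally $\ogw_{\beta,k}$ equals $\ogwb_{\beta,k}$ or $0$, so the same holds for $\ogw$. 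One should also note that the reduction of a general interior constraint to a divisor via repeated use of recursion~\ref{reduce_smallest} terminates (the minimal degree strictly decreases), so only finitely many divisor-axiom steps occur --- though for the statement ``denominator a power of $2$'' even this is not needed, since $\Z[\tfrac12]$ already absorbs arbitrarily many factors of $\tfrac12$.

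The main obstacle, such as it is, is bookkeeping rather than mathematics: one must be sure that \emph{every} coefficient appearing in Theorem~\ref{thm:recursion} and in the axioms of Proposition~\ref{axioms} is an element of $\Z[\tfrac12]$, in particular that the closed invariants $\GW_{\hat\beta}$ of $\C P^n$ are integers (a standard fact, since $\C P^n$ is convex and the moduli spaces are smooth of the expected dimension) and that $P_\R$ of a cup product of classes in $W' = \spa\{\Gamma_j\}_{j=0}^n$ vanishes (as used in the proof of Corollary~\ref{cor:computable}, because $W'$ is closed under cup product and $P_\R|_{W'} = 0$). Once these are in hand, the closure of $\Z[\tfrac12]$ under the relevant operations makes the induction immediate.
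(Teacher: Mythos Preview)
Your proposal is correct and follows essentially the same approach as the paper's proof, which is a one-sentence observation that the recursion has integer initial conditions and integer coefficients, with the divisor axiom being the only source of denominators (all powers of $2$). Your write-up is simply a more detailed expansion of this same idea.
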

\begin{proof}
This is immediate from the recursive process noting that the initial conditions are integer, and the only contribution to the denominators comes from the divisor axiom. Therefore, the denominators consist of powers of $2$.
\end{proof}

\appendix
\section{The real setting}\label{app:a}
The objective of this section is to prove Theorem~\ref{thm:cases}. In particular, we operate under the assumptions of Theorem~\ref{thm:cases} throughout the section.

For any nodal Riemann surface with boundary $\Sigma$,
denote by $\overline\Sigma$ be the conjugate surface, as in Section~\ref{ssec:geoddfn}.
Denote by $\psi_\Sigma: \overline \Sigma \to\Sigma$ the anti-holomorphic map given by the identity map on points.
Denote by $\tilde\phi:\M_{l+1}(\beta)\to \M_{l+1}(\phi_*\beta)$ the map induced
by $\phi$, namely,
\[
\tilde\phi\,[u : \Sigma\to X,\vec{w}=(w_0,w_1,\ldots,w_l)]
:=
[\phi\circ u\circ \psi_\Sigma, (\psi^{-1}_\Sigma(w_0),\psi^{-1}_\Sigma(w_1),\ldots, \psi^{-1}_\Sigma(w_l))].
\]

\begin{lm}\label{lm:sgnphitilde}
We have $sgn(\phi)\equiv n\pmod 2$ and $sgn(\tilde\phi)\equiv  n+c_1(\beta)+l\pmod 2$.
\end{lm}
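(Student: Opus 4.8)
The statement computes two orientation signs: that of the anti-symplectic involution $\phi : X \to X$ acting on $X$ itself, and that of the induced map $\tilde\phi$ on the moduli space $\M_{l+1}(\beta)$ of genus zero stable maps. For the first sign, $sgn(\phi) \equiv n \pmod 2$: since $\phi$ is anti-symplectic and we work with $J \in \J_\phi$ satisfying $\phi^* J = -J$, the differential $d\phi$ at a fixed point is complex anti-linear with respect to $J$, hence it acts on each complex tangent line as an orientation-reversing real map. As $\dim_\C X = n$, this contributes a sign of $(-1)^n$ on the determinant line of $TX$, so $sgn(\phi) \equiv n \pmod 2$. I would phrase this via the standard fact that complex conjugation on $\C^n$ has sign $(-1)^n$, together with the fact that $X$ is connected so the sign is independent of the fixed point (and in fact independent of whether we compute at a fixed point, since the orientation of $X$ is the canonical complex one).

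\textbf{The main step: computing $sgn(\tilde\phi)$.} The map $\tilde\phi$ is built from three ingredients: the involution $\phi$ on the target, the conjugation $\psi_\Sigma : \overline\Sigma \to \Sigma$ on domains, and the relabeling/repositioning of the $l+1$ marked points via $\psi_\Sigma^{-1}$. The plan is to decompose the orientation line of $\M_{l+1}(\beta)$ according to the standard recipe for moduli of stable maps — roughly, $\det(\text{moduli}) \cong \det(\text{deformations of domain}) \otimes \det(\text{marked points}) \otimes \det(\text{index of } \bar\partial\text{-operator})$ — and track how each factor transforms. The target involution $\phi$ contributes to the index bundle: the real Cauchy–Riemann operator $D_u$ for $u$ and $D_{\phi\circ u\circ\psi}$ for its image are conjugate, and comparing their determinant lines introduces a factor governed by the complex rank of the index, which by Riemann–Roch for genus zero is $n + c_1(\beta)$ (the index of $\bar\partial$ twisted by $u^*TX$ on $\C P^1$ is $2(n + c_1(\beta))$ as a real dimension, contributing sign $(-1)^{n + c_1(\beta)}$ from the anti-linearity). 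Conjugating the domain $\Sigma \mapsto \overline\Sigma$ reverses the orientation of the domain's complex structure but this is already accounted for; the genuinely new contribution beyond $n + c_1(\beta)$ is the relabeling of marked points under $\psi_\Sigma^{-1}$, which permutes nothing but does conjugate each marked point — and since each marked point lives on a complex curve, moving it via the anti-holomorphic $\psi_\Sigma^{-1}$ contributes $(-1)^1$ per marked point, i.e., $(-1)^{l+1}$ for the $l+1$ points $w_0, \ldots, w_l$. Combining, $sgn(\tilde\phi) \equiv (n + c_1(\beta)) + (l+1) \equiv n + c_1(\beta) + l + 1 \pmod 2$. I would need to reconcile this with the claimed $n + c_1(\beta) + l$; the discrepancy of $1$ should be absorbed by a careful bookkeeping of how the domain conjugation interacts with the moduli orientation conventions of~\cite{ST4} — in particular, whether the point $w_0$ (the distinguished $0$-th marked point, which often plays a special role as the evaluation point) is counted, or whether one of the contributions is already built into the orientation of $\M_{l+1}(\beta)$ itself. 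I expect to follow the conventions and sign computations in~\cite{ST1} (e.g., the gluing sign analysis around Lemma~2.12) and~\cite[Appendix D]{MS} closely here.

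\textbf{Anticipated main obstacle.} The hard part will be the precise sign bookkeeping for $sgn(\tilde\phi)$: there are three independent sources of signs (target conjugation via $\phi$, domain conjugation via $\psi_\Sigma$, marked-point repositioning) and they must be combined under a fixed orientation convention for $\M_{l+1}(\beta)$. Getting the parity exactly right — distinguishing $n + c_1(\beta) + l$ from $n + c_1(\beta) + l + 1$ — requires pinning down whether the orientation of moduli of spheres in~\cite{ST4} includes a factor from the domain's automorphisms or from the $0$-th marked point, and how the conjugation $\psi_\Sigma$ acts on the determinant of the linearized operator versus the determinant of the domain deformation space. I would handle this by reducing to the irreducible case (a single $\C P^1$ with marked points), where everything can be made explicit via Riemann–Roch and the known sign of complex conjugation on $\C^m$, and then invoking the compatibility of orientations with gluing (as in~\cite{FOOO, ST1}) to extend to nodal domains. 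The closed analog of~\cite[Lemma~8.2.3]{FOOO} and the sign conventions already fixed earlier in the paper for $w_\s$ and the $\q_{\emptyset,l}$ operations should provide all the needed inputs.
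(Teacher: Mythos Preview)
Your approach is substantially more complicated than necessary, and the discrepancy of $1$ you encountered is a genuine bookkeeping error, not a convention issue to be swept away.

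The paper's proof is essentially a one-liner. The key observation you are missing is that $\tilde\phi$ is globally anti-holomorphic as a self-map of the complex orbifold $\M_{l+1}(\beta)$: it is built from the anti-holomorphic $\phi$ on the target and the anti-holomorphic $\psi_\Sigma$ on the domain. For an anti-holomorphic diffeomorphism of a complex orbifold, the orientation sign is simply the complex dimension. Hence
\[
sgn(\phi)=\tfrac{1}{2}\dim_\R X = n, \qquad sgn(\tilde\phi)=\tfrac{1}{2}\dim_\R \M_{l+1}(\beta)=\tfrac{1}{2}\bigl(2n+2c_1(\beta)+2(l+1)-6\bigr)=n+c_1(\beta)+l-2,
\]
which is congruent to $n+c_1(\beta)+l \pmod 2$. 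No decomposition of the determinant line is needed.

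Your error is precisely localizable: in your decomposition you counted the $l+1$ marked points as contributing $l+1$ complex dimensions but never subtracted the $3$ complex dimensions of $PSL(2,\C)$ acting on the domain. Including that correction gives $(n+c_1(\beta)) + (l+1) - 3 = n+c_1(\beta)+l-2$, in agreement with the paper. Your remark that the domain conjugation ``is already accounted for'' is where this got lost --- the automorphism group contributes to the orientation line and conjugation acts on it as well. Once you recognize $\tilde\phi$ as anti-holomorphic, all three of your ``independent sources of signs'' collapse into the single statement $sgn(\tilde\phi)=\dim_\C\M_{l+1}(\beta)$, and the delicate bookkeeping you anticipated as the main obstacle evaporates.
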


\begin{proof}
Since both $X$ and $\M_{l+1}(\beta)$ are complex orbifolds, and therefore admit a canonical orientation, the sign of the involution on each of them is simply half the dimension. So,
\[
sgn(\phi)=\frac{1}{2}\cdot 2n=n,
\]
and
\[
sgn(\tilde\phi)=\frac{1}{2}(2n+2c_1(\beta)+2l+2-6)
\equiv n+c_1(\beta)+l\pmod 2.
\]
\end{proof}

\begin{lm}\label{lm:qpinvt}
Let $\zeta_1,\ldots,\zeta_l\in  A^{even}(X)$ be homogeneous forms such that $\phi^*\zeta_j = (-1)^{|\zeta_j|/2}\zeta_j$. Then
\[
\phi^*\q^{\phi_*\beta}_{\emptyset,l}(\zeta_1,\ldots,\zeta_l)
=
(-1)^{|\q^\beta_{\emptyset,l}(\zeta_1,\ldots,\zeta_l)|/2}
\q^{\beta}_{\emptyset,l}(\zeta_1,\ldots,\zeta_l).
\]
\end{lm}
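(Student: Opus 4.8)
The plan is to deduce the statement from the naturality of push-forward under the orientation-reversing diffeomorphism $\tilde\phi$, combined with the sign bookkeeping already prepared in Lemmas~\ref{lm:sgnphitilde} and~\ref{lm:pulltopush}. First I would record the equivariance of the evaluation maps: since $\tilde\phi$ sends $[u,\vec w]$ to $[\phi\circ u\circ\psi_\Sigma,\psi_\Sigma^{-1}(\vec w)]$, we have $ev_j^{\phi_*\beta}\circ\tilde\phi=\phi\circ ev_j^\beta$ for $j=0,\ldots,l$. Hence $(\tilde\phi)^*(ev_j^{\phi_*\beta})^*\zeta_j=(ev_j^\beta)^*\phi^*\zeta_j=(-1)^{|\zeta_j|/2}(ev_j^\beta)^*\zeta_j$, using the hypothesis $\phi^*\zeta_j=(-1)^{|\zeta_j|/2}\zeta_j$. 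Taking the wedge over $j=1,\ldots,l$ gives $(\tilde\phi)^*\bigl(\bigwedge_j(ev_j^{\phi_*\beta})^*\zeta_j\bigr)=(-1)^{\sum_j|\zeta_j|/2}\bigwedge_j(ev_j^\beta)^*\zeta_j$ (there is no Koszul sign in reordering because the $\zeta_j$, and hence the pulled-back forms, are even).

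Next I would push forward along $ev_0$. The identity $ev_0^{\phi_*\beta}\circ\tilde\phi=\phi\circ ev_0^\beta$ means that $\tilde\phi$ is a map of the total spaces covering $\phi$ on the base, so by Proposition~\ref{prop:iof}\ref{prop:pushfiberprod} (or directly by naturality of integration over the fiber) one gets $(ev_0^\beta)_*\circ(\tilde\phi)^*=(-1)^{sgn(\tilde\phi)+sgn(\phi)}\phi^*\circ(ev_0^{\phi_*\beta})_*$ after correcting for the orientation-reversals of $\tilde\phi$ and $\phi$ via Lemma~\ref{lm:pulltopush}. Concretely, write $\xi=\bigwedge_{j=1}^l(ev_j^{\phi_*\beta})^*\zeta_j$; then apply Lemma~\ref{lm:pulltopush} to $\tilde\phi$ to replace $(\tilde\phi)^*\xi$ by $(-1)^{sgn(\tilde\phi)}(\tilde\phi)_*^{-1}\xi$, use Proposition~\ref{prop:iof}\ref{prop:pushcomp} to combine $(ev_0^\beta)_*(\tilde\phi)_*^{-1}=\bigl(ev_0^{\phi_*\beta}\circ\tilde\phi\circ\tilde\phi^{-1}\bigr)_*=\ldots$, actually more cleanly: from $ev_0^{\phi_*\beta}\circ\tilde\phi=\phi\circ ev_0^\beta$ one has $(\phi\circ ev_0^\beta)_*=(ev_0^{\phi_*\beta}\circ\tilde\phi)_*=(ev_0^{\phi_*\beta})_*\circ(\tilde\phi)_*$, so $(ev_0^\beta)_*\circ\tilde\phi_*^{-1}=\phi_*^{-1}\circ(ev_0^{\phi_*\beta})_*$ wherever these are invertible on the relevant degrees; then reintroduce pullbacks via Lemma~\ref{lm:pulltopush} for $\phi$. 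Assembling, $\phi^*(ev_0^{\phi_*\beta})_*\xi=(-1)^{sgn(\phi)+sgn(\tilde\phi)}(ev_0^\beta)_*(\tilde\phi)^*\xi$.

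Then I would combine the two computations. Starting from $\q^{\phi_*\beta}_{\emptyset,l}(\zeta)=(-1)^{w_\s(\phi_*\beta)}(ev_0^{\phi_*\beta})_*\xi$, apply $\phi^*$, use the displayed push-pull relation and the pullback computation of the first paragraph, obtaining
\begin{equation*}
\phi^*\q^{\phi_*\beta}_{\emptyset,l}(\zeta)=(-1)^{w_\s(\phi_*\beta)+sgn(\phi)+sgn(\tilde\phi)+\sum_j|\zeta_j|/2}(ev_0^\beta)_*\Bigl(\bigwedge_{j=1}^l(ev_j^\beta)^*\zeta_j\Bigr).
\end{equation*}
By Lemma~\ref{lm:sgnphitilde}, $sgn(\phi)+sgn(\tilde\phi)\equiv c_1(\beta)+l\pmod2$, and since $\phi_*\beta$ and $\beta$ are related by $\phi$ with $\phi^*$ acting as in~\eqref{eq:phi*ext}, $w_\s(\phi_*\beta)\equiv w_\s(\beta)+\mu(\beta)/2$ or a comparable shift; I would check that the total exponent reduces, using $|\q^\beta_{\emptyset,l}(\zeta)|=|\zeta|-2l-2c_1(\beta)+4$ from Lemma~\ref{lm:cdeg}, to $w_\s(\beta)+\tfrac12|\q^\beta_{\emptyset,l}(\zeta)|\pmod 2$, which gives exactly the claimed identity after re-expressing $(-1)^{w_\s(\beta)}(ev_0^\beta)_*(\ldots)=\q^\beta_{\emptyset,l}(\zeta)$.

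The main obstacle is the sign arithmetic: one must carefully track the interaction between the orientation-reversal signs $sgn(\phi)$, $sgn(\tilde\phi)$, the sign $(-1)^{w_\s(\beta)}$ built into the definition of $\q_{\emptyset,l}^\beta$, the behavior of $w_\s$ under $\phi_*$, and the degree shift $\tfrac12|\q^\beta_{\emptyset,l}(\zeta)|$ coming from Lemma~\ref{lm:cdeg}; in particular one should verify that the hypothesis $n\not\equiv1\pmod4$ and the parity of $c_1(\beta)$ conspire so that everything collapses to $(-1)^{|\q^\beta_{\emptyset,l}(\zeta)|/2}$. I expect no conceptual difficulty beyond this, since the geometric input (equivariance of $\tilde\phi$ and the evaluation maps, plus Proposition~\ref{prop:iof}) is entirely standard and already available in the excerpt.
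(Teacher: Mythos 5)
Your approach is exactly the paper's (the paper just reads the same chain of identities in the opposite direction, starting from $(-1)^{\sum|\zeta_j|/2}\q^\beta_{\emptyset,l}(\zeta)$ and ending at $\phi^*\q^{\phi_*\beta}_{\emptyset,l}(\zeta)$): equivariance of the evaluation maps under $\tilde\phi$, conversion of pullbacks to pushforwards via Lemma~\ref{lm:pulltopush}, and then the sign count from Lemmas~\ref{lm:sgnphitilde} and~\ref{lm:cdeg}. Your computation $\phi^*(ev_0^{\phi_*\beta})_*\xi=(-1)^{sgn(\phi)+sgn(\tilde\phi)}(ev_0^\beta)_*\tilde\phi^*\xi$ is correct and is the crux.

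The one loose end you flag — the comparison of $w_\s(\phi_*\beta)$ with $w_\s(\beta)$ — is resolved more simply than you suspect. Appendix~A opens by declaring that the assumptions of Theorem~\ref{thm:cases} are in force throughout, and those assumptions include that $\s$ is a (non-relative) spin structure, so $w_\s=0$ and both signs $(-1)^{w_\s(\beta)}$, $(-1)^{w_\s(\phi_*\beta)}$ are trivially $1$; the paper's proof silently drops them for this reason. Your tentative guess $w_\s(\phi_*\beta)\equiv w_\s(\beta)+\mu(\beta)/2$ is \emph{not} right, and if it were, the argument would fail, so it is good you were uncertain about it; the correct general statement is that $w_\s(\phi_*\beta)=w_\s(\beta)$ whenever $\phi^*w_\s=w_\s$ (which holds here trivially since $w_\s=0$). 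Finally, the hypothesis $n\not\equiv1\pmod4$ plays no role in this lemma; it is needed elsewhere in Theorem~\ref{thm:cases} (for the existence of the real bounding pair and the parity of $y(1)$), not for this sign identity. With these two clarifications, your proof is complete and matches the paper's.
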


\begin{proof}
Denote by $ev_j$ (resp. $ev_j^\phi$) the evaluation maps on $\M_{l+1}(\beta)$ (resp. $\M_{l+1}(\phi_*\beta)$).
By assumption on $\zeta_j$, we have
\begin{align*}
(-1)^{\sum_{j=1}^l|\zeta_j|/2}
\q^\beta_{\emptyset,l}(\zeta_1,\ldots,\zeta_l)
=&
(-1)^{\sum_{j=1}^l|\zeta_j|/2}
(ev_0)_*\big(\bigwedge_{j=1}^lev_j^*\zeta_j\big)\\
=&
(ev_0)_*\big(\bigwedge_{j=1}^lev_j^*\phi^*\zeta_j\big)\\
=&
(ev_0)_*\big(\bigwedge_{j=1}^l(ev_j^\phi\circ \tilde\phi)^*\zeta_j\big)\\
=&
(ev_0)_*\tilde\phi^* \big(\bigwedge_{j=1}^l(ev_j^\phi)^*\zeta_j\big).\\
\shortintertext{By Lemma~\ref{lm:pulltopush},}
=&
(-1)^{sgn(\tilde\phi)}
(ev_0)_*(\tilde\phi^{-1})_* \big(\bigwedge_{j=1}^l(ev_j^\phi)^*\zeta_j\big)\\
=&
(-1)^{sgn(\tilde\phi)}
\phi_*(ev_0^\phi)_*\big(\bigwedge_{j=1}^l(ev_j^\phi)^*\zeta_j\big)\\
=&
(-1)^{sgn(\tilde\phi)+sgn(\phi)}
\phi^*(ev_0^\phi)_*\big(\bigwedge_{j=1}^l(ev_j^\phi)^*\zeta_j\big),\\
\shortintertext{and by Lemma~\ref{lm:sgnphitilde},}
=&
(-1)^{c_1(\beta)+l}
\phi^*\q^{\phi_*\beta}_{\emptyset,l}(\zeta_1,\ldots,\zeta_l).
\end{align*}
The desired conclusion follows by Lemma~\ref{lm:cdeg}.
\end{proof}

\begin{cor}\label{cor:realfrob}
Let $U = H^{even}_\phi(X).$
Then $U \otimes \Qh_U \subset QH_U(X)$ is a Frobenius subalgebra.
\end{cor}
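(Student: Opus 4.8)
The plan is to verify the two conditions that, as explained after assumption~\ref{assump:sfa}, characterize a Frobenius subalgebra of $QH_U(X)$: first, that the restriction of the Poincar\'e pairing to $U$ is non-degenerate, and second, that $U\otimes Q_U$ is closed under the big quantum product $\star_U$. The unit $1\in H^0(X)$ lies in $U$ since $\phi^*1 = 1$, and commutativity, associativity, and the Frobenius identity for $\star_U$ on $U\otimes Q_U$ are inherited from $QH_U(X)$, so only these two points require proof.

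For non-degeneracy I would use $sgn(\phi)\equiv n \pmod 2$ from Lemma~\ref{lm:sgnphitilde}, which gives $\int_X\phi^*\alpha = (-1)^n\int_X\alpha$ for a top form $\alpha$. Hence for homogeneous $\eta,\zeta$ with $\phi^*[\eta]=\epsilon_\eta[\eta]$, $\phi^*[\zeta]=\epsilon_\zeta[\zeta]$, and $|\eta|+|\zeta| = 2n$, one gets $\epsilon_\eta\epsilon_\zeta\langle\eta,\zeta\rangle_X = (-1)^n\langle\eta,\zeta\rangle_X$, so $\langle\eta,\zeta\rangle_X = 0$ unless $\epsilon_\eta\epsilon_\zeta = (-1)^n$. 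Writing $U^c$ for the span of the complementary $\phi^*$-eigenspace in each $H^{2k}(X)$ (the $(-1)^{k+1}$-eigenspace), a short bookkeeping with $\epsilon = (-1)^k$ on a class in $U\cap H^{2a}$ and $\epsilon = (-1)^{b+1}$ on a class in $U^c\cap H^{2b}$ with $a+b = n$ shows that $\langle U,U^c\rangle_X = 0$, while $\langle U, H^{odd}(X)\rangle_X = 0$ for degree reasons. Since $H^*(X;\R) = U\oplus U^c\oplus H^{odd}(X)$ and the Poincar\'e pairing is non-degenerate on $H^*(X;\R)$, its restriction to $U$ is non-degenerate.

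For closure under $\star_U$, the first step is to record how $\phi^*$, extended to $H^*(X;Q_U)$ via~\eqref{eq:phi*ext}, interacts with the grading: every monomial $T^{\varpi(\beta)}\prod_i t_i^{r_i}$ in $Q_U$ has even degree and $\phi^*$ multiplies it by $(-1)^{\deg/2}$, so one checks that on the total-degree-$D$ part of $H^*(X;Q_U)$ with $D$ even the subspace $U\otimes Q_U$ is exactly the $(-1)^{D/2}$-eigenspace of $\phi^*$. Next, since $\Phi_U$ and $\star_U$ depend only on $[\gamma_U]=\Gamma_U$, I would choose representatives $\gamma_i$ of the basis of $U$, and representatives $\eta,\zeta$ of homogeneous classes in $U$, to be $\phi^*$-eigenforms with $\phi^*\gamma_i = (-1)^{|\gamma_i|/2}\gamma_i$, etc.\ (average over the $\Z/2$-action). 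Then $[\eta]\star_U[\zeta] = \sum_{l\ge 0}\frac1{l!}\sum_{i_1,\ldots,i_l} t_{i_1}\cdots t_{i_l}\,[\q_{\emptyset,l+2}(\eta,\zeta,\gamma_{i_1},\ldots,\gamma_{i_l})]$, with no signs from extracting the $t_{i_j}$ by Lemma~\ref{lm:qlinear} since they are even. Applying $\phi^*$ term by term, reindexing the sum over $\beta\in H_2(X;\Z)$ by the involution $\beta\mapsto\phi_*\beta$ (under which $\varpi$ and $c_1$ are preserved), and invoking Lemma~\ref{lm:qpinvt} together with the degree formula of Lemma~\ref{lm:cdeg} for $|\q^\beta_{\emptyset,l+2}(\eta,\zeta,\gamma_{i_1},\ldots,\gamma_{i_l})|$, the powers of $(-1)^{c_1(\beta)}$ from $\phi^*T^{\varpi(\beta)}$ and from Lemma~\ref{lm:qpinvt} cancel, and the contributions of $\sum_j|\gamma_{i_j}|/2$ and of $l$ from $\phi^*(t_{i_1}\cdots t_{i_l})$ cancel those from Lemma~\ref{lm:qpinvt}. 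What survives is $\phi^*([\eta]\star_U[\zeta]) = (-1)^{(|\eta|+|\zeta|)/2}\,[\eta]\star_U[\zeta]$. Since $[\eta]\star_U[\zeta]$ has total degree $|\eta|+|\zeta|$, the eigenspace description from the first step forces $[\eta]\star_U[\zeta]\in U\otimes Q_U$, and the corollary follows.

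I expect the main obstacle to be the sign and Novikov-degree bookkeeping in the closure argument: one must confirm that the reindexing $\beta\mapsto\phi_*\beta$ is compatible with the grading of $Q_U$, so that $T^{\varpi(\phi_*\beta)} = T^{\varpi(\beta)}$ and $c_1(\phi_*\beta) = c_1(\beta)$, and then check that the three sources of signs --- the $\phi^*$-action on the Novikov parameters and on the $t_i$, the degree of $\q^\beta_{\emptyset,l+2}$, and the constant $(-1)^{|\q^\beta|/2}$ from Lemma~\ref{lm:qpinvt} --- combine to give exactly $(-1)^{(|\eta|+|\zeta|)/2}$ with nothing left over. The non-degeneracy step, by contrast, is routine.
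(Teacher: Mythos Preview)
Your proof is correct and follows essentially the same approach as the paper's: both arguments show non-degeneracy by proving that $U$ and the complementary $\phi^*$-eigenspace are Poincar\'e-orthogonal using $sgn(\phi)\equiv n$, and both deduce closure under $\star_U$ from Lemma~\ref{lm:qpinvt} together with the compatibility of $\varpi$ with the involution on $H_2(X;\Z)$. Your version is more explicit about the extended $\phi^*$-action on $Q_U$ from~\eqref{eq:phi*ext} and the resulting eigenspace description of $U\otimes Q_U$, whereas the paper states these points tersely; in particular the paper simply records $\varpi(\beta)=\varpi(\phi_*\beta)$ and invokes Lemma~\ref{lm:qpinvt}, leaving the sign cancellation you spell out implicit.
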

\begin{proof}
For short, write $V:=U\otimes \Qh_U$.
First, $V$ is closed under $\star_U$. Indeed, let $\eta,\zeta\in A^*(X;Q_U)$ be representatives of classes in $V$ such that $\phi^*\eta = (-1)^{|\eta|/2}\eta,\phi^*\zeta = (-1)^{|\zeta|/2}\zeta.$  We have
\[
[\eta]\star_U[\zeta] = \sum_{\beta \in H_2(X;\Z), \; l \geq 0} \frac{T^{\varpi(\beta)}}{l!}\q_{\emptyset,l+2}^\beta(\eta \otimes \zeta \otimes \gamma_U^{\otimes l}).
\]
Recall that we have chosen $\sly =H_2(X,L;\Z)/\Im(\Id+\phi_*),$ so $\varpi(\beta) = \varpi(\phi_*\beta).$ Thus, Lemma~\ref{lm:qpinvt} implies that $[\eta]\star_U[\zeta] \in V.$

Second, the bilinear form $\langle \cdot , \cdot \rangle_X$ is nondegenerate on $V$.
Indeed, let $V'$ denote the direct sum over $k$ of the $(-1)^{k + 1}$-eigenspaces of $\phi^*$ acting on $H^{2k}(X)\otimes \Qh_U,$ so $V \oplus V' = H^{even}(X) \otimes \Qh_U.$
Since the bilinear form $\langle \cdot,\cdot \rangle_X$ is non-degenerate on $H^{even}(X)\otimes \Qh_U,$ it suffices to show that $V$ and $V'$ are $\langle \cdot,\cdot \rangle_X$-orthogonal. Let $\eta \in V$ and $\zeta \in V'$ be homogeneous.
In order for $\langle\eta,\zeta\rangle_X$ to be nonzero, we need $|\eta|+|\zeta|=2n$.
In addition, recall that $sgn(\phi)=n$. Therefore, by Lemma~\ref{lm:pulltopush},
\begin{multline*}
\langle\eta,\zeta\rangle_X
=
(-1)^{|\zeta|}pt_*(\eta\wedge\zeta)
=
(-1)^{|\zeta|+sgn(\phi)}pt_*\phi^*(\eta\wedge\zeta)=\\
=
(-1)^{|\zeta|+n+|\eta|/2+|\zeta|/2+1}pt_*(\eta\wedge\zeta)
=
(-1)^{|\zeta|+n+n+1}pt_*(\eta\wedge\zeta)
=
-\langle\eta,\zeta\rangle_X,
\end{multline*}
so $\langle\eta,\zeta\rangle_X=0$.
\end{proof}

\begin{proof}[Proof of Theorem~\ref{thm:cases}]
Part~\ref{it:realthma} is given by Corollary~\ref{cor:realfrob}.
Part~\ref{it:realthmc} is given by~\cite[Theorem~3]{ST2}.
It remains to verify part~\ref{it:realthmb}, namely, that $\Ker(\rho|_{H^{even}})\subset \Hh^{even}_{\phi}(X,L;\R)$.
Consider the long exact sequence~\eqref{eq:yrho}. Since $\Ker(\rho) = \Im(y),$ it suffices to show that $y(1) \in \Hh^{even}_{\phi}(X,L;\R).$
Let $\phi^*$ act on $\R[-n]$ by
the identity.
This action makes $\i$ into a $\phi^*$-equivariant map. Indeed,
for $\eta\in A^*(X;\R)$, we have
\begin{multline*}
\i_\R(\phi^*(\eta))=
(-1)^{n+|\eta|}\int_L i^*(\phi^*\eta)
=
(-1)^{n+|\eta|}\int_L(\phi\circ i)^*\eta
= \\
=(-1)^{n+|\eta|}\int_L i^*\eta
=
\i_\R(\eta)
=
\phi^*(\i_\R(\eta)).
\end{multline*}
By the naturality of the long exact sequence~\eqref{eq:yrho},
we conclude that $y$ is $\phi^*$-equivariant, and therefore $\phi^*y(1) =y(1)$. Since $n\equiv 3\pmod 4$ and $|y(1)|=n+1$, this means
$y(1)\in \Hh^{even}_\phi(X,L;\R)$.
\end{proof}

\bibliography{../../bibliography_exp}
\bibliographystyle{../../amsabbrvcnobysame}

\end{document}